
\documentclass{amsart}

\usepackage{amsmath,amsthm,amstext,amssymb,bbm}
\usepackage{hyperref}
\usepackage{accents}
\usepackage[utf8]{inputenc}
\usepackage{enumerate,enumitem}
\usepackage{amscd,  amsfonts,  amsbsy,  mathrsfs, upgreek, mathtools, stmaryrd, bbm}

\usepackage{todonotes}



\theoremstyle{plain}
\newtheorem{theorem}{Theorem}[section]
\newtheorem{lemma}[theorem]{Lemma}

\newtheorem*{observation*}{Observation}
\newtheorem{corollary}[theorem]{Corollary}
\newtheorem{proposition}[theorem]{Proposition}

\newtheorem*{claim*}{Claim}
\newtheorem*{subclaim*}{Subclaim}
\theoremstyle{definition}
\newtheorem{definition}[theorem]{Definition}
\newtheorem{remark}[theorem]{Remark}

\newtheorem{fact}[theorem]{Fact}
\newtheorem{question}[theorem]{Question}
\newtheorem*{question*}{Question}

\newcommand{\betrag}[1]{\vert{#1}\vert}

\newcommand{\dom}[1]{{{\rm{dom}}(#1)}}

\newcommand{\cof}[1]{{{\rm{cof}}(#1)}}
\newcommand{\otp}[1]{{{\rm{otp}}\left(#1\right)}}
\newcommand{\ran}[1]{{{\rm{ran}}(#1)}}

\newcommand{\tc}[1]{{\rm{tc}}({#1})}

\newcommand{\POT}[1]{{\mathcal{P}}({#1})}

\newcommand{\map}[3]{{#1}:{#2}\longrightarrow{#3}}
\newcommand{\Map}[5]{{#1}:{#2}\longrightarrow{#3};~{#4}\longmapsto{#5}}

\newcommand{\Set}[2]{\{{#1}~\vert~{#2}\}}
\newcommand{\seq}[2]{\langle{#1}~\vert~{#2}\rangle}

\newcommand{\goedel}[2]{{\prec}{#1},{#2}{\succ}}

\newcommand{\anf}[1]{{\text{``}\hspace{0.3ex}{#1}\hspace{0.3ex}\text{''}}}

\newcommand{\HH}[1]{\rm{H}_{#1}}

\newcommand{\Add}[2]{{\rm{Add}}({#1},{#2})}

\newcommand{\id}{{\rm{id}}}
\newcommand{\Lim}{{\rm{Lim}}}
\newcommand{\On}{{\rm{Ord}}}
\newcommand{\LL}{{\rm{L}}}
\newcommand{\HOD}{{\rm{HOD}}}

\newcommand{\ZFC}{{\rm{ZFC}}}

\newcommand{\PFA}{{\rm{PFA}}}

\newcommand{\uhr}{\upharpoonright}
\newcommand{\co}{\mathbb{C}}
\newcommand{\po}{\mathbb{P}}
\DeclareMathOperator{\col}{Col}

\newcommand{\QQQ}{{\mathbb{Q}}}

\newcommand{\KK}{{\rm{K}}}
\newcommand{\MM}{{\rm{M}}}

\newcommand{\VV}{{\rm{V}}}

\newcommand{\calC}{\mathcal{C}}

\newcommand{\calL}{\mathcal{L}}

 \newcommand{\GCH}{{\rm{GCH}}}

\newcommand{\power}{\mathcal{P}}

\newcommand{\qo}{\mathbb{Q}}

\newcommand{\name}{\dot}
\newcommand{\can}{\check}
\newcommand{\fr}{{}^\frown}


\makeatletter
\@namedef{subjclassname@2020}{%
  \textup{2020} Mathematics Subject Classification}
\makeatother


\title{On $\Sigma_1$-Definable Closed Unbounded Sets}

\author{Omer Ben-Neria}
\address{Einstein Institute of Mathematics, The Hebrew University of Jerusalem. The Edmond J. Safra Campus (Giv’at Ram), Jerusalem 91904, Israel}
\email{omer.bn@mail.huji.ac.il}

\author{Philipp L\"{u}cke}
\address{Fachbereich Mathematik, Universit\"at Hamburg, Bundesstra{\ss}e 55, Hamburg, 20146, Germany}
\email{philipp.luecke@uni-hamburg.de}

\subjclass[2020]{03E47; 03E35, 03E45, 03E55}
\keywords{Definability, stationary sets, large cardinals, singular cardinals, J\'{o}nsson cardinals}

\thanks{The first author would like to thank the Israel Science Foundation (Grants 1832/19 and 1302/23) for their support. 
The second author gratefully acknowledges support by the \emph{Deutsche Forschungsgemeinschaft} (Project number 522490605). 
In addition, the research of the second author was supported by the Spanish Government under grant EUR2022-134032. 
This project has received funding from the European Union’s Horizon 2020 research and innovation programme under the Marie Sk{\l}odowska-Curie grant agreement No 842082 of the second author (Project \emph{SAIFIA: Strong Axioms of Infinity -- Frameworks, Interactions and Applications}).}


\begin{document}

\begin{abstract}
 Definable stationary sets, and specifically, ordinal definable ones, play a significant role in the study of canonical inner models of set theory and the class HOD of hereditarily ordinal definable sets. 
 Fixing a certain notion of definability and an uncountable cardinal, one can consider the associated family of definable closed unbounded sets.
 In this paper, we study the extent to which such families can approximate the full closed unbounded filter, and their dependence on the defining complexity.  
 Focusing on closed unbounded subsets of a cardinal $\kappa$ which are $\Sigma_1$-definable in parameters from $\HH{\kappa}$ and ordinal parameters, we show that the ability of such closed unbounded sets to well approximate the closed unbounded filter on $\kappa$ can highly vary, and strongly depends on key properties of the underlying universe of set theory. 
\end{abstract}

\maketitle



\section{Introduction}\label{section:Intro}

The concepts of closed unbounded and stationary subsets capture many of the key aspects of the combinatorics of cardinals of uncountable cofinalities. 
 Recent developments in the study of canonical inner models of set theory provide strong motivations for analyzing the definability,  specifically, the ordinal definability, of these objects. 
 In particular, the notion of a \emph{$\omega$-strongly measurable cardinal $\kappa$ in $\HOD$}, introduced by Woodin (see {\cite[Definition 189]{Woodin-SuitableExtendersI}}) to measure  local failures of the inner model $\HOD$ to approximate the set-theoretic  universe $V$, 
 is equivalent to the fact that the restriction of the closed unbounded filter on $\Set{\alpha<\kappa}{\cof{\alpha}=\omega}$ to $\HOD$  identifies with the intersection of a small number of ordinal definable normal measures on $\kappa$ in $\HOD$ (see {\cite[Lemma 2.4]{BH21}}). 
 Since, in $\HOD$, many of the sets in this intersection do not contain a closed unbounded subset,  it follows that there are many subsets of the cardinal $\kappa$ that are  $\HOD$-stationary (i.e., that meet every closed unbounded subset of $\kappa$ that is an element of $\HOD$), but are not stationary subsets of $\kappa$ in $V$.

 Motivated by this observation, the purpose of the work presented in this paper is to study the extent to which hierarchies of definable closed unbounded subsets of an uncountable cardinal $\kappa$ can approximate the full closed unbounded filter on $\kappa$, where the different hierarchies are given in terms of the  complexity of the defining formulas (e.g., $\Sigma_n$-formulas\footnote{See {\cite[p. 183]{MR1940513}}.}  for some natural number $n$) and the set of parameters allowed in these formulas (e.g., ordinal parameters or parameters contained in $\HH{\kappa}$). 
 Since ordinal definable sets of ordinals can be represented as the unique solutions of $\Sigma_2$-formulas with ordinal parameters (see the proof of {\cite[Lemma 13.25]{MR1940513}} for details) and closed unbounded subsets of uncountable cardinals $\kappa$ that are definable by $\Sigma_0$-formulas with ordinals and elements of $\HH{\kappa}$ can be shown to have a very simple, eventually periodic structure,\footnote{More precisely, it is possible to use  {\cite[Lemma 2.3]{Sigma1Partitions}} to show that, if $\varphi(v_0,\ldots,v_n)$ is a $\Sigma_0$-formula, $\kappa$ is an uncountable cardinals and $z_0,\ldots,z_{n-1}\in\HH{\kappa}\cup\On$ with the property that $C=\Set{\alpha<\kappa}{\varphi(\alpha,z_0,\ldots,z_{n-1})}$ is a   closed unbounded subset  of $\kappa$, then there exists a set $N$ of natural numbers such that $C\cap[\lambda,\lambda+\omega)=\Set{\lambda+n}{n\in N}$ holds for coboundedly many limit ordinals $\lambda<\kappa$.}  
 the first place to expect different phenomena is with $\Sigma_1$-definable closed unbounded sets.  
 In order to motivate the definitions and results contained in this paper, we start by presenting two settings in which $\Sigma_1$-definable closed unbounded sets  give an optimal approximation to the collection of all closed unbounded sets: 
 \begin{itemize}
  \item In G\"odel's constructible universe $L$, every  subset (and, in particular, every closed unbounded subset) of an infinite cardinal $\kappa$ is the unique solution of a $\Sigma_1$-formula with parameters from $\kappa^+$. 
  This follows directly from the fact that every such subset is the $\eta$-th element in the canonical well-ordering of $L$ for some $\eta<\kappa^+$, and transitive models of $\anf{\ZFC^-+{V=L}}$ containing an ordinal $\eta$ can uniformly compute the initial segment of order-type $\eta+1$ of this well-ordering.

  \item \emph{Martin's Maximum} $\MM\MM$ implies that every closed unbounded subset of  $\omega_1$ contains a closed unbounded subset  that is the unique solution of a $\Sigma_1$-formula that only uses the ordinal $\omega_1$  and real numbers as parameters. 
   This statement follows directly from results of Woodin that show that $\MM\MM$ implies \emph{admissible club guessing} 
  (see {\cite[Theorems 3.16, 3.17 \& 3.19]{MR1713438}}).  
\end{itemize}
In addition, results that will be contained in a sequel to this paper show that for every uncountable  cardinal $\kappa$ satisfying $\kappa^{{<}\kappa}=\kappa$, there is cofinality-preserving forcing extension $V[G]$ of the ground model $V$ in which there exists  a subset $E$ of $\kappa$ such that every closed unbounded subset of $\kappa$ contains a closed unbounded subset that is the unique solution of a $\Sigma_1$-formula that only uses the set $E$ and ordinals less than $\kappa^+$ as parameters.

 In  contrast to these settings, the results of this paper will isolate several canonical contexts in which  collections of $\Sigma_1$-definable closed unbounded sets fail to approximate the closed unbounded filter. In order to formulate these results, we now specify the notions of definability used in this paper.

 \begin{definition}\label{definition:definable}
  A class $\Gamma$ is \emph{definable} by a formula $\varphi(v_0,\ldots,v_n)$ and parameters $y_0,\ldots,y_{n-1}$ if $$\Gamma ~ = ~ \Set{x}{\varphi(x,y_0,\ldots,y_{n-1})}.$$
 \end{definition}

Throughout this paper, we study the definability of certain sets of ordinals. Given such a set $E$, we will mostly consider the questions that ask whether the set $\{E\}$ is definable (in the sense of Definition \ref{definition:definable}) by certain formulas with parameters coming from a given class. Note that this form of definability differs from the statement that the set $E$ itself is definable (again in the sense of Definition \ref{definition:definable}) in the given way. While the former implies the latter, it is easy to see that the converse implication fails for the set $\omega_1$ of all countable ordinals and definability by  $\Sigma_1$-formulas without parameters.

 We now introduce the main concepts studied in this paper:

\begin{definition}\label{Def:Sigma_n-Stationary}
 Let $n$ be a natural number, let $\kappa$ be a uncountable cardinal and let $S$ be a subset of $\kappa$.  
 \begin{enumerate}
   \item  Given a class $R$, the subset $S$  is \emph{$\Sigma_n(R)$-stationary in $\kappa$} if $C\cap S\neq\emptyset$ holds for every closed unbounded subset $C$ of $\kappa$ with the property that the set $\{C\}$ is definable by a $\Sigma_n$-formula with parameters in $R\cup\{\kappa\}$.

   \item The subset $S$ is \emph{$\Sigma_n$-stationary in $\kappa$} if it is $\Sigma_n(\emptyset)$-stationary in $\kappa$. 

   \item Given a class $R$, the subset $S$ is \emph{$\mathbf{\Sigma}_n(R)$-stationary in $\kappa$} if it is $\Sigma_n(R\cup\HH{\kappa})$-stationary in $\kappa$. 

   \item The subset $S$ is \emph{$\mathbf{\Sigma}_n$-stationary in $\kappa$} if it is $\mathbf{\Sigma}_n(\emptyset)$-stationary in $\kappa$.  
 \end{enumerate} 
\end{definition}

 Trivially, all stationary subsets of a given uncountable cardinal $\kappa$ are $\Sigma_n(R)$-stationary in $\kappa$ for every parameter class $R$ and every natural number $n$. 
 As indicated by Definition \ref{Def:Sigma_n-Stationary}, our focus in this paper will be on natural classes $R$ such as $\On$, $\HH{\kappa}$, $\HH{\mu}$ for some $\mu < \kappa$, and their combinations. 
 We therefore view the existence of non-stationary $\Sigma_n(R)$-stationary sets as a measure  for the discrepancy between the collection of  $\Sigma_n(R)$-definable closed unbounded subsets and the collection of all closed unbounded subsets. 
 Our results will isolate several settings in which highly non-stationary sets (such as singletons, or sets of successor
cardinals below a limit cardinal) are  stationary for rich collections of definable sets. 
 In particular,  even in the case  of singular cardinals of countable cofinality, where  stationarity coincides with coboundedness, we will present canonical examples of such cardinals in which  sparse  subsets are $\Sigma_n(R)$-stationary. 
 The proofs of all these results reveal that, in the given settings, that the considered collections of $\Sigma_n(R)$-definable closed unbounded subsets and $\Sigma_n(R)$-stationary sets possess many of structural features provable for the collections of closed unbounded and stationary subsets of regular uncountable cardinals. 
 Below, we list six of our main results that best illustrate the phenomenon described above: 
 
\begin{itemize}
    \item If $\kappa$ is a Ramsey cardinal, then every unbounded  subset of $\kappa$ that consists of cardinals  is $\mathbf{\Sigma}_1$-stationary (Theorem \ref{theorem:StablyMeasurable}). 

    \item If $\kappa$ is a (possibly singular) limit of measurable cardinals,  then every unbounded  subset of $\kappa$ that consists of cardinals is $\mathbf{\Sigma}_1(Ord)$-stationary (Theorem \ref{theorem:LimitMeasurables}). 

    \item If  $\omega_\omega$ is a J\'{o}nsson cardinal, then every unbounded subset of $\Set{\omega_n}{n<\omega}$ is $\mathbf{\Sigma}_1$-stationary in $\omega_\omega$ (Theorem \ref{theorem:JonssonStationary}). 

    \item It is equiconsistent with the existence of a measurable cardinal that every unbounded subset of $\Set{\omega_n}{n<\omega}$ is $\mathbf{\Sigma}_1(Ord)$-stationary in $\omega_\omega$ (Theorem \ref{Thm:Measurable}). 

    \item It is equiconsistent with the existence of a Mahlo cardinal that there is a regular cardinal $\mu$ for which the singleton $\{\mu\}$ is $\Sigma_1(\HH{\mu})$-stationary in $\mu^+$ (Theorem \ref{Thm:Mahlo}). 

    \item While it is provable that for every set $A$ of cardinality less than the \emph{reaping number $\mathfrak{r}$}\footnote{See Definition \ref{definition:ReapingNumber} below.} and every singular cardinal $\kappa$ of countable cofinality,  there are disjoint $\Sigma_1(A)$-stationary subsets of $\kappa$ (Proposition \ref{proposition:CountCofBistat}), it is equiconsistent with the existence of a measurable cardinal that there is a singular cardinal $\kappa$ of countable cofinality such that for every subset $A$ of $\HH{\kappa}$ of cardinality $\mathfrak{r}$, there are are disjoint $\Sigma_1(A)$-stationary subsets of $\kappa$ (Corollary \ref{Cor:EquiconsistentReaping}). 
\end{itemize}

 We now briefly outline the structure of this paper:
 In Section \ref{section:FirstResults}, we prove preliminary results about the theory of $\Sigma_n(R)$-stationary sets using mostly combinatorial arguments. In addition, we introduce an auxiliary property of $\Sigma_n$-undefinability of an ordinal, which plays a key role in the result of the paper.
 %
 In Section \ref{section:UndefProp}, we examine $\Sigma_1$-definability in the Dodd-Jensen core model, and prove several results regarding $\Sigma_1$-stationary sets in 
 $\KK^{DJ}$. 
 Section \ref{Section:LargeCardinalAndSigma_1} is devoted to showing that $\Sigma_1$-stationarity is weak in the presence of sufficiently large cardinals (e.g., measurable cardinals or stably measurable cardinals), or at small cardinals with strong partition properties (e.g., when $\omega_\omega$ is J\'{o}nsson). 
 In Section \ref{Section:ConsistencyResults}, we build on the results about $\KK^{DJ}$ as well as on forcing results with large cardinals, to prove several equiconsistency results about of weakness of $\Sigma_1(Ord)$-definable closed unbounded sets. 
 In Section \ref{Section:Concluding Remarks and Open Problems}, we conclude the paper by listing  some  problems left open by our results.


\section{Preliminaries about $\Sigma_n$-Stationary sets}\label{section:FirstResults}

In this section, we start to develop the  theory of $\Sigma_n(R)$-stationary sets for natural classes $R$.


\subsection{A proper hierarchy}

 In order to motivate the below results, we start our investigations of $\Sigma_n(R)$-stationary sets by presenting a setting in which these sets form a properly descending hierarchy in the parameter $n$.

 \begin{theorem}\label{theorem:ProperyHierarchySigmaNStationary}
  Assume that the $\GCH$ holds. 
  Then for every  uncountable regular cardinal  $\kappa$ and every  cardinal $\theta$ satisfying  $\theta=\theta^\kappa$, there is a uniformly definable partial order $\po_{\kappa,\theta}$ such that forcing with $\po_{\kappa,\theta}$ preserves cofinalities and, if $G$ is $\po_{\kappa,\theta}$-generic over $V$, then, in $V[G]$, for every natural number $n>0$ and every set $A$ of cardinality less than $\theta$ with the property that the set $\{A\}$ is definable by a $\Sigma_2$-formula with parameters in $A$,  there is a subset of $\kappa$ that is $\Sigma_n(A)$-stationary in $\kappa$ and not $\Sigma_{n+1}(A)$-stationary in $\kappa$.  
 \end{theorem}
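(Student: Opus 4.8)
The plan is to first reduce the theorem to a single club-construction problem, and then to solve that problem by a coding forcing built from club shooting. For the reduction, fix $n>0$ and an admissible parameter $A$ (so $|A|<\theta$ and $\{A\}$ is definable by a $\Sigma_2$-formula with parameters in $A$). I claim it suffices to produce, in $V[G]$, a closed unbounded subset $C$ of $\kappa$ such that (i) $\{C\}$ is definable by a $\Sigma_{n+1}$-formula with parameters in $A\cup\{\kappa\}$, and (ii) no closed unbounded $D\subseteq\kappa$ with $\{D\}$ definable by a $\Sigma_n$-formula with parameters in $A\cup\{\kappa\}$ satisfies $D\subseteq C$. Granting this, set $S=\kappa\setminus C$. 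Since $S\cap C=\emptyset$ and $\{C\}$ is $\Sigma_{n+1}(A\cup\{\kappa\})$-definable, $S$ is not $\Sigma_{n+1}(A)$-stationary. Conversely, if $\{D\}$ is $\Sigma_n(A\cup\{\kappa\})$-definable and $D$ is club, then $D\not\subseteq C$ by (ii), so $D\cap S\neq\emptyset$; hence $S$ is $\Sigma_n(A)$-stationary. The theorem thus reduces to building such clubs $C=C^A_n$ uniformly.

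For the construction, I would let $\po_{\kappa,\theta}$ be a reverse Easton iteration (over the regular cardinals in a suitable interval) whose key ingredient is the $<\kappa$-closed forcing that shoots a club through $\kappa$ by closed bounded approximations. Under $\GCH$ this forcing is $<\kappa$-closed and $\kappa^+$-cc, so the whole iteration preserves cofinalities, and it is uniformly definable from $\kappa$ and $\theta$ through its bookkeeping. The iteration would add a master sequence of mutually generic clubs and simultaneously code this sequence into the continuum function on a designated block of cardinals, arranging the coding so that the club extracted for a pair $(n,A)$ is recoverable only after $n$ alternations of quantifiers over the coding block. The hypothesis $\theta=\theta^\kappa$ is used to count nice names, to secure the chain condition, and to provide enough coding room for all parameters of cardinality less than $\theta$.

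The two halves of the construction then correspond to (i) and (ii). For the upper bound (i), I would decode $C^A_n$ by a $\Sigma_{n+1}$-formula: first recover a code for $A$ using its $\Sigma_2$-definition from ordinals in $A$ — which is legitimate inside a $\Sigma_{n+1}$-formula precisely because $n>0$ gives $\Sigma_2\subseteq\Sigma_{n+1}$ — and then read $C^A_n$ off the coding block through the prescribed $n$-fold quantifier alternation. For the eluding property (ii), I would show that every club $D$ whose singleton is $\Sigma_n(A\cup\{\kappa\})$-definable already lies in the submodel of $V[G]$ generated by all generic information of complexity below that of $C^A_n$, so that $C^A_n$ is generic over a model containing $D$; the standard density argument for club shooting — extend a condition so as to jump over a point of $D$ without including it — then produces a point of $D$ outside $C^A_n$, i.e. $D\not\subseteq C^A_n$.

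The main obstacle is the simultaneous and uniform control of definability across all levels and parameters: the coding must make $C^A_n$ definable at exactly level $n+1$ while remaining generic with respect to everything definable at level $n$, and this has to hold coherently for all pairs $(n,A)$ at once, so that neither the higher clubs $C^A_m$ with $m>n$ nor the parameter $A$ itself (which may be a set newly added by the forcing) leaks enough information to pin $C^A_n$ down one level too early. Making the stratified coding robust enough to guarantee that $\Sigma_n$-truth in $V[G]$ with parameters in $A\cup\{\kappa\}$ cannot access the $(n,A)$-th generic, while the $\Sigma_2$-self-definability of $A$ still lets a $\Sigma_{n+1}$-formula locate it, is the delicate heart of the argument.
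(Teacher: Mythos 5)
Your opening reduction is correct: producing a club $C$ such that $\{C\}$ is definable by a $\Sigma_{n+1}$-formula with parameters in $A\cup\{\kappa\}$ while no club $D$ with $\{D\}$ definable by a $\Sigma_n$-formula with parameters in $A\cup\{\kappa\}$ satisfies $D\subseteq C$ does yield the desired set $S=\kappa\setminus C$. The gap is everything after that. The construction you describe --- a reverse Easton iteration of club shooting whose generics are coded into the continuum function so that the club attached to a pair $(n,A)$ is \emph{recoverable only after $n$ quantifier alternations} --- is a restatement of the problem, not a solution to it, and you concede as much when you call the stratified coding ``the delicate heart of the argument.'' Two steps are missing, and they are exactly the hard part. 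First, for your property (ii) you assert that every club $D$ whose singleton is $\Sigma_n(A\cup\{\kappa\})$-definable in $V[G]$ already lies in an intermediate model over which $C^A_n$ is generic. Nothing in the described construction forces $\Sigma_n$-truth of $V[G]$ to be captured by the ``lower'' portion of the iteration: levels of the Levy hierarchy in a forcing extension do not stratify into intermediate models matching the stages of a coding apparatus, and controlling the least $n$ at which a set becomes $\Sigma_n$-definable is precisely the known difficulty here. Second, the parameter $A$ ranges over sets of $V[G]$ of cardinality less than $\theta$, including sets added by the forcing itself, so no bookkeeping fixed in the ground model can attach a dedicated generic club $C^A_n$ to each pair $(n,A)$; you flag this leakage problem yourself but offer no mechanism to resolve it. As written, the proposal reduces the theorem to an unproved, and at least as difficult, coding claim.

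For comparison, the paper's proof avoids any level-by-level control of generic objects. Its forcing is $\Add{\kappa}{\theta}*\dot{\QQQ}$, where $\dot{\QQQ}$ is ${<}\theta^+$-closed and codes $\POT{\kappa}$ into the $\GCH$ pattern, so that $\POT{\kappa}\subseteq\HOD$ holds in the extension. Then two soft arguments combine. By Proposition \ref{proposition:CohenNonStationarySigmaNStationary}, since there are fewer than $\theta$ many clubs whose singletons are $\Sigma_n(A\cup\{\kappa\})$-definable, the Cohen part factors so that all of them lie in an intermediate model $M$ with $V[G]$ still $\Add{\kappa}{\theta}$-generic over $M$; a single fresh Cohen subset $X$ of $\kappa$ then gives the non-stationary set $X\setminus\Lim(X)$, which meets every unbounded subset of $\kappa$ in $M$ and hence every such club. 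This requires only counting and mutual genericity, for all $n$ and all $A$ in the extension simultaneously. By Proposition \ref{proposition:ProperStatHierarchy}, since $\POT{\kappa}\subseteq\HOD$, one takes the $\HOD$-least non-stationary set $E$ that is $\Sigma_n(A)$-stationary; because the initial segments of the canonical well-ordering of $\HOD$ are $\Sigma_2$-definable and the class $C^{(n)}$ is $\Pi_n$-definable, the set $\{E\}$ is $\Sigma_{n+1}(A)$-definable, and so is the $\HOD$-least club disjoint from $E$, which witnesses that $E$ is not $\Sigma_{n+1}(A)$-stationary. In other words, the $\Sigma_{n+1}$-definability comes for free from leastness in the $\HOD$ well-order rather than from any engineered coding --- which is precisely how the paper sidesteps the obstacle your proposal runs into.
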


 Note that the  definability assumptions on the parameter set $A$ stated in the above theorem are satisfied in the case where $\theta=\kappa^{++}$ and $A=\HH{\kappa}\cup\kappa^+$. We therefore directly get the following corollary whose statement should be compared with the two settings discussed in Section \ref{section:Intro}, in which  all $\mathbf{\Sigma}_1(\kappa^+)$-stationary subsets of an uncountable cardinal $\kappa$ are stationary.

 \begin{corollary}
     If the $\GCH$ holds, then for every  uncountable regular cardinal  $\kappa$, there is a uniformly definable cofinality preserving partial order $\po_\kappa$ with the property that whenever $G$ is $\po_\kappa$-generic over $V$ and $n>0$ is a natural number, then, in $V[G]$, there is a subset of $\kappa$ that is $\mathbf{\Sigma}_n(\kappa^+)$-stationary in $\kappa$ and not $\mathbf{\Sigma}_{n+1}(\kappa^+)$-stationary in $\kappa$.  \qed 
 \end{corollary}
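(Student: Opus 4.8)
The plan is to reduce the statement to a single club-construction problem and then realize it with a uniformly definable product forcing. \emph{Reduction:} fix, in $V[G]$, a natural number $n>0$ and a set $A$ as in the statement. It suffices to produce a closed unbounded $C\subseteq\kappa$ such that the singleton $\{C\}$ is definable by a $\Sigma_{n+1}$-formula with parameters in $A\cup\{\kappa\}$, while the complement $S=\kappa\setminus C$ meets every closed unbounded subset of $\kappa$ whose singleton is $\Sigma_n$-definable with parameters in $A\cup\{\kappa\}$. Such an $S$ is then $\Sigma_n(A)$-stationary by construction, and it is \emph{not} $\Sigma_{n+1}(A)$-stationary because $C$ is a $\Sigma_{n+1}(A)$-definable closed unbounded set disjoint from it. The hypothesis that $\{A\}$ is $\Sigma_2$-definable with parameters in $A$ is used precisely here: since $n+1\geq 2$, the membership relation ``$x=A$'' is then $\Delta_2$-definable from parameters in $A$, so $A$ may be used as a parameter inside a $\Sigma_{n+1}$-definition without raising its complexity. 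Thus defining $C$ \emph{from $A$} will be no harder than defining it from the generic data that $A$ will be made to encode.

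\emph{The forcing and preservation of cofinalities.} I would let $\po_{\kappa,\theta}$ be the $({<}\kappa)$-support product of length $\theta$ of a single, uniformly definable, $({<}\kappa)$-closed building block $\qo$ that adds a generic closed unbounded subset of $\kappa$ together with a coding gadget recording the conditions used to build it. Using the $\GCH$ and $\theta=\theta^\kappa$, the poset $\po_{\kappa,\theta}$ has cardinality $\theta$, is $({<}\kappa)$-closed, and satisfies the $\kappa^+$-chain condition, and therefore preserves cofinalities; it is uniformly definable from $\kappa$ and $\theta$. The generic filter $G$ yields a definable sequence $\seq{C_\eta}{\eta<\theta}$ of mutually generic closed unbounded sets whose coding gadgets will let each $C_\eta$ be recovered by a $\Sigma_{n+1}$-formula.

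\emph{Building the witness.} Given $A$ as above, I would first enumerate, definably from $A$ and using $\betrag{A}<\theta$, those $\Sigma_n$-formulas with parameters in $A\cup\{\kappa\}$ whose unique solution is forced to be a closed unbounded subset of $\kappa$; call these clubs $\seq{D_\xi}{\xi<\lambda}$ with $\lambda\leq\betrag{A}<\theta$. Assigning to the $D_\xi$ pairwise distinct coordinates $\eta_\xi<\theta$ drawn from an index set recoverable from $A$ (for instance, ordinal codes contained in $A$, as in the case $A=\HH{\kappa}\cup\kappa^+$ of the Corollary), I would use the generic clubs $C_{\eta_\xi}$ to interleave into $S$ one point of each $D_\xi$, and set $C=\kappa\setminus S$. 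To see that $\{C\}$ is $\Sigma_{n+1}(A)$-definable, one expresses ``$\alpha\in C$'' by asserting the \emph{existence} of a transitive witness --- a condition together with a small model of $\ZFC^{-}$ reading off the coordinates recoverable from $A$ --- inside which a $\Sigma_n$, respectively $\Pi_n$, verification of the coding succeeds; the single outermost existential quantifier over this witness raises the complexity of the underlying forcing-relation statement by exactly one level.

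\emph{The main obstacle.} The delicate step is verifying that $S$ genuinely meets \emph{every} $\Sigma_n(A)$-definable club of $V[G]$, and not merely the $D_\xi$ anticipated by the enumeration. The difficulty is that $\Sigma_n$-truth in $V[G]$ about ordinals below $\kappa$ is not decided by individual conditions, so the coordinate-by-coordinate diagonalization need not obviously catch an arbitrary such club. I expect to resolve this with a reflection lemma showing that every $\Sigma_n(A)$-definable club of $V[G]$ contains a club of points whose membership is already decided by bounded, mutually generic pieces of $G$; the homogeneity of the product together with the $\kappa^+$-chain condition is what reduces the a priori unbounded family of $\Sigma_n(A)$-definable clubs to the pre-enumerated family $\seq{D_\xi}{\xi<\lambda}$ that $S$ was built to meet. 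Establishing this localization of $\Sigma_n$-truth in the product extension, simultaneously for all admissible parameter sets $A$, is the heart of the argument and the step most likely to require the full strength of the hypotheses $\GCH$ and $\theta=\theta^\kappa$.
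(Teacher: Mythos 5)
Your reduction (produce a club $C$ that is $\Sigma_{n+1}(A)$-definable while $S=\kappa\setminus C$ meets every $\Sigma_n(A)$-definable club) is a correct reformulation, but the proof has a genuine gap, and it is precisely the step you yourself flag as the ``main obstacle.'' Your plan requires an enumeration $\seq{D_\xi}{\xi<\lambda}$, fixed in advance and wired into coordinates of the forcing, of all closed unbounded subsets of $\kappa$ that are $\Sigma_n(A)$-definable in $V[G]$. This cannot be arranged the way you describe: which $\Sigma_n$-formulas define clubs in $V[G]$ is a question about $\Sigma_n$-truth in $V[G]$, the parameter set $A$ itself is an arbitrary set of $V[G]$ (not of $V$), and for $n\geq 2$ such truth is not decided by conditions or by ``bounded, mutually generic pieces of $G$'' --- there is no Prikry-style or homogeneity-based localization of $\Sigma_n$-truth for a $({<}\kappa)$-closed product, so the hoped-for reflection lemma is not available. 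A second, independent problem is your $\Sigma_{n+1}$-definition of $C$ via transitive witnesses: verifying a $\Sigma_n$ (or $\Pi_n$) statement of $V[G]$ inside a small transitive model of $\ZFC^-$ proves nothing about its truth in $V[G]$ when $n\geq 2$, since only $\Sigma_1$-statements are upward absolute from such models; so the claimed ``one extra quantifier'' bound on complexity does not go through.

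It is instructive to compare with how the paper avoids both issues. For the stationarity side, it works purely extensionally (Proposition \ref{proposition:CohenNonStationarySigmaNStationary}): in $V[G]$ there are fewer than $\theta$ many $\Sigma_n(A)$-definable clubs simply by counting definitions, and since the forcing is $\Add{\kappa}{\theta}$, this small \emph{set} of clubs lies in an intermediate model $M$ over which $V[G]$ is still $\Add{\kappa}{\theta}$-generic; a single further Cohen subset $X$ of $\kappa$ is then generic over $M$, so $E=X\setminus\Lim(X)$ is non-stationary yet meets every unbounded set lying in $M$ --- no decision of $\Sigma_n$-truth by conditions is ever needed, only mutual genericity against sets of the intermediate model. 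For the non-$\Sigma_{n+1}$-stationarity side, the paper does not code a particular club into the forcing at all; instead the second forcing step codes $\POT{\kappa}$ into the $\GCH$-pattern so that $\POT{\kappa}\subseteq\HOD$, and then (Proposition \ref{proposition:ProperStatHierarchy}) the $\HOD$-least non-stationary $\Sigma_n(A)$-stationary set $E$ and the $\HOD$-least club disjoint from $E$ are both $\Sigma_{n+1}(A)$-definable, using that initial segments of the $\HOD$ well-order are $\Sigma_2$-definable and that the class $C^{(n)}$ of $\Sigma_n$-correct levels is $\Pi_n$-definable. That ``least witness in a definable well-order'' device is exactly what replaces your unsound small-model verification of $\Sigma_n$-truth, and it is the piece your argument is missing.
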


 The above result is a consequence of the following two observations that might be of independent interest:

 \begin{proposition}\label{proposition:ProperStatHierarchy}
  Let $n>0$ be a natural number, let $\kappa$  be an uncountable cardinal with $\POT{\kappa}\subseteq\HOD$ and let $A$ be a set with the property that the set $\{A\}$ is definable by a $\Sigma_{n+1}$-formula with parameters in $A$. If there is a non-stationary subset of $\kappa$ that is $\Sigma_n(A)$-stationary in $\kappa$, then there is a subset of $\kappa$ that is $\Sigma_n(A)$-stationary in $\kappa$ and not $\Sigma_{n+1}(A)$-stationary in $\kappa$. 
 \end{proposition}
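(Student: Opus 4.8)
The plan is to recast the conclusion as the construction of a single definable club and then to build that club from the canonical well-ordering supplied by $\POT{\kappa}\subseteq\HOD$.

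\emph{Reduction.} First I would show that it suffices to produce a closed unbounded set $C^{*}\subseteq\kappa$ such that $\{C^{*}\}$ is definable by a $\Sigma_{n+1}$-formula with parameters in $A\cup\{\kappa\}$ and such that no closed unbounded subset $D$ of $C^{*}$ has $\{D\}$ definable by a $\Sigma_{n}$-formula with parameters in $A\cup\{\kappa\}$. Indeed, setting $T=\kappa\setminus C^{*}$, every $\Sigma_{n}(A)$-definable club $D$ satisfies $D\not\subseteq C^{*}$ and hence meets $T$, so $T$ is $\Sigma_{n}(A)$-stationary; while $C^{*}$ is itself a $\Sigma_{n+1}(A)$-definable club disjoint from $T$, so $T$ is not $\Sigma_{n+1}(A)$-stationary. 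I would also record the dual reading of the hypothesis: a non-stationary $\Sigma_{n}(A)$-stationary set $S$ exists if and only if there is a club $C_{0}$ with no $\Sigma_{n}(A)$-definable subclub (any club disjoint from $S$ is such a $C_{0}$, and conversely $\kappa\setminus C_{0}$ is non-stationary and $\Sigma_{n}(A)$-stationary).

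\emph{Construction.} Using $\POT{\kappa}\subseteq\HOD$, I would fix the canonical $\Sigma_{2}$-definable well-ordering $\lhd$ of $\POT{\kappa}$ and the induced enumeration $\seq{X_{\eta}}{\eta<\Theta}$ of all subsets of $\kappa$; since $n>0$ this apparatus is available at the level $\Sigma_{n+1}$. The role of the assumption that $\{A\}$ is $\Sigma_{n+1}$-definable with parameters in $A$ is precisely that the predicate ``$X_{\eta}$ is a $\Sigma_{n}(A)$-definable club'' becomes expressible by a $\Sigma_{n+1}$-formula with parameters in $A\cup\{\kappa\}$: one quantifies existentially over a G\"odel number and over a finite tuple from $A\cup\{\kappa\}$ and applies the universal $\Sigma_{n}$-satisfaction predicate, the uniqueness clause contributing only a $\Pi_{n}$-conjunct. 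I would then take $C^{*}$ to be a diagonal-intersection-type club of the $\Sigma_{n}(A)$-definable clubs read off along $\seq{X_{\eta}}{\eta<\Theta}$, designing the defining formula so that membership in $\kappa\setminus C^{*}$ is a \emph{bounded} existential statement of the shape ``there is $\eta<\alpha$ with $X_{\eta}$ a $\Sigma_{n}(A)$-definable club and $\alpha\notin X_{\eta}$''. To verify that $C^{*}$ has no $\Sigma_{n}(A)$-definable subclub I would invoke the combinatorial fact that no $\Sigma_{n}(A)$-definable club is $\subseteq^{*}$-minimal among such clubs: given a $\Sigma_{n}(A)$-definable club $D$, the subclub $\widehat{D}=\{d\in D \mid \otp{D\cap d}\text{ even}\}$ (its limit points together with its even-indexed points) is again $\Sigma_{n}(A)$-definable, is closed unbounded for every value of $\cof{\kappa}$, and satisfies that $D\setminus\widehat{D}$ is unbounded. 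A $\Sigma_{n}(A)$-subclub of an intersection of \emph{all} $\Sigma_{n}(A)$-definable clubs would have to be $\subseteq^{*}$ every such club, hence $\subseteq^{*}$-minimal, which this fact excludes; here the hypothesis (through $C_{0}$) is what guarantees the construction is non-trivial, i.e.\ that the complement of $C^{*}$ is $\Sigma_{n}(A)$-stationary, since in its absence every $\Sigma_{n}(A)$-stationary set would be stationary and no such $C^{*}$ could exist.

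\emph{Main obstacle.} The delicate point — and the one I expect to consume most of the work — is the exact complexity bookkeeping ensuring that $\{C^{*}\}$ is definable by a $\Sigma_{n+1}$-formula and not merely by a $\Pi_{n+1}$- or $\Delta_{n+2}$-formula. The tension is structural: the guiding property ``$C$ has no $\Sigma_{n}(A)$-definable subclub'', equivalently ``$\kappa\setminus C$ is $\Sigma_{n}(A)$-stationary'', is inherently $\Pi_{n+1}$, so defining $C^{*}$ as the $\lhd$-least club with that property would land at $\Sigma_{n+2}$ and yield only a set that fails to be $\Sigma_{n+2}(A)$-stationary. The entire purpose of routing the definition through the bounded existential description of $\kappa\setminus C^{*}$ is to keep the complexity at $\Sigma_{n+1}$, exploiting that quantifiers over ordinals below $\kappa$ and over $\lhd$-initial segments are free in the L\'evy hierarchy and that the $\Sigma_{2}$-enumeration is absorbed once $n\geq 1$. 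Arranging a single $\Sigma_{n+1}$-description that simultaneously (i) picks out a closed unbounded set, (ii) captures enough of the $\Sigma_{n}(A)$-definable clubs for the subclub-freeness argument to apply, and (iii) has a genuinely $\Sigma_{n+1}(A)$-definable singleton, is the technical heart of the proof.
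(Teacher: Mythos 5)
Your reduction is sound and is, implicitly, also how the paper's proof concludes; the genuine gap is in the construction of $C^{*}$. A diagonal intersection can only diagonalize against at most $\kappa$ many clubs (and needs $\kappa$ regular to produce a closed unbounded set at all, whereas the proposition allows singular $\kappa$), but $A$ is an arbitrary set: in the paper's intended application it contains $\HH{\kappa}\cup\kappa^{+}$, so there can be strictly more than $\kappa$ many $\Sigma_{n}(A)$-definable clubs. Your verification tacitly assumes that every such club --- in particular the thinned club $\widehat{D}$ --- occurs in the diagonalized family at an index below $\kappa$; a $\Sigma_{n}(A)$-definable club appearing past stage $\kappa$ of the enumeration is never diagonalized against, and nothing prevents it from being a subset of $C^{*}$. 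This failure is not a bookkeeping issue but unavoidable, because your construction never actually uses the hypothesis that a non-stationary $\Sigma_{n}(A)$-stationary set exists: if it worked, it would prove the conclusion outright. Yet in $L$ with $A=\kappa^{+}$ (a legitimate instance: $\POT{\kappa}\subseteq\HOD$ holds, and $\{\kappa^{+}\}$ is $\Sigma_{2}$-definable from the parameter $\kappa\in A$), every closed unbounded subset of $\kappa$ is itself the unique solution of a $\Sigma_{1}$-formula with parameters in $A$, so no club without a $\Sigma_{n}(A)$-definable subclub can exist and the conclusion fails. You acknowledge exactly this tension in your final sentence, but you never identify the point at which the hypothesis enters your construction --- and indeed it never does.

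The second gap is in your ``main obstacle'' paragraph: you dismiss defining the witness as the $\lhd$-least set with the relevant property on the grounds that this ``lands at $\Sigma_{n+2}$''. That is precisely the route the paper takes, and the complexity stays at $\Sigma_{n+1}$ because of a reflection device you do not consider: the class $C^{(n)}$ of ordinals $\lambda$ with $V_{\lambda}\prec_{\Sigma_{n}}V$ is $\Pi_{n}$-definable without parameters, so the statement ``$E$ is the least non-stationary $\Sigma_{n}(A)$-stationary subset of $\kappa$ in the canonical well-ordering of $\HOD$'' can be rendered as ``there exists $\lambda\in C^{(n)}$ such that $V_{\lambda}$ believes $E$ is that least set'', which is $\Sigma_{n+1}$ with parameters in $A$, using that the initial segments of the $\HOD$-ordering are $\Sigma_{2}$-definable. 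The paper then takes $C$ to be the $\HOD$-least club disjoint from this $E$; the set $\{C\}$ is $\Sigma_{n+1}(A)$-definable for the same reason, $E$ is $\Sigma_{n}(A)$-stationary by choice, and $C$ witnesses that $E$ is not $\Sigma_{n+1}(A)$-stationary. That argument uses the hypothesis exactly once (to know that $E$ exists), is uniform in $\cof{\kappa}$ and in $\betrag{A}$, and avoids every counting assumption on which your construction depends.
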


 \begin{proof}
  Let $E$ be the least non-stationary subset of $\kappa$ in the canonical well-ordering of $\HOD$ that is $\Sigma_n(A)$-stationary in $\kappa$. Since the collection of all initial segments of the canonical well-ordering of $\HOD$ is definable by a $\Sigma_2$-formula without parameters and the collection $C^{(n)}$ of all ordinals $\lambda$ with $V_\lambda\prec_{\Sigma_n}V$ is definable by a $\Pi_n$-formula without parameters (see {\cite[Section 1]{zbMATH06029795}}), we know that the set $\{E\}$ is definable by a $\Sigma_{n+1}$-formula with parameters in $A$. Let $C$ denote the least closed unbounded subset of $\kappa$ in the canonical well-ordering of $\HOD$ that is disjoint from $E$. But then we know that the set $\{C\}$ is also definable by a $\Sigma_{n+1}$-formula with parameters in $A$, and therefore $C$ witnesses that $E$ is not $\Sigma_{n+1}(A)$-stationary in $\kappa$. 
 \end{proof}

 \begin{proposition}\label{proposition:CohenNonStationarySigmaNStationary}
  Let $\kappa$ be an uncountable cardinal with $\kappa^{{<}\kappa}=\kappa$, let $\theta>\kappa$ be a cardinal with  $\theta^\kappa=\theta$, let $G$ be $\Add{\kappa}{\theta}$-generic over $\VV$ and let $\calC$ be a collection of closed unbounded subsets of $\kappa$ of cardinality less than $\theta$ in $V[G]$.   Then, in $\VV[G]$, there exists a non-stationary subset $E$ of $\kappa$ with the property that $C\cap E\neq\emptyset$ holds for all $C\in\calC$. 
 \end{proposition}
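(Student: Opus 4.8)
The plan is to reduce the statement to the construction of a single closed unbounded set and then build that set from a ``fresh'' Cohen coordinate. First I would record the basic features of the forcing: since $\kappa^{{<}\kappa}=\kappa$, the cardinal $\kappa$ is regular, and $\Add{\kappa}{\theta}$ is ${<}\kappa$-closed and $\kappa^+$-c.c.; in particular cardinals and cofinalities are preserved, and the property of being a closed unbounded subset of $\kappa$ is absolute between $V[G]$ and all of its intermediate submodels. The key reduction is the following: it suffices to produce, in $V[G]$, a closed unbounded set $D\subseteq\kappa$ with $C\not\subseteq D$ for every $C\in\calC$. Indeed, setting $E=\kappa\setminus D$ then yields a non-stationary set (it is disjoint from the club $D$) satisfying $E\cap C=C\setminus D\neq\emptyset$ for every $C\in\calC$, as required.

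Next I would localise each member of $\calC$ to few Cohen coordinates. Viewing $\Add{\kappa}{\theta}$ as the ${<}\kappa$-support product of $\theta$ copies of $\Add{\kappa}{1}$ indexed by $\theta$, a nice-name argument based on the $\kappa^+$-c.c. shows that every subset of $\kappa$ lying in $V[G]$ already belongs to $V[G\uhr S]$ for some $S\subseteq\theta$ of size at most $\kappa$: for each of the $\kappa$ many $\alpha<\kappa$ one needs only an antichain of size $\leq\kappa$ deciding ``$\alpha\in\dot C$'', and each condition uses ${<}\kappa$ coordinates. Applying this to every $C\in\calC$ and taking the union $X\subseteq\theta$ of the resulting supports, we obtain $|X|\leq|\calC|\cdot\kappa<\theta$, so that every $C\in\calC$ belongs to $N:=V[G\uhr X]$ and $\theta\setminus X\neq\emptyset$. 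I would then fix a coordinate $\xi\in\theta\setminus X$ and let $c\subseteq\kappa$ be the Cohen subset added by $G$ on coordinate $\xi$; by the product lemma, $c$ is $\Add{\kappa}{1}$-generic over $N$.

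Finally I would set $D$ to be the set of limit points of $c$, that is, $D=\Set{\delta<\kappa}{\sup(c\cap\delta)=\delta}$. Since $c$ is unbounded in $\kappa$ by genericity and $\kappa$ is regular, $D$ is a closed unbounded subset of $\kappa$ (and remains so in $V[G]$ by absoluteness). The crucial point is that $D$ omits a point of each $C\in\calC$: fixing such a $C\in N$, the set of conditions $p\in\Add{\kappa}{1}$ forcing some $\alpha\in C$ to fail to be a limit point of $\dot c$ is dense, since given $p$ one may choose $\alpha\in C$ with $\alpha>\sup(\dom{p})$ and extend $p$ by $0$'s on the interval $[\sup(\dom{p}),\alpha)$, thereby forcing $\sup(\dot c\cap\alpha)\leq\sup(\dom{p})<\alpha$. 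This dense set lies in $N$, so genericity of $c$ over $N$ produces an $\alpha\in C$ with $\alpha\notin D$, whence $C\not\subseteq D$. As this holds for every $C\in\calC$, the set $E=\kappa\setminus D$ witnesses the conclusion.

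I expect the main technical obstacle to be the localisation step: verifying, through the $\kappa^+$-c.c. and the nice-name computation, that each $C\in\calC$ depends on at most $\kappa$ of the $\theta$ Cohen coordinates, so that the entire family is captured by an intermediate extension $N$ leaving at least one coordinate free. Once this is in place, the remainder is a routine density argument together with the absoluteness check that $D$ is closed unbounded in $V[G]$.
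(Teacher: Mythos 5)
Your proposal is correct in substance and follows essentially the same route as the paper's proof: pass to an intermediate extension capturing the members of $\calC$, use a Cohen coordinate generic over it, and let the limit points of that Cohen set form the club avoided by the non-stationary set. (The paper takes $E$ to be the set of isolated points of its Cohen subset of $\kappa$, which is contained in your $E=\kappa\setminus\Lim(c)$, and its genericity claim is verified by the same extend-by-zeros density argument you give.)

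One step needs tightening, and it is exactly the localisation step you flagged. Each individual club $C\in\calC$ indeed has a nice name whose support $S_C\subseteq\theta$ lies in $V$ and has size at most $\kappa$, but the assignment $C\mapsto S_C$ is chosen in $V[G]$, so the union $X=\bigcup_{C\in\calC}S_C$ need not be an element of $V$. Consequently $\Add{\kappa}{\theta}$ does not factor in $V$ as $\Add{\kappa}{X}\times\Add{\kappa}{\theta\setminus X}$, and the product lemma cannot be applied verbatim to $N=V[G\uhr X]$; indeed it is not even immediate what generic extension $V[G\uhr X]$ denotes when $X\notin V$. This is repairable in two standard ways. Either use the $\kappa^+$-chain condition to cover $X$ by a set $Y\in V$ with $X\subseteq Y\subseteq\theta$ and $\betrag{Y}<\theta$, pick $\xi\in\theta\setminus Y$, and run your argument with $N=V[G\uhr Y]$ (this is in effect how the intermediate model $M$ in the paper's proof is obtained); or dispense with a single intermediate model altogether, noting that for each fixed $C$ the support $S_C$ does lie in $V$, so any $\xi\in\theta\setminus X$ yields a Cohen set $c$ that is $\Add{\kappa}{1}$-generic over $V[G\uhr S_C]$ for every $C\in\calC$ separately, and your density argument for a given $C$ only needs $C\in V[G\uhr S_C]$. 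With either repair the rest of your argument goes through unchanged.
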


 \begin{proof}
   By our assumptions, we can find a generic extension $M$ of $\VV$ such that $\calC\in M\subsetneq\VV[G]$ and $\VV[G]$ is a non-trivial $\Add{\kappa}{\theta}$-generic extension of $M$. 
  Let $H\in\VV[G]$ be $\Add{\kappa}{1}$-generic over $M$, let $X$ denote the subset of $\kappa$ corresponding to $H$ and set $E=X\setminus\Lim(X)$. Genericity then ensures that $E$ intersects every  unbounded subset of $\kappa$ in $M$. In particular, we know that $C\cap E\neq\emptyset$ holds for every $C\in\calC$. Finally, since $E$ is disjoint from $\Lim(X)$, we know that $E$ is a non-stationary subset of $\kappa$ in $\VV[G]$.    
 \end{proof}

 A combination of these two observations now directly yields the desired consistency proof:

  \begin{proof}[Proof of Theorem \ref{theorem:ProperyHierarchySigmaNStationary}]
     Assume that the $\GCH$ holds. 
  Given an uncountable regular cardinal  $\kappa$ and a  cardinal $\theta$ satisfying  $\theta=\theta^\kappa$, we define $\po_{\kappa,\theta}$ to be the two-step iteration $\Add{\kappa}{\theta}*\dot{\QQQ}$, where $\dot{\QQQ}$ is the canonical $\Add{\kappa}{\theta}$-name for the ${<}\theta^+$-closed partial order that codes $\POT{\kappa}$ into the $\GCH$-pattern above $\theta^+$ (see, for example, \cite{MR3304634}). Let $G*H$ be ($\Add{\kappa}{\theta}*\dot{\QQQ}$)-generic over $V$. Fix a natural number $n>0$ and a set $A\in V[G,H]$ of cardinality less than $\theta$ with the property that,  in $V[G,H]$, the set $\{A\}$ is definable by a $\Sigma_2$-formula with parameters in $A$. Since $\POT{\POT{\kappa}}^{V[G]}=\POT{\POT{\kappa}}^{V[G,H]}$ and, in $V[G,H]$, there are less than $\theta$-many closed unbounded subsets $C$ of $\kappa$  with the property that the set $\{C\}$ is definable by a $\Sigma_n$-formula with parameters in $A\cup\{\kappa\}$, an application of Proposition \ref{proposition:CohenNonStationarySigmaNStationary} shows that, in $V[G,H]$, there is a non-stationary subset of $\kappa$ that is $\Sigma_n(A)$-stationary in $\kappa$. But our setup ensures that  $\POT{\kappa}^{V[G]}=\POT{\kappa}^{V[G,H]}\subseteq\HOD^{V[G,H]}$ and hence Proposition \ref{proposition:ProperStatHierarchy} allows us to conclude that, in $V[G,H]$, there is a subset of $\kappa$ that is $\Sigma_n(A)$-stationary in $\kappa$ and not $\Sigma_{n+1}(A)$-stationary in $\kappa$.  
 \end{proof}

 In  a sequel to this paper, we will provide analogous results about the properness of the hierarchy of $\Sigma_n(A)$-stationary sets for singular cardinals.


\subsection{Combinatorial arguments}

We now continue by analyzing basic structural features of the collection of all $\Sigma_n(A)$-stationary sets. 
All results presented in this section are purely combinatorial, in the sense that they only rely on counting arguments. We nevertheless decided to phrase them in such a way that they make statements about collections of definable subsets. These formulations motivate several of our later results that will show that, in general, we cannot relax the stated assumptions on the sizes of parameter sets. 
We start our analysis by comparing $\Sigma_n(A)$-stationarity with standard stationarity and isolating settings in which counting arguments ensure the existence of  $\Sigma_n(A)$-stationary sets that are not stationary.

\begin{proposition}\label{proposition:SigmaStatNonStat}
 If $\kappa$ is a cardinal of uncountable cofinality, $A$ is a set of cardinality at most $\cof{\kappa}$ and $n$ is a natural number, then there is an unbounded, non-stationary subset of $\kappa$ that is $\Sigma_n(A)$-stationary in $\kappa$. 
\end{proposition}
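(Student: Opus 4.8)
The plan is to exploit the fact that, under the stated hypotheses, the collection of closed unbounded sets against which $S$ must be tested is small, and then to build $S$ as a sufficiently \emph{sparse} selector from these sets. First I would record that $\mu:=\cof{\kappa}$ is a regular uncountable cardinal. The key counting step comes next: since there are only countably many $\Sigma_n$-formulas and, as $|A|\le\mu$ and $\mu$ is uncountable, at most $|A\cup\{\kappa\}|^{{<}\omega}\le\mu$ many finite tuples of parameters from $A\cup\{\kappa\}$, the map sending a pair consisting of a formula and a parameter tuple to the unique closed unbounded set it defines (when it does) witnesses that there are at most $\mu$ closed unbounded subsets $C$ of $\kappa$ for which $\{C\}$ is definable by a $\Sigma_n$-formula with parameters in $A\cup\{\kappa\}$. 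I would enumerate all of these as $\langle C_i \mid i<\mu\rangle$, padding the enumeration with the trivial club $\kappa$ if necessary so that it has length exactly $\mu$.

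With this enumeration fixed, I would construct $S=\Set{s_i}{i<\mu}$ by recursion, simultaneously ensuring that $S$ is unbounded, meets every $C_i$, and is sparse. Fixing in advance an increasing sequence $\langle \eta_i \mid i<\mu\rangle$ cofinal in $\kappa$, at stage $i$ I set $\gamma_i=\sup\Set{s_j+1}{j<i}$ and choose $s_i\in C_i$ with $s_i>\max(\gamma_i,\eta_i)$. This is possible because $C_i$ is unbounded, and $\gamma_i<\kappa$ since $\Set{s_j}{j<i}$ has cardinality $|i|<\mu=\cof{\kappa}$, so its supremum stays below $\kappa$. The resulting set $S$ is strictly increasing, meets every $C_i$ and hence every relevant closed unbounded set, and is unbounded because $s_i>\eta_i$ for all $i$.

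It remains to verify that $S$ is non-stationary, which I expect to be the main point of the argument. The crucial feature of the construction is the strict inequality $s_i>\gamma_i$ at every stage: since $S\cap s_i=\Set{s_j}{j<i}$ has supremum $\gamma_i<s_i$, no element of $S$ is a limit point of $S$, so $S$ is a discrete cofinal subset of $\kappa$. Consequently its derived set $S'$ of limit points is closed and disjoint from $S$; moreover $S'$ is unbounded, as for every limit ordinal $i<\mu$ the ordinal $\gamma_i=\sup_{j<i}s_j$ is a limit point of $S$, and these are cofinal in $\kappa$. Thus $S'$ is a closed unbounded subset of $\kappa$ disjoint from $S$, so $S$ is non-stationary, completing the proof. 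The only subtlety to check carefully is that $S'$ is genuinely closed and unbounded and meets $S$ nowhere; all three facts follow from the jump $s_i>\gamma_i$ built into the recursion together with the regularity of $\mu$.
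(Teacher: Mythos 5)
Your proof is correct, and it takes a genuinely different route from the paper's, although both arguments rest on the same counting observation: since there are only countably many $\Sigma_n$-formulas and at most $\cof{\kappa}$ many finite tuples from $A\cup\{\kappa\}$, there are at most $\cof{\kappa}$ many closed unbounded subsets $C$ of $\kappa$ for which $\{C\}$ is definable in the relevant way. The paper enumerates these clubs as $\seq{C_\alpha}{\alpha<\cof{\kappa}}$, forms a diagonal intersection $C$ of them relative to a strictly increasing continuous cofinal sequence of order type $\cof{\kappa}$ (this relativization is what makes diagonal intersection meaningful when $\kappa$ is singular of uncountable cofinality), and sets $S=C\setminus\Lim(C)$: this $S$ is unbounded, disjoint from the club $\Lim(C)$, and meets every $C_\alpha$ because a final segment of $S$ lies inside $C_\alpha$. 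You instead run a transfinite recursion of length $\cof{\kappa}$, choosing a single point $s_i\in C_i$ above both the supremum of the earlier choices and a fixed cofinal sequence; regularity of $\cof{\kappa}$ keeps each $\gamma_i$ below $\kappa$, and the deliberate jumps $s_i>\gamma_i$ make $S$ discrete, so its derived set is a club disjoint from $S$. Each approach buys something: the paper's construction is shorter once diagonal intersections are available, and its $S$ is almost contained in every definable club (hence meets each one unboundedly, not just once); your selector argument is more elementary, avoiding diagonal-intersection machinery entirely and using only the boundedness of small subsets of $\kappa$, at the price of verifying discreteness and the closedness, unboundedness and disjointness of the derived set by hand --- all of which you do correctly, including the key points that $\gamma_i<\kappa$ and that the limit points $\gamma_i$, for limit $i<\cof{\kappa}$, are cofinal in $\kappa$.
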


\begin{proof}
 Let $\seq{C_\alpha}{\alpha<\cof{\kappa}}$ be an enumeration of all closed unbounded subsets $C$ of $\kappa$ with the property that the set $\{C\}$ is definable by a $\Sigma_n$-formula with parameters in $A\cup\{\kappa\}$. In addition,  define $C$ to be a diagonal intersection of these sets with respect to some strictly increasing continuous sequence of order type $\cof{\kappa}$ in $\kappa$. Set $S=C\setminus\Lim(C)$.  Then $S$ is an unbounded and non-stationary subset of $\kappa$. Moreover, we have $C\cap S\neq\emptyset$ whenever $C$ is a closed unbounded subset  of $\kappa$ with the property that the set $\{C\}$ is definable by a $\Sigma_n$-formula with parameters in $A\cup\{\kappa\}$.  
\end{proof}

 The results of this paper show that the implication in the above proposition can fail if we consider sets of parameters of cardinality  $\cof{\kappa}^+$. In the case of regular cardinals,  the examples given in Section \ref{section:Intro} about $L$ and under $MM$ provide examples of such failures. 
 For singular cardinals $\kappa$ of uncountable cofinality, Corollary \ref{corollary:SingularUncCofStationary} below will show that, in the Dodd-Jensen core model, stationarity coincides with $\Sigma_1(\POT{\cof{\kappa}})$-stationarity, and therefore also with $\mathbf{\Sigma}_1$-stationarity in $\kappa$.
 Moreover, \ref{corollary:StrengthSingularUncountCof}, shows that when $\kappa$ is singular of uncountable cofinality, the existence of 
 $\mathbf{\Sigma}_1$-stationarity subset of $\kappa$ that is not stationary is equi-consistent with the existence of $\cof{\kappa}$ many measurable cardinals.
 In the case of singular cardinals of countable cofinalities, 
 Theorem \ref{Thm:Measurable} will give an analogous equi-consistency result. We start with a quick combinatorial argument.

\begin{proposition}\label{proposition:CofCountableSigmaStatNonStat}
 If $\kappa$ is a singular  cardinal of countable cofinality, $A$ is a set of cardinality less than $\kappa^\omega$ and $n$ is a natural number, then there is an unbounded subset of $\kappa$ that is $\Sigma_n(A)$-stationary in $\kappa$ and whose complement in $\kappa$ is unbounded in $\kappa$. 
\end{proposition}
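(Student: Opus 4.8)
The plan is to obtain the desired set as the complement $S=\kappa\setminus T$ of a single closed unbounded set $T$ of order type $\omega$, chosen from a sufficiently large almost disjoint family of such sets. Since any $S\subseteq\kappa$ whose complement is countable automatically meets every uncountable closed unbounded subset of $\kappa$, it suffices to find an unbounded set $T\subseteq\kappa$ of order type $\omega$ with the property that no $\Sigma_n(A)$-definable closed unbounded subset of $\kappa$ is contained in $T$. Indeed, for such a $T$ the set $S=\kappa\setminus T$ is unbounded with unbounded complement $T$, and it meets every closed unbounded $C$ for which $\{C\}$ is definable by a $\Sigma_n$-formula with parameters in $A\cup\{\kappa\}$: if $C$ is uncountable this holds because $T$ is countable, and if $C$ is countable it holds because $C\not\subseteq T$. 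The relevant counting input is that there are at most $\max(|A|,\aleph_0)$ such closed unbounded sets $C$, and by hypothesis $\max(|A|,\aleph_0)<\kappa^\omega$.

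To build the family, I would fix an increasing sequence $\langle\kappa_m \mid m<\omega\rangle$ of cardinals cofinal in $\kappa$ and put $I_m=[\kappa_m,\kappa_{m+1})$. The tree $\bigcup_{m<\omega}\prod_{k<m} I_k$ of finite transversals has exactly $\kappa$ nodes, since each of its levels has fewer than $\kappa$ nodes while the level sizes $\prod_{k<m}|I_k|\geq\kappa_m$ are cofinal in $\kappa$. I would then fix an injection of the tree into $\kappa$ that maps the $m$-th level into an interval $[\delta_m,\delta_{m+1})$, where $\langle\delta_m\mid m<\omega\rangle$ is increasing and cofinal in $\kappa$. For each branch $x\in\prod_{m<\omega} I_m$, let $B_x=\{\,\mathrm{code}(x\uhr m)\mid m<\omega\,\}$ be the set of codes of its initial segments; its $m$-th element lies in $[\delta_m,\delta_{m+1})$, so $B_x$ has order type $\omega$ and is cofinal in $\kappa$, hence is closed unbounded. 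For distinct branches $x\neq y$ the sets $B_x$ and $B_y$ agree only on the codes of their finitely many common initial segments, so $B_x\cap B_y$ is finite. This yields an almost disjoint family $\{B_x\mid x\in\prod_{m<\omega}I_m\}$ consisting of $\prod_{m<\omega}|I_m|=\kappa^\omega$ many closed unbounded sets.

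The crux is then an easy counting observation: because each $\Sigma_n(A)$-definable closed unbounded set $C$ is infinite and the $B_x$ have pairwise finite intersections, $C$ can be contained in $B_x$ for at most one branch $x$. Hence the set of branches $x$ for which some $\Sigma_n(A)$-definable closed unbounded set is a subset of $B_x$ has size at most $\max(|A|,\aleph_0)<\kappa^\omega$. Since there are $\kappa^\omega$ branches, I can choose a branch $x_0$ such that no $\Sigma_n(A)$-definable closed unbounded set is contained in $B_{x_0}$, and then $T=B_{x_0}$ and $S=\kappa\setminus B_{x_0}$ are as required.

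I expect the main work to lie in the construction and verification of the almost disjoint family, and in particular in arranging the coding so that each $B_x$ is genuinely cofinal in $\kappa$ while the intersections stay finite. The reason such a family is the right tool is that the hypothesis bounds the \emph{number of definable clubs}, not the number of points one must avoid: naive attempts to split $\kappa$ into $S$ and $T$ by a uniform threshold inside each interval $I_m$ fail, since a definable club concentrated near the left endpoints $\kappa_m$ would force cofinitely many whole intervals into $S$ and thereby destroy the unboundedness of $T$.
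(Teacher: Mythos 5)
Your proof is correct and takes essentially the same approach as the paper's: both arguments fix an almost disjoint family of $\kappa^\omega$ many cofinal subsets of $\kappa$ of order type $\omega$, note that the at most $\max(|A|,\aleph_0)<\kappa^\omega$ many $\Sigma_n(A)$-definable clubs can each be contained in at most one member of the family, and then take the complement of a member containing no such club. The only difference is cosmetic: the paper simply cites the existence of such an almost disjoint family in $[\kappa]^\omega$, whereas you construct it explicitly via the tree of finite transversals of the intervals $[\kappa_m,\kappa_{m+1})$.
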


\begin{proof}
 Since the set $[\kappa]^\omega$ contains an almost disjoint family of cardinality $\kappa^\omega$ that consists of unbounded subsets of $\kappa$, we can find an element $b\in[\kappa]^\omega$ that is unbounded in $\kappa$ and has the property that no infinite subset of $b$ is definable by a $\Sigma_n$-formula with parameters in $A\cup\{\kappa\}$. Then $\kappa\setminus b$ is $\Sigma_n(A)$-stationary in $\kappa$ and the complement of this set in $\kappa$ is unbounded in $\kappa$. 
\end{proof}

 We continue by comparing the structural properties of $\Sigma_n(A)$-stationary sets with those of standard stationary sets. Our first focus in this comparison will be the question of the existence of disjoint $\Sigma_n(A)$-stationary sets at various uncountable  cardinals. 
 In the case of cardinals of  uncountable cofinality, the fact that all stationary sets are $\Sigma_n(A)$-stationary already ensures the existence of such sets. 
 The next result strengthens this conclusion by showing that a version of \emph{Solovay's Theorem} on the splitting of stationary sets holds for $\Sigma_n(A)$-stationary sets.

\begin{proposition}
Suppose that $\kappa$ is a cardinal of uncountable cofinality, $A$ is a set of cardinality $\cof{\kappa}$ with $A\cap\kappa$ cofinal in $\kappa$,  $n < \omega$, and $S$ is a $\Sigma_n(A)$-stationary subset of $\kappa$. Then there exists a partition $\seq{S_\alpha}{\alpha<\cof{\kappa}}$ of $S$ into $\Sigma_n(A)$-stationary subsets. 
\end{proposition}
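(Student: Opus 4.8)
The plan is to combine a counting observation with a single largeness lemma and then a transfinite recursion that distributes witnesses among the pieces. Set $\lambda=\cof{\kappa}$, a regular uncountable cardinal. First I would observe that, since there are only countably many $\Sigma_n$-formulas and all admissible parameters come from finite tuples over $A\cup\{\kappa\}$, the collection of all closed unbounded $C\subseteq\kappa$ for which $\{C\}$ is definable by a $\Sigma_n$-formula with parameters in $A\cup\{\kappa\}$ has cardinality at most $|A|=\lambda$. Fix an enumeration $\seq{C_\beta}{\beta<\lambda}$ of this collection (with repetitions if necessary). Since a subset $T$ of $\kappa$ is $\Sigma_n(A)$-stationary exactly when $T\cap C_\beta\neq\emptyset$ for every $\beta<\lambda$, it suffices to produce a partition $\seq{S_\alpha}{\alpha<\lambda}$ of $S$ with $S_\alpha\cap C_\beta\neq\emptyset$ for all $\alpha,\beta<\lambda$.

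The key step, and the one where the hypotheses on $A$ are used, is the claim that $|S\cap C_\beta|\geq\lambda$ for every $\beta<\lambda$. I would deduce this from the stronger statement that $S\cap C_\beta$ is unbounded in $\kappa$, since every unbounded subset of $\kappa$ has cardinality at least $\lambda$. Given $\eta<\kappa$, use that $A\cap\kappa$ is cofinal in $\kappa$ to pick $\eta'\in A\cap\kappa$ with $\eta'>\eta$. The family of definable clubs is closed under the truncation $C\mapsto C\setminus\eta'$: if $\{C_\beta\}$ is the unique solution of a $\Sigma_n$-formula $\varphi$ with parameters in $A\cup\{\kappa\}$, then $\{C_\beta\setminus\eta'\}$ is the unique solution of $\exists w\,(\varphi(w)\wedge z=w\setminus\eta')$, which is again $\Sigma_n$ for $n\geq 1$ (the leading existential quantifier is absorbed), with parameters $\eta'\in A$ together with those of $\varphi$. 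Hence $C_\beta\setminus\eta'$ is a definable club, and $\Sigma_n(A)$-stationarity of $S$ yields a point of $S\cap C_\beta$ above $\eta'>\eta$. As $\eta$ was arbitrary, $S\cap C_\beta$ is unbounded.

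Finally I would fix a bijection $\seq{(\alpha_\xi,\beta_\xi)}{\xi<\lambda}$ between $\lambda$ and $\lambda\times\lambda$ (possible since $\lambda$ is regular, so $\lambda\cdot\lambda=\lambda$) and recursively choose points $x_\xi\in S\cap C_{\beta_\xi}$ with $x_\xi\neq x_\zeta$ for all $\zeta<\xi$; this is possible because at stage $\xi$ only $|\xi|<\lambda\leq|S\cap C_{\beta_\xi}|$ points have been used. Writing $x_{\alpha,\beta}$ for the point chosen at the stage indexed by $(\alpha,\beta)$, I would set $S_\alpha=\Set{x_{\alpha,\beta}}{\beta<\lambda}$ for $\alpha>0$ and place every remaining point of $S$ into $S_0$. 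Since the $x_{\alpha,\beta}$ are pairwise distinct, the sets $S_\alpha$ are pairwise disjoint and cover $S$, and each $S_\alpha$ meets every $C_\beta$ at the point $x_{\alpha,\beta}$ and is therefore $\Sigma_n(A)$-stationary.

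The main obstacle is the largeness lemma: the whole argument hinges on forcing $S\cap C$ to be unbounded rather than merely nonempty, and this is precisely what the cofinality of $A\cap\kappa$ buys via the truncation-closure of the family of definable clubs. The only point requiring separate care is the case $n=0$, where the leading existential quantifier in the truncation argument is no longer absorbed; there I would instead appeal to the eventually periodic structure of $\Sigma_0$-definable clubs recorded in the footnote above to verify the same closure property directly.
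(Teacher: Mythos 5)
Your proof is correct and follows essentially the same route as the paper's: the paper likewise enumerates the (at most $\cof{\kappa}$ many) closed unbounded sets whose singletons are $\Sigma_n$-definable from $A\cup\{\kappa\}$, implicitly uses the cofinality of $A\cap\kappa$ together with truncation-closure of this family to see that $S$ meets each such club unboundedly often, and then distributes a $\cof{\kappa}\times\cof{\kappa}$-indexed family of points of $S$ among the pieces, using the G\"odel pairing function where you use an arbitrary bijection. Your explicit largeness lemma and your closing caveat about $n=0$ merely spell out steps that the paper's more compressed proof leaves implicit (the paper does not treat the case $n=0$ separately at all).
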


\begin{proof}
Since $A\cap\kappa$ is unbounded in $\kappa$, we know that there are $\cof{\kappa}$-many $\Sigma_n(A)$-definable closed unbounded subsets of $\kappa$. 
Let $\seq{C_\alpha}{\alpha<\cof{\kappa}}$ be an enumeration of all closed unbounded subsets $C$ of $\kappa$ with the property that the set $\{C\}$ is definable by a $\Sigma_n$-formula with parameters in $A\cup\{\kappa\}$. 
Since the assumption that $A\cap\kappa$ is unbounded in $\kappa$ implies that $C_\alpha\cap S$ is unbounded in $\kappa$ for all $\alpha<\cof{\kappa}$, we can find a strictly increasing sequence $\seq{\sigma_\alpha}{\alpha<\cof{\kappa}}$ of elements of $S$ with the property that $\sigma_{\goedel{\alpha_0}{\alpha_1}}\in C_{\alpha_1}$ for all $\alpha_0,\alpha_1<\cof{\kappa}$, where $\map{\goedel{\cdot}{\cdot}}{Ord\times Ord}{Ord}$ denotes the G\"odel pairing function. If we now pick a partition $\seq{S_\alpha}{\alpha<\cof{\kappa}}$ of $S$ with the property that $\Set{\sigma_{\alpha,\beta}}{\beta<\cof{\kappa}}\subseteq S_\alpha$ holds for all $\alpha<\cof{\kappa}$, then  each $S_\alpha$ is $\Sigma_n(A)$-stationary in $\kappa$. 
\end{proof}

In the case of cardinals of countable cofinality,  the  existence of disjoint $\Sigma_n(A)$-stationary sets  turns out to be closely connected to the \emph{reaping number $\mathfrak{r}$}. 

\begin{definition}\label{definition:ReapingNumber}
$\mathfrak{r}$ is the least cardinality of a subset $A$ of $[\omega]^\omega$ with the property that for every $b\in[\omega]^\omega$, there is $a\in A$ such that either $a\setminus b$ or $a\cap b$ is finite. 
\end{definition}

We will later show that, in general, the conclusion of the following proposition cannot be extended to sets of parameters of cardinality $\mathfrak{r}$ (see Corollaries \ref{corollary:NoMeasurableCorollaries3} and \ref {Cor:EquiconsistentReaping} below).

\begin{proposition}\label{proposition:CountCofBistat}
 Let $\kappa$ be a singular cardinal of countable cofinality, let $A$ be a set of cardinality less than $\mathfrak{r}$ and let $n$ be a natural number. Then there exists a subset $E$ of $\kappa$ with the property that both $E$ and $\kappa\setminus E$ are $\Sigma_n(A)$-stationary in $\kappa$. 
\end{proposition}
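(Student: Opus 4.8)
The plan is to reduce the statement to a purely combinatorial fact about subsets of $\omega$ and then feed it into the defining property of the reaping number. Write $\calC$ for the collection of all closed unbounded subsets $C$ of $\kappa$ such that the set $\{C\}$ is definable by a $\Sigma_n$-formula with parameters in $A\cup\{\kappa\}$. Since there are only countably many $\Sigma_n$-formulas and each such $C$ is the unique solution of a formula using a finite tuple of parameters drawn from $A\cup\{\kappa\}$, we have $|\calC|\leq\max(|A|,\aleph_0)<\mathfrak{r}$, using here that $\mathfrak{r}$ is uncountable and $|A|<\mathfrak{r}$. By the definition of $\Sigma_n(A)$-stationarity, it then suffices to produce a single subset $E$ of $\kappa$ with $C\cap E\neq\emptyset$ and $C\cap(\kappa\setminus E)\neq\emptyset$ for every $C\in\calC$.

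Next I would set up a correspondence between $\kappa$ and $\omega$. Fix a strictly increasing sequence $\seq{\kappa_m}{m<\omega}$ cofinal in $\kappa$ with $\kappa_0=0$, and let $I_m=[\kappa_m,\kappa_{m+1})$, so that $\seq{I_m}{m<\omega}$ partitions $\kappa$ into bounded intervals. To each $C\in\calC$ associate its \emph{trace} $b_C=\Set{m<\omega}{C\cap I_m\neq\emptyset}$. Because $C$ is unbounded in $\kappa$, it cannot be contained in any finite union of the intervals $I_m$, so each $b_C$ is an infinite subset of $\omega$. The key observation is that, for a set of the form $E=\bigcup_{m\in X}I_m$ with $X\subseteq\omega$, we have $C\cap E\neq\emptyset$ exactly when $X\cap b_C\neq\emptyset$, and $C\cap(\kappa\setminus E)\neq\emptyset$ exactly when $(\omega\setminus X)\cap b_C\neq\emptyset$. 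Thus it is enough to find a single $X\subseteq\omega$ that \emph{splits} every $b_C$, meaning both $X\cap b_C$ and $b_C\setminus X$ are nonempty.

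Finally, apply Definition \ref{definition:ReapingNumber} to the family $R=\Set{b_C}{C\in\calC}\subseteq[\omega]^\omega$. Since $|R|\leq|\calC|<\mathfrak{r}$, the family $R$ is not a reaping family, so the reaping property must fail for $R$: there is some $X\in[\omega]^\omega$ such that for every $a\in R$, neither $a\setminus X$ nor $a\cap X$ is finite. In particular, for every $a\in R$ both $a\cap X$ and $a\setminus X$ are nonempty. Setting $E=\bigcup_{m\in X}I_m$ and using the correspondence above then yields $C\cap E\neq\emptyset$ and $C\cap(\kappa\setminus E)\neq\emptyset$ for every $C\in\calC$, so that both $E$ and $\kappa\setminus E$ are $\Sigma_n(A)$-stationary in $\kappa$, as desired.

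The only genuinely delicate point is the logical unwinding in the last step. Definition \ref{definition:ReapingNumber} phrases the reaping property of $R$ as \anf{for every $b$ there is $a\in R$ with $a\setminus b$ or $a\cap b$ finite}, so one must negate this correctly for small $R$ and verify that the resulting witness $X$ splits \emph{every} member of $R$ simultaneously (equivalently, that $a\cap X$ and $a\setminus X$ are both infinite for all $a\in R$). Everything else — the counting bound $|\calC|<\mathfrak{r}$, the infinitude of each $b_C$, and the interval-to-$\omega$ correspondence — is routine and uses only that $\cof{\kappa}=\omega$.
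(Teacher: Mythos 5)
Your proof is correct and takes essentially the same route as the paper: the paper factors the interval-trace argument into a standalone equivalence (Lemma \ref{lemma:reaping}) and applies it to a family of cofinal $\omega$-sequences chosen inside the definable clubs, whereas you inline that argument, taking traces of the clubs themselves. The combinatorial core --- partitioning $\kappa$ into intervals $[\kappa_m,\kappa_{m+1})$, passing to traces in $[\omega]^\omega$, and using that a family of size less than $\mathfrak{r}$ cannot be a reaping family --- is identical.
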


The proof of this proposition relies on the equivalence provided by the next lemma:

\begin{lemma}\label{lemma:reaping}
 The following statements are equivalent for every infinite cardinal $\theta$: 
 \begin{enumerate}
  \item\label{item:reaping1} $\theta<\mathfrak{r}$. 

  \item\label{item:reaping2} For every singular cardinal $\kappa$ of countable cofinality and every subset $A$ of $[\kappa]^\omega$ that consists of cofinal subsets of $\kappa$ and has cardinality at most $\theta$, there exists a subset $E$ of $\kappa$ such that for every $a\in A$, both $a\cap E$ and $a\setminus E$ are infinite. 

  \item\label{item:reaping3} There is a singular cardinal $\kappa$ of countable cofinality with the property that for every subset $A$ of $[\kappa]^\omega$ that consists of cofinal subsets of $\kappa$ and has cardinality at most $\theta$, there exists a subset $E$ of $\kappa$ such that for every $a\in A$, both $a\cap E$ and $a\setminus E$ are infinite. 
 \end{enumerate}
\end{lemma}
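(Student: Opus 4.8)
The plan is to establish the cycle of implications $(\ref{item:reaping1}) \Rightarrow (\ref{item:reaping2}) \Rightarrow (\ref{item:reaping3}) \Rightarrow (\ref{item:reaping1})$. The implication $(\ref{item:reaping2}) \Rightarrow (\ref{item:reaping3})$ is immediate, since $\aleph_\omega$ is a singular cardinal of countable cofinality and hence witnesses the existential statement in $(\ref{item:reaping3})$. The substance of the argument lies in the other two implications, and both rest on the same device: transferring the combinatorics of $[\omega]^\omega$ to $[\kappa]^\omega$ by fixing a strictly increasing sequence $\langle \kappa_n \mid n<\omega \rangle$ cofinal in $\kappa$ (with $\kappa_0 = 0$) and decomposing $\kappa$ into the half-open intervals $I_n = [\kappa_n, \kappa_{n+1})$, so that $\kappa = \bigsqcup_{n<\omega} I_n$.

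For $(\ref{item:reaping1}) \Rightarrow (\ref{item:reaping2})$, I would fix $\kappa$, the interval decomposition, and a family $A \subseteq [\kappa]^\omega$ of cofinal subsets with $\betrag{A} \leq \theta$. To each $a \in A$ I associate its trace $\pi(a) = \Set{n<\omega}{a \cap I_n \neq \emptyset}$, which is an infinite subset of $\omega$ precisely because $a$ is cofinal in $\kappa$ (so that $a$ meets $I_n$ for arbitrarily large $n$). The family $\Set{\pi(a)}{a \in A}$ has cardinality at most $\theta < \mathfrak{r}$, so by Definition \ref{definition:ReapingNumber} it is not a reaping family, i.e.\ there is a single $b \in [\omega]^\omega$ with both $\pi(a) \cap b$ and $\pi(a) \setminus b$ infinite for every $a \in A$. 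Setting $E = \bigcup_{n \in b} I_n$, the infinitude of $\pi(a) \cap b$ produces infinitely many intervals contained in $E$ that meet $a$, so $a \cap E$ is infinite; symmetrically $\pi(a) \setminus b$ infinite gives $a \setminus E$ infinite. This $E$ is as required.

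For $(\ref{item:reaping3}) \Rightarrow (\ref{item:reaping1})$, I would argue by contraposition, showing that $\theta \geq \mathfrak{r}$ makes the splitting property in $(\ref{item:reaping3})$ fail at \emph{every} singular $\kappa$ of countable cofinality. Fix such a $\kappa$ with its interval decomposition and a reaping family $R \subseteq [\omega]^\omega$ of cardinality $\mathfrak{r} \leq \theta$. Lifting along the sequence, put $a_r = \Set{\kappa_n}{n \in r}$ for $r \in R$; each $a_r$ is a cofinal subset of $\kappa$ of order type $\omega$, the map $r \mapsto a_r$ is injective, and $A = \Set{a_r}{r \in R}$ has cardinality $\mathfrak{r} \leq \theta$. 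Given any $E \subseteq \kappa$, set $b = \Set{n<\omega}{\kappa_n \in E}$, so that $a_r \cap E = \Set{\kappa_n}{n \in r \cap b}$ and $a_r \setminus E = \Set{\kappa_n}{n \in r \setminus b}$. After disposing of the trivial case where $b$ is finite (then every $a_r \cap E$ is finite), $b$ lies in $[\omega]^\omega$ and the reaping property of $R$ yields some $r \in R$ with $r \cap b$ or $r \setminus b$ finite, whence $a_r \cap E$ or $a_r \setminus E$ is finite. Thus no single $E$ splits $A$, and the property in $(\ref{item:reaping3})$ fails for $\kappa$.

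I do not expect a genuine obstacle: the proof is in essence a faithful translation of the definition of $\mathfrak{r}$ across the interval coding. The only points demanding care are book-keeping ones, namely verifying that each trace $\pi(a)$ is truly infinite (which uses that $a$ is cofinal, not merely that $a \in [\kappa]^\omega$), that the lifted sets $a_r$ are cofinal with $r \mapsto a_r$ injective so that $\betrag{A} = \betrag{R}$, and handling the degenerate case of a finite $b$ in the last implication.
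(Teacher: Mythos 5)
Your proposal is correct and follows essentially the same route as the paper's proof: the implication $(1)\Rightarrow(2)$ via the interval decomposition $[\kappa_n,\kappa_{n+1})$ and traces in $[\omega]^\omega$, and $(3)\Rightarrow(1)$ by lifting a reaping family along the cofinal sequence via $r\mapsto\Set{\kappa_n}{n\in r}$. The only differences are presentational — you argue directly/contrapositively where the paper argues by contradiction, and you explicitly handle the finite-trace case that the paper's ordering of choices lets it avoid — so there is nothing substantive to add.
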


\begin{proof}
 First, assume that \eqref{item:reaping1} holds and \eqref{item:reaping2} fails. Then there is a singular cardinal $\kappa$ of countable cofinality and a subset $A$ of $[\kappa]^\omega$ such that $A$ consists of cofinal subsets of $\kappa$, $\betrag{A}\leq\theta$ and for every subset $E$ of $\kappa$, there is $a\in A$ such that either $a\cap E$ or $a\setminus E$ is finite. 
 Let $\seq{\kappa_n}{n<\omega}$ be a strictly increasing sequence that is cofinal in $\kappa$ with $\kappa_0=0$. Given $a\in A$, define $$b_a ~ = ~ \Set{n<\omega}{a\cap[\kappa_n,\kappa_{n+1})\neq\emptyset} ~ \in ~ [\omega]^\omega.$$ Since $\betrag{A}<\mathfrak{r}$, we can now find $c\in[\omega]^\omega$ with the property that for all $a\in A$, both $b_a\setminus c$ and $b_a\cap c$ are infinite. Set $$E ~ = ~ \bigcup_{n\in c}[\kappa_n,\kappa_{n+1}).$$ By our assumptions, there is $a\in A$ with the property that either $a\setminus E$ or $a\cap E$ is finite. But this implies that either $b_a\setminus c$ or $b_a\cap c$ is finite, a contradiction.

 Now, assume that \eqref{item:reaping3} holds and \eqref{item:reaping1} fails. 
 Then there is a singular cardinal $\kappa$ of countable cofinality with the property that for every subset $A$ of $[\kappa]^\omega$ that consists of cofinal subsets of $\kappa$ and has cardinality at most $\theta$, there exists a subset $E$ of $\kappa$ such that for every $a\in A$, both $a\cap E$ and $a\setminus E$ are infinite. 
 Fix $A\subseteq[\omega]^\omega$ of cardinality $\mathfrak{r}$ such that for every $b\in[\omega]^\omega$, there exists $a\in A$ with the property that either $a\setminus b$ or $a\cap b$ is finite. 
 Pick a s strictly increasing sequence $\seq{\kappa_n}{n<\omega}$ that is cofinal in $\kappa$ and, given $a\in A$, define $b_a=\Set{\kappa_n}{n\in a}\in[\kappa]^\omega$. Since $b_a$ is cofinal in $\kappa$ for all $a\in A$, we can now find a subset $E$ of $\kappa$ such that for every $a\in A$, both $b_a\cap E$ and $b_a\setminus E$ are infinite. Set $c=\Set{n<\omega}{\kappa_n\in E}\in[\omega]^\omega$. If $a\in A$, then both $a\setminus c$ and $a\cap c$ are infinite, contradicting our assumptions on $A$.  
\end{proof}

\begin{proof}[Proof of Proposition \ref{proposition:CountCofBistat}]
  Let $\kappa$ be a singular cardinal of countable cofinality, let $A$ be a set of cardinality less than $\mathfrak{r}$ and let $n>0$ be a natural number. Then there exists a subset $A'$ of $[\kappa]^\omega$ of cardinality less than $\mathfrak{r}$ that consists of cofinal sequences and has the property that for every closed unbounded subset $C$ of $\kappa$ such that the set $\{C\}$ is definable by a $\Sigma_n$-formula with parameters in $A\cup\{\kappa\}$, there exists $a\in A'$ with $a\subseteq C$. 
  Using Lemma \ref{lemma:reaping}, we can now find a subset $E$ of $\kappa$ such that for every $a\in A$, both $a\setminus E$ and $a\cap E$ are infinite. Then both $E$ and $\kappa\setminus E$ are $\Sigma_n(A)$-stationary in $\kappa$. 
\end{proof}

We close this section by comparing another aspect of the behavior of stationary sets with its counterpart in the definable context. While the collection of all closed unbounded subsets of a cardinal of uncountable cofinality is closed under intersections and therefore all of these subsets are stationary, these implications can obviously fail at a singular cardinal of countable cofinality, where all unbounded sets of order type $\omega$ are closed unbounded and stationarity coincides with coboundedness. The following lemma 
 completely characterizes the settings in which these  implications also hold in the definable context:

\begin{lemma}\label{lemma:CharClubsClosed}
 Given a class $A$ and a natural number $n>0$, the following statements are equivalent for every singular cardinal $\kappa$ of countable cofinality: 
 \begin{enumerate}
     \item There is a cofinal function $\map{c}{\omega}{\kappa}$ that is definable by a $\Sigma_n$-formula with parameters in $A\cup\{\kappa\}$. 

     \item There are disjoint closed unbounded subsets $C_0$ and $C_1$ of $\kappa$ with the property that the sets $\{C_0\}$ and $\{C_1\}$ are both definable by $\Sigma_n$-formulas with parameters in $A\cup\{\kappa\}$. 
     
     \item There are closed unbounded subsets $C_0$ and $C_1$ of $\kappa$ with $C_0\cap C_1$ bounded in $\kappa$ and the property that the sets $\{C_0\}$ and $\{C_1\}$ are both definable by $\Sigma_n$-formulas with parameters in $A\cup\{\kappa\}$. 
 \end{enumerate}
\end{lemma}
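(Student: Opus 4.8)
The plan is to establish the cyclic chain of implications $(1)\Rightarrow(2)\Rightarrow(3)\Rightarrow(1)$. The middle implication $(2)\Rightarrow(3)$ is immediate, since disjoint sets have empty, hence bounded, intersection, so all of the content lies in the two remaining implications. Throughout, I would use the standing fact that, because $\cof{\kappa}=\omega$, every strictly increasing cofinal $\map{f}{\omega}{\kappa}$ has range a closed unbounded set: its only potential limit point is $\sup\ran{f}=\kappa\notin\kappa$.

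For $(1)\Rightarrow(2)$, I would start from a cofinal function $\map{c}{\omega}{\kappa}$ whose singleton is $\Sigma_n$-definable from parameters in $A\cup\{\kappa\}$, and first replace it by a strictly increasing one. Define $\map{d}{\omega}{\kappa}$ by the recursion $d(0)=c(0)$ and $d(m+1)=c(k_m)$, where $k_m$ is least with $c(k_m)>d(m)$; cofinality of $c$ guarantees that each $k_m$ exists and that $d$ is strictly increasing, and $d$ is cofinal because otherwise the non-decreasing indices $k_m$ would stabilize below $\kappa$, forcing $d$ to be eventually constant. I would then set $C_0=\Set{d(2k)}{k<\omega}$ and $C_1=\Set{d(2k+1)}{k<\omega}$. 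Each $C_i$ is a strictly increasing cofinal $\omega$-sequence, hence a club, and the two are disjoint. Since the passage $c\mapsto d\mapsto C_i$ runs through a bounded recursion over $\omega$ and an arithmetic (even/odd) split of the index, the singletons $\{C_0\}$ and $\{C_1\}$ stay $\Sigma_n$-definable from the same parameters, using that for $n\geq 1$ the $\Sigma_n$-hierarchy absorbs $\Sigma_1$-definable operations.

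The heart of the argument is $(3)\Rightarrow(1)$. Given clubs $C_0,C_1$ with $\{C_0\},\{C_1\}$ both $\Sigma_n$-definable and $C_0\cap C_1$ bounded, fix $\beta<\kappa$ with $C_0\cap C_1\subseteq\beta$. I would define $\map{c}{\omega}{\kappa}$ by the alternating recursion $c(0)=\min(C_0\setminus\beta)$ and $c(m+1)=\min\Set{\gamma\in C_{(m+1)\bmod 2}}{\gamma>c(m)}$; these minima exist because clubs are unbounded, and the resulting sequence is strictly increasing with $c(m)\in C_{m\bmod 2}$. I claim $c$ is cofinal: if $\delta=\sup_m c(m)$ were below $\kappa$, then the even-indexed values (lying in $C_0$) and the odd-indexed values (lying in $C_1$) would each converge to $\delta$, so closure of $C_0$ and $C_1$ would give $\delta\in C_0\cap C_1$; but $\delta\geq c(0)\geq\beta$, contradicting $C_0\cap C_1\subseteq\beta$. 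Finally, since $c$ is obtained by a bounded recursion over $\omega$ from $C_0$, $C_1$, and the definable bound $\beta=\sup(C_0\cap C_1)$, its singleton is $\Sigma_n$-definable via a formula of the shape $\exists C_0\,\exists C_1\,(\psi_0(C_0)\wedge\psi_1(C_1)\wedge\vartheta(c,C_0,C_1))$, where $\psi_i$ are the given $\Sigma_n$-definitions and $\vartheta$ is a $\Delta_0$ formula encoding the recursion.

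The main obstacle I anticipate is not the combinatorics, which are clean, but the bookkeeping of definitional complexity: checking in each direction that the auxiliary objects ($d$, the parity split, the bound $\beta$, and the alternating recursion) are extracted by formulas which, when composed with the given $\Sigma_n$-definitions, still define the relevant singletons by $\Sigma_n$-formulas. For this I would lean on the closure of the $\Sigma_n$-hierarchy (for $n\geq 1$) under conjunction, bounded quantification, and existential quantification, together with the observation that evaluating a function on $\omega$ and reading off the order type of a finite initial segment are $\Delta_0$- or $\Sigma_1$-operations. I expect these estimates to be routine, but to warrant an explicit, if brief, justification.
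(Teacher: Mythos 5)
Your proposal is correct and follows essentially the same route as the paper: the substantive direction $(3)\Rightarrow(1)$ is proved by exactly the same alternating-minima recursion between $C_0$ and $C_1$ above a bound on their intersection, with cofinality forced by closure and definability by the $\Sigma_n$-Recursion Theorem. The only difference is cosmetic: for $(1)\Rightarrow(2)$ the paper forms the disjoint clubs as $\Set{c(n)+\omega}{n<\omega}$ and $\Set{c(n)+\omega+1}{n<\omega}$, whereas you first refine $c$ to a strictly increasing cofinal sequence and split it by parity, which is, if anything, slightly more careful about closedness when $c$ is not monotone.
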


\begin{proof}
  First, we assume that there is a cofinal function $\map{c}{\omega}{\kappa}$ that is definable by a $\Sigma_n$-formula with parameters in $A\cup\{\kappa\}$. Define $C_0=\Set{c(n)+\omega}{n<\omega}$ and $C_1=\Set{c(n)+\omega+1}{n<\omega}$. Then $C_0$ and $C_1$ are disjoint closed unbounded in $\kappa$, and the sets $\{C_0\}$ and $\{C_1\}$ are definable by $\Sigma_n$-formulas with parameters in $A\cup\{\kappa\}$. 
 In the other direction, assume that there are closed unbounded subsets $C_0$ and $C_1$ of $\kappa$ with $C_0\cap C_1$ bounded in $\kappa$ and the property that the sets $\{C_0\}$ and $\{C_1\}$ are both definable by $\Sigma_n$-formulas with parameters in $A\cup\{\kappa\}$. We define $\mu=\max(C_0\cap C_1)<\kappa$ and let $\map{c}{\omega}{\kappa}$ denote the unique function with  $c(0)=\mu$ and $$c(2k+2-i) ~ = ~ \min(C_i\setminus(c(2k+1-i)+1))$$ for all $k<\omega$ and $i<2$. The $\Sigma_n$-Recursion Theorem then implies that $c$ is definable by a $\Sigma_n$-formula with parameters in $A\cup\{\kappa\}$. Moreover, we know that $c$ is cofinal in $\kappa$, because otherwise $\mu<\sup_{k<\omega}c(k)\in C_0\cap C_1$. 
\end{proof}

Using an argument similar to the one of the previous Lemma, we obtain the following corollary, which will later allow us to show that definable closed unbounded sets behave nicely in  various settings.

\begin{corollary}\label{corollary:CriterionClosedIntersections}
 Let $\kappa$ be a singular cardinal of countable cofinality, let $A$ be a class and let $n>0$ be a natural number with the property that there exists a subset of $\kappa$ that is $\Sigma_n(A)$-stationary in $\kappa$ and consists of cardinals. Then the collection of all closed unbounded subsets $C$ of $\kappa$ with the property that the set $\{C\}$ is definable by a $\Sigma_n$-formula with parameters in $A\cup\{\kappa\}$ is closed under intersections. \qed
\end{corollary}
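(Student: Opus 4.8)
The plan is to argue by contradiction, using Lemma \ref{lemma:CharClubsClosed} to convert a failure of closure under intersections into a $\Sigma_n(A)$-definable cofinal function $\map{c}{\omega}{\kappa}$, and then to use such a function to build a $\Sigma_n(A)$-definable closed unbounded set all of whose elements are non-cardinals.

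First, I would fix a subset $S$ of $\kappa$ that consists of cardinals and is $\Sigma_n(A)$-stationary in $\kappa$, as provided by the hypothesis, and assume toward a contradiction that there are closed unbounded subsets $C_0$ and $C_1$ of $\kappa$ such that the sets $\{C_0\}$ and $\{C_1\}$ are definable by $\Sigma_n$-formulas with parameters in $A\cup\{\kappa\}$, but $C_0\cap C_1$ does not belong to the collection in question. A short verification shows that the set $\{C_0\cap C_1\}$ is again definable by a $\Sigma_n$-formula with parameters in $A\cup\{\kappa\}$: one existentially quantifies over $C_0$ and $C_1$ and appends the $\Sigma_0$-statement ``$y=C_0\cap C_1$'', and, since $n>0$, the class of $\Sigma_n$-formulas is closed under these operations. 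Since $C_0\cap C_1$ is closed as an intersection of closed sets, the only way for it to fail to lie in the collection is to fail to be unbounded; hence $C_0\cap C_1$ is bounded in $\kappa$.

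This places us precisely in the situation described by the third statement of Lemma \ref{lemma:CharClubsClosed}, and so the first statement of that lemma yields a cofinal function $\map{c}{\omega}{\kappa}$ that is definable by a $\Sigma_n$-formula with parameters in $A\cup\{\kappa\}$. Using the $\Sigma_n$-Recursion Theorem, I would then pass to a strictly increasing cofinal sequence of infinite successor ordinals, for instance by setting $d_0=c(0)+\omega+1$ and $d_{k+1}=\max(c(k+1),d_k)+\omega+1$; the resulting function is definable by a $\Sigma_n$-formula with parameters in $A\cup\{\kappa\}$, is strictly increasing and cofinal in $\kappa$, and each value $d_k$ is an infinite successor ordinal. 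Consequently, $C=\Set{d_k}{k<\omega}$ is a closed (its unique limit point is $\kappa\notin\kappa$) and unbounded subset of $\kappa$ for which $\{C\}$ is definable by a $\Sigma_n$-formula with parameters in $A\cup\{\kappa\}$, and every element of $C$ is a non-cardinal. Since $S$ consists of cardinals, this gives $C\cap S=\emptyset$, contradicting the assumption that $S$ is $\Sigma_n(A)$-stationary in $\kappa$.

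The conceptual heart of the argument is the observation that a failure of closure under intersections is exactly the third statement of Lemma \ref{lemma:CharClubsClosed}, so that the substantive combinatorics has already been carried out there; the remaining work is the bookkeeping needed to guarantee that the manufactured set is genuinely a closed unbounded set and that all of its members are genuinely non-cardinals. The main point requiring care is this last edge-case analysis---ensuring the chosen ordinals are infinite (so not finite cardinals) and successors (so not limit cardinals)---together with the routine verification that the intersection $C_0\cap C_1$ remains definable by a $\Sigma_n$-formula with parameters in $A\cup\{\kappa\}$.
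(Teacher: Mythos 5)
Your proof is correct and follows essentially the route the paper intends (the paper only says the corollary follows ``by an argument similar to the one of the previous Lemma''): reduce a failure of closure under intersections to the bounded-intersection case via the automatic $\Sigma_n$-definability of $\{C_0\cap C_1\}$ for $n>0$, invoke Lemma~\ref{lemma:CharClubsClosed} to get a $\Sigma_n(A)$-definable cofinal $\omega$-function, and turn it into a definable club of infinite successor ordinals that misses the given $\Sigma_n(A)$-stationary set of cardinals. Your explicit handling of the edge cases (strict increasingness via the recursive $d_k$, and avoiding cardinals by using successor ordinals above $\omega$) is exactly the bookkeeping the paper leaves implicit.
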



\subsection{$\Sigma_n$-undefinability property}

We introduce a notion that will allow us to show that various nonstationary sets of cardinals $E \subseteq \kappa$  of an uncountable cardinal $\kappa$ are $\mathbf{\Sigma}_n$-stationary.

\begin{definition}\label{definition:UnDefProp}
 Given uncountable cardinals $\mu<\kappa$, an ordinal $\gamma\geq\kappa$ and a natural number $n$, we say that the cardinal $\kappa$ has the \emph{$\Sigma_n(\mu,\gamma)$-undefinability property} if no ordinal $\alpha$ in the interval $[\mu,\kappa)$ has the property that the set $\{\alpha\}$ is definable by a $\Sigma_n$-formula with parameters in the set $\HH{\mu}\cup\{\kappa,\gamma\}$. 
 Moreover, we say that $\kappa$ has the \emph{$\Sigma_n(\mu)$-undefinability property} if it has the $\Sigma_n(\mu,\kappa)$-undefinability property. 
\end{definition}

The next lemma shows how this undefinability property is connected to $\Sigma_n$-stationarity:

\begin{lemma}\label{lemma:UnDefStationary}
 Given uncountable cardinals $\mu<\kappa$, an ordinal $\gamma\geq\kappa$ and a natural number $n>0$, if the cardinal $\kappa$ has the $\Sigma_n(\mu,\gamma)$-undefinability property, then the set $\{\mu\}$ is $\Sigma_n(\HH{\mu}\cup\{\gamma\})$-stationary in $\kappa$.  
\end{lemma}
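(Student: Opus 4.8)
The plan is to unfold both definitions and reduce the statement to a single membership assertion. By Definition~\ref{Def:Sigma_n-Stationary}, saying that $\{\mu\}$ is $\Sigma_n(\HH{\mu}\cup\{\gamma\})$-stationary in $\kappa$ means exactly that $\mu\in C$ for every closed unbounded $C\subseteq\kappa$ such that $\{C\}$ is definable by a $\Sigma_n$-formula with parameters in $\HH{\mu}\cup\{\kappa,\gamma\}$. So I would fix such a $C$, witnessed by a $\Sigma_n$-formula $\psi$ and parameters in $\HH{\mu}\cup\{\kappa,\gamma\}$, so that $C$ is the unique set $Y$ satisfying $\psi(Y,\ldots)$, and argue by contradiction from the assumption $\mu\notin C$. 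Note that $\gamma$ plays no active role beyond being an admissible parameter.

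Next I would locate the gap in $C$ just below $\mu$. Since $\mu$ is an uncountable cardinal, it is a limit ordinal, so closedness of $C$ forces $\beta_0:=\sup(C\cap\mu)<\mu$; otherwise $\mu$ would be a limit point of $C$ and hence lie in $C$. As $\mu$ is a cardinal and $\beta_0<\mu$, the ordinal $\beta_0$ has $\betrag{\beta_0}<\mu$, so $\beta_0\in\HH{\mu}$ and is therefore an admissible parameter. Setting $\alpha=\min(C\setminus(\beta_0+1))$, the choice of $\beta_0$ together with $\mu\notin C$ gives $C\cap(\beta_0,\mu]=\emptyset$, whence $\alpha>\mu$ and thus $\alpha\in(\mu,\kappa)\subseteq[\mu,\kappa)$.

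The main step, and the only delicate point, is to check that $\{\alpha\}$ is definable by a $\Sigma_n$-formula with parameters in $\HH{\mu}\cup\{\kappa,\gamma\}$, which then contradicts the $\Sigma_n(\mu,\gamma)$-undefinability property and finishes the argument. The idea is to fold the entire definition of $\alpha$ under a single existential set quantifier witnessing $C$: the formula
\[
 \exists Y\,\bigl[\psi(Y,\ldots)\wedge x\in Y\wedge \beta_0<x\wedge \forall z<x\,(z\in Y\rightarrow z\leq\beta_0)\bigr]
\]
defines $\{\alpha\}$, since the bracketed clause forces $Y=C$ (by uniqueness of the witness of $\psi$), $x\in C$, and $x$ to be the least element of $C$ above $\beta_0$. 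Crucially, every clause after $\psi$ uses only bounded quantifiers and atomic formulas in the free set variable $Y$, so that block is $\Sigma_0$; conjoining it with the $\Sigma_n$-formula $\psi$ and prefixing $\exists Y$ keeps the whole formula $\Sigma_n$, using $n\geq 1$. Its parameters are $\beta_0\in\HH{\mu}$ together with the parameters of $\psi$, all lying in $\HH{\mu}\cup\{\kappa,\gamma\}$.

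I expect the complexity bookkeeping in this last paragraph—ensuring the passage from $C$ to ``the next point of $C$ above $\beta_0$'' does not escape $\Sigma_n$—to be the only genuinely nontrivial point; everything else follows immediately from closedness of $C$ and the cardinality computation placing $\beta_0$ in $\HH{\mu}$. Having produced a $\Sigma_n$-definable singleton $\{\alpha\}$ with $\alpha\in[\mu,\kappa)$ and parameters in $\HH{\mu}\cup\{\kappa,\gamma\}$, the assumed undefinability property is contradicted, so $\mu\in C$, and since $C$ was arbitrary this gives the claim.
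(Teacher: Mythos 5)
Your proof is correct and takes essentially the same approach as the paper's: assume $\mu\notin C$, locate the gap in $C$ just below $\mu$, and observe that the next element of $C$ is then a $\Sigma_n$-definable singleton in $[\mu,\kappa)$ with parameters in $\HH{\mu}\cup\{\kappa,\gamma\}$, contradicting the undefinability property. The only cosmetic differences are that you use $\beta_0=\sup(C\cap\mu)$, which handles the case $C\cap\mu=\emptyset$ uniformly (the paper instead splits into the cases $\nu=\min(C)$ and $\nu=\min(C\setminus(\max(C\cap\mu)+1))$), and that you carry out the complexity bookkeeping for the defining formula explicitly, which the paper leaves implicit.
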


\begin{proof}
 Let $C$ be a closed unbounded subset of $\kappa$ with the property that the set $\{C\}$ is definable  by a $\Sigma_n$-formula with parameters in $\HH{\mu}\cup\{\kappa,\gamma\}$. 
 Assume, towards a contradiction, that $\mu$ is not an element of $C$. Set $\nu=\min(C\setminus\mu)>\mu$. Then  $C\cap\mu\neq\emptyset$, because otherwise $\nu=\min(C)$ and this would imply that the set  $\{\nu\}$ is definable by a $\Sigma_n$-formula with parameters in $\HH{\mu}\cup\{\kappa,\gamma\}$. This allows us to define $\rho=\max(C\cap\mu)<\mu$. But then $\nu=\min(C\setminus(\rho+1))$ and hence $\{\nu\}$ is definable by a $\Sigma_n$-formula with parameters in $\HH{\mu}\cup\{\kappa,\gamma\}$, a contradiction.  
\end{proof}

The following direct corollary of the above lemma will later allow us to isolate various examples of $\mathbf{\Sigma}_1$-stationary subsets of cardinals of countable cofinality that are not stationary (i.e., not cobounded) in the given limit cardinal.

\begin{corollary}\label{corollary:UnbCardStat}
 Let $\kappa$ be a limit cardinal, let $\gamma\geq\kappa$ be an ordinal, let $n>0$ be a natural number and let $E$ be the set of uncountable cardinals $\mu<\kappa$ with the property that $\kappa$ has the $\Sigma_n(\mu,\gamma)$-undefinability property for all $\mu \in E$. If $E$ is unbounded in $\kappa$, then $E$ is $\mathbf{\Sigma}_n(\{\gamma\})$-stationary in $\kappa$. \qed
\end{corollary}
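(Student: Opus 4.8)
The plan is to reduce the statement directly to Lemma \ref{lemma:UnDefStationary} by finding, for each relevant closed unbounded set $C$, a single cardinal $\mu\in E$ that witnesses $\mu\in C$. Unravelling the definitions (Definition \ref{Def:Sigma_n-Stationary}), the assertion that $E$ is $\mathbf{\Sigma}_n(\{\gamma\})$-stationary in $\kappa$ just means that $E$ is $\Sigma_n(\{\gamma\}\cup\HH{\kappa})$-stationary in $\kappa$, i.e.\ that $C\cap E\neq\emptyset$ holds for every closed unbounded $C\subseteq\kappa$ such that the set $\{C\}$ is definable by a $\Sigma_n$-formula with parameters in $\HH{\kappa}\cup\{\kappa,\gamma\}$. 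So I would fix such a $C$, say with $\{C\}$ definable by a $\Sigma_n$-formula using parameters $z_0,\ldots,z_{k-1}$, each lying in $\HH{\kappa}\cup\{\kappa,\gamma\}$.

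The crucial observation is a parameter-absorption step: since $\kappa$ is a limit cardinal, $\HH{\kappa}=\bigcup\Set{\HH{\mu}}{\mu<\kappa\text{ a cardinal}}$. Indeed, any $x\in\HH{\kappa}$ satisfies $\betrag{\tc{x}}\leq\mu$ for some cardinal $\mu<\kappa$, whence $x\in\HH{\mu^+}$ with $\mu^+<\kappa$. Consequently the finitely many parameters among $z_0,\ldots,z_{k-1}$ that belong to $\HH{\kappa}$ are all contained in a single $\HH{\mu_0}$ for some cardinal $\mu_0<\kappa$.

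Now I would invoke the hypothesis that $E$ is unbounded in $\kappa$ to choose some $\mu\in E$ with $\mu>\mu_0$; then $\HH{\mu_0}\subseteq\HH{\mu}$, so each $z_i$ lies in $\HH{\mu}\cup\{\kappa,\gamma\}$, and hence $\{C\}$ is definable by a $\Sigma_n$-formula with parameters in $\HH{\mu}\cup\{\kappa,\gamma\}$. Since $\mu\in E$ is an uncountable cardinal for which $\kappa$ has the $\Sigma_n(\mu,\gamma)$-undefinability property, Lemma \ref{lemma:UnDefStationary} tells us that $\{\mu\}$ is $\Sigma_n(\HH{\mu}\cup\{\gamma\})$-stationary in $\kappa$. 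Applying this to $C$ yields $\mu\in C$, and since $\mu\in E$ we obtain $C\cap E\neq\emptyset$, as required.

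There is essentially no hard step here; the only point requiring care is the parameter-absorption observation, namely that a finite set of elements of $\HH{\kappa}$ can be captured inside a single $\HH{\mu}$ with $\mu\in E$. This is precisely where both the limit-cardinal hypothesis on $\kappa$ and the unboundedness of $E$ are used, and everything else is a direct appeal to Lemma \ref{lemma:UnDefStationary} after unwinding $\mathbf{\Sigma}_n(\{\gamma\})$-stationarity as $\Sigma_n(\{\gamma\}\cup\HH{\kappa})$-stationarity.
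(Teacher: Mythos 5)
Your proof is correct and is exactly the argument the paper intends: the corollary is stated as a direct consequence of Lemma \ref{lemma:UnDefStationary}, and your parameter-absorption step (collecting the finitely many $\HH{\kappa}$-parameters into a single $\HH{\mu}$ with $\mu\in E$ above them, using that $\kappa$ is a limit cardinal and $E$ is unbounded) is precisely what makes it ``direct.'' Nothing is missing.
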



\section{Undefinability in the Dodd-Jensen core model}\label{section:UndefProp}

 In the following, we establish basic definability and undefinability results dealing with \emph{Dodd-Jensen core model $\KK^{DJ}$}. These results make use of the presentation of this model in \cite{MR730856} and \cite{MR926749}. 
 In this setting, a set $M$ is a \emph{premouse at an ordinal $\mu>\omega$} if $M$ is of the form $J^U_\eta$ and $$\langle M,\in,U\rangle\models\anf{\text{$U$ is a normal 
 ultrafilter on $\mu$}}.$$ 
 Moreover, if $M$ is a premouse at $\mu$, then we define the \emph{lower part of $M$} to be the set $lp(M)=M\cap V_\mu$. 
 Given a premouse $M$ and an ordinal $\delta$, an iteration of $M$ of length $\delta$ is given by a sequence $\seq{M_\alpha}{\alpha<\delta}$ of premice and a commuting system $\seq{\map{j_{\alpha,\beta}}{M_\alpha}{M_\beta}}{\alpha\leq\beta<\delta}$ of $\Sigma_1$-elementary embeddings such that the following statements hold:
 \begin{itemize}
     \item $M=M_0$ and $j_{\alpha,\alpha}=\id_{M_\alpha}$ for all $\alpha<\delta$. 

     \item If $\alpha+1<\delta$, $M_\alpha=J_\eta^U$ is a premouse at $\mu$ and $M_{\alpha+1}=J_\zeta^W$ is a premouse at $\nu$, then $M_{\alpha+1}$ is the transitive collapse of the ultrapower of $M_\alpha$ using $U$, $j_{\alpha,\alpha+1}$ is the corresponding ultrapower embedding, $j_{\alpha,\alpha+1}(\mu)=\nu$ and $$W ~ = ~ \Set{[f]_U}{f\in {}^\mu M_\alpha\cap M_\alpha, \Set{\xi<\mu}{f(\xi)\in U}\in U}.$$

     \item If $\gamma\in\Lim\cap\delta$, then $\langle M_\gamma,\seq{\map{j_{\alpha,\gamma}}{M_\alpha}{M_\gamma}}{\alpha<\gamma}\rangle$ is a direct limit of $\langle\seq{M_\alpha}{\alpha<\gamma},\seq{\map{j_{\alpha,\beta}}{M_\alpha}{M_\beta}}{\alpha\leq\beta<\gamma}\rangle$. 
 \end{itemize}
 If such an iterations exists, then it is uniquely determined and it is called the $\delta$-iteration of $M$.  
 A premouse $M$ is then called a \emph{mouse}, if $\delta$-iterations of $M$ exist   for all $\delta\in\On$. Note that, since the iterability of a premouse can be checked in every transitive structure of uncountable ordinal height that contains the mouse and satisfies a sufficiently strong fragment of $\ZFC$  (see {\cite[Theorem 2.7]{MR926749}}), it follows that the class of all mice is $\Sigma_1$-definable from every uncountable ordinal. 
 We can now use such iterations to compare a mouse $M=J^U_\eta$ at some ordinal $\mu$ with a mouse $N=J^W_\zeta$ at an ordinal $\nu$, in the sense that they allow us to find mice $M^\prime=J^C_{\eta^\prime}$ and $N^\prime=J^C_{\zeta^\prime}$ at the same ordinal $\rho$ and $\Sigma_1$-elementary embeddings $\map{j}{M}{M^\prime}$ and $\map{i}{N}{N^\prime}$ with $j(\mu)=i(\nu)=\rho$ (see {\cite[Lemma 1.13]{MR730856}}). 
  In our arguments below, we will frequently make use of results of Dodd and Jensen (see {\cite[pp. 238-241]{MR730856}}) that show that the Dodd-Jensen core model $\KK^{DJ}$ is equal to the union of the constructible universe $L$ and the lower parts $lp(M)$ of all mice $M$.

  We now prove three lemmata that show that various objects that witness the accessibility of cardinals in $\KK^{DJ}$ are simply definable.

\begin{lemma}\label{lemma:InitialSegmentsKDJ}
 If $\kappa$ is an infinite cardinal that is not inaccessible in $\KK^{DJ}$, then the set $\{\HH{\kappa}^{\KK^{DJ}}\}$ is definable by a $\Sigma_1$-formula with parameter $\kappa$. 
\end{lemma}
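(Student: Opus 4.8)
The plan is to produce, for the cardinal $\kappa$, a $\Sigma_1$-formula $\psi(x,v)$ so that $\psi(x,\kappa)$ holds if and only if $x=\HH{\kappa}^{\KK^{DJ}}$, by expressing $\HH{\kappa}^{\KK^{DJ}}$ as the object computed by a single $\Sigma_1(\kappa)$-recognizable witnessing structure. The two ingredients that make this possible are recalled in the excerpt: the class of all mice is $\Sigma_1$-definable from the uncountable ordinal $\kappa$, and $\KK^{DJ}=\LL\cup\bigcup_M lp(M)$. I would also use that $\KK^{DJ}$ satisfies the $\GCH$, so that $\HH{\kappa}^{\KK^{DJ}}$ is an initial segment of the $\KK^{DJ}$-hierarchy; in particular $\HH{\kappa}^{M}=\HH{\kappa}^{\KK^{DJ}}$ for every mouse $M$ at an ordinal $\mu_M\geq\kappa$, since such an $M$ agrees with $\KK^{DJ}$ below $\mu_M$. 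The first step is thus to reduce matters to locating such a witness and to certifying its correctness. (The case $\kappa=\omega$ is trivial, as $\HH{\omega}^{\KK^{DJ}}=\HH{\omega}$ is $\Delta_0$-definable without parameters, so I assume $\kappa$ uncountable.)

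Next I would run a case analysis on how far the mice reach. If some mouse $M$ has $\mu_M\geq\kappa$, then $lp(M)$ is an initial segment of $\KK^{DJ}$ of ordinal height $\geq\kappa$, so $\HH{\kappa}^{M}=\HH{\kappa}^{\KK^{DJ}}$, and the clause ``there is a mouse $M$ at some $\mu_M\geq\kappa$ with $x=\HH{\kappa}^{M}$'' is $\Sigma_1(\kappa)$. Otherwise all ordinals $\mu_M$ lie below $\kappa$; let $\rho\leq\kappa$ be their supremum. If $\rho<\kappa$, then above $\rho$ the model $\KK^{DJ}$ is the constructible closure of the seed $S=\HH{\rho}^{\KK^{DJ}}$ (no further measures appear), where $S$ is itself $\Sigma_1(\kappa)$-recognizable as the lower part of a mouse (or as $\LL_\rho$); running the $\LL$-style recursion over $S$ up to ordinal height $\kappa$ is $\Sigma_1$ in $(S,\kappa)$ and hence $\Sigma_1(\kappa)$. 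The remaining possibility is $\rho=\kappa$, where $\HH{\kappa}^{\KK^{DJ}}=\bigcup\{\,lp(M) : \mu_M<\kappa\,\}$ is a genuinely cofinal union. This is exactly where the hypothesis enters: in this configuration $\kappa$ is a limit cardinal of $\KK^{DJ}$, so failing to be inaccessible there forces $\kappa$ to be \emph{singular} in $\KK^{DJ}$, whence there is a cofinal $\map{f}{\cof{\kappa}^{\KK^{DJ}}}{\kappa}$ with $f\in\HH{\kappa}^{\KK^{DJ}}$. Quantifying existentially over a sequence $\seq{lp(M_\xi)}{\xi<\cof{\kappa}^{\KK^{DJ}}}$ of mouse lower parts whose heights are cofinal in $\kappa$, and letting $x$ be its union, converts this cofinal union into a single $\Sigma_1(\kappa)$-expressible witness. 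Taking the disjunction of these finitely many clauses yields a $\Sigma_1(\kappa)$-formula.

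The main obstacle I expect is \emph{correctness}: in each case one must show that \emph{every} structure passing the relevant $\Sigma_1$ test computes the right $\HH{\kappa}^{\KK^{DJ}}$, not merely that the intended witness exists. This rests on the Dodd--Jensen machinery cited in the excerpt. First, iterability is absolute to, and hence $\Sigma_1$-verifiable in, any transitive model of a sufficient fragment of $\ZFC$ of uncountable height (\cite[Theorem 2.7]{MR926749}), which is what makes ``$M$ is a mouse'' genuinely $\Sigma_1(\kappa)$. Second, by comparison (\cite[Lemma 1.13]{MR730856}) any two iterable mice coiterate, so a $\Sigma_1$-recognized mouse lower part of height $\geq\kappa$ is forced to be a true initial segment of $\KK^{DJ}$ and therefore agrees with it on $\HH{\kappa}$. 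I anticipate the delicate point to be guaranteeing that the comparison embeddings do not move the relevant initial segment, i.e.\ that every critical point occurring in the coiteration against $\KK^{DJ}$ is $\geq\kappa$; this is why the witnesses must be required to be sound, and why the exhaustive trichotomy above (a single mouse of height $\geq\kappa$, a constructible closure over a bounded seed, or a short cofinal union under singularity) is exactly what the non-inaccessibility hypothesis buys. Once correctness is secured in each branch, the disjunction defines $\psi$ and proves that $\{\HH{\kappa}^{\KK^{DJ}}\}$ is $\Sigma_1(\kappa)$-definable.
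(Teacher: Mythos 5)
There is a genuine gap, and it sits exactly at the point you flagged as the ``main obstacle'': correctness of the recognized witnesses. Your first (and, in fact, only substantive) case rests on the claim that any mouse $M$ at an ordinal $\mu_M\geq\kappa$ ``agrees with $\KK^{DJ}$ below $\mu_M$'', so that $\HH{\kappa}^{M}=\HH{\kappa}^{\KK^{DJ}}$, and later that comparison forces a recognized lower part of height $\geq\kappa$ to be ``a true initial segment of $\KK^{DJ}$''. This is false. The mouse order is not monotone in the height of the top measure: if $N_0$ is a weak mouse and $P$ is a stronger mouse at a small ordinal whose lower part contains a real $r\notin N_0$, then iterating the top measure of $N_0$ far beyond $\kappa$ produces a mouse $M$ at some $\mu_M\geq\kappa$ whose lower part still omits $r$ (the iteration has uncountable critical points and adds no reals), while $r\in lp(P)\subseteq\HH{\kappa}^{\KK^{DJ}}$. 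So $\HH{\kappa}^{M}\subsetneq\HH{\kappa}^{\KK^{DJ}}$, and your first disjunct recognizes incorrect witnesses. The Dodd--Jensen comparison lemma only tells you that one of the two coiterates ends up an initial segment of the other; it does not tell you that the tall mouse is the stronger one, which is what your argument needs.

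A symptom of the same problem is that your case trichotomy never actually uses the hypothesis that $\kappa$ is not inaccessible in $\KK^{DJ}$ where it matters. You invoke it only in the cases where the heights $\mu_M$ are bounded by $\kappa$, but those cases are vacuous: if any mouse exists at all, then iterating top measures produces mice at arbitrarily large ordinals, so either $\KK^{DJ}=\LL$ (trivial case) or your first case applies. Since the conclusion of the lemma genuinely fails for suitable inaccessibles of $\KK^{DJ}$ (by Theorem \ref{theorem:InaccNonDefInKDJ}, under $V=\KK^{DJ}$ it fails exactly at stably measurable cardinals), any correct proof must use non-inaccessibility inside the mouse case. The paper does this by building it into the $\Sigma_1$-formula: the witness is required to be a mouse $M$ at some $\nu>\kappa$ in which $\kappa$ is a successor cardinal or singular. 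Correctness is then proved by a comparison argument you are missing: given $x\in\HH{\kappa}^{\KK^{DJ}}$, pick a small mouse containing $x$ and iterate its top measure $\kappa$ many times to get a mouse $N$ in which $\kappa$ is the top critical point, hence inaccessible in $N$; coiterating $M$ and $N$, the mismatch (``$\kappa$ inaccessible in $N'$ but not in $M'$'') forces $N'$ to be an initial segment of $M'$, whence $x\in\POT{\kappa}^{M'}=\POT{\kappa}^{M}$. That is the step that makes every recognized witness correct, and it is exactly what your proposal lacks.
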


\begin{proof}
 Since the $\GCH$ holds in $\KK^{DJ}$ and $\HH{\aleph_0}^{\KK^{DJ}}=\LL_\omega$, we may assume that either  $\kappa$ is a successor cardinal in $\KK^{DJ}$ or $\kappa$ is singular  in $\KK^{DJ}$. Moreover, we may assume that there exists a mouse, because otherwise $\KK^{DJ}=\LL$ and $\{\HH{\kappa}^{\KK^{DJ}}\}=\{\LL_\kappa\}$ is $\Sigma_1$-definable in the desired way.

 \begin{claim*}
  If $M$ is a mouse at some $\nu>\kappa$ such that $\kappa$ is either a successor cardinal or a singular cardinal in $M$, then $\HH{\kappa}^{\KK^{DJ}}\subseteq M$. 
 \end{claim*}

 \begin{proof}[Proof of the Claim]
  Fix $x\in\HH{\kappa}^{\KK^{DJ}}$. Then there exists a mouse $N_0$ at some cardinal $\mu<\kappa$ such that $\betrag{N_0}<\kappa$ and $x\in lp(N_0)$. 
 Let $N$ be the mouse obtained by iterating the top measure of $N_0$ $\kappa$ many times. Then clearly, $x \in lp(N)$ and $\kappa$ is the critical point of the top measure of $N$, and is therefore inaccessible in $N$. The coiteration of $M,N$ which can only involve the top measures of the two mice, results in $\Sigma_1$-elementary embeddings $\map{\pi_M}{M}{M^\prime}$ and $\map{\pi_N}{N}{N^\prime}$, with the following properties:
 \begin{itemize}
     \item $\kappa$ is inaccessible in $N^{\prime}$ but not in $M^{\prime}$,
     \item $x \in lp(N^{\prime})$, 
     \item $\power(\kappa)^M = \power(\kappa)^{M'}$, and
     \item 
 one of $M^\prime$,  $N^\prime$ is an initial segment of the other. 
 \end{itemize}
 It is therefore clear that $N^{\prime}$
 must be an initial segment of $M^{\prime}$, from which we conclude  that $x \in \power^{M'}(\kappa)$ and therefore that $x \in M$. 
 \end{proof}

 As outlined earlier, the assumption that $\KK^{DJ}\neq\LL$ implies that $\KK^{DJ}$ is equal to the union of all lower parts of mice. In particular, there exists a mouse $M$ at an ordinal above $\kappa$ with the property that $\kappa$ is either a successor cardinal or a singular cardinal in $M$. By the above claim, we know that $\HH{\kappa}^{\KK^{DJ}}=\HH{\kappa}^M$ holds for every mouse $M$ with these properties. By combining this implication with earlier remarks about the definability of the class of all mice, we can derive the statement of the lemma. 
\end{proof}

\begin{lemma}\label{lemma:CofInKDJ}
  If $\kappa$ is an infinite cardinal, then the set $\{\cof{\kappa}^{\KK^{DJ}}\}$ is definable by a $\Sigma_1$-formula with parameter $\kappa$. 
\end{lemma}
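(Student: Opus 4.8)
The plan is to split into two cases according to whether $\kappa$ is regular or singular in $\KK^{DJ}$, and to handle the (harder) singular case by combining Lemma \ref{lemma:InitialSegmentsKDJ} with the observation that \emph{strictly increasing} cofinal maps pin down cofinalities exactly.

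First, if $\kappa$ is regular in $\KK^{DJ}$ (in particular if $\kappa$ is a successor cardinal or inaccessible in $\KK^{DJ}$), then $\cof{\kappa}^{\KK^{DJ}}=\kappa$ and the singleton $\{\cof{\kappa}^{\KK^{DJ}}\}$ is trivially definable by the $\Sigma_1$-formula $v_0=\kappa$ with parameter $\kappa$. So I may assume that $\kappa$ is singular in $\KK^{DJ}$ and set $\lambda_0=\cof{\kappa}^{\KK^{DJ}}<\kappa$. Since $\kappa$ is then not inaccessible in $\KK^{DJ}$, Lemma \ref{lemma:InitialSegmentsKDJ} applies and yields that the set $\{\HH{\kappa}^{\KK^{DJ}}\}$ is definable by a $\Sigma_1$-formula with parameter $\kappa$.

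Next, I would define $\lambda_0$ as the unique $\lambda$ satisfying the conjunction of the following three clauses: (i) $\lambda<\kappa$; (ii) $\HH{\kappa}^{\KK^{DJ}}\models$ \anf{$\lambda$ is a regular cardinal}; and (iii) there is a level $M$ of $\KK^{DJ}$ (i.e.\ some $\LL_\eta$ or some mouse) containing a strictly increasing map $\map{f}{\lambda}{\kappa}$ whose range is cofinal in $\kappa$. The point is that each clause is $\Sigma_1$ in the parameter $\kappa$: clause (iii) is $\Sigma_1(\kappa)$ because the class of mice is $\Sigma_1$-definable from the uncountable ordinal $\kappa$ and the remaining requirements on $M$ and $f$ are bounded; and clause (ii) is $\Sigma_1(\kappa)$ because, once a set $H$ with $H=\HH{\kappa}^{\KK^{DJ}}$ is produced by the $\Sigma_1$-definition of Lemma \ref{lemma:InitialSegmentsKDJ}, the statement \anf{$H\models\lambda$ is a regular cardinal} only quantifies over elements of $H$ and is therefore $\Delta_0$ in $H$ and $\lambda$. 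A conjunction of $\Sigma_1$-formulas being $\Sigma_1$, this produces the desired $\Sigma_1(\kappa)$-definition, provided the three clauses indeed single out $\lambda_0$.

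The main thing to verify is therefore that the conjunction holds exactly for $\lambda=\lambda_0$. For the correctness of clause (ii) I would use that every map witnessing the failure of regularity of an ordinal $\lambda<\kappa$ has cardinality less than $\kappa$ and hence lies in $\HH{\kappa}^{\KK^{DJ}}$; since $\HH{\kappa}^{\KK^{DJ}}\subseteq\KK^{DJ}$ cannot contain spurious such maps either, the model $\HH{\kappa}^{\KK^{DJ}}$ computes regularity of ordinals below $\kappa$ correctly, so clause (ii) is equivalent to \anf{$\lambda$ is regular in $\KK^{DJ}$}. (Note that this does not require $\HH{\kappa}^{\KK^{DJ}}$ to satisfy full replacement, which may well fail since $\kappa$ is singular.) For the remaining direction, if $\lambda$ satisfies (i)--(iii) then the strictly increasing cofinal map from clause (iii) forces $\cof{\kappa}^{\KK^{DJ}}=\cof{\lambda}^{\KK^{DJ}}$, while clause (ii) gives $\cof{\lambda}^{\KK^{DJ}}=\lambda$, whence $\lambda=\lambda_0$; conversely $\lambda_0$ is regular in $\KK^{DJ}$ and, since $\cof{\kappa}^{\KK^{DJ}}=\lambda_0$, carries a strictly increasing cofinal map into $\kappa$ that, as an element of $\KK^{DJ}$, lies in some level $M$, so that $\lambda_0$ satisfies all three clauses. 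The most delicate point, and the one I would write out in full, is precisely the equivalence in clause (ii): it is the use of the size bound below $\kappa$ that converts the naively $\Pi_1$ notion of regularity in $\KK^{DJ}$ into a $\Delta_0$ check inside the $\Sigma_1$-definable set $\HH{\kappa}^{\KK^{DJ}}$, and this is what keeps the entire definition $\Sigma_1$.
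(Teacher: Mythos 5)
Your proposal follows essentially the same route as the paper's proof: dispose of the case where $\kappa$ is regular in $\KK^{DJ}$ trivially, and in the singular case (where Lemma \ref{lemma:InitialSegmentsKDJ} applies, since singularity in $\KK^{DJ}$ rules out inaccessibility in $\KK^{DJ}$) characterize $\cof{\kappa}^{\KK^{DJ}}$ as the unique $\lambda<\kappa$ that is regular in the $\Sigma_1$-definable set $\HH{\kappa}^{\KK^{DJ}}$ and admits a cofinal map into $\kappa$ lying in $\KK^{DJ}$. Your insistence on \emph{strictly increasing} cofinal maps is in fact a point where you are more careful than the paper's own wording: with arbitrary cofinal functions, uniqueness fails, since every $\KK^{DJ}$-regular $\xi$ in the interval $[\cof{\kappa}^{\KK^{DJ}},\kappa)$ carries a (non-monotone) cofinal map into $\kappa$ inside $\KK^{DJ}$, obtained by composing the canonical one with a surjection; and replacing uniqueness by minimality would introduce a $\Pi_1$ clause and spoil $\Sigma_1$-ness. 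Your justification of clause (ii) --- that $\HH{\kappa}^{\KK^{DJ}}$ computes regularity below $\kappa$ correctly because any collapsing function of an ordinal $\lambda<\kappa$ has hereditary size below $\kappa$ --- is exactly the implicit content of the paper's formulation.

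There is, however, one genuine (though easily repaired) gap, in clause (iii). You require the witness $f$ to be an element of ``some $\LL_\eta$ or some mouse $M$''. Membership in $\LL_\eta$ is fine, but an element of a mouse $M$ at $\mu$ need not belong to $\KK^{DJ}$: only the lower part $lp(M)=M\cap V_\mu$ is contained in $\KK^{DJ}$, and a cofinal map $\map{f}{\lambda}{\kappa}$ has rank roughly $\kappa$, so whenever $\mu\leq\kappa$ such an $f$ can lie in $M\setminus lp(M)$. Your uniqueness verification (``the strictly increasing cofinal map from clause (iii) forces $\cof{\kappa}^{\KK^{DJ}}=\cof{\lambda}^{\KK^{DJ}}$'') tacitly assumes $f\in\KK^{DJ}$; for $f\in M\setminus lp(M)$ this is not justified --- such a map only witnesses a cofinality computation inside the mouse $M$, which you have not related to the cofinality in $\KK^{DJ}$, and a priori $M$ and $\KK^{DJ}$ could disagree here. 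The fix is immediate and lands exactly on the paper's formulation: require $f\in\LL_\eta$ or $f\in lp(M)$, equivalently $f\in\KK^{DJ}$, which is still a $\Sigma_1$ requirement in the parameter $\kappa$ because the class $\KK^{DJ}$ is $\Sigma_1$-definable with parameter $\kappa$ (this is precisely the definability fact the paper invokes at this point). With that adjustment your argument is correct.
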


\begin{proof}
 This is trivial in the case $\kappa$ is not is singular in $\KK^{DJ}$. Assuming it is singular, we know that $\cof{\kappa}^{\KK^{DJ}}$ is the unique ordinal $\xi<\kappa$ with the property that $\xi$ is regular in $\HH{\kappa}^{\KK^{DJ}}$ and $\KK^{DJ}$ contains a cofinal function $\map{c}{\xi}{\kappa}$. 
 Since the class $\KK^{DJ}$ is definable by a $\Sigma_1$-formula with parameter $\kappa$ (see, for example, the proof of {\cite[Lemma 4.13]{Sigma1Partitions}}), we can apply Lemma \ref{lemma:InitialSegmentsKDJ} to conclude that the set $\{\cof{\kappa}^{\KK^{DJ}}\}$ is also definable by a $\Sigma_1$-formula with parameter $\kappa$.  
\end{proof}

\begin{lemma}\label{lemma:LeastCofinal}
 If $\kappa$ is an infinite cardinal and $c$ is the $<_{\KK^{DJ}}$-least cofinal function from $\cof{\kappa}^{\KK^{DJ}}$ to  $\kappa$ in $\KK^{DJ}$, then the set $\{c\}$ is definable by a $\Sigma_1$-formula with parameter $\kappa$. 
\end{lemma}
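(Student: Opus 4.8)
The plan is to define $c$ by locating a level of $\KK^{DJ}$ that is tall enough to contain both $c$ and the ordinal $\kappa$, and then reading off $c$ as the function that this level \emph{internally} regards as the least cofinal map. Write $\lambda=\cof{\kappa}^{\KK^{DJ}}$; by Lemma \ref{lemma:CofInKDJ} the set $\{\lambda\}$ is $\Sigma_1$-definable from $\kappa$, and since $c$ is an element of $\KK^{DJ}$ of hereditary cardinality at most $\kappa$, it first appears at some level of $\KK^{DJ}$ of height exceeding $\kappa$. We may assume that $\kappa$ is uncountable, the case $\kappa=\omega$ being immediate since the $<_{\KK^{DJ}}$-least cofinal function $\omega\to\omega$ lies in a fixed countable level of $L$ and is thus $\Sigma_1$-definable outright.

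The argument rests on three standard features of the Dodd--Jensen core model, all available from the presentation in \cite{MR730856}. First, the class of levels of $\KK^{DJ}$ -- that is, the structures $L_\eta$ together with the mice and their lower parts -- is $\Sigma_1$-definable from the uncountable parameter $\kappa$, by the $\Sigma_1$-definability of the class of mice recorded earlier. Second, the canonical well-ordering $<_{\KK^{DJ}}$ orders objects primarily according to the least level at which they appear; consequently every level $M$ of $\KK^{DJ}$ is an initial segment of $<_{\KK^{DJ}}$, and the restriction $<_{\KK^{DJ}}\uhr M$ coincides with the canonical well-ordering of $M$, which is definable over $\langle M,\in\rangle$. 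Third, if $M$ is a transitive level with $\kappa\in M$ and $\kappa\subseteq M$, then for every $x\in M$ the statements ``$x$ is a function from $\lambda$ to $\kappa$'' and ``$\ran{x}$ is unbounded in $\kappa$'' are absolute between $M$ and $V$, being $\Delta_0$ in $x,\lambda,\kappa$.

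With these in hand, I would define $c$ to be the unique set $x$ for which there exist $M$ and $\lambda$ satisfying: $M$ is a level of $\KK^{DJ}$ with $\kappa\in M$ and $\kappa\subseteq M$; $\lambda=\cof{\kappa}^{\KK^{DJ}}$; and $x\in M$ together with
$$M\models\text{``$x$ is the $<$-least function from $\lambda$ to $\kappa$ with unbounded range''},$$
where $<$ denotes the canonical well-ordering of $M$. The clause that $M$ is a level of $\KK^{DJ}$ is $\Sigma_1$ in $M,\kappa$ by the first fact above, the clause $\lambda=\cof{\kappa}^{\KK^{DJ}}$ is $\Sigma_1$ in $\lambda,\kappa$ by Lemma \ref{lemma:CofInKDJ}, the conditions $\kappa\in M$, $\kappa\subseteq M$ and $x\in M$ are bounded, and the displayed clause is $\Delta_0$ in $M,x,\lambda,\kappa$ once $M$ is fixed. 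Pulling the witnesses $M,\lambda$ out in front therefore yields a $\Sigma_1$-formula in the single parameter $\kappa$.

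It remains to check that this formula has $c$ as its unique solution. For existence, take $M$ to be any level of $\KK^{DJ}$ of height greater than $\kappa$ with $c\in M$; then $\kappa\in M$ and $\kappa\subseteq M$, and since $c$ is genuinely the $<_{\KK^{DJ}}$-least cofinal function from $\lambda$ to $\kappa$, no cofinal candidate precedes it even from the point of view of $M$, so $M$ regards $c$ as least. For uniqueness, suppose that $x=c'$ is witnessed by some level $M'$ whose internal least cofinal function is $c'$. Since $c$ is globally least among cofinal functions $\lambda\to\kappa$, we have $c\leq_{\KK^{DJ}}c'$; as $c'\in M'$ and $M'$ is an initial segment of $<_{\KK^{DJ}}$, it follows that $c\in M'$. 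But $c$ is cofinal and $c\leq_{\KK^{DJ}}c'$, so the internal leastness of $c'$ inside $M'$ forces $c'=c$. The main obstacle, and the point requiring the most care in the full write-up, is precisely this marriage of the \emph{local} computation inside $M'$ with the \emph{global} well-ordering: it is the initial-segment property of levels, together with the absoluteness of cofinality, that guarantees that the level's internal verdict agrees with the true one.
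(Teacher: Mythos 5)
Your overall skeleton is the same as the paper's: the paper's proof consists of exactly two ingredients, namely Lemma \ref{lemma:CofInKDJ} (to pin down $\lambda=\cof{\kappa}^{\KK^{DJ}}$) and a citation of {\cite[Lemma 2.3]{MR3845129}}, which says that the initial segments of $<_{\KK^{DJ}}\restriction\POT{\kappa}$ are $\Sigma_1$-definable with parameter $\kappa$. You use the first ingredient verbatim and attempt to inline a proof of the second via ``levels'' of $\KK^{DJ}$. Unfortunately, the inlined part contains a genuine error: your second ``standard feature'' is false as stated. If $\KK^{DJ}\neq\LL$, then the levels $\LL_\eta$ are \emph{not} initial segments of $<_{\KK^{DJ}}$, and $<_{\KK^{DJ}}\restriction\LL_\eta$ does \emph{not} coincide with the canonical well-ordering $<_\LL\restriction\LL_\eta$ definable over $\LL_\eta$. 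Indeed, the canonical well-ordering of $\KK^{DJ}$ is set-like, so any non-constructible real $r\in\KK^{DJ}$ has only set-many $<_{\KK^{DJ}}$-predecessors; hence every sufficiently large $\LL_\eta$ contains constructible sets lying $<_{\KK^{DJ}}$-above $r$ while $r\notin\LL_\eta$. The well-ordering of $\KK^{DJ}$ necessarily interleaves constructible and non-constructible sets, so a level of $\LL$ can neither be an initial segment of it nor compute it internally.

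This breaks the uniqueness half of your argument, not just its presentation. Take $\kappa$ regular (so $\lambda=\kappa$ and $\LL$ certainly contains cofinal functions from $\lambda$ to $\kappa$) and suppose $\KK^{DJ}\neq\LL$. Then a large $\LL_\eta$ qualifies as a ``level'' under your definition, and its internal verdict --- the $<_\LL$-least cofinal function from $\lambda$ to $\kappa$ --- satisfies your $\Sigma_1$-formula, yet it can differ from the true $<_{\KK^{DJ}}$-least such function, which may well be non-constructible; your formula then has two distinct solutions. (A similar problem arises from admitting full mice $J^U_\eta$ as levels: their elements above the measurable, such as the top measure, first appear very late in the stratification of $\KK^{DJ}$, so full mice are not initial segments of $<_{\KK^{DJ}}$ either.) The repair is to take as levels only the lower parts $lp(M)$ of mice (together with the $\LL_\eta$ in the case $\KK^{DJ}=\LL$), and then to prove that the internal construction orders of different mice cohere on their common lower parts and agree with $<_{\KK^{DJ}}$ --- a comparison/coiteration argument. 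That coherence is exactly the nontrivial content hidden in your phrase ``standard features,'' and it is precisely what {\cite[Lemma 2.3]{MR3845129}} packages; once it is granted, your existence and uniqueness computations do go through and reproduce the paper's one-line proof. The same caveat applies to your side remark on $\kappa=\omega$: the $<_{\KK^{DJ}}$-least cofinal function from $\omega$ to $\omega$ need not lie in any level of $\LL$, since it need not be constructible.
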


\begin{proof}
This is an immediate consequence of the proof of {\cite[Lemma 2.3]{MR3845129}}, which shows that the collection of initial segments of the restriction of $<_{\KK^{DJ}}$ to $\POT{\kappa}$ is definable by a $\Sigma_1$-formula with parameter $\kappa$, and  Lemma \ref{lemma:CofInKDJ} above.
\end{proof}

We now use the above results to show that, in the case of singular cardinals $\kappa$ of uncountable cardinality, the statement of Proposition \ref{proposition:SigmaStatNonStat} cannot be strengthened in $\ZFC$. We will later improve this result to obtain $\cof{\kappa}$-many measurable cardinals from the existence of a singular cardinal $\kappa$ of uncountable cofinality with the property that there exists a non-stationary $\mathbf{\Sigma}_1$-stationary subset of $\kappa$ (see Theorem \ref{theorem:UncCofManyMeasurables} and Corollary \ref{corollary:StrengthSingularUncountCof} below).

\begin{corollary}\label{corollary:SingularUncCofStationary}
    Assume that there is no inner model with a measurable cardinal. If $\kappa$ is a singular cardinal of uncountable cofinality and $S$ is  $\Sigma_1(\POT{\cof{\kappa}^{\KK^{DJ}}})$-stationary in $\kappa$, then $S$ is a stationary subset of $\kappa$. 
\end{corollary}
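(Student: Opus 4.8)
The plan is to prove the contrapositive: assuming that $S$ is non-stationary, I will produce a closed unbounded set $C\subseteq\kappa$ disjoint from $S$ such that $\{C\}$ is definable by a $\Sigma_1$-formula with parameters in $\POT{\cof{\kappa}^{\KK^{DJ}}}\cup\{\kappa\}$, which witnesses that $S$ is not $\Sigma_1(\POT{\cof{\kappa}^{\KK^{DJ}}})$-stationary. Write $\lambda=\cof{\kappa}^{\KK^{DJ}}$. If $\kappa$ happens to be regular in $\KK^{DJ}$ (which, by the Dodd--Jensen Covering Lemma, in fact cannot occur under our hypothesis, though we need not use this), then $\lambda=\kappa$ and $S$ is trivially non-$\Sigma_1(\POT\kappa)$-stationary, since any witnessing club $D$ makes $\{D\}$ definable by the $\Delta_0$-formula $y=D$ with parameter $D\in\POT\kappa$. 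So I may assume $\kappa$ is singular in $\KK^{DJ}$, whence $\lambda<\kappa$. The hypothesis that there is no inner model with a measurable cardinal enters only to guarantee that $\KK^{DJ}$ is well-defined and that the definability Lemmas \ref{lemma:InitialSegmentsKDJ}--\ref{lemma:LeastCofinal} apply.

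The key tool is a canonical, $\Sigma_1$-definable \emph{continuous} cofinal map. Since $\lambda=\cof{\kappa}^{\KK^{DJ}}$ and $\KK^{DJ}\models\ZFC$, there is in $\KK^{DJ}$ a strictly increasing, continuous, cofinal function $\map{d}{\lambda}{\kappa}$; let $d$ be the $<_{\KK^{DJ}}$-least such function. Exactly as in the proofs of Lemmas \ref{lemma:CofInKDJ} and \ref{lemma:LeastCofinal} --- using that $\lambda$ is $\Sigma_1$-definable from $\kappa$ and that initial segments of $<_{\KK^{DJ}}\restriction\POT\kappa$ are $\Sigma_1$-definable from $\kappa$ --- the function $d$ is definable by a $\Sigma_1$-formula with parameter $\kappa$ and has $\Delta_1$ graph. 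Because continuity and suprema are absolute, $d$ remains continuous in $V$, so $\ran{d}$ is closed in $\kappa$; being cofinal, $\ran{d}$ is a closed unbounded subset of $\kappa$.

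With $d$ in hand the construction is short. Fix a closed unbounded $D\subseteq\kappa$ with $D\cap S=\emptyset$, and set
$$A ~ = ~ \Set{\xi<\lambda}{d(\xi)\in D} ~ \subseteq ~ \lambda, \qquad C ~ = ~ d[A] ~ = ~ \ran{d}\cap D.$$
Since $\kappa$ has uncountable cofinality in $V$, the intersection $\ran{d}\cap D$ of two closed unbounded sets is again closed unbounded, so $C$ is a club; and $C\subseteq D$ gives $C\cap S=\emptyset$. It remains to observe that $\{C\}$ is $\Sigma_1$-definable from $A\in\POT\lambda$ and $\kappa$. Writing $C=d[A]$, the statement $\anf{y=d[A]}$ is the conjunction of $\forall z\in y\,\exists\xi\in A\,(d(\xi)=z)$ and $\forall\xi\in A\,(d(\xi)\in y)$, both of which are bounded quantifications of $\Sigma_1$-formulas and hence $\Sigma_1$ by $\Sigma_1$-collection. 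Thus $\{C\}$ is definable by a $\Sigma_1$-formula with parameters in $\POT\lambda\cup\{\kappa\}$, and $C$ witnesses that $S$ is not $\Sigma_1(\POT\lambda)$-stationary, as desired.

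I expect the main obstacle to be the verification that the continuous cofinal function $d$ is $\Sigma_1$-definable from $\kappa$: Lemma \ref{lemma:LeastCofinal} provides the $<_{\KK^{DJ}}$-least cofinal function but not a continuous one, so one must either re-run its argument for the least increasing continuous cofinal function, or first pass to the closure of the range of the function supplied by Lemma \ref{lemma:LeastCofinal} and re-enumerate it inside $\KK^{DJ}$ --- in either case appealing to the $\Sigma_1$-definability of $<_{\KK^{DJ}}\restriction\POT\kappa$ to keep the only non-parameter down to $\kappa$. The remaining points (closedness of $\ran{d}$ via absoluteness of continuity, club-ness of $\ran{d}\cap D$ via the uncountable cofinality of $\kappa$, and the bounded-quantifier computation showing $\{d[A]\}$ is $\Sigma_1$) are routine.
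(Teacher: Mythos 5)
Your proposal is correct and takes essentially the same approach as the paper: in both arguments the core is the $\Sigma_1(\kappa)$-definability (via Lemma \ref{lemma:LeastCofinal}) of a closed unbounded subset of $\kappa$ of order type $\lambda=\cof{\kappa}^{\KK^{DJ}}$ together with its monotone enumeration, so that the trace of an arbitrary club on this set can be coded by a subset of $\lambda$, which is an allowed parameter. Your contrapositive organization, the explicit handling of the (vacuous, by Dodd--Jensen covering) case that $\kappa$ is regular in $\KK^{DJ}$, and your passage to the $<_{\KK^{DJ}}$-least \emph{continuous} increasing cofinal function are only presentational refinements of the step the paper performs implicitly when it extracts the club $C_0$ and its enumeration from Lemma \ref{lemma:LeastCofinal}.
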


\begin{proof}
 By our assumption, the results of \cite{MR661475} ensure that $\kappa$ is a singular cardinal in $\KK^{DJ}$. Using Lemma \ref{lemma:LeastCofinal}, we find a closed unbounded subset $C_0$ of $\kappa$ of order-type $\cof{\kappa}^{\KK^{DJ}}$ with the property that both the set $\{C_0\}$ and the monotone enumeration of $C_0$ are definable by a $\Sigma_1$-formula with parameter $\kappa$. Given an arbitrary closed unbounded subset $C$ of $\kappa$, we then know that the intersection $C\cap C_0$ is a closed unbounded subset and the set $\{C\cap C_0\}$ is definable by a $\Sigma_1$-formula with parameters in $\POT{\cof{\kappa}^{\KK^{DJ}}}\cup\{\kappa\}$. This shows that every subset of $\kappa$ that is $\Sigma_1(\POT{\cof{\kappa}^{\KK^{DJ}}})$-stationary in $\kappa$ is stationary in $\kappa$. 
\end{proof}

We now derive further consequences of the above lemmata. The results of the subsequent sections will show that it is possible to use large cardinals to obtain  singular cardinals $\kappa$ where the negations of all  of the listed statements hold.

\begin{corollary}\label{corollary:ConsequencesSingularNoMeasurable}
 Assume that there is no inner model with a measurable cardinal. If $\kappa$ is a singular cardinal, then the following statements hold: 
 \begin{enumerate}
  \item If $\alpha<\kappa$, then the set $\{\alpha\}$ is not $\Sigma_1(\alpha)$-stationary in $\kappa$. 

  \item There is an unbounded subset of $\kappa$ that consists of cardinals and is not $\Sigma_1$-stationary. 

  \item There exists a regressive function $\map{r}{\kappa}{\kappa}$ that is definable by a $\Sigma_1$-formula with parameter $\kappa$ and is not constant on any unbounded subset of $\kappa$. 
 \end{enumerate}
\end{corollary}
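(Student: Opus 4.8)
The plan is to prove all three items with a single piece of machinery: a canonical $\Sigma_1$-definable (from $\kappa$) cofinal structure on $\kappa$ coming from $\KK^{DJ}$. First I would invoke the covering lemma of \cite{MR661475}: since there is no inner model with a measurable cardinal, every singular cardinal of $V$ remains singular in $\KK^{DJ}$, so $\lambda:=\cof{\kappa}^{\KK^{DJ}}<\kappa$. By Lemma \ref{lemma:CofInKDJ} the ordinal $\lambda$ is definable by a $\Sigma_1$-formula with parameter $\kappa$, and by Lemma \ref{lemma:LeastCofinal} so is the $<_{\KK^{DJ}}$-least cofinal function $\map{c}{\lambda}{\kappa}$. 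Using the $\Sigma_1$-Recursion Theorem (as in the proof of Lemma \ref{lemma:CharClubsClosed}) I would replace $c$ by a strictly increasing, continuous, cofinal function $\map{e}{\lambda}{\kappa}$ dominating it; this $e$ is again definable by a $\Sigma_1$-formula with parameter $\kappa$, its range $C_0=\ran{e}$ is a closed unbounded subset of $\kappa$ of order type \emph{exactly} $\lambda$, and $\betrag{C_0}=\lambda$.

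For (1), fix $\alpha<\kappa$ and split into cases. If $\alpha\le\lambda$, then the tail $(\lambda,\kappa)$ is a closed unbounded subset of $\kappa$ avoiding $\alpha$ whose singleton is $\Sigma_1$-definable from $\kappa$ alone, so $\{\alpha\}$ is not even $\Sigma_1$-stationary. If $\alpha>\lambda$ and $\alpha\notin C_0$, then $C_0$ itself witnesses this. Finally, if $\alpha>\lambda$ and $\alpha\in C_0$, write $\alpha=e(\zeta)$; since $\otp{C_0}=\lambda$ we have $\zeta<\lambda<\alpha$, and the tail $\Set{e(\eta)}{\zeta<\eta<\lambda}$ is a closed unbounded subset of $\kappa$ avoiding $\alpha=e(\zeta)$ whose singleton is definable by a $\Sigma_1$-formula with parameters $\zeta\in\alpha$ and $\kappa$. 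In every case $\{\alpha\}$ is not $\Sigma_1(\alpha)$-stationary. The point I would emphasize is that fixing $\otp{C_0}$ to be exactly $\lambda$, together with the separate treatment of $\alpha\le\lambda$, is precisely what forces the natural index parameter $\zeta$ to lie below $\alpha$; this is the step that would otherwise break down at ``fixed points'' of the enumeration (where $e(\zeta)=\zeta$), and I view it as the main technical subtlety.

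For (2), the key observation is a cardinality one: since $\betrag{C_0}=\lambda$, no ordinal $\mu$ with $\cof{\mu}>\lambda$ can be a limit point of $C_0$, as that would require $C_0\cap\mu$ to be cofinal in $\mu$. I would therefore take $C$ to be the closure of $\Set{e(\xi)+1}{\xi<\lambda}$, whose elements, by continuity of $e$, are either successor ordinals $e(\xi)+1$ or limit points $e(\rho)$ of $C_0$, and let $E$ be the set of successor cardinals in the interval $(\lambda,\kappa)$. Each such $\mu$ is a limit ordinal with $\cof{\mu}=\mu>\lambda$, hence is neither a successor ordinal nor a limit point of $C_0$, so $\mu\notin C$. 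Since $\kappa$ is a limit cardinal, $E$ is unbounded in $\kappa$ and consists of cardinals, while $C$ is a closed unbounded subset of $\kappa$ disjoint from $E$ whose singleton is $\Sigma_1$-definable from $\kappa$; thus $E$ is not $\Sigma_1$-stationary.

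For (3), I would define $\map{r}{\kappa}{\kappa}$ by setting $r(\alpha)=0$ for $\alpha\le\lambda$ and $r(\alpha)=\otp{C_0\cap\alpha}$ for $\alpha>\lambda$. Regressivity is immediate, since $\otp{C_0\cap\alpha}\le\lambda<\alpha$, and $r$ is definable by a $\Sigma_1$-formula with parameter $\kappa$ because $C_0$ and $\lambda$ are. For each value $v$, the fiber $r^{-1}(v)$ is contained in a single interval bounded by two consecutive points of $C_0\cup\{\lambda\}$, hence is bounded in $\kappa$; consequently $r$ cannot be constant on any unbounded subset of $\kappa$. I expect the only real work throughout to lie in the first paragraph, namely producing the continuous $\Sigma_1$-definable enumeration of the correct order type from the cited lemmas, after which (1)--(3) are short.
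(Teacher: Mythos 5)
Your proposal is correct, and it runs on the same engine as the paper's proof: covering for the Dodd--Jensen core model makes $\kappa$ singular in $\KK^{DJ}$, and Lemmas \ref{lemma:CofInKDJ} and \ref{lemma:LeastCofinal} turn $\lambda=\cof{\kappa}^{\KK^{DJ}}<\kappa$ and the $<_{\KK^{DJ}}$-least cofinal function $c$ into $\Sigma_1$-singletons in the parameter $\kappa$. For items (1) and (3) your argument differs only cosmetically from the paper's, which uses the tails $(c(\xi),\kappa)$ and the function $\alpha\mapsto\min\Set{\xi<\lambda}{c(\xi)\geq\alpha}$ in place of your tails of $C_0$ and $\alpha\mapsto\otp{C_0\cap\alpha}$. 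Item (2) is where you genuinely diverge: the paper takes for its club the set of ordinals that are limit cardinals in $\KK^{DJ}$ (whose $\Sigma_1$-definability rests on Lemma \ref{lemma:InitialSegmentsKDJ}) and for $E$ the set of successor cardinals of singular cardinals below $\kappa$, using the covering-theorem fact that $\KK^{DJ}$ computes such successors correctly; you instead observe that your club $C$ consists only of successor ordinals and of limit points of $C_0$, and the latter have cofinality below $\lambda$, so $C$ misses every regular cardinal in the interval $(\lambda,\kappa)$ and you may take $E$ to be the set of all successor cardinals in that interval. Your route buys two things: it needs no fine-structural input beyond the definable cofinal function itself (Lemma \ref{lemma:InitialSegmentsKDJ} is not used at all), and your witness $E$ is unbounded for every singular $\kappa$, whereas the paper's witness set presupposes unboundedly many singular cardinals below $\kappa$ and so is not unbounded at, for instance, $\kappa=\aleph_\omega$. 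Two small points you should make explicit when writing this up: first, the $\Sigma_1$-recursion producing $e$ stays below $\kappa$ at every stage because no proper initial segment $c\restriction\xi$ has range cofinal in $\kappa$ (this follows from the regularity of $\lambda$ in $\KK^{DJ}$, the fact that $c\restriction\xi\in\KK^{DJ}$, and the absoluteness of the statement $\sup\ran{c\restriction\xi}=\kappa$, together with closure of the cardinal $\kappa$ under ordinal addition); second, $\betrag{C_0}$ equals $\betrag{\lambda}$, which may be smaller than $\lambda$ when $\lambda$ is not a cardinal of $V$, but your cofinality argument only needs $\betrag{C_0}\leq\lambda<\mu$, so nothing breaks.
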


\begin{proof}
 Set $\lambda=\cof{\kappa}^{\KK^{DJ}}$ and let $\map{c}{\lambda}{\kappa}$ denote the the $<_{\KK^{DJ}}$-least cofinal function
from $\lambda$ to $\kappa$ in $\KK^{DJ}$. By our assumption, the results of \cite{MR661475} imply that $\lambda<\kappa$ and we can use Lemma \ref{lemma:InitialSegmentsKDJ} to show that the set $\HH{\kappa}^{\KK^{DJ}}$ is definable by a $\Sigma_1$-formula with parameter $\kappa$. Moreover, Lemma \ref{lemma:CofInKDJ} and Lemma \ref{lemma:LeastCofinal} ensure that both the set $\{\lambda\}$ and the function $c$ are definable by a $\Sigma_1$-formulas with parameter $\kappa$. 

 Now, fix $\alpha<\kappa$. If $\alpha\leq\lambda$, then $C=(\lambda,\kappa)$ is a closed unbounded subset of $\kappa$ that is disjoint from $\{\alpha\}$ and has the property that the set $\{C\}$ is definable by a $\Sigma_1$-formula with parameter $\kappa$. In the other case, if $\lambda<\alpha$, then there is $\xi<\lambda$ with $c(\xi)>\alpha$ and $C=(c(\xi),\kappa)$ is a closed unbounded subset of $\kappa$ disjoint from $\{\alpha\}$ with the property that the set $\{C\}$ is definable by a $\Sigma_1$-formula with parameters in $\alpha\cup\{\kappa\}$. 

 Next, assume that $\kappa$ is a limit of limit cardinals in $\KK^{DJ}$ and define $C$ to be the closed unbounded set of all ordinals $\rho<\kappa$ with the property that, in $\KK^{DJ}$, the ordinal $\rho$ is a limit cardinal. Since the set $\{\HH{\kappa}^{\KK^{DJ}}\}$ is definable by a $\Sigma_1$-formula with parameter $\kappa$, it follows that set $\{C\}$ is definable in the same way. 
 Now, let $E$ denote the set of all  successor cardinals of singular cardinals smaller than $\kappa$. Since out setup and the results of \cite{MR661475} ensure that all singular cardinals are singular in $\KK^{DJ}$ and $\KK^{DJ}$ computes the successors of these cardinals correctly, we know that each element of $E$ is the  successor of a singular cardinal in $\KK^{DJ}$ and this shows that $C\cap E=\emptyset$. In particular, we can conclude that $E$ is an unbounded subset of $\kappa$ that consists of cardinals and is not $\Sigma_1$-stationary.

 We now assume that $\kappa$ is not a limit of limit cardinals in $\KK^{DJ}$.  Let $\eta$ denote the least ordinal below $\kappa$ with the property that the intverval $(\eta,\kappa)$ contains no ordinals that are limit cardinals in $\KK^{DJ}$ and we define $C$ to be the set of ordinals in $(\eta,\kappa)$ that are successor ordinals of ordinals that are  cardinals in $\KK^{DJ}$. Our assumptions then imply that $C$ is a cofinal subset of $\kappa$ of order-type $\omega$ and this implies that $C$ is a closed unbounded subset of $\kappa$. Moreover, the fact that the set $\{\HH{\kappa}^{\KK^{DJ}}\}$ is definable by a $\Sigma_1$-formula with parameter $\kappa$ ensures that the set $\{C\}$ is definable in the same way. If we now define $E$ to be the set of all cardinals in the interval $(\eta,\kappa)$, then $E$ is unbounded in $\kappa$ and, since  $C\cap E=\emptyset$ holds, we know that $E$ is not $\Sigma_1$-stationary.

 Finally, define $\map{r}{\kappa}{\lambda}$ to be the unique map with $c(\alpha)=0$ for all $\alpha<\lambda$ and $c(\alpha)=\min\Set{\xi<\lambda}{c(\xi)\geq\alpha}$ for all $\lambda\leq\alpha<\kappa$. Then $r$ is regressive and it is not constant on any unbounded subset of $\kappa$. Moreover, our earlier observations show that $r$ is definable by a $\Sigma_1$-formula with parameter $\kappa$.  
\end{proof}

\begin{corollary}\label{corollary:NoMeasurableCorollaries3}
 Assume that there is no inner model with a measurable cardinal. If $\kappa$ is a singular cardinal of countable cofinality, then the following statements hold: 
 \begin{enumerate}
     \item There are disjoint closed unbounded subsets $C_0$ and $C_1$ of $\kappa$ with the property that the sets $\{C_0\}$ and $\{C_1\}$ are both definable by $\Sigma_1$-formulas with parameters in $\HH{\kappa}\cup\{\kappa\}$. 

     \item There exists a subset $A$ of $\HH{\kappa}$ of cardinality $\mathfrak{r}$ such that every subset of $\kappa$ that is $\Sigma_1(A)$-stationary in $\kappa$ contains a closed unbounded subset $C$ of $\kappa$ with the property that the set $\{C\}$ is definable by a $\Sigma_1$-formula with parameters in $A\cup\{\kappa\}$. 
 \end{enumerate}
\end{corollary}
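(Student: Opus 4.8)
The plan is to reduce everything to a single $\Sigma_1$-definable cofinal $\omega$-sequence in $\kappa$ and then to translate the two assertions into elementary statements about subsets of $\omega$ controlled by the reaping number. First I would fix the defining sequence. Since there is no inner model with a measurable cardinal, the covering lemma \cite{MR661475} applies and, exactly as in the proof of Corollary \ref{corollary:ConsequencesSingularNoMeasurable}, $\kappa$ is singular in $\KK^{DJ}$; moreover, as covering preserves the cofinality of a $V$-singular cardinal, $\cof{\kappa}^{\KK^{DJ}}=\cof{\kappa}=\omega$. By Lemma \ref{lemma:LeastCofinal}, the $<_{\KK^{DJ}}$-least cofinal function $c\colon\omega\to\kappa$ in $\KK^{DJ}$ is definable by a $\Sigma_1$-formula with parameter $\kappa$, and passing to the strictly increasing enumeration of its range yields a strictly increasing cofinal sequence $\seq{\kappa_n}{n<\omega}$ that is likewise $\Sigma_1$-definable from $\kappa$. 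The crucial feature is that, because $\cof{\kappa}=\omega$, for every infinite $x\subseteq\omega$ the set $C_x=\Set{\kappa_n}{n\in x}$ has no limit point below $\kappa$ and is cofinal in $\kappa$, hence is a closed unbounded subset of $\kappa$; furthermore $\{C_x\}$ is uniformly definable by a $\Sigma_1$-formula with parameters $x$ and $\kappa$, just as the sets $C_0,C_1$ are formed from a definable cofinal function in Lemma \ref{lemma:CharClubsClosed}.

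For the first statement I would simply feed $\seq{\kappa_n}{n<\omega}$ into the implication (1)$\Rightarrow$(2) of Lemma \ref{lemma:CharClubsClosed} with $A=\emptyset$ and $n=1$. This produces disjoint closed unbounded subsets $C_0,C_1$ of $\kappa$ such that $\{C_0\}$ and $\{C_1\}$ are definable by $\Sigma_1$-formulas with parameter $\kappa$, hence with parameters in $\HH{\kappa}\cup\{\kappa\}$; concretely one may take $C_0=\Set{\kappa_{2n}}{n<\omega}$ and $C_1=\Set{\kappa_{2n+1}}{n<\omega}$.

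For the second statement I would fix a reaping family $R\subseteq[\omega]^\omega$ of cardinality $\mathfrak{r}$ witnessing Definition \ref{definition:ReapingNumber}, and close it under removal of finite sets, i.e. put $A=\Set{a\setminus F}{a\in R,\ F\in[\omega]^{<\omega}}$. Every element of $A$ is an infinite subset of $\omega\subseteq\HH{\kappa}$, so $A\subseteq\HH{\kappa}$, and since $\mathfrak{r}$ is infinite $\betrag{A}=\mathfrak{r}$. Now let $S$ be $\Sigma_1(A)$-stationary and set $b_S=\Set{n<\omega}{\kappa_n\in S}$. First $b_S$ must be infinite: otherwise, for any $a\in R$ the set $a\setminus b_S=a\setminus(a\cap b_S)$ is an infinite element of $A$ with $C_{a\setminus b_S}\cap S=\emptyset$, contradicting $\Sigma_1(A)$-stationarity. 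Applying the reaping property of $R$ to $b_S\in[\omega]^\omega$ gives $a\in R$ with $a\cap b_S$ finite or $a\setminus b_S$ finite. In the first case $a\setminus b_S=a\setminus(a\cap b_S)\in A$ is infinite and $C_{a\setminus b_S}$ is a $\Sigma_1(A)$-definable club disjoint from $S$, again contradicting stationarity. Hence $a\setminus b_S$ is finite, so $a\cap b_S=a\setminus(a\setminus b_S)\in A$ is infinite and $C_{a\cap b_S}\subseteq S$; as $a\cap b_S\in A$, the set $\{C_{a\cap b_S}\}$ is definable by a $\Sigma_1$-formula with parameters in $A\cup\{\kappa\}$, so $C_{a\cap b_S}$ is the required definable club contained in $S$.

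The delicate point, and the step I expect to be the main obstacle, is the definability bookkeeping in this last argument: the index sets $a\cap b_S$ and $a\setminus b_S$ are computed from $S$, so a priori the clubs $C_{a\cap b_S}$ and $C_{a\setminus b_S}$ need not be definable from parameters in $A\cup\{\kappa\}$. This is exactly why I close $R$ under finite modifications before forming $A$: reaping only yields an $a\in R$ that is \emph{almost} contained in, or almost disjoint from, $b_S$, and absorbing the finite error into $A$ guarantees that both the extracted club and its ``dual'' used for the contradictions have their defining index set inside the fixed parameter family $A$ rather than depending on $S$. The remaining ingredients—the reduction of closed unbounded subsets of $\kappa$ to subsets of $\omega$ via $\seq{\kappa_n}{n<\omega}$, and the characterization of $\mathfrak{r}$ through Lemma \ref{lemma:reaping} and Definition \ref{definition:ReapingNumber}—I expect to be routine, the only further point requiring care being the verification that $\cof{\kappa}^{\KK^{DJ}}=\omega$ (and not merely $<\kappa$) from the covering lemma.
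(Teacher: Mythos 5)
Your reduction of both statements to subsets of $\omega$ via a reaping family closed under finite deletions is sound and runs parallel to the paper's own argument, but your very first step contains a genuine error: the claim that, by covering, $\cof{\kappa}^{\KK^{DJ}}=\cof{\kappa}=\omega$. The Dodd--Jensen covering lemma does yield that $\kappa$ is \emph{singular} in $\KK^{DJ}$, but it does not make $\KK^{DJ}$ compute the cofinality of $\kappa$ correctly: the ordinal $\lambda=\cof{\kappa}^{\KK^{DJ}}$ can be an uncountable $\KK^{DJ}$-regular cardinal whose $V$-cofinality is $\omega$. Concretely, let $V$ be a Namba-forcing extension of $\LL$ (set forcing adds neither $0^\#$ nor inner models with measurable cardinals, and $\KK^{DJ}$ is forcing invariant, so in $V$ there is no inner model with a measurable and $\KK^{DJ}=\LL$) and let $\kappa=\aleph_{\omega_2}^{\LL}$; then $\kappa$ is a singular cardinal of $V$ with $\cof{\kappa}^V=\cof{\omega_2^{\LL}}^V=\omega$, while $\cof{\kappa}^{\KK^{DJ}}=\omega_2^{\LL}>\omega$. (What covering \emph{does} rule out is $\lambda\geq\omega_2^V$: for a $\KK^{DJ}$-regular $\beta\geq\omega_2^V$, covering an appropriate cofinal set forces $\cof{\beta}^V=\betrag{\beta}^V$; but $\lambda$ can perfectly well have $V$-cardinality $\aleph_1$ and $V$-cofinality $\omega$, as in the example.) Consequently, Lemma \ref{lemma:LeastCofinal} does not hand you a cofinal function $\map{c}{\omega}{\kappa}$ that is $\Sigma_1$-definable from $\kappa$ alone, and the sequence $\seq{\kappa_n}{n<\omega}$ on which your entire argument rests need not exist with that definability. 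The point you flagged at the very end as merely ``requiring care'' is in fact false.

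The gap is localized, and the repair is exactly what the paper does. Take the $<_{\KK^{DJ}}$-least cofinal function $\map{c}{\lambda}{\kappa}$, which Lemma \ref{lemma:LeastCofinal} makes $\Sigma_1$-definable from $\kappa$; note that $\cof{\lambda}^V=\cof{\kappa}^V=\omega$ (there is a strictly increasing cofinal map from $\lambda$ to $\kappa$ in $\KK^{DJ}$); and fix in $V$ an arbitrary cofinal function $\map{d}{\omega}{\lambda}$, which is an element of $\HH{\kappa}$. Then $c\circ d$ is a cofinal $\omega$-sequence in $\kappa$ that is $\Sigma_1$-definable from the parameters $\kappa$ and $d$. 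This extra parameter is harmless: statement (1) permits parameters in $\HH{\kappa}\cup\{\kappa\}$, and in statement (2) you simply add $d$ (together with $\omega$, if convenient) to your family $A$ without changing its cardinality $\mathfrak{r}$. With $\seq{\kappa_n}{n<\omega}$ replaced by the increasing enumeration of the range of $c\circ d$ and with this enlarged $A$, your case analysis --- $b_S$ infinite, reaping applied to $b_S$, the finite error absorbed by closing the reaping family under finite deletions --- goes through verbatim; it then coincides, up to bookkeeping, with the published proof, which absorbs the finite error by using natural-number parameters from $\omega\subseteq A$ instead of closing the family under finite modifications.
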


\begin{proof}
 Set $\lambda=\cof{\kappa}^{\KK^DJ}$.  Our assumptions then imply that $\lambda<\kappa$. Let $\map{c}{\lambda}{\kappa}$ denote the $<_{\KK^{DJ}}$-least cofinal function in $\KK^{DJ}$. Since $\cof{\lambda}=\omega$, we can also fix a cofinal function $\map{d}{\omega}{\lambda}$. 
 Define $C_0=\Set{\omega\cdot((c\circ d)(i))}{i<\omega}$ and $C_1=\Set{\alpha+1}{\alpha\in C_0}$. Then the sets $C_0$ and $C_1$ are disjoint closed unbounded subsets of $\kappa$. Moreover, Lemma \ref{lemma:LeastCofinal} ensures that the sets $\{C_0\}$ and $\{C_1\}$ are both definable by $\Sigma_1$-formulas with parameters $\kappa$ and $d$.  
 Finally, pick a subset $A$ of $\HH{\kappa}$ of cardinality $\mathfrak{r}$ such that $\omega\cup\{d\}\subseteq A$, $A\cap\kappa$ is cofinal in $\kappa$ and for every $b\in[\omega]^\omega$, there is $a\in A\cap[\omega]^\omega$ with the property that either $a\setminus b$ or $a\cap b$ is finite. 
 Let $S$ be a subset of $\kappa$ that is $\Sigma_1(A)$-stationary in $\kappa$. The fact that $A\cap\kappa$ is unbounded in $\kappa$ then implies that the set $b=\Set{i<\omega}{(c\circ d)(i)\in S}$ is infinite. Hence, there exists $a\in A$ with the property that either $a\setminus b$ or $a\cap b$ is finite. Set $C=\Set{(c\circ d)(i)}{i\in a}$. Then $C$ is closed unbounded in $\kappa$ and the set $\{C\}$ is definable by a $\Sigma_1$-formula with parameters in $A\cup\{\kappa\}$. Hence, we know that $C\cap S$ is unbounded in $\kappa$ and this shows that $a\cap b$ is infinite. We can now find $k<\omega$ with  $a\setminus b\subseteq k$ and, if we define $D=\Set{(c\circ d)(i)}{k\leq i\in A}$, then $D$ is a closed unbounded subset of $S$ and the set $\{D\}$ is definable by a $\Sigma_1$-formula with parameters in $A\cup\{\kappa\}$.  
\end{proof}

We end this section with another results about the definability of initial segments of the Dodd-Jensen core model that will be used in our characterizations of \emph{stably measurable} cardinals below.

\begin{lemma}\label{lemma:DefineFromBistationary}
 Let $\kappa$ be an uncountable cardinal and let $E\in \KK^{DJ}$ be a subset of $\kappa$. If $\kappa$ is regular in $\KK^{DJ}$ and $E$ is a bistationary subset of $\kappa$ in $\KK^{DJ}$, then the set $\{\HH{\kappa}^{\KK^{DJ}}\}$ is definable by a $\Sigma_1$-formula with parameter $E$. 
\end{lemma}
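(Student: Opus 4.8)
The plan is to first extract $\kappa$ from the parameter and then to produce a $\Sigma_1$-definition of $\HH{\kappa}^{\KK^{DJ}}$ that ranges over mice in which $E$ is seen as bistationary. Since $E$ is stationary in $\KK^{DJ}$ it is unbounded in $\kappa$, so $\kappa=\bigcup E$ and hence the singleton $\{\kappa\}$ is $\Sigma_0$-definable from $E$. If $\kappa$ is not inaccessible in $\KK^{DJ}$, then, being regular there, it is a successor cardinal of $\KK^{DJ}$, and Lemma \ref{lemma:InitialSegmentsKDJ} already provides a $\Sigma_1$-definition of $\{\HH{\kappa}^{\KK^{DJ}}\}$ from $\kappa$, which composes with the $\Sigma_0$-definition of $\kappa$ to a $\Sigma_1$-definition from $E$. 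I may therefore assume that $\kappa$ is inaccessible in $\KK^{DJ}$; this is exactly the situation in which the argument of Lemma \ref{lemma:InitialSegmentsKDJ} breaks down, since there is no longer any asymmetry in inaccessibility to orient the comparison of mice, and the bistationarity of $E$ is what will replace it.

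The definition I would use is the following: $y=\HH{\kappa}^{\KK^{DJ}}$ if and only if there is a mouse $M$ with $E\in M$, with $\kappa=\bigcup E$ inaccessible in $M$, with $M\models$ ``$E$ is a bistationary subset of $\kappa$'', and with $y=\HH{\kappa}^{M}$. Since the class of all mice is $\Sigma_1$-definable from any uncountable ordinal and the remaining clauses are evaluated inside $M$, this is a $\Sigma_1$-formula in the parameter $E$. It then remains to verify that $\HH{\kappa}^{\KK^{DJ}}$ is its unique solution, which I would split into existence and correctness.

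For existence I would take a sufficiently tall initial segment $M$ of $\KK^{DJ}$ with $E\in M$, so that $\POT{\kappa}^{M}=\POT{\kappa}^{\KK^{DJ}}$; then $M$ computes the clubs of $\kappa$ correctly, and since $\kappa$ is inaccessible and $E$ is bistationary in $\KK^{DJ}$, the mouse $M$ verifies all the clauses and satisfies $\HH{\kappa}^{M}=\HH{\kappa}^{\KK^{DJ}}$. For correctness, one inclusion is immediate, as $lp(M)\subseteq\KK^{DJ}$ gives $\HH{\kappa}^{M}\subseteq\HH{\kappa}^{\KK^{DJ}}$. For the reverse inclusion, given $x\in\HH{\kappa}^{\KK^{DJ}}$ I would choose a sufficiently tall initial segment $N$ of $\KK^{DJ}$ with $x,E\in N$, which hence also satisfies the clauses, and coiterate $M$ with $N$ to comparable iterates $M^{*},N^{*}$ via $\Sigma_1$-elementary embeddings $\pi$ and $\sigma$. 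Because one of $M^{*},N^{*}$ is an initial segment of the other and both are tall, it suffices to show that the coiteration preserves $\HH{\kappa}$, that is, that all critical points of $\pi$ and $\sigma$ are at least $\kappa$: an ultrapower by a measure with critical point $\geq\kappa$ fixes $\HH{\kappa}$, so a critical point equal to $\kappa$ is harmless, and then $\HH{\kappa}^{M}=\HH{\kappa}^{M^{*}}=\HH{\kappa}^{N^{*}}=\HH{\kappa}^{N}\ni x$.

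The heart of the proof, and the only place bistationarity is essential, is the claim that no critical point below $\kappa$ occurs in this coiteration. Here I would argue that a critical point below $\kappa$ forces a disagreement between the measurable cardinals of $M$ and $N$ below $\kappa$, whose resolution drives the relevant critical sequence cofinally up to the image of $\kappa$; the former critical points then form a closed unbounded subset $C$ of the image of $\kappa$ lying in the shorter iterate, and, by normality of the measures applied, membership in the image of $E$ is decided uniformly along $C$, so the image of $E$ either contains or is disjoint from a club. This contradicts the fact that, by $\Sigma_1$-elementarity together with the clause ``$E$ is bistationary in $\kappa$'', the image of $E$ must remain bistationary in the image of $\kappa$. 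Making this dichotomy precise for the linear coiteration of Dodd--Jensen mice---in particular checking that the club of former critical points belongs to the iterate and is genuinely cofinal in the image of $\kappa$, and discharging the case of boundedly many critical points below $\kappa$ using the uniqueness of measures in $\KK^{DJ}$---is the technical obstacle; the reduction to inaccessibility and the preservation of $\HH{\kappa}$ above $\kappa$ are then routine.
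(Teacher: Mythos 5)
Your reduction to the inaccessible case and the extraction of $\kappa=\bigcup E$ are fine, but the core of the argument does not go through, for two reasons. First, there is a type error that infects both your existence and your correctness steps: initial segments of $\KK^{DJ}$ are not mice. In the Dodd--Jensen framework a mouse is a premouse $J^U_\eta$ with an iterable top measure, and the comparison lemma applies only to such objects; a ``sufficiently tall initial segment of $\KK^{DJ}$'' has no top measure, so it can neither witness your existential quantifier over mice nor be coiterated with the witness $M$. Second, and more fundamentally, if you repair this by taking $N$ to be a mouse with $x,E\in lp(N)$, then the difficulty you identify as the heart of the proof evaporates while the real difficulty appears: since $E$ is unbounded in $\kappa$ it has rank $\kappa$, so any mouse with $E$ in its lower part has its measurable above $\kappa$, and Dodd--Jensen iterations only apply (images of) the top measure; hence all critical points on both sides of the coiteration are automatically above $\kappa$. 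There are no critical points below $\kappa$ to rule out. Instead, the problem is that when the comparison ends with $M^*$ a proper initial segment of $N^*$, you learn only $\POT{\kappa}^M\subseteq\POT{\kappa}^N$ and nothing about whether $x\in M$, and the argument stalls. Moreover, the mechanism by which you want bistationarity to act cannot work: bistationarity is a $\Pi_1$ statement, so it is not preserved upward by the merely $\Sigma_1$-elementary iteration maps, and the club that ultimately produces the contradiction (see below) is not an element of any of the mice involved, so internal clauses such as \anf{M\models\text{$E$ is bistationary}} can never be contradicted by it.

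The missing idea, which is the paper's key move, is the choice of the auxiliary mouse: given $x\in\HH{\kappa}^{\KK^{DJ}}\setminus M$, one takes a mouse $N_0$ of cardinality less than $\kappa$ with $x\in lp(N_0)$ and iterates its top measure exactly $\kappa$ many times, obtaining a mouse $N$ \emph{at} $\kappa$ whose critical sequence is club in $\kappa$ and whose top measure is precisely the restriction of the club filter on $\kappa$ to $\POT{\kappa}^N$. Comparing this $N$ with the witness $M$: if $N'$ is an initial segment of $M'$, then $x\in M$, as desired; if instead $M'$ is a proper initial segment of $N'$, then $E\in\POT{\kappa}^{M}=\POT{\kappa}^{M'}\subseteq\POT{\kappa}^{N'}=\POT{\kappa}^{N}$, so either $E$ or $\kappa\setminus E$ lies in the $N$-measure and hence contains a tail of the critical sequence --- a genuine club --- contradicting the bistationarity of $E$. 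Note that this contradiction is with bistationarity in the ambient universe, which is why the paper first reduces to the case $V=\KK^{DJ}$ (legitimate because $\KK^{DJ}$ is $\Sigma_1$-definable from $\kappa$); with that reduction in place, the extra clauses in your definition ($\kappa$ inaccessible in $M$, $M\models$ \anf{\text{$E$ is bistationary}}) and your case split on inaccessibility become unnecessary, and the simpler definition \anf{y=\HH{\kappa}^{M}\text{ for some mouse }M\text{ with }E\in lp(M)} is what one verifies.
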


\begin{proof}
 Since the class $\KK^{DJ}$ is definable by a $\Sigma_1$-formula with parameter $\kappa$ in $V$, we may assume that $V=\KK^{DJ}$ holds.   
 Moreover, we may assume that there exists a mouse, because otherwise we have $\KK^{DJ}=\LL$ and $\{\HH{\kappa}^{\KK^{DJ}}\}=\{\LL_\kappa\}$ is definable by a $\Sigma_1$-formula with parameter $E$. 2
 The desired statement  is then a direct consequence of the following observation: 

 \begin{claim*}
  If $M$ is a mouse with $E\in lp(M)$, then $\HH{\kappa}\subseteq M$. 
 \end{claim*}

 \begin{proof}[Proof of the Claim]
  Assume, towards a contradiction, that there is an $x\in\HH{\kappa}\setminus M$. Then there is a mouse $N_0$  with  $\betrag{N_0}<\kappa$ and $x\in lp(N_0)$. As above, this allows us to find a mouse $N$ at $\kappa$ with $x\in lp(N)$. Let $\map{\pi_0}{M}{M^\prime}$ and $\map{\pi_1}{N}{N^\prime}$ be the $\Sigma_1$-elementary embeddings obtained by coiterating $M$ and $N$, i.e., either $M^\prime$ is an initial segment of $N^\prime$ or $N^\prime$ is an initial segment of $M^\prime$. 
  Since $\HH{\kappa}^M=\HH{\kappa}^{M^\prime}$ and $x\in lp(N)\setminus M$, we now know that $M^\prime$ is an initial segment of $N^\prime$. This implies that $E\in\POT{\kappa}^M=\POT{\kappa}^{M^\prime}\subseteq\POT{\kappa}^{N^\prime}=\POT{\kappa}^N$. Since the given $N$-ultrafilter on $\kappa$ is equal to the restriction of the closed unbounded filter on $\kappa$ to $\POT{\kappa}^N$, we can conclude that $E$ either contains a closed unbounded subset of $\kappa$ or is disjoint from such a subset. This contradicts the bistationarity of $E$.  
 \end{proof}

  This claim now shows that $\HH{\kappa}$ is the unique set $B$ with the property that there exists a mouse $M$ with $E\in lp(M)$ and $B=\HH{\kappa}^M$. This directly yields the desired $\Sigma_1$-definition of $\{\HH{\kappa}\}$.  
\end{proof}


\section{Large cardinals and $\Sigma_1$-stationary sets}\label{Section:LargeCardinalAndSigma_1}

In Section \ref{section:FirstResults}, we already gave examples of two important features of $\Sigma_1(A)$-stationary sets. First, the collection of these sets can be substantially larger than the collection of ordinary stationary sets. Second, this collection can possess structural features that resemble the behavior of stationary sets. We start by proving results for large cardinals and then extend these results to limits of large cardinals (not necessarily regular). Finally, we show that it is possible to derive similar consequences from Ramsey-theoretic properties that may hold on smaller cardinal.


\subsection{Stably measurable cardinals}

 %
 The following large cardinal property, introduced by Welch in \cite{zbMATH07415211}, turns out to be closely connected to $\Sigma_1$-undefinability considerations.

 \begin{definition}[Welch]
 An uncountable regular cardinal $\kappa$ is \emph{stably measurable} if there exists are 
 \begin{itemize}
 \item transitive set $M$ with $\HH{\kappa}\cup\{\kappa\}\subseteq M\prec_{\Sigma_1}\HH{\kappa^+}$, 
 \item a transitive set $N$ with $M\cup{}^{{<}\kappa}N\subseteq N$, and 
 \item a normal, weakly amenable $N$-ultrafilter $F$ on $\kappa$ with the property that $\langle N,\in,F\rangle$ is iterable.  
 \end{itemize}

\end{definition}

 In \cite{MR2817562}, Sharpe and Welch defined an uncountable cardinal $\kappa$ to be \emph{iterable} if for every subset $A$ of $\kappa$, there is a transitive model $M$ of $\ZFC^-$ of cardinality $\kappa$ with $A,\kappa\in M$ and a weakly amenable M-ultrafilter $U$ on $\kappa$ such that  $\langle M,\in,U\rangle$ is $\omega_1$-iterable. 
 It is easy to see that all iterable cardinals are stably measurable. Moreover, all Ramsey cardinals are iterable (see {\cite[Lemma 5.2]{MR2817562}}) and this shows that all measurable cardinals are stably measurable. 
 In the other direction, {\cite[Corollary 1.18]{zbMATH07415211}} shows that, if $\kappa$ is a stably measurable cardinal, then $a^\#$ exists for every set of ordinals $a$ in $\HH{\kappa}$. 
 Motivated by Corollary \ref{corollary:ConsequencesSingularNoMeasurable}, we proceed towards the following result:

 \begin{theorem}\label{theorem:StablyMeasurable}
  Let $\kappa$ be a stably measurable cardinal.
  \begin{enumerate}
   \item If $\mu<\kappa$ is an uncountable cardinal, then the singleton $\{\mu\}$ is $\Sigma_1(\HH{\mu})$-stationary in $\kappa$. 
  
   \item If $E$ is an unbounded subset of $\kappa$ that consists of cardinals, then $E$ is $\mathbf{\Sigma}_1$-stationary in $\kappa$. 

   \item If $S$ is a $\mathbf{\Sigma}_1$-stationary subset of $\kappa$ and $\map{r}{\kappa}{\kappa}$ is a regressive function that is definable by a $\Sigma_1$-formula with parameters in $\HH{\kappa}\cup\{\kappa\}$, then $r$ is constant on a $\mathbf{\Sigma}_1$-stationary subset of $S$. 
  \end{enumerate}
 \end{theorem}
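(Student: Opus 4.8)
The plan is to reduce all three parts to a single statement: that for every uncountable cardinal $\mu<\kappa$, the cardinal $\kappa$ has the $\Sigma_1(\mu,\kappa)$-undefinability property of Definition \ref{definition:UnDefProp} (the case $\gamma=\kappa$). Granting this, part~(1) is immediate from Lemma \ref{lemma:UnDefStationary} applied with $n=1$ and $\gamma=\kappa$, since $\Sigma_1(\HH{\mu}\cup\{\kappa\})$-stationarity coincides with $\Sigma_1(\HH{\mu})$-stationarity. For part~(2), let $E$ be an unbounded set of cardinals and let $C$ be a closed unbounded set whose singleton $\{C\}$ is $\Sigma_1$-definable from parameters in $\HH{\kappa}\cup\{\kappa\}$. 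Since $\HH{\kappa}=\bigcup_{\nu<\kappa}\HH{\nu}$, these parameters lie in $\HH{\mu_0}$ for some cardinal $\mu_0<\kappa$; choosing any uncountable $\mu\in E$ with $\mu>\mu_0$, Lemma \ref{lemma:UnDefStationary} gives $\mu\in C$, so $E\cap C\neq\emptyset$ and $E$ is $\mathbf{\Sigma}_1$-stationary. (Alternatively one may invoke Corollary \ref{corollary:UnbCardStat}.)

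The heart of the matter is therefore the undefinability property, and the plan here is to exploit the iterable structure witnessing stable measurability. Fix the data $\HH{\kappa}\cup\{\kappa\}\subseteq M\prec_{\Sigma_1}\HH{\kappa^+}$, the transitive $N\supseteq M$ closed under ${<}\kappa$-sequences, and the normal, weakly amenable, iterable $N$-ultrafilter $F$ on $\kappa$; note that $\HH{\kappa}^N=\HH{\kappa}$ and $\kappa$ is measurable in $N$ via $F$. Suppose toward a contradiction that some $\alpha\in[\mu,\kappa)$ has $\{\alpha\}$ defined by a $\Sigma_1$-formula $\varphi(x,p,\kappa)$ with $p\in\HH{\mu}$. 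I would first \emph{capture} this definition inside $N$: the uniqueness clause $\forall x\,(\varphi(x,p,\kappa)\to x=\alpha)$ is $\Pi_1$ and hence descends from $V$ to the transitive model $N$, while the existence clause $\varphi(\alpha,p,\kappa)$, being $\Sigma_1$ with parameters in $\HH{\kappa}\subseteq M$, reflects from $\HH{\kappa^+}$ into $M$ by $\Sigma_1$-elementarity and thus holds in $N$. Consequently $N\models$ ``$\alpha$ is the unique solution of $\varphi(\cdot,p,\kappa)$''. I would then iterate $F$ through the ordinals, obtaining elementary embeddings $j_{0,\gamma}\colon N\to N_\gamma$ with critical sequence $\langle\kappa_\gamma\rangle$; since $\alpha,p<\kappa=\crit{j_{0,\gamma}}$, each $j_{0,\gamma}$ fixes $\alpha$ and $p$ and sends $\kappa$ to $\kappa_\gamma$. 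Elementarity yields that in each $N_\gamma$ the ordinal $\alpha$ is the unique solution of $\varphi(\cdot,p,\kappa_\gamma)$, and $\Sigma_1$-upward-absoluteness then gives $\varphi(\alpha,p,\kappa_\gamma)$ in $V$ for a proper class of $\gamma$. The contradiction is to be extracted from the indiscernibility of the critical sequence over the fixed parameters $\HH{\mu}\cup\{\alpha\}$: the specific value $\kappa=\kappa_0$ cannot be distinguished from later $\kappa_\gamma$ by the $\Sigma_1$-type of $\alpha$, so $\alpha$ cannot both be the \emph{unique} $\varphi(\cdot,p,\kappa)$-solution below $\kappa$ and genuinely depend on $\kappa$; pushing this through the comparison of $N$ with its iterate is meant to force $\alpha$ to already be $\Sigma_1$-definable from $p$ together with some parameter $<\mu$, contradicting $\alpha\geq\mu$.

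For part~(3), the plan is to prove a definable Fodor lemma using normality of $F$. Since $r$ is $\Sigma_1$-definable from parameters in $\HH{\kappa}\cup\{\kappa\}$, its restriction $r\upharpoonright\kappa\in\HH{\kappa^+}$ is captured into $N$ exactly as above and is regressive there; by normality of $F$ there is a single $\beta^*<\kappa$ with $\{\xi<\kappa: r(\xi)=\beta^*\}\in F$. The two facts to combine are: every closed unbounded $C$ with $\{C\}$ $\Sigma_1$-definable from $\HH{\kappa}\cup\{\kappa\}$ lies in $N$ (capture) and hence in $F$ (as $F$ extends the closed unbounded filter of $N$); and $F$ is a proper filter, so every member of $F$ meets every such $C$. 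This shows that every $F$-large, $\mathbf{\Sigma}_1$-definable set is $\mathbf{\Sigma}_1$-stationary, and I would aim to conclude that $S\cap\{r=\beta^*\}$ is $\mathbf{\Sigma}_1$-stationary and refines $S$ with $r$ constant. The delicate point is that a $\mathbf{\Sigma}_1$-stationary $S$ — for instance the set of successor cardinals below $\kappa$, which is $\mathbf{\Sigma}_1$-stationary by part~(2) — may be $F$-null, so meeting $C$ does not a priori yield meeting $C\cap\{r=\beta^*\}$; here I would fall back on part~(2), handling the cardinals in $S$ via constancy of $r$ on an unbounded subset of them and the remaining part via $F$.

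The step I expect to be the main obstacle is the undefinability argument of the second paragraph: both the \emph{capture} of a $\Sigma_1$-definition inside the iterable model $N$ — which requires knowing that the $\Sigma_1$-witness for $\varphi(\alpha,p,\kappa)$ can be taken in $\HH{\kappa^+}$, the one genuine reflection point — and the conversion of the indiscernibility of the iteration's critical sequence into a contradiction with the uniqueness of the definition. The corresponding subtlety in part~(3), namely reconciling $\mathbf{\Sigma}_1$-stationarity with possible $F$-nullity of $S$, is the secondary difficulty.
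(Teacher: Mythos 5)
Your reduction of parts (1) and (2) to the undefinability property is exactly the paper's route (its Lemma \ref{proposition:StablyMeasurableUndefinable} combined with Lemma \ref{lemma:UnDefStationary} and Corollary \ref{corollary:UnbCardStat}), but your proof of the undefinability property itself has a genuine gap, located precisely where you flag it. Iterating $\langle N,\in,F\rangle$ upward fixes $\alpha$ and $p$ but moves the parameter $\kappa$ to $\kappa_\gamma$, so elementarity plus $\Sigma_1$-upward absoluteness only yields that $\varphi(\alpha,p,\kappa_\gamma)$ holds in $V$ for the \emph{displaced} parameters $\kappa_\gamma$; since uniqueness is not upward absolute, this is perfectly consistent with $\alpha$ being the unique solution of $\varphi(\cdot,p,\kappa)$, and no contradiction follows. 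The appeal to ``indiscernibility of the critical sequence'' is not available in $V$ (the ordinals $\kappa_\gamma$, $\gamma\geq 1$, are not even cardinals there), and your intended conclusion --- that $\alpha$ would be $\Sigma_1$-definable from parameters below $\mu$ --- is not a contradiction of anything you have assumed. The paper's argument goes in the opposite direction: pick an elementary submodel $X\prec N$ of cardinality less than $\mu$ with $\tc{\{z\}}\cup\{\kappa,\alpha\}\subseteq X$, collapse it to $N_0$ via $\pi$, so that $\pi(\alpha)<\pi(\kappa)<\mu\leq\alpha$; then $U=\pi[F]$ is a weakly amenable $N_0$-ultrafilter whose iterability follows from {\cite[Theorem 19.15]{MR1994835}}, and iterating $\langle N_0,\in,U\rangle$ the right number of times sends $\pi(\kappa)$ back up to exactly $\kappa$ while fixing $\pi(\alpha)$, which lies below all critical points. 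Upward $\Sigma_1$-absoluteness then gives $\varphi(\pi(\alpha),\kappa,z)$ in $V$ with the \emph{original} parameter $\kappa$, producing a second solution $\pi(\alpha)<\alpha$ and the desired contradiction. This ``collapse $\alpha$ below $\mu$, then re-inflate the collapsed $\kappa$ to $\kappa$'' step is the missing idea; no amount of upward iteration of $N$ itself can replace it, because one needs an embedding whose \emph{target} measurable is $\kappa$.

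Part (3) has a parallel gap. Your Fodor step is sound up to producing $\beta^*<\kappa$ with $r^{-1}\{\beta^*\}\in F$, and you correctly observe that every club $C$ with $\{C\}$ suitably $\Sigma_1$-definable is captured in $N$ and hence lies in $F$. But $F$-largeness of $r^{-1}\{\beta^*\}$ tells you nothing about $S$: $\mathbf{\Sigma}_1$-stationarity of $S$ only guarantees that $S$ meets definable \emph{clubs}, and $r^{-1}\{\beta^*\}$ need not be one, so $S\cap r^{-1}\{\beta^*\}$ could be empty for all your argument shows. Your fallback via part (2) is circular: ``constancy of $r$ on an unbounded subset of the cardinals in $S$'' is itself an instance of the Fodor-type statement being proved. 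The paper bridges this with its Lemma \ref{lemma:DefClubStablyMeas}: any subset of $\kappa$ that contains a club and whose singleton is $\Sigma_1$-definable from parameters in $\HH{\kappa}\cup\{\kappa\}$ in fact contains a club $C$ whose singleton is itself $\Sigma_1$-definable from such parameters --- proved once more by the collapse-and-iterate technique, the club being the critical sequence $\Set{(j_{0,\alpha}\circ\pi)(\kappa)}{\alpha<\kappa}$, which is definable from $\kappa$, $N_0$ and $U$. Applying this to $r^{-1}\{\beta^*\}$ yields a definable club $C\subseteq r^{-1}\{\beta^*\}$; then $S\cap C$ is $\mathbf{\Sigma}_1$-stationary (intersections of definable clubs are definable clubs, as $\kappa$ is regular) and $r$ is constant on it. Without a lemma of this kind, your argument for (3) does not close.
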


 Using Lemma \ref{lemma:UnDefStationary} and Corollary \ref{corollary:UnbCardStat}, the first two statements of Theorem \ref{theorem:StablyMeasurable} directly follow from the next lemma:

\begin{lemma}\label{proposition:StablyMeasurableUndefinable}
 If $\kappa$ is a stably measurable cardinal, then $\kappa$ has the $\Sigma_1(\mu)$-undefinability property for every uncountable cardinal $\mu<\kappa$. 
\end{lemma}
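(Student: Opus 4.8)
The plan is to prove the contrapositive flavored statement directly: assuming $\kappa$ is stably measurable and $\mu<\kappa$ is an uncountable cardinal, I want to show no ordinal $\alpha\in[\mu,\kappa)$ has $\{\alpha\}$ definable by a $\Sigma_1$-formula with parameters in $\HH{\mu}\cup\{\kappa\}$ (recall $\gamma=\kappa$ for the $\Sigma_1(\mu)$-undefinability property). The crucial tool is the iterable structure $\langle N,\in,F\rangle$ supplied by stable measurability: since $F$ is a normal, weakly amenable $N$-ultrafilter on $\kappa$ and $\langle N,\in,F\rangle$ is iterable, I can form the iterated ultrapowers $\map{j_{0,\xi}}{N}{N_\xi}$ for all ordinals $\xi$, obtaining a closed unbounded class of \emph{generating ordinals} $\seq{\kappa_\xi}{\xi\in\On}$ with $\kappa_0=\kappa$ and each $\kappa_\xi=j_{0,\xi}(\kappa)$. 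The embeddings fix $\HH{\kappa}^N$ below the critical point and, importantly, fix all parameters in $\HH{\mu}\subseteq\HH{\kappa}\subseteq N$.

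First I would record the key elementarity fact: for any $\Sigma_1$-formula $\varphi$ and parameters $\bar{p}\in\HH{\mu}$, the statement $\varphi(\alpha,\bar p,\kappa)$ transfers under the iteration maps because $\mu<\kappa=\crit{j_{0,1}}$ leaves $\bar p$ and $\mu$ fixed while moving $\kappa$ to $\kappa_\xi$. The point of stable measurability is that $M\prec_{\Sigma_1}\HH{\kappa^+}$ with $\HH{\kappa}\cup\{\kappa\}\subseteq M\subseteq N$ guarantees that $\Sigma_1$-truth in $V$ about such $\alpha$ is reflected into, and computed by, the iterable pair; so I would argue that if $\{\alpha\}$ were $\Sigma_1$-definable over $V$ from parameters in $\HH{\mu}\cup\{\kappa\}$, then the same definition would pin down a unique ordinal inside each iterate $N_\xi$ with $\kappa$ replaced by $\kappa_\xi$.

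Next comes the main contradiction. Suppose $\{\alpha\}$ is $\Sigma_1$-definable from $\bar p\in\HH{\mu}$ and $\kappa$, witnessing $\varphi(\alpha,\bar p,\kappa)$ uniquely. Applying $j_{0,\xi}$ and using that it fixes $\alpha$ (as $\alpha<\kappa=\crit{j_{0,\xi}}$) but sends $\kappa$ to $\kappa_\xi$, I would derive that $\varphi(\alpha,\bar p,\kappa_\xi)$ holds in $N_\xi$, and by $\Sigma_1$-upward absoluteness also in $V$; hence $\alpha$ is the \emph{unique} solution of $\varphi(\cdot,\bar p,\kappa_\xi)$ for every $\xi$. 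Since the map $\xi\mapsto\kappa_\xi$ is injective and the generators form a proper class of distinct ordinals, while a $\Sigma_1$ formula with fixed $\bar p$ and varying third parameter $\kappa_\xi$ should produce correspondingly different (indeed unboundedly large) unique solutions, this forces $\alpha$ to simultaneously equal incompatible values — the contradiction. The cleanest route is to observe that $\alpha<\kappa\le\kappa_\xi$ stays fixed while the defining parameter $\kappa_\xi$ grows, and the uniqueness clause transported through the iteration forces $\alpha=j_{0,\xi}(\alpha)$ to name something above $\kappa_\xi$ when $\xi$ is large, which is absurd since $\alpha<\kappa$.

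The hard part will be pinning down precisely how $\Sigma_1$-definability over $V$ interacts with the weakly amenable $N$-ultrafilter, since $N$ is not all of $V$ and $F$ is only an $N$-ultrafilter; I would need the $M\prec_{\Sigma_1}\HH{\kappa^+}$ clause to guarantee that the relevant $\Sigma_1$-facts about $\alpha$ witnessed in $V$ are captured inside $N$ (equivalently inside $M$), so that applying $j_{0,\xi}$ is legitimate and the transported facts genuinely reflect back to $V$. Establishing this absoluteness bridge — that $\Sigma_1$-definitions with $\HH{\mu}$-parameters compute the same ordinals in $V$, in $\HH{\kappa^+}$, and along the iterates — is the technical core, after which the injectivity of the generator sequence delivers the contradiction routinely.
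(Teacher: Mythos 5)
Your argument breaks down at the intended contradiction, and the problem is structural rather than technical. You iterate $\langle N,\in,F\rangle$ itself, so every iteration map $j_{0,\xi}$ has critical point $\kappa$: it fixes $\alpha$ and all parameters in $\HH{\mu}$, but it \emph{moves the parameter} $\kappa$ to $\kappa_\xi=j_{0,\xi}(\kappa)$. Pushing the $\Sigma_1$ statement through $j_{0,\xi}$ and then up to $V$ only yields that $\varphi(\alpha,\kappa_\xi,z)$ holds in $V$ for each $\xi$ --- a statement about a \emph{different} parameter. This cannot contradict the hypothesis, which asserts uniqueness of the solution only for the formula with parameter $\kappa$. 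Nothing forces the formula with parameter $\kappa_\xi$ to have ``correspondingly different, unboundedly large'' unique solutions: a $\Sigma_1$ formula may simply ignore that argument, in which case $\alpha$ remains its unique solution for every $\kappa_\xi$, with no tension at all. Nor does uniqueness transfer upward: ``every solution of $\varphi(\cdot,\kappa_\xi,z)$ equals $\alpha$'' is $\Pi_1$, and $\Pi_1$ statements do not pass from the iterate $N_\xi$ to $V$. So your final step --- that the transported uniqueness clause ``forces $\alpha$ to name something above $\kappa_\xi$'' --- is unsupported, and no contradiction is ever reached.

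What is actually needed, and what the paper does, is to produce a \emph{second, smaller} ordinal satisfying $\varphi(\cdot,\kappa,z)$ with the \emph{same} parameters $\kappa$ and $z$. Since any iteration of the full structure fixes $\alpha$ and moves $\kappa$, one must first shrink and only then iterate: take an elementary submodel $X\prec N$ of cardinality less than $\mu$ with $\tc{\{z\}}\cup\{\kappa,\alpha\}\subseteq X$, and let $\map{\pi}{X}{N_0}$ be the transitive collapse, so that $\pi(z)=z$ and $\pi(\alpha)<\pi(\kappa)<\mu\leq\alpha$. Weak amenability of $F$ is what makes the collapsed structure $\langle N_0,\in,\pi[F]\rangle$ iterable (Kanamori, Theorem 19.15) --- this is the one place where the precise hypotheses on $F$ are used. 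Now iterate $N_0$ until the image of its measurable cardinal is exactly $\kappa$, obtaining an elementary $\map{j}{N_0}{N_1}$ with $j(\pi(\kappa))=\kappa$, while $\pi(\alpha)$ and $z$ sit below the critical point $\pi(\kappa)$ and are therefore fixed. Since $\varphi(\alpha,\kappa,z)$ holds in $N$ (here the clause $M\prec_{\Sigma_1}\HH{\kappa^+}$ plays exactly the bridging role you anticipated at the end of your proposal), it holds in $X$, so $\varphi(\pi(\alpha),\pi(\kappa),z)$ holds in $N_0$, so $\varphi(\pi(\alpha),\kappa,z)$ holds in $N_1$, and hence in $V$ by $\Sigma_1$-upward absoluteness. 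As $\pi(\alpha)<\mu\leq\alpha$, this contradicts the uniqueness of $\alpha$. In short: collapse first, then iterate back up to $\kappa$; iterating upward from $\kappa$, as you propose, can never produce the second witness.
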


\begin{proof}
 Assume, towards a contradiction, that there is a $\Sigma_1$-formula $\varphi(v_0,v_1,v_2)$, an uncountable cardinal $\mu<\kappa$, an ordinal $\alpha$ in the interval $[\mu,\kappa)$ and $z\in\HH{\mu}$ such that $\alpha$ is the unique ordinal $\xi$ with the property that $\varphi(\xi,\kappa,z)$ holds. 
 Pick a transitive set $M$ with $\HH{\kappa}\cup\{\kappa\}\subseteq M\prec_{\Sigma_1}\HH{\kappa^+}$, a transitive set $N$ with $M\cup{}^{{<}\kappa}N\subseteq N$ and a weakly amenable $N$-ultrafilter $F$ on $\kappa$ with the property that $\langle N,\in,F\rangle$ is iterable.  
 Now, pick an elementary submodel $\langle X,\in,F_0\rangle$ of $\langle N,\in,F\rangle$ of cardinality less than $\mu$ with $\tc{\{z\}}\cup\{\kappa,\alpha\}\subseteq X$, and let $\map{\pi}{X}{N_0}$ denote the corresponding transitive collapse. Then $\pi(z)=z$ and $\pi(\alpha)<\pi(\kappa)<\mu\leq\alpha$. Moreover, if we set $U=\pi[F_0]$, then $U$ is a weakly amenable $N_0$-ultrafilter on $\pi(\kappa)$ and {\cite[Theorem 19.15]{MR1994835}} implies that $\langle N_0,\in,U\rangle$ is iterable.  
 This yields a transitive set $N_1$ and an elementary embedding $\map{j}{N_0}{N_1}$ with $j(\pi(\kappa))=\kappa$ and $j\restriction\HH{\pi(\kappa)}^{N_0}=\id_{\HH{\pi(\kappa)}^{N_0}}$. 
 Since our setup ensures that $\varphi(\alpha,\kappa,z)$ holds in $N$, we now know that $\varphi(\pi(\alpha),\kappa,z)$ holds in $N_1$. By $\Sigma_1$-upwards absoluteness, this shows that $\varphi(\pi(\alpha),\kappa,z)$ holds in $\VV$, contradicting the uniqueness of $\alpha$. 
\end{proof}

 We now work towards a proof of the third part of Theorem \ref{theorem:StablyMeasurable}. The starting point for this is a result of Welch (see {\cite[Theorem 1.26]{zbMATH07415211}}) proving that stably measurable cardinal have the \emph{$\mathbf{\Sigma}_1$-club property} introduced in \cite{Sigma1Partitions}, i.e., if $\kappa$ is a stably measurable cardinal and $E$ is a subset of $\kappa$ with the property that the set $\{E\}$ is definable by a $\Sigma_1$-formula with parameters in $\HH{\kappa}\cup\{\kappa\}$, then either $E$ or $\kappa\setminus E$ contains a closed unbounded subset of $\kappa$. 
 This result directly implies that if $\kappa$ is a stably measurable cardinal and $\map{r}{\kappa}{\kappa}$ is a regressive function that is definable by a $\Sigma_1$-formula with parameters in $\HH{\kappa}\cup\{\kappa\}$, then for every ordinal $\alpha<\kappa$, there is a closed unbounded subset $C_\alpha$ of $\kappa$ with the property that $C_\alpha$ is either contained in $r^{{-}1}\{\alpha\}$ or disjoint from $r^{{-}1}\{\alpha\}$. By forming the diagonal intersection $\triangle_{\alpha<\kappa} C_\alpha$, it is now easy to see that there is a unique ordinal $\alpha_*<\kappa$ with the property that $C_{\alpha_*}\subseteq r^{{-}1}\{\alpha_*\}$ and this directly implies that $\alpha_*$ is the unique ordinal smaller than $\kappa$ with the property that $r^{{-}1}\{\alpha_*\}$ contains a closed unbounded subset of $\kappa$. The following lemma further strengthens this conclusion:

\begin{lemma}\label{lemma:DefClubStablyMeas}
 Let $\kappa$ be a stably measurable cardinal and let $E$ be a subset of $\kappa$  that contains a closed unbounded subset of $\kappa$ and has the property that the set $\{E\}$ is definable by a $\Sigma_1$-formula with parameters in $\HH{\kappa}\cup\{\kappa\}$. Then there exists a closed unbounded subset $C$ of $\kappa$ with $C\subseteq E$ and the property that the set $\{C\}$ is definable by a $\Sigma_1$-formula with parameters in $\HH{\kappa}\cup\{\kappa\}$. 
\end{lemma}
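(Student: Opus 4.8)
The plan is to combine the collapsing-and-iterating-up technique from the proof of Lemma \ref{proposition:StablyMeasurableUndefinable} with the observation that the collapsed structure is small enough to serve as a parameter in $\HH{\kappa}$. Fix a $\Sigma_1$-formula $\psi$ and a parameter $p\in\HH{\kappa}$ such that $E$ is the unique set $X$ with $\psi(X,p,\kappa)$, and fix witnesses $M$, $N$ and $F$ for the stable measurability of $\kappa$, so that $\HH{\kappa}\cup\{\kappa\}\subseteq M\prec_{\Sigma_1}\HH{\kappa^+}$, $M\cup{}^{{<}\kappa}N\subseteq N$ and $F$ is a weakly amenable, iterable $N$-ultrafilter on $\kappa$. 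Using that $M\prec_{\Sigma_1}\HH{\kappa^+}$ contains $p$, I would first argue that the $\Sigma_1$-statement $\exists X\,\psi(X,p,\kappa)$ reflects to $M$ and, together with the uniqueness of $E$ and upward $\Sigma_1$-absoluteness, yields $E\in M\subseteq N$ and $N\models\text{``}E\text{ is the unique solution of }\psi(\cdot,p,\kappa)\text{''}$. In particular $E\in N$, so $F$ decides $E$.

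Next, following the proof of Lemma \ref{proposition:StablyMeasurableUndefinable}, I would pick an elementary submodel $X\prec N$ of cardinality less than $\kappa$ with $\tc{\{p\}}\cup\{E,\kappa\}\subseteq X$, let $\map{\pi}{X}{\bar N}$ be the transitive collapse, and set $\bar\kappa=\pi(\kappa)<\kappa$, $\bar E=\pi(E)$ and $\bar F=\pi[F]$. As in that proof, {\cite[Theorem 19.15]{MR1994835}} ensures that $\langle\bar N,\in,\bar F\rangle$ is iterable. Since $\lvert\bar N\rvert<\kappa$ and $\kappa$ is a cardinal, the critical sequence $\seq{\bar\kappa_i}{i\leq\kappa}$ of the length-$(\kappa+1)$ iteration of $\langle\bar N,\in,\bar F\rangle$ is strictly increasing and continuous with $\bar\kappa_i<\kappa$ for all $i<\kappa$ and $\bar\kappa_\kappa=\kappa$; write $\map{j}{\bar N}{N_\kappa}$ for the resulting embedding, so that $j(\bar\kappa)=\kappa$ and $j\uhr\HH{\bar\kappa}^{\bar N}=\id$. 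Then $C=\Set{\bar\kappa_i}{i<\kappa}$ is a closed unbounded subset of $\kappa$.

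To see that $C\subseteq E$, I would run the usual uniqueness-plus-absoluteness argument: by elementarity of $\pi$, the structure $\bar N$ satisfies that $\bar E$ is the unique solution of $\psi(\cdot,p,\bar\kappa)$; applying $j$ (which fixes $p$ and sends $\bar\kappa$ to $\kappa$) gives $N_\kappa\models\psi(j(\bar E),p,\kappa)$, and upward $\Sigma_1$-absoluteness together with the uniqueness of $E$ in $\VV$ forces $j(\bar E)=E$, and symmetrically $j(\bar\kappa\setminus\bar E)=\kappa\setminus E$. Now I would invoke the standard iteration fact that $\bar\kappa_i\in j(A)$ for every $A\in\bar F$ and every $i<\kappa$. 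Since $\bar F$ is an ultrafilter, either $\bar E\in\bar F$ or $\bar\kappa\setminus\bar E\in\bar F$; in the latter case the argument would yield $C\subseteq j(\bar\kappa\setminus\bar E)=\kappa\setminus E$, contradicting the hypothesis that $E$ contains a closed unbounded set, as any two closed unbounded subsets of $\kappa$ meet. Hence $\bar E\in\bar F$ and therefore $C\subseteq j(\bar E)=E$.

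Finally, I would address definability, which is the crux of the argument. The point is that $\bar N$ and $\bar F$ are elements of $\HH{\kappa}$, since $X$ has cardinality less than $\kappa$. As with the class of mice discussed in Section \ref{section:UndefProp}, iterations are uniquely determined and their defining properties can be verified inside any transitive set containing them, so the length-$(\kappa+1)$ iteration of $\langle\bar N,\in,\bar F\rangle$ is $\Sigma_1$-definable from $\bar N$, $\bar F$ and $\kappa$. Consequently the statement ``$y$ is the set of critical points of this iteration'' is expressible by a $\Sigma_1$-formula with parameters $\bar N,\bar F\in\HH{\kappa}$ and $\kappa$, and it holds exactly of $y=C$. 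This shows that $\{C\}$ is definable by a $\Sigma_1$-formula with parameters in $\HH{\kappa}\cup\{\kappa\}$, as required. The main obstacle is precisely this last step: the iteration produces a club contained in $E$, but a priori only with parameters outside $\HH{\kappa}$; the resolution is the observation that the collapsed iterable structure already lies in $\HH{\kappa}$ and that the iteration itself is canonically (hence $\Sigma_1$-)definable.
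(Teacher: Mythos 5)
Your proposal is correct and follows essentially the same route as the paper's proof: collapse the stable-measurability witness to a small iterable structure $\langle\bar N,\in,\bar F\rangle\in\HH{\kappa}$, iterate it $\kappa$ times so that the critical sequence forms a club, identify $j(\bar E)=E$ via elementarity, $\Sigma_1$-upward absoluteness and uniqueness, and then note that the club of critical points is $\Sigma_1$-definable from the collapsed structure and $\kappa$. The only (harmless) deviation is that you obtain $\bar E\in\bar F$ from the ultrafilter dichotomy plus the fact that two clubs in $\kappa$ must meet, whereas the paper uses the normality of $F$ together with $E\in M$ to put $E$ into $F$ before collapsing.
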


\begin{proof}
 Fix a $\Sigma_1$-formula $\varphi(v_0,v_1,v_2)$ and an element $z$ of $\HH{\kappa}$ with the property that the set $\{E\}$ is definable by the formula $\varphi(v_0,v_1,v_2)$ and the parameters $\kappa$ and $z$. By our assumptions on $\kappa$, there exists   a transitive set $M$ with $\HH{\kappa}\cup\{\kappa\}\subseteq M\prec_{\Sigma_1}\HH{\kappa^+}$, a transitive set $N$ with $M\cup{}^{{<}\kappa}N\subseteq N$ and a normal, weakly amenable $N$-ultrafilter $F$ on $\kappa$ with the property that $\langle N,\in,F\rangle$ is iterable.  This setup ensures that $E\in M$ and $\varphi(E,\kappa,z)$ holds in $N$. Moreover, the fact that $E$ contains a closed unbounded subset of $\kappa$ implies that $M$ contains such a subset of $E$ and, since $F$ is a normal $N$-ultrafilter, we can conclude that $E$ is an element of $F$. 
 Pick an elementary submodel $\langle X,\in,F_0\rangle$ of $\langle N,\in,F\rangle$ of cardinality less than $\kappa$ with $\tc{\{z\}}\cup\{\kappa\}\subseteq X$, and let $\map{\pi}{X}{N_0}$ denote the corresponding transitive collapse. Then $\pi(z)=z$ and, if we set $U=\pi[F_0]$, then $U$ is a weakly amenable $N_0$-ultrafilter on $\pi(\kappa)$ and {\cite[Theorem 19.15]{MR1994835}} ensures that $\langle N_0,\in,U\rangle$ is iterable. 
 Let $$\langle\seq{N_\alpha}{\alpha\in Ord},\seq{\map{j_{\alpha,\beta}}{M_\alpha}{M_\beta}}{\alpha\leq\beta\in Ord}\rangle$$ denote the linear iteration of $\langle N_0,\in,U\rangle$. 
 Then $(j_{0,\kappa}\circ\pi)(\kappa)=\kappa$, $(j_{0,\kappa}\circ\pi)(z)=z$, $\Sigma_1$-upwards absoluteness implies that $\varphi((j_{0,\kappa}\circ\pi)(E),\kappa,z)$ holds in $V$ and hence  $(j_{0,\kappa}\circ\pi)(E)=E$. Moreover, the fact that $E\in F$ ensures that   $\pi(E)\in U$ and therefore $(j_{0,\alpha}\circ\pi)(\kappa)\in(j_{0,\alpha+1}\circ\pi)(E)\subseteq E$ for all $\alpha<\kappa$. This shows that the closed unbounded subset $C=\Set{(j_{0,\alpha}\circ\pi)(\kappa)}{\alpha<\kappa}$ of $\kappa$ is a subset of $E$. Finally, the set $\{C\}$ is definable by a $\Sigma_1$-formula with parameters $\kappa$, $N_0$ and $U$. 
\end{proof}

\begin{proof}[Proof of Theorem \ref{theorem:StablyMeasurable}]
The last two lemmata prove the first two assertions of the theorem. 
To prove the third assertion, let $\kappa$ be a stably measurable cardinal, let $S$ be $\mathbf{\Sigma}_1$-stationary in $\kappa$, and let $\map{r}{\kappa}{\kappa}$ be a regressive function that is definable by a $\Sigma_1$-formula with parameters in $\HH{\kappa}\cup\{\kappa\}$. Our earlier observations now show that there is a unique ordinal $\alpha<\kappa$ with the property that the set $r^{{-}1}\{\alpha\}$ contains a closed unbounded subset of $\kappa$. Since the set $\{r^{{-}1}\{\alpha\}\}$ is definable by a $\Sigma_1$-formula with parameters in $\HH{\kappa}\cup\{\kappa\}$, Lemma \ref{lemma:DefClubStablyMeas} yields a closed unbounded subset $C$ of $\kappa$ with $C\subseteq r^{{-}1}\{\alpha\}$ and the property that the set $\{C\}$ is definable by a $\Sigma_1$-formula with parameters in $\HH{\kappa}\cup\{\kappa\}$. 
 Then $r$ is constant on $C\cap S$ and, since $\kappa$ is an uncountable regular cardinal, we know that this set is $\mathbf{\Sigma}_1$-stationary in $\kappa$.  
\end{proof}

We can now show that, in the Dodd--Jensen core model $\KK^{DJ}$, stably measurable cardinals are characterized by the $\mathbf{\Sigma}_1$-stationarity of unbounded sets of cardinals:

 \begin{theorem}\label{theorem:InaccNonDefInKDJ}
 If $\VV=\KK^{DJ}$, then the following statements are equivalent for every  cardinal $\kappa>\omega_1$: 
 \begin{enumerate}
  \item The cardinal $\kappa$ is stably measurable. 

  \item The set $\{\HH{\kappa}\}$ is not definable by a $\Sigma_1$-formula with parameters in the set $\HH{\kappa}\cup\{\kappa\}$. 

  \item The cardinal $\kappa$ is a limit cardinal and every unbounded subset of $\kappa$ that consists of cardinals  is $\mathbf{\Sigma}_1$-stationary in $\kappa$. 
 \end{enumerate}
\end{theorem}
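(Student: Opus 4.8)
The plan is to establish the cycle of implications $(1)\Rightarrow(3)\Rightarrow(2)\Rightarrow(1)$, using the earlier results of this section to dispatch the first two implications and reserving the fine-structural work for the last.

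For $(1)\Rightarrow(3)$, the $\mathbf{\Sigma}_1$-stationarity of every unbounded set of cardinals is exactly the second clause of Theorem \ref{theorem:StablyMeasurable}, so it remains to check that a stably measurable $\kappa$ is a limit cardinal. Since $\kappa$ is regular by definition, I only need to rule out that $\kappa=\lambda^+$ is a successor cardinal. If it were, then (as $\VV=\KK^{DJ}$ and $\lambda\geq\omega_1$) $\kappa$ would fail to be inaccessible in $\KK^{DJ}$, so Lemma \ref{lemma:InitialSegmentsKDJ} would make $\{\HH{\kappa}\}$ definable by a $\Sigma_1$-formula with parameter $\kappa$; but then the largest cardinal $\lambda$ of $\HH{\kappa}$ would be $\Sigma_1$-definable from $\kappa$ by a bounded search inside the $\Sigma_1$-definable set $\HH{\kappa}$, contradicting the $\Sigma_1(\lambda)$-undefinability property granted by Lemma \ref{proposition:StablyMeasurableUndefinable}. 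Hence $\kappa$ is a regular limit cardinal.

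For $(3)\Rightarrow(2)$ I argue contrapositively. Assume $\{\HH{\kappa}\}$ is definable by a $\Sigma_1$-formula with parameters in $\HH{\kappa}\cup\{\kappa\}$. If $\kappa$ is not a limit cardinal, then $(3)$ fails outright, so suppose $\kappa$ is a limit cardinal. Then the monotone enumeration $\seq{\mu_\xi}{\xi<\theta}$ of the infinite cardinals below $\kappa$ is definable by a $\Sigma_1$-formula with the same parameters, via a bounded computation inside $\HH{\kappa}$. Writing each index uniquely as $\xi=\rho+k$ with $\rho$ a limit ordinal or $0$ and $k<\omega$, the set $C=\Set{\mu_\xi}{k\text{ is even}}$ is a closed unbounded subset of $\kappa$ whose singleton $\{C\}$ is definable by a $\Sigma_1$-formula with parameters in $\HH{\kappa}\cup\{\kappa\}$; closedness holds because every limit point of $C$ is some $\mu_\xi$ with $\xi$ a limit index, and such indices are even. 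Since $E=\Set{\mu_\xi}{k\text{ is odd}}$ is an unbounded set of cardinals disjoint from $C$, the set $E$ witnesses the failure of $(3)$.

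The crux is $(2)\Rightarrow(1)$, and this is where the main obstacle lies. Assume $\{\HH{\kappa}\}$ is not $\Sigma_1$-definable from parameters in $\HH{\kappa}\cup\{\kappa\}$; by the contrapositive of Lemma \ref{lemma:InitialSegmentsKDJ} the cardinal $\kappa$ is inaccessible in $\KK^{DJ}=\VV$. The task is to produce the witnessing data for stable measurability: a transitive $M$ with $\HH{\kappa}\cup\{\kappa\}\subseteq M\prec_{\Sigma_1}\HH{\kappa^+}$, a transitive $N\supseteq M$ closed under ${<}\kappa$-sequences, and an iterable, weakly amenable normal $N$-ultrafilter $F$ on $\kappa$. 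My plan is to obtain $F$ from the top measure of a mouse at $\kappa$ that captures all of $\HH{\kappa}$: using that $\KK^{DJ}$ is the union of $\LL$ and the lower parts of mice, every element of $\HH{\kappa}$ lies in the lower part of a mouse of size less than $\kappa$, and, by iterating top measures and coiterating exactly as in Lemmas \ref{lemma:InitialSegmentsKDJ} and \ref{lemma:DefineFromBistationary}, these mice line up linearly. I would then argue that the negation of $(2)$ is precisely the obstruction to this coiteration stabilizing into a single $\Sigma_1$-identifiable mouse: were the lower parts to assemble into $V_\kappa^{M^\ast}$ for a mouse $M^\ast$ at $\kappa$ pinned down by a $\Sigma_1$ condition, then $\{\HH{\kappa}\}$ would be $\Sigma_1$-definable, contrary to hypothesis; consequently the mice with lower part $\HH{\kappa}$ must form a $\Sigma_1$-indiscernible family, and any one of them, together with an elementary submodel of $\HH{\kappa^+}$, furnishes the required iterable structure. \emph{The hard part} will be upgrading the bare existence of a mouse at $\kappa$ with lower part $\HH{\kappa}$ to the full witnessing package --- in particular securing the ${<}\kappa$-closure of $N$ and the $\Sigma_1$-elementarity $M\prec_{\Sigma_1}\HH{\kappa^+}$, rather than merely a mouse covering $\HH{\kappa}$ --- and, dually, verifying that no $\Sigma_1$-formula can single out such a mouse. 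For this final calibration I would invoke Welch's analysis of stably measurable cardinals in \cite{zbMATH07415211}, whose characterization matches the non-definability of $\HH{\kappa}$ established here, and re-derive the definability consequences through the coiteration technique already developed in this section.
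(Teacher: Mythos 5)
Your implications $(1)\Rightarrow(3)$ and $(3)\Rightarrow(2)$ are correct. In particular, your argument that a stably measurable $\kappa$ must be a limit cardinal (combining Lemma \ref{lemma:InitialSegmentsKDJ} with the $\Sigma_1(\lambda)$-undefinability property from Lemma \ref{proposition:StablyMeasurableUndefinable}) makes explicit a point the paper leaves implicit, and your parity-splitting of the $\Sigma_1$-definable enumeration of the cardinals below $\kappa$ is a clean variant of the paper's case distinction, which instead uses the club of limit cardinals against the set of successor cardinals when the former is unbounded, and Lemma \ref{lemma:LeastCofinal} to produce a club of successor ordinals when it is bounded.

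The genuine gap is in $(2)\Rightarrow(1)$. What you offer there is a plan rather than a proof, and its pivotal step is a non sequitur: from the hypothesis that $\{\HH{\kappa}\}$ is not $\Sigma_1$-definable you conclude that ``the mice with lower part $\HH{\kappa}$ must form a $\Sigma_1$-indiscernible family, and any one of them \dots furnishes the required iterable structure.'' Undefinability of $\{\HH{\kappa}\}$ cannot by itself produce a mouse at $\kappa$ whose lower part is all of $\HH{\kappa}$: your coiteration argument only shows that \emph{if} such a mouse were pinned down by a $\Sigma_1$ condition, then $(2)$ would fail; it yields no existence statement whatsoever. Moreover, the existence of such a structure, together with the ${<}\kappa$-closure of $N$ and the $\Sigma_1$-elementarity $M\prec_{\Sigma_1}\HH{\kappa^+}$ (which you yourself flag as ``the hard part''), is essentially assertion $(1)$ itself. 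Your fallback --- invoking Welch's ``characterization matching the non-definability of $\HH{\kappa}$ established here'' --- is circular, because the equivalence of stable measurability with the non-definability of $\{\HH{\kappa}\}$ in $\KK^{DJ}$ is precisely what this theorem asserts; it is not a result available in \cite{zbMATH07415211} to be quoted.

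The paper closes this direction quite differently, via the contrapositive $\neg(1)\Rightarrow\neg(2)$: if $\kappa$ is inaccessible in $\VV=\KK^{DJ}$ but not stably measurable, then Welch's Theorem 2.6 of \cite{zbMATH07415211} --- a concrete citable result, namely the failure of the $\Sigma_1$-club property --- produces a \emph{bistationary} subset $E$ of $\kappa$ such that $\{E\}$ is definable by a $\Sigma_1$-formula with parameters in $\HH{\kappa}\cup\{\kappa\}$, and then Lemma \ref{lemma:DefineFromBistationary} converts this definable bistationary set into a $\Sigma_1$-definition of $\{\HH{\kappa}\}$. That bridge --- from a definable bistationary set to a definable $\HH{\kappa}$ --- is exactly the ingredient missing from your sketch; without it (or a genuinely new construction of the measurability witnesses from undefinability alone), the direction $(2)\Rightarrow(1)$ remains unproven.
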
 

\begin{proof} 
 First, assume that $\kappa$ is not stably measurable. 
 Then we can apply {\cite[Theorem 2.6]{zbMATH07415211}} to  find a bistationary subset $E$ of $\kappa$ with the property that the set $\{E\}$ is definable by a $\Sigma_1$-formula with parameters in $\HH{\kappa}\cup\{\kappa\}$.  Lemma \ref{lemma:DefineFromBistationary} then shows that the set $\{\HH{\kappa}\}$ is definable by a $\Sigma_1$-formula with parameters in $\HH{\kappa}\cup\{\kappa\}$.

 Now, assume that $\kappa$ is a limit cardinal and the set $\{\HH{\kappa}\}$ is definable by a $\Sigma_1$-formula with parameters in $\HH{\kappa}\cup\{\kappa\}$. 
 Let $C$ denote the set of all limit cardinals smaller than $\kappa$. Our assumption then implies that the set $\{C\}$ is definable by a $\Sigma_1$-formula with parameters in $\HH{\kappa}\cup\{\kappa\}$. 
 If $C$ is unbounded in $\kappa$, then the set of all successor cardinals smaller $\kappa$ is an unbounded subset of $\kappa$ that consists of cardinals and is not $\mathbf{\Sigma}_1$-stationary in $\kappa$. In the other case, if $C$ is bounded in $\kappa$, then $\cof{\kappa}=\omega$, Lemma \ref{lemma:LeastCofinal} yields a cofinal function $\map{c}{\omega}{\kappa}$ that is definable by a $\Sigma_1$-formula with parameter $\kappa$ and the set $D=\Set{c(n)+1}{n<\omega}$ is closed unbounded in $\kappa$ that contains no cardinals and has the  property that the set $\{D\}$ is definable by a $\Sigma_1$-formula with parameter $\kappa$. 

 Together with Theorem  \ref{theorem:StablyMeasurable}, these computations establish all equivalences claimed in the theorem. 
\end{proof}

Next, we show that stable measurability provides the exact consistency strength for the existence of limit cardinals with given  undefinability property:

 \begin{theorem}\label{theorem:ConsStrengthInacc}
 The following statements are equiconsistent over $\ZFC$: 
 \begin{enumerate}
  \item There exists a stably measurable cardinal. 

  \item There exists a limit cardinal $\kappa$ with the property that every unbounded subset of $\kappa$ that consists of cardinals is $\mathbf{\Sigma}_1$-stationary in $\kappa$. 
 \end{enumerate} 
\end{theorem}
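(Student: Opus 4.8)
The plan is to prove the two consistency implications separately, crossing between the combinatorial hypothesis and the large-cardinal hypothesis by means of the core-model characterization in Theorem \ref{theorem:InaccNonDefInKDJ}. For the direction from (1) to (2) I would in fact argue that the implication holds outright, so that in particular the consistency of (1) gives the consistency of (2): if $\kappa$ is stably measurable, then $\kappa$ is inaccessible (by the results of Welch in \cite{zbMATH07415211}) and hence a limit cardinal, and the second clause of Theorem \ref{theorem:StablyMeasurable} shows that every unbounded subset of $\kappa$ consisting of cardinals is $\mathbf{\Sigma}_1$-stationary in $\kappa$, so that $\kappa$ witnesses (2).

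For the converse I would assume that (2) holds for some limit cardinal $\kappa$ and produce an inner model with a stably measurable cardinal. First I would dispose of the case in which there is an inner model $W$ with a measurable cardinal: since $\ZFC$ proves that measurable cardinals are stably measurable, $W$ already models the existence of a stably measurable cardinal, giving $\mathrm{Con}(1)$. So I may assume that there is no inner model with a measurable cardinal, in which case $\KK^{DJ}$ is defined and Dodd--Jensen covering holds, and the goal becomes to show that $\kappa$ itself is stably measurable in $\KK^{DJ}$.

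The core of the argument proceeds in two steps. First I would show that $\kappa$ is inaccessible: were $\kappa$ singular in $V$, the second clause of Corollary \ref{corollary:ConsequencesSingularNoMeasurable} would furnish an unbounded subset of $\kappa$ consisting of cardinals that is not $\Sigma_1$-stationary, and hence not $\mathbf{\Sigma}_1$-stationary, contradicting (2); so $\kappa$ is regular, and being a limit cardinal it is inaccessible. Since regularity and being a limit cardinal are downward absolute to the inner model $\KK^{DJ}$, the cardinal $\kappa$ is inaccessible in $\KK^{DJ}$ as well, and in particular $\kappa>\omega_1^{\KK^{DJ}}$. Second, I would verify condition (2) of Theorem \ref{theorem:InaccNonDefInKDJ} inside $\KK^{DJ}$, namely that $\{\HH{\kappa}^{\KK^{DJ}}\}$ is not definable in $\KK^{DJ}$ by a $\Sigma_1$-formula with parameters in $\HH{\kappa}^{\KK^{DJ}}\cup\{\kappa\}$. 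Assuming otherwise towards a contradiction, I would transfer this definition up to $V$: since $\KK^{DJ}$ is itself $\Sigma_1$-definable with parameter $\kappa$ and $\Sigma_0$-formulas are absolute to the transitive class $\KK^{DJ}$, a $\Sigma_1$-definition of $\{\HH{\kappa}^{\KK^{DJ}}\}$ inside $\KK^{DJ}$ yields a $\Sigma_1$-definition of the same set in $V$ with parameters in $\HH{\kappa}\cup\{\kappa\}$. Mirroring the proof of Theorem \ref{theorem:InaccNonDefInKDJ}, the set $C$ of ordinals below $\kappa$ that are limit cardinals in $\KK^{DJ}$ is then a closed unbounded subset of $\kappa$ whose singleton is $\Sigma_1$-definable in $V$ with parameters in $\HH{\kappa}\cup\{\kappa\}$. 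Letting $E$ be the unbounded set of successor cardinals $\mu^{+V}$ of singular cardinals $\mu<\kappa$ of $V$, the absence of an inner model with a measurable cardinal ensures that each such $\mu$ is singular in $\KK^{DJ}$, and covering gives $\mu^{+V}=\mu^{+\KK^{DJ}}$, so that the elements of $E$ are successor cardinals of $\KK^{DJ}$ and $E\cap C=\emptyset$. Thus $E$ is an unbounded set of cardinals of $V$ that fails to be $\mathbf{\Sigma}_1$-stationary, contradicting (2). Hence condition (2) of Theorem \ref{theorem:InaccNonDefInKDJ} holds in $\KK^{DJ}$, and since $\KK^{DJ}\models V=\KK^{DJ}$, that theorem gives that $\kappa$ is stably measurable in $\KK^{DJ}$, completing $\mathrm{Con}(2)\Rightarrow\mathrm{Con}(1)$.

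I expect the main obstacle to lie in the bookkeeping that bridges $V$ and $\KK^{DJ}$. One must apply (2) only to sets consisting of genuine $V$-cardinals, which is exactly why the witnessing set $E$ has to be built from successors of singular cardinals together with Dodd--Jensen covering, so as to keep its elements out of the $\KK^{DJ}$-limit-cardinals even though $\KK^{DJ}$ may have extra cardinals below a $V$-cardinal. One must also check that the $\Sigma_1$-definability witnessed inside $\KK^{DJ}$ genuinely reflects to a $\Sigma_1$-definability in $V$ with parameters of the correct complexity. The remaining inputs --- downward absoluteness of inaccessibility, the $\Sigma_1$-definability of $\KK^{DJ}$, and the legitimacy of applying Theorem \ref{theorem:InaccNonDefInKDJ} inside $\KK^{DJ}$ (valid because the core model is its own core model) --- should be routine.
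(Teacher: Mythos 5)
Your proposal is correct and follows essentially the same route as the paper's proof: Theorem \ref{theorem:StablyMeasurable} for the forward direction, and for the reverse direction the combination of Corollary \ref{corollary:ConsequencesSingularNoMeasurable} (to get regularity), Theorem \ref{theorem:InaccNonDefInKDJ} applied inside $\KK^{DJ}$, the $\Sigma_1$-transfer via the definability of $\KK^{DJ}$, and the covering-lemma argument contrasting the club of $\KK^{DJ}$-limit cardinals with the set of successors of singular cardinals. Your reorganization (case split on an inner model with a measurable, plus explicitly verifying that stably measurable cardinals are limit cardinals) is just a more explicit bookkeeping of what the paper does by a single global contradiction.
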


\begin{proof} 
 Assume, towards a contradiction, that there is no inner model with a stably measurable cardinal and $\kappa$ is a limit cardinal with the property that every unbounded subset of $\kappa$ that  consists of cardinals is $\mathbf{\Sigma}_1$-stationary in $\kappa$.  Then Corollary \ref{corollary:ConsequencesSingularNoMeasurable} shows that $\kappa$ is regular.   
 Since $\kappa$ is not stably measurable in $\KK^{DJ}$, Theorem \ref{theorem:InaccNonDefInKDJ} now shows that, in $\KK^{DJ}$, the set $\{\HH{\kappa}\}$ is definable by a $\Sigma_1$-formula with parameters in $\HH{\kappa}\cup\{\kappa\}$. 
 Let $C$ denote the set of all ordinals less than $\kappa$ that are limit cardinals in $\KK^{DJ}$. Then $C$ is a closed unbounded subset of $\kappa$ and, since the class $\KK^{DJ}$ is definable by a $\Sigma_1$-formula with parameter $\kappa$, the set $\{C\}$ is definable by a $\Sigma_1$-formula with parameters in $\HH{\kappa}\cup\{\kappa\}$. Now, let $E$ denote the set of all successor cardinals of singular cardinals less than $\kappa$. The results of \cite{MR661475} then show that the elements of $E$ are successor cardinals of singular cardinals in $\KK^{DJ}$ and hence $C\cap E=\emptyset$. This shows that $E$ is an unbounded subset of $\kappa$ that consists of cardinals and is not $\mathbf{\Sigma}_1$-stationary.  
 In combination with Theorem \ref{theorem:StablyMeasurable}, these arguments prove the desired equivalence.  
\end{proof}

In Section \ref{Section:ConsistencyResults} we show that the assertion in item (2) of the last theorem can consistently hold at $\kappa = \aleph_\omega$, starting from the consistency assumption of a measurable cardinal. 


\subsection{Limits of measurable cardinals}

 We now continue by showing that many of the above results  can be extended to larger classes of definable sets if the given cardinal is a (not necessarily regular) limit of measurable cardinals:

  \begin{theorem}\label{theorem:LimitMeasurables}
  Let $\kappa$ be a cardinal that is a limit of measurable cardinals. 
  \begin{enumerate}
   \item Every unbounded subset  $S$ of $\kappa$ consisting of cardinals is $\mathbf{\Sigma}_1(Ord)$-stationary. 

   \item If $S$ is a $\mathbf{\Sigma}_1(Ord)$-stationary subset of $\kappa$ and $\map{r}{\kappa}{\kappa}$ is a regressive function that is definable by a $\Sigma_1$-formula with parameters in $\HH{\kappa}\cup Ord$, then $r$ is constant on a $\mathbf{\Sigma}_1(Ord)$-stationary subset of $S$. 
  \end{enumerate}
 \end{theorem}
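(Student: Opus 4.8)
The plan is to establish both parts by adapting the proofs of Theorem~\ref{theorem:StablyMeasurable}, letting the abundance of measurable cardinals below $\kappa$ compensate for the facts that $\kappa$ itself need not be regular and that we now wish to allow arbitrary ordinal parameters. For the first part, I would first reduce, via G\"odel coding of finitely many ordinals into a single one, to showing that for every ordinal $\gamma\geq\kappa$ and every uncountable cardinal $\mu<\kappa$, the cardinal $\kappa$ has the $\Sigma_1(\mu,\gamma)$-undefinability property. Granting this, Lemma~\ref{lemma:UnDefStationary} shows that every uncountable cardinal $\mu$ in an interval $[\mu_0,\kappa)$ lies in every club $C$ whose singleton is definable by a $\Sigma_1$-formula with parameters in $\HH{\mu_0}\cup\{\kappa,\gamma\}$, where $\mu_0$ bounds the $\HH{\kappa}$-parameters of $C$; hence such a $C$ contains a tail of the uncountable cardinals below $\kappa$ and therefore meets every unbounded set $S$ of cardinals. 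This is precisely the content of Corollary~\ref{corollary:UnbCardStat}, once the undefinability property is known to hold for all relevant $\mu$.

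The heart of the matter is thus the undefinability lemma, which I would prove by the collapse-and-re-iterate strategy underlying Lemma~\ref{proposition:StablyMeasurableUndefinable}. Assume towards a contradiction that some $\alpha\in[\mu,\kappa)$ is the unique solution of $\varphi(v_0,z,\kappa,\gamma)$ for a $\Sigma_1$-formula $\varphi$ and some $z\in\HH{\mu}$. Fixing a large $\theta$ with $V_\theta\prec_{\Sigma_2}V$, I would take an elementary $X\prec V_\theta$ with transitive collapse $\pi$ arranged so that $\pi$ fixes $z$ but sends $\alpha$ to some $\alpha_0<\alpha$, with $\pi(\kappa)=\kappa_0$ and $\pi(\gamma)=\gamma_0$; the collapsed model $N_0$ then satisfies that $\alpha_0$ is the unique solution of $\varphi(v_0,z,\kappa_0,\gamma_0)$. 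Since $V_\theta$, and hence $N_0$, sees $\kappa_0$ as a limit of measurable cardinals, one may perform an iteration of $N_0$ using its internal measures, all of whose critical points exceed $\alpha_0$ (so that $\alpha_0$ is fixed) and which sends $\kappa_0\mapsto\kappa$ and $\gamma_0\mapsto\gamma$ while fixing $z$. Applying this iteration and invoking the $\Sigma_1$-upward absoluteness of $\varphi$ to $V$ would then yield $\varphi(\alpha_0,z,\kappa,\gamma)$ in $V$ with $\alpha_0\neq\alpha$, contradicting the uniqueness of $\alpha$.

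The main obstacle is exactly the simultaneous restoration of the two large parameters $\kappa$ and $\gamma$: a single normal measure produces a linear iteration with only one degree of freedom and cannot, in general, route $\kappa_0$ and $\gamma_0$ to independently prescribed targets. This is where the hypothesis that $\kappa$ is a limit of measurables is used in an essential way, rather than through a single measure as in the stably measurable case: the cofinally many measurables of $N_0$ lying strictly between $\alpha_0$ and $\kappa_0$ provide enough freedom, through iterating several of them, to hit both targets while keeping $\alpha_0$ below every critical point employed. Making this routing precise---and verifying that $\gamma$ genuinely lies in the range of the achievable images of $\gamma_0$---is the technical crux of the whole argument, and I expect it to require the most care.

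For the second part, I would follow the proof of the third assertion of Theorem~\ref{theorem:StablyMeasurable}. The first step is a $\mathbf{\Sigma}_1(\On)$-club dichotomy for limits of measurables: every subset of $\kappa$ whose singleton is $\Sigma_1$-definable with parameters in $\HH{\kappa}\cup\On\cup\{\kappa\}$ either contains, or is disjoint from, a club whose singleton is definable in the same way. I would derive this from the reflection and iteration machinery developed above, in the spirit of Lemma~\ref{lemma:DefClubStablyMeas}. Applying the dichotomy to the preimages $r^{-1}\{\alpha\}$ and forming a diagonal intersection then yields a unique $\alpha_*<\kappa$ for which $r^{-1}\{\alpha_*\}$ contains a $\Sigma_1(\On\cup\HH{\kappa})$-definable club $C$, so that $r$ is constant on $C\cap S$. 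Finally, $C\cap S$ is $\mathbf{\Sigma}_1(\On)$-stationary: when $\kappa$ has uncountable cofinality this is immediate, since $S$ is $\mathbf{\Sigma}_1(\On)$-stationary and clubs meet clubs, while when $\cof{\kappa}=\omega$ it follows from Corollary~\ref{corollary:CriterionClosedIntersections}, whose hypothesis is supplied by the first part of the theorem.
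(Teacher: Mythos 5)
Your proof of part (1) has two genuine gaps. First, the statement you reduce to is false: for a fixed $\gamma\geq\kappa$, the cardinal $\kappa$ cannot have the $\Sigma_1(\mu,\gamma)$-undefinability property for \emph{every} uncountable cardinal $\mu<\kappa$. Indeed, take any $\alpha\in[\mu,\kappa)$ and set $\gamma=\kappa+\alpha$; then $\alpha$ is the unique ordinal $x$ with $\kappa+x=\gamma$, and this is a $\Sigma_1$-property of $(x,\kappa,\gamma)$, witnessed by an order-isomorphism between $\gamma$ and the ordered sum of $\kappa$ and $x$. So the undefinability property fails for this pair $(\mu,\gamma)$. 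For the same reason, your claim that the tail $[\mu_0,\kappa)$ of cardinals entering a definable club can be computed from a bound $\mu_0$ on its $\HH{\kappa}$-parameters alone is wrong: the club $(\alpha,\kappa)$ is $\Sigma_1$-definable from $\kappa$ and $\gamma=\kappa+\alpha$ with no $\HH{\kappa}$-parameter, yet it omits every cardinal $\leq\alpha$; the bound must depend on $\gamma$, as in Lemma \ref{lemma:LimitMeasurableUndefProp}. Second, and more seriously, the collapse-and-re-iterate strategy cannot deliver even the corrected statement, and the point you defer as ``the technical crux'' is exactly where it breaks: once the iteration of the collapsed hull $N_0$ is required to send $\kappa_0$ to $\kappa$, its length (and, with a single measure, the whole embedding) is determined, so the image of $\gamma_0$ is whatever it is, and interleaving several measures of the small structure $N_0$ provides no mechanism for hitting an arbitrary prescribed ordinal $\gamma$. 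The paper's proof is organized precisely to avoid this routing problem: Lemma \ref{lemma:CommutingEmbeddings} shows that linear iterations of $V$ by measures on two different measurable cardinals commute on the ordinals, and Lemma \ref{lemma:LimitMeasurableFixedPoints} deduces that for every ordinal $\gamma$ and all sufficiently large measurables $\kappa_\xi<\kappa$, every iterate $j^\xi_{0,\alpha}$ of $V$ (for $\alpha<\kappa$) by a normal measure on $\kappa_\xi$ fixes both $\kappa$ and $\gamma$. One then iterates $V$ itself \emph{forward}: the putative unique solution $\alpha\in[\kappa_\xi,\kappa)$ is moved strictly upward while $\kappa$, $\gamma$ and $z\in\HH{\kappa_\xi}$ are all fixed, and $\Sigma_1$-upward absoluteness contradicts uniqueness. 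No transitive collapse, and no steering of parameters, is needed.

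Part (2) has a further gap: you apply the club dichotomy to the fibers $r^{-1}\{\alpha\}$ and then form a diagonal intersection, but the theorem allows $\kappa$ to be a \emph{singular} limit of measurable cardinals, and diagonal intersections of $\kappa$-many clubs need not be unbounded when $\kappa$ is singular. This device was available in Theorem \ref{theorem:StablyMeasurable} only because stably measurable cardinals are regular. The paper's proof of part (2) avoids both the dichotomy and the diagonal intersection: choosing a measurable $\kappa_0<\kappa$ with $z\in\HH{\kappa_0}$ and a normal ultrafilter $U$ on $\kappa_0$ whose iterations fix $\gamma$ and $\kappa$, elementarity together with upward absoluteness shows directly that $r$ takes the constant value $r(\kappa_0)$ on the critical-point club $C=\Set{j_{0,\alpha}(\kappa_0)}{\alpha<\kappa}$, whose singleton is $\Sigma_1$-definable from $\kappa$ and $U$. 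Your final step --- intersecting $C$ with $S$ via Corollary \ref{corollary:CriterionClosedIntersections} and part (1) --- does agree with the paper.
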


 The starting point of the proof of this theorem is the following technical lemma that generalizes {\cite[Lemma 1.1.25]{La04}} to embeddings given by linear iterations.

\begin{lemma}\label{lemma:CommutingEmbeddings}
 Suppose that  $\kappa_0<\kappa_1$ are measurable cardinal and $\alpha_0 < \kappa_1$ is an ordinal. Given $i<2$, let $U_i$ be a normal ultrafilter on $\kappa_i$ and let $$\langle\seq{M^i_\alpha}{\alpha\in Ord},\seq{\map{j^i_{\alpha,\beta}}{M^i_\alpha}{M^i_\beta}}{\alpha\leq\beta\in Ord}\rangle$$ denote the linear iteration of $\langle\VV,\in,U_i\rangle$. In addition, let $$\langle\seq{M^*_\alpha}{\alpha\in Ord},\seq{\map{j^*_{\alpha,\beta}}{M^*_\alpha}{M^*_\beta}}{\alpha\leq\beta\in Ord}\rangle$$ denote the linear iteration of $\langle M^0_{\alpha_0},\in,j^0_{0,\alpha_0}(U_1)\rangle$. Then $j^*_{0,\alpha}\restriction Ord=j^1_{0,\alpha}\restriction Ord$ for all $\alpha\in Ord$. 
\end{lemma}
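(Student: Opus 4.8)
The plan is to prove the statement by induction on $\alpha$, the real content being concentrated in a single application of the $U_1$-ultrapower; the passage to $\alpha$-length iterations is then bookkeeping. Write $j^0 = j^0_{0,\alpha_0}$, $M = M^0_{\alpha_0}$ and $W = j^0(U_1)$, so that $\langle M^*_\alpha\rangle$ is the iteration of $\langle M,\in,W\rangle$ while $\langle M^1_\alpha\rangle$ is the iteration of $\langle V,\in,U_1\rangle$. The two pillars of the argument, both consequences of $\alpha_0 < \kappa_1$, are: (B) the image measure $W$ agrees with $U_1$ on the subsets of $\kappa_1$ lying in $M$; and (C) every function $f\colon\kappa_1\to\On$ in $V$ agrees $U_1$-almost-everywhere with some function in $M$. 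Granting (B) and (C), the map $[g]_W\mapsto[g]_{U_1}$ (for $g\in M$) is, by (B), an order-preserving injection of $\On^{\Ult{M}{W}}$ into $\On^{\Ult{V}{U_1}}$, and, by (C), it is onto; since both are the ordinals of transitive class models it is the identity, so $[g]_W=[g]_{U_1}$ for all $g\in M$, and taking $g$ constant yields $j^*_{0,1}\restriction\On = j^1_{0,1}\restriction\On$.

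First I would record the structural facts about $j^0$. Since $\alpha_0<\kappa_1$ and $\kappa_1$ is inaccessible, every critical point $\nu_\beta = j^0_{0,\beta}(\kappa_0)$ of the iteration satisfies $\nu_\beta < (|\beta|+2^{\kappa_0})^+ < \kappa_1$, so $\delta:=\sup_{\beta<\alpha_0}\nu_\beta<\kappa_1$ and, by continuity of $j^0$ at ordinals of cofinality above $\delta$, $j^0(\kappa_1)=\kappa_1$ and $j^0$ fixes every inaccessible $\gamma\in(\delta,\kappa_1)$. As the normal measure $U_1$ concentrates on inaccessibles, the set $C^*$ of $j^0$-fixed points below $\kappa_1$ lies in $U_1$. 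For (B), I would write a set $X\in\POT{\kappa_1}^M$ as $j^0(h)(\vec\nu)$ with $\vec\nu\in[\delta]^{<\omega}$ and, using {\L}o\'s for the iteration together with the $\kappa_1$-completeness of $U_1$, locate a $U_1$-large subset of $C^*$ on which, for all $\gamma$, membership of $\gamma$ in $X$ is equivalent to one fixed $U_0$-side condition; this forces $X\in W\Leftrightarrow X\in U_1$. For (C), the key point is that \emph{every} ordinal belongs to $M$: given $f$, choose for each $\gamma$ a representation $f(\gamma)=j^0(p_\gamma)(\vec\nu_\gamma)$ with $p_\gamma\in V$ and $\vec\nu_\gamma$ a finite tuple of generators; since there are fewer than $\kappa_1$ such tuples, $\kappa_1$-completeness makes $\vec\nu_\gamma$ constant on a $U_1$-large subset of $C^*$, and the $p_\gamma$ can then be assembled, exactly as in the single-generator computation $g(\gamma)=[\,\xi\mapsto p_\gamma(\xi)\,]_{U_0}$, into one function $g\in M$ with $g(\gamma)=f(\gamma)$ on a $U_1$-large set.

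With the case $\alpha=1$ in hand, I would run the outer induction on $\alpha$. At successor steps $\alpha\to\alpha+1$ the inductive hypothesis gives $j^*_{0,\alpha}\restriction\On=j^1_{0,\alpha}\restriction\On$, hence $j^*_{0,\alpha}(\kappa_1)=j^1_{0,\alpha}(\kappa_1)$ and agreement of the two stage-$\alpha$ measures on their common subsets; applying the single-step argument inside $M^*_\alpha$ and $M^1_\alpha$ (that is, to the images of $M$, $V$, $W$, $U_1$ under the agreeing maps) propagates the agreement to $\alpha+1$. At a limit stage $\alpha$ both models are direct limits, and an ordinal of either limit is represented by a pair $(\beta,\xi)$ with $\beta<\alpha$ and $\xi$ an ordinal of the $\beta$-th model; the inductive agreement of all the connecting maps on ordinals then identifies the ordinals of the two direct limits compatibly with $j^*_{0,\alpha}$ and $j^1_{0,\alpha}$. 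To make the successor step go through cleanly, the induction hypothesis should be strengthened to carry, at each stage, not only agreement on ordinals but also the stage-$\alpha$ forms of (B) and (C). The main obstacle is precisely this propagation, i.e. verifying that the relation ``$M^*_\alpha$ captures $M^1_\alpha$ modulo the stage-$\alpha$ image of $U_1$'' is preserved under the ultrapower and direct-limit operations; this is exactly the step that upgrades Laver's single-ultrapower lemma to linear iterations, whereas the single ultrapower itself, once (B) and (C) are established, is comparatively transparent.
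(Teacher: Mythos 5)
Your single-step case is correct, and it is essentially the same comparison that drives the paper's proof: the map $[g]_W \mapsto [g]_{U_1}$ (the paper's map $F$) is an order-isomorphism between the ordinals of the two ultrapowers, hence the identity, and your facts (B) and (C) are established by the same completeness-plus-fixed-point computations that appear as the paper's Claims. But the proposal does not prove the lemma, because everything beyond one step is left undone --- and you concede this yourself: your opening sentence calls the passage to $\alpha$-length iterations ``bookkeeping'', while your closing paragraph correctly identifies the propagation of (B) and (C) through the iteration as ``the main obstacle'' and then does not carry it out. That propagation is not bookkeeping; it is the content of the lemma (the one-step case is essentially Laver's lemma, which the paper cites as the result being generalized). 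Concretely: at a successor stage you must compare $\mathrm{Ult}(M^*_\alpha,W_\alpha)$ with $\mathrm{Ult}(M^1_\alpha,U_{1,\alpha})$, and to run your one-step argument there you need stage-$\alpha$ analogues of (B) and (C) together with the structural facts that made them provable at the base: a copy embedding $k_\alpha\colon M^1_\alpha\to M^*_\alpha$ with $k_\alpha(U_{1,\alpha})=W_\alpha$, a generating set of the right size, completeness of $U_{1,\alpha}$ \emph{internal to} $M^1_\alpha$ (it is not $\kappa^1_\alpha$-complete in $V$), and a $U_{1,\alpha}$-large set of fixed points of $k_\alpha$. None of this follows from the inductive hypothesis as stated, and at limit stages the generators of $k_\alpha$ become cofinal in $\kappa^1_\alpha$, so the base-stage computations do not transfer verbatim. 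The limit step has a further gap as written: agreement of $j^*_{0,\beta}$ and $j^1_{0,\beta}$ on ordinals for all $\beta<\alpha$ does not by itself identify the two direct limits; for that you need agreement of the \emph{connecting} maps $j^*_{\beta,\beta'}$ and $j^1_{\beta,\beta'}$ on all ordinals, which is exactly what the missing stage-$\beta$ arguments would supply.

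The paper's proof shows how to avoid this propagation entirely, and that is its key idea beyond the one-step comparison. Its inductive hypothesis is only that $j^*_{0,\beta}$ and $j^1_{0,\beta}$ agree on ordinals for $\beta<\alpha_1$; this guarantees that the two iterations have the \emph{same} generators $C^1=\Set{j^1_{0,\beta}(\kappa_1)}{\beta<\alpha_1}$. Every ordinal is then represented simultaneously as $j^*_{0,\alpha_1}(g)(d)$ with $g\in M^0_{\alpha_0}$ and as $j^1_{0,\alpha_1}(f)(d)$ with $f\in V$, where $d$ ranges over finite tuples from $C^1$, and the whole (B)/(C)-style comparison is carried out \emph{at the base}, using the finite-dimensional product measures $U_1^n$ and $U_*^n=j^0(U_1^n)$ together with the {\L}o\'{s} theorem for linear iterations. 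The resulting map $F(j^*(g)(c))=j^1(g\circ j^0)(c)$ is a well-defined, order-preserving surjection of $\mathrm{Ord}$ onto $\mathrm{Ord}$, hence the identity, which settles stage $\alpha_1$ uniformly, with no successor/limit case distinction and no stage-wise transfer of (B) and (C). If you want to complete your outline, the repair is precisely this: replace the stage-wise induction by the base-level representation over tuples of generators, i.e., upgrade your (B) and (C) from the measures $W,U_1$ to the product measures $U_*^n,U_1^n$, which only requires the completeness and fixed-point facts you already have at the base.
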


\begin{proof}
 Fix an ordinal $\alpha_1<\kappa$ with the property that $j^*_{0,\alpha}\restriction Ord=j^1_{0,\alpha}\restriction Ord$ holds for all $\alpha<\alpha_1$. Set $M^0=M^0_{\alpha_0}$, $M^1=M^1_{\alpha_1}$, $M^*=M^*_{\alpha_1}$, $j^0=j^0_{0,\alpha_0}$, $j^1=j^1_{0,\alpha_1}$,  $j^*=j^*_{0,\alpha_1}$ and  
 $U_*=j^0(U_1)$. Then $j^0(\kappa_1)=\kappa_1$. 
 In addition, if $i<2$, then we define $\kappa^i_\alpha=j^i_{0,\alpha}(\kappa_i)$ for all $\alpha\in Ord$ and $C^i=\Set{\kappa^i_\alpha}{\alpha<\alpha_i}$. 
 An application of {\cite[Lemma 19.6]{MR1994835}} then shows that every element of $M^*$ is of the form $j^0(f)(c)$ with $n<\omega$, $\map{f}{[\kappa_0]^n}{V}$ and $c\in[C^0]^n$. 
 In addition, given $i<2$ and $n<\omega$, we define $$U_i^n ~ = ~ \Set{X\subseteq [\kappa_i]^n}{\exists E\in U_i ~ [E]^n\subseteq X}.$$ The results of {\cite[Chapter 19]{MR1994835}} then show that $U_i^n$ is a ${<}\kappa_i$-complete ultrafilter on $[\kappa_i]^n$. Set $U_*^n=j^0(U_1^n)$ for all $n<\omega$. Our assumption now  implies that $j^*_{0,\alpha}(\kappa_1)=\kappa^1_\alpha$ holds for all $\alpha<\alpha_1$. In particular, we know that every ordinal is of the form $j^*(g)(d)$ with $n<\omega$, $\map{g}{[\kappa_1]^n}{Ord}$ in $M_0$ and $d\in [C^1]^n$.

 \begin{claim*}
  If $n<\omega$, $B\in U_1^n$ and $B_*\in U_*^n$, then $j^0[B]\cap B_*\neq\emptyset$. 
 \end{claim*}

 \begin{proof}[Proof of the Claim]
  Pick $n<\omega$, $\map{f}{[\kappa_0]^n}{V}$ and $c\in [C^0]^n$ with $B_*=j^0(f)(c)$. 
  By {\cite[Lemma 19.9]{MR1994835}}, we then have $$D ~ = ~ \Set{a\in[\kappa_0]^n}{f(a)\in U_1^n} ~ \in ~ U_0^n.$$  
  Since $U_0^n$ has cardinality less than $\kappa_1$, the ${<}\kappa_1$-completeness of $U_1^n$ implies that $E=\bigcap\Set{f(a)}{a\in D}$ is an element of $U_1^n$. 
  Pick $b\in B\cap E$. 
  Then $$D ~ \subseteq ~ \Set{a\in[\kappa_0]^n}{b\in f(a)} ~ \in ~ U_0^n$$ and {\cite[Lemma 19.9]{MR1994835}} shows that $j^0(b)\in j^0[B]\cap j^0(f)(c)=j^0[B]\cap B_*\neq\emptyset$. 
 \end{proof}

 \begin{claim*}
  If $n<\omega$ and $\map{g_0,g_1}{[\kappa_1]^n}{Ord}$ are functions in $M^0$ with $$\Set{b\in[\kappa_1]^n}{g_0(b)=g_1(b)}\in U_*^n,$$ then $$\Set{b\in[\kappa_1]^n}{g_0(j^0(b))=g_1(j^0(b))}\in U_1^n.$$
 \end{claim*}

 \begin{proof}[Proof of the Claim]
  Assume, towards a contradiction, that the above conclusion fails. Then we know that the set $B=\Set{b\in[\kappa_1]^n}{g_0(j^0(b))\neq g_1(j^0(b))}$ is an element of $U_1^n$. In this situation, we can use our first claim to find $b\in B$ with $g_0(j^0(b))=g_1(j^0(b))$, a contradiction. 
 \end{proof}

 The same proof also yields the following implication: 

  \begin{claim*}
  If $n<\omega$ and $\map{g_0,g_1}{[\kappa_1]^n}{Ord}$ are functions in $M^0$ such that the set  $\Set{b\in[\kappa_1]^n}{g_0(b)<g_1(b)}$ is an element of $U_*^n$,  then the set $$\Set{b\in[\kappa_1]^n}{g_0(j^0(b))<g_1(j^0(b))}$$ is an element of $U_1^n$. \qed
 \end{claim*}

 \begin{claim*}
  For every function $\map{f}{[\kappa_1]^n}{Ord}$ in $V$, there is a function $\map{g}{[\kappa_1]^n}{Ord}$ in $M^0$ with $$\Set{b\in[\kappa_1]^n}{f(b)=g(j^0(b))}\in U_1^n.$$
 \end{claim*}

 \begin{proof}[Proof of the Claim]
  Given $b\in[\kappa_1]^n$, pick $m_b<\omega$, $\map{f_b}{[\kappa_0]^{m_b}}{Ord}$ in $V$ and $c_b\in[C^0]^{m_b}$ with $f(b)=j^0(f_b)(c_b)$. Then there is $m<\omega$ and $c\in[C^0]^m$ with $$B ~ = ~ \Set{b\in[\kappa_1]^n}{m_b=m, ~ c_b=c}  ~ \in ~ U_1^n.$$
  Define $$\Map{G}{[\kappa_0]^m\times B}{Ord}{(a,b)}{f_b(a)}$$ and $$\Map{g}{[\kappa_1]^n}{Ord}{b}{j^0(G)(c,b)}.$$ Then $g$ is an element of $M^0$ and $$g(j^0(b)) ~ = ~ j^0(G)(c,j^0(b)) ~ = ~ j^0(f_b)(c) ~ = ~ f(b)$$ holds for all $b\in B\in U_1^n$. 
 \end{proof}

 \begin{claim*}
  If $\map{g}{[\kappa_1]^1}{Ord}$ is an element of $M^0$, then $$j^1(g\circ j^0)(\{\kappa_1\}) ~ = ~ j^1(g)(\{\kappa_1\}).$$
 \end{claim*}

 \begin{proof}[Proof of the Claim]
  Since standard arguments show that  $\Set{\{\gamma\}}{\gamma<\kappa_1, j^0(\gamma)=\gamma}$ is an element of $U^1_1$, we know that the set $\Set{\{\gamma\}}{\gamma<\kappa_1, (g\circ j^0)(\gamma)=g(\gamma)}$ is also contained in $U_1^1$ and hence an application of {\cite[Lemma 19.9]{MR1994835}} shows that we have $j^1(g\circ j^0)(\{\kappa_1\})  =  j^1(g)(\{\kappa_1\})$. 
 \end{proof}

 By our second claim and {\cite[Lemma 19.9]{MR1994835}}, there is a well-defined class function $\map{F}{Ord}{Ord}$ with the property that $$F(j^*(g)(c)) ~ = ~ j^1(g\circ j^0)(c)$$ holds for all $n<\omega$, $\map{g}{[\kappa_1]^n}{Ord}$ in $M^0$ and $c\in [C^1]^n$. Moreover, our third claim shows that $F$ is order-preserving. Finally, since every ordinal is of the form $j^1(f)(c)$ for some $n<\omega$, $\map{f}{[\kappa_1]^n}{Ord}$ in $V$ and $c\in[C^1]^n$, our fourth claim shows that $F$ is also surjective. In combination, this shows that $F$ is the identity on $Ord$. 
 Now,    fix an ordinal $\gamma$ and let $g_\gamma$ denote the constant function on $[\kappa_1]^1$ with value $\gamma$. Then we have 
 \begin{equation*}
  \begin{split}
   j^*(\gamma) ~ & = ~ j^*(g_\gamma)(\{\kappa_1\}) ~ = ~ F(j^*(g_\gamma)(\{\kappa_1\}))  \\ &  = ~ j^1(g_\gamma\circ j^0)(\{\kappa_1\})  ~ = ~ j^1(g_\gamma)(\{\kappa_1\}) ~ = ~ j^1(\gamma)
  \end{split}
 \end{equation*}
 and this proves the statement of the lemma.  
\end{proof}

Using the above results, we can now drive a variation of {\cite[Lemma 1.1.27]{La04}} for iterations. The following lemma   will be the main tool used in the proofs of this section.

\begin{lemma}\label{lemma:LimitMeasurableFixedPoints}
 Let $\lambda \geq \aleph_0$ be a regular cardinal, let $\vec{\kappa}=\seq{\kappa_\xi}{\xi<\lambda}$ be a strictly increasing sequence of measurable cardinals with supremum $\kappa$ and let $\vec{U}=\seq{U_\xi}{\xi<\lambda}$ be a sequence with the property that $U_\xi$ is a normal ultrafilter on $\kappa_\xi$ for all $\xi<\lambda$. Assume that either $\kappa=\lambda$ or $\lambda<\kappa_0$. 
 Given $\xi<\lambda$, let $$\langle\seq{M^\xi_\alpha}{\alpha\in Ord},\seq{\map{j^\xi_{\alpha,\beta}}{M^\xi_\alpha}{M^\xi_\beta}}{\alpha\leq\beta\in Ord}\rangle$$ denote the linear iteration of $\langle\VV,\in,U_\xi\rangle$. 
 Then for all ordinals $\gamma$ and eventually all $\xi<\lambda$, we have $j^\xi_{0,\alpha}(\gamma)=\gamma$  for all   $\alpha<\kappa$. 
\end{lemma}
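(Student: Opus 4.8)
The plan is to fix an ordinal $\gamma$ and to show that the set of $\xi<\lambda$ for which some $\alpha<\kappa$ satisfies $j^\xi_{0,\alpha}(\gamma)\neq\gamma$ is bounded in $\lambda$. Since each $j^\xi_{0,\alpha}$ is an elementary embedding of $\VV$ into a transitive inner model, we always have $j^\xi_{0,\alpha}(\gamma)\geq\gamma$, and the factorisation $j^\xi_{0,\beta}=j^\xi_{\alpha,\beta}\circ j^\xi_{0,\alpha}$ shows that $\alpha\mapsto j^\xi_{0,\alpha}(\gamma)$ is non-decreasing; hence it suffices to bound $j^\xi_{0,\alpha}(\gamma)$ by $\gamma$. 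I would first record the standard preliminary facts. Each $j^\xi_{0,\alpha}$ has critical point $\kappa_\xi$, so the case $\gamma<\kappa$ is settled by taking $\xi$ with $\kappa_\xi>\gamma$. For $\alpha<\kappa$ the critical sequence $\seq{j^\xi_{0,\beta}(\kappa_\xi)}{\beta\leq\alpha}$ stays below $\kappa$: choosing a measurable $\kappa_\eta>\max(|\alpha|,\kappa_\xi)$ and using that iterating a measurable below an inaccessible for fewer than that many steps keeps the critical points below the inaccessible, we obtain $j^\xi_{0,\alpha}(\kappa_\xi)<\kappa_\eta<\kappa$. The same inaccessibility argument, applied to $\kappa_\eta$ for $\eta>\xi$, yields $j^\xi_{0,\alpha}(\kappa_\eta)=\kappa_\eta$. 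Combining these with the dichotomy in the hypothesis (which ensures that $\cof{\kappa}=\lambda$ differs from every critical point appearing in the iterations), I would conclude that $j^\xi_{0,\alpha}$ is continuous at $\kappa$ and that $j^\xi_{0,\alpha}(\kappa)=\kappa$ for every $\xi$ and every $\alpha<\kappa$.

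The core is a transfinite induction on $\gamma\geq\kappa$, which I would run as a minimal-counterexample argument: let $\gamma$ be least such that the bad set is unbounded. If $\gamma=\delta+1$, then $j^\xi_{0,\alpha}(\gamma)=j^\xi_{0,\alpha}(\delta)+1$, so $\gamma$ and $\delta$ have the same bad set, contradicting minimality; thus $\gamma$ is a limit ordinal. If $\gamma=\mu^+$ is a successor cardinal, then by minimality $\mu$ is eventually fixed, and for such $\xi$ we have $j^\xi_{0,\alpha}(\mu^+)=(\mu^+)^{M^\xi_\alpha}=\mu^+$, since $M^\xi_\alpha\subseteq\VV$ and $j^\xi_{0,\alpha}(\mu)=\mu$; so $\gamma$ is eventually fixed, again a contradiction. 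Hence the minimal counterexample $\gamma$ is a limit ordinal that is not a successor cardinal. For the cofinally many bad $\xi$ with $\kappa_\xi>\cof{\gamma}$ (or for all $\xi$, when $\cof{\gamma}\geq\kappa$) the critical sequence avoids $\cof{\gamma}$, so $j^\xi_{0,\alpha}$ is continuous at $\gamma$; if $\alpha_\xi<\kappa$ witnesses badness, continuity then forces some $\delta<\gamma$ with $j^\xi_{0,\alpha_\xi}(\delta)\geq\gamma$.

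The main obstacle is therefore the closure step: ruling out such a $\delta$, i.e. establishing $j^\xi_{0,\alpha}[\gamma]\subseteq\gamma$ for the relevant bad $\xi$, since this together with continuity yields $j^\xi_{0,\alpha}(\gamma)=\gamma$ and contradicts badness. Here I would exploit minimality of $\gamma$ (so every $\delta<\gamma$ is eventually fixed) together with Lemma \ref{lemma:CommutingEmbeddings} to compare, for $\xi<\eta$, the action of $j^\xi_{0,\alpha}$ with that of the linear iteration of $j^\xi_{0,\alpha}(U_\eta)$ over $M^\xi_\alpha$, whose critical point is the fixed cardinal $\kappa_\eta$; this is what should let me transfer the eventual fixing of ordinals below $\gamma$ by large-index embeddings into a bound on the images $j^\xi_{0,\alpha}(\delta)$ for the smaller index $\xi$, keeping them below $\gamma$. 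The dichotomy $\kappa=\lambda$ or $\lambda<\kappa_0$ is precisely what guarantees that the continuity thresholds accumulated along the induction assemble to an ordinal below $\lambda$ rather than reaching $\lambda$, and the generator representation underlying the proof of Lemma \ref{lemma:CommutingEmbeddings} (every ordinal below $j^\xi_{0,\alpha}(\delta)$ has the form $j^\xi_{0,\alpha}(f)(c)$ with $c$ drawn from the critical sequence, which lies below $\kappa$) supplies the bound $|j^\xi_{0,\alpha}(\delta)|\leq|\delta|^{\kappa_\xi}$ that I expect to combine with a cofinal family of fixed points below $\gamma$ to complete the closure. The resulting contradiction with the existence of $\delta$ finishes the induction.
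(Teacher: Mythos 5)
Your preliminary reductions are essentially sound: the monotonicity remarks, the successor-ordinal and successor-cardinal cases, the fixing of $\kappa$, and the continuity criterion (an iteration $j^\xi_{0,\alpha}$ can be discontinuous only at ordinals of cofinality $\kappa_\xi$) are all correct, and one can even add a case you leave implicit, namely that a minimal counterexample $\gamma$ cannot have $\cof{\gamma}>\lambda$: the at most $\lambda$-many witnesses $\delta_\xi$ would then have supremum $\delta^*<\gamma$, and $\delta^*$ would already be a counterexample. But the closure step, which you yourself flag as the main obstacle, is a genuine gap, and the problem is concentrated exactly in the case $\cof{\gamma}=\lambda$. The ``thresholds assemble below $\lambda$'' idea uses the regularity of $\lambda$ and therefore needs a cofinal subset of $\gamma$ of size less than $\lambda$; it proves the case $\cof{\gamma}<\lambda$ and nothing more (this is also not the role of the dichotomy hypothesis, whose actual purpose is to guarantee $\cof{\kappa}=\lambda\neq\kappa_\xi$, hence $j^\xi_{0,\alpha}(\kappa)=\kappa$ and $j^\xi_{0,\alpha}(\lambda)=\lambda$). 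The generator bound $|j^\xi_{0,\alpha}(\delta)|\leq|\delta|^{\kappa_\xi}\cdot|\alpha|$ only rules out those $\gamma$ that are cardinals closed under $\mu\mapsto\mu^{{<}\kappa}$. When $\cof{\gamma}=\lambda$, fix an increasing continuous cofinal sequence $\seq{\gamma_i}{i<\lambda}$ and, using minimality, let $\rho_i<\lambda$ be a threshold beyond which all iterations fix $\gamma_i$: for a fixed bad $\xi$ you only control the images of those $\gamma_i$ with $\rho_i\leq\xi$, and the badness witness $\delta_\xi$ simply lies above all of them. The resulting constraint (each bad $\xi$ satisfies $\xi<\rho_{i}$ for the least $i$ with $\delta_\xi<\gamma_i$) is a perfectly consistent combinatorial pattern, so no contradiction can be extracted from this ``external'' use of minimality plus counting; your appeal to Lemma \ref{lemma:CommutingEmbeddings} at this point is not backed by an actual argument, and the lemma by itself says nothing about bounding $j^\xi_{0,\alpha}(\delta)$ below $\gamma$.

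What is missing is a reflection step, and it is the heart of the paper's (much shorter) proof: minimality must be transferred \emph{into an iterate} by elementarity, not used only in $V$. Concretely, pick a bad index $\zeta$ with witness $\alpha_0<\kappa$, set $j=j^\zeta_{0,\alpha_0}$ and $M=M^\zeta_{\alpha_0}$, so that $j(\gamma)>\gamma$, $j(\kappa)=\kappa$ and $j(\lambda)=\lambda$. Applying $j$ to the statement that $\gamma$ is the \emph{least} counterexample yields, inside $M$, that every ordinal below $j(\gamma)$ --- in particular $\gamma$ itself --- is eventually fixed by the iterations of $M$ via the measures on the sequence $j(\vec{U})$; say this holds for all indices in $[\rho,\lambda)$ with $\rho<\lambda$. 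Now choose a $V$-bad $\xi\geq\rho$ with $\kappa_\xi>\alpha_0$ and $j^\xi_{0,\alpha_1}(\gamma)>\gamma$. Lemma \ref{lemma:CommutingEmbeddings}, applied with $\kappa_0=\kappa_\zeta$ and $\kappa_1=\kappa_\xi$ (this is where $\kappa_\xi>\alpha_0$ is needed), identifies the iteration of $M$ by $j(U_\xi)$ --- which is the $j(\xi)$-th entry of $j(\vec{U})$ --- with $j^\xi$ on the ordinals, so this $M$-iteration moves $\gamma$ even though $j(\xi)\geq\xi\geq\rho$, a contradiction. Note that this argument requires no continuity or closure analysis and no case distinction on $\cof{\gamma}$ whatsoever; the only place where your proposal and the paper genuinely meet is the pairing of minimality with Lemma \ref{lemma:CommutingEmbeddings}, but the lemma only bites after the minimality has been reflected into $M$.
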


\begin{proof}
 Assume, towards a contradiction, that there is an ordinal $\gamma$ such that for unboundedly many $\xi<\lambda$, there is an $\alpha<\kappa$ with $j^\xi_{0,\alpha}(\gamma)>\gamma$.  
 Let $\gamma$ be minimal with this property. Pick $\zeta<\lambda$ and  $\alpha_0<\kappa$ with $j^\zeta_{0,\alpha_0}(\gamma)>\gamma$. Set $j=j^\zeta_{0,\alpha_0}$ and $M=M^\zeta_{\alpha_0}$. Then $j(\lambda)=\lambda$ and $j(\kappa)=\kappa$. Moreover, if we set $j(\vec{\kappa})=\seq{\kappa^\prime_\xi}{\xi<\lambda}$ and $j(\vec{U})=\seq{U^\prime_\xi}{\xi<\lambda}$, then the fact that $j(\gamma)>\gamma$ yields an ordinal $\rho<\lambda$ with the property that $i^\xi_{0,\alpha}(\gamma)=\gamma$ holds for all $\rho\leq\xi<\lambda$ and all $\alpha<\kappa$, where $$\langle\seq{N^\xi_\alpha}{\alpha\in Ord},\seq{\map{i^\xi_{\alpha,\beta}}{N^\xi_\alpha}{N^\xi_\beta}}{\alpha\leq\beta\in Ord}\rangle$$ denotes the linear iteration of $\langle M,\in,U^\prime_\xi\rangle$. 
 By our assumption, we can now pick $\rho<\xi<\lambda$ and $\alpha_1<\kappa$ such that $\kappa_\xi>\alpha_0$ and $j^\xi_{0,\alpha_1}(\gamma)>\gamma$. In this situation, Lemma \ref{lemma:CommutingEmbeddings} implies that $i^{j(\xi)}_{0,\alpha_1}(\gamma)=j^\xi_{0,\alpha_1}(\gamma)>\gamma$ and this yields a contradiction, because $\rho\leq\xi\leq j(\xi)<\lambda$. 
\end{proof}

 With the help of Lemma \ref{lemma:LimitMeasurableFixedPoints}, we can now work towards a proof of the first part of Theorem \ref{theorem:LimitMeasurables}.

\begin{lemma}\label{lemma:LimitMeasurableUndefProp}
  In the setting of  Lemma \ref{lemma:LimitMeasurableFixedPoints}, for every ordinal $\gamma\geq\kappa$, there exists an ordinal $\lambda_0<\lambda$ with the property that the cardinal $\kappa$ has the $\Sigma_1(\kappa_\xi,\gamma)$-undefinability property for all $\lambda_0\leq\xi<\lambda$.   
\end{lemma}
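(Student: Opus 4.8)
The plan is to deduce the statement directly from Lemma \ref{lemma:LimitMeasurableFixedPoints} via a standard iterability argument. First I would fix an ordinal $\gamma\geq\kappa$ and apply Lemma \ref{lemma:LimitMeasurableFixedPoints} to the \emph{two} ordinals $\kappa$ and $\gamma$ (both are ordinals $\geq\kappa$), obtaining a single threshold $\lambda_0<\lambda$ such that for every $\xi$ with $\lambda_0\leq\xi<\lambda$ and every $\beta<\kappa$ the iteration embedding $j^\xi_{0,\beta}$ fixes both $\kappa$ and $\gamma$. I claim this $\lambda_0$ witnesses the lemma, and I would verify the $\Sigma_1(\kappa_\xi,\gamma)$-undefinability property for each fixed $\xi\in[\lambda_0,\lambda)$ by contradiction.

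So suppose some ordinal $\alpha\in[\kappa_\xi,\kappa)$ has the property that $\{\alpha\}$ is definable by a $\Sigma_1$-formula with parameters in $\HH{\kappa_\xi}\cup\{\kappa,\gamma\}$. Since $\kappa_\xi$ is measurable, hence inaccessible, $\HH{\kappa_\xi}=V_{\kappa_\xi}$ is closed under finite tuples, so I may collect all the $\HH{\kappa_\xi}$-parameters into a single $z\in\HH{\kappa_\xi}$ and fix a $\Sigma_1$-formula $\varphi$ such that $\alpha$ is the unique ordinal with $\VV\models\varphi(\alpha,z,\kappa,\gamma)$.

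The key step is to produce an iterate of $U_\xi$ that fixes all three parameters $z$, $\kappa$, $\gamma$ but moves $\alpha$. Writing $\kappa^\xi_\beta=j^\xi_{0,\beta}(\kappa_\xi)$ for the critical sequence, I would use that this sequence is strictly increasing and continuous with $\kappa^\xi_\beta\geq\beta$, so that the choice $\beta=\alpha+1<\kappa$ gives $\kappa^\xi_\beta\geq\alpha+1>\alpha$. Because $\alpha\geq\kappa_\xi$ and $j^\xi_{0,\beta}$ is order-preserving on the ordinals, this forces $j^\xi_{0,\beta}(\alpha)\geq j^\xi_{0,\beta}(\kappa_\xi)=\kappa^\xi_\beta>\alpha$, so $\alpha$ is genuinely moved. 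On the other hand, $\crit{j^\xi_{0,\beta}}=\kappa_\xi$ fixes $V_{\kappa_\xi}$ pointwise and hence $j^\xi_{0,\beta}(z)=z$, while the choice of $\lambda_0$ guarantees $j^\xi_{0,\beta}(\kappa)=\kappa$ and $j^\xi_{0,\beta}(\gamma)=\gamma$. I would then close the argument by $\Sigma_1$-upward absoluteness: since $M^\xi_\beta$ is a transitive inner model and $j^\xi_{0,\beta}$ is elementary, $\VV\models\varphi(\alpha,z,\kappa,\gamma)$ yields $M^\xi_\beta\models\varphi(j^\xi_{0,\beta}(\alpha),z,\kappa,\gamma)$, and upward absoluteness of $\Sigma_1$-formulas from $M^\xi_\beta$ to $\VV$ gives $\VV\models\varphi(j^\xi_{0,\beta}(\alpha),z,\kappa,\gamma)$. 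As $j^\xi_{0,\beta}(\alpha)\neq\alpha$, this contradicts the uniqueness of $\alpha$.

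I expect the only real obstacle to be the bookkeeping that ensures \emph{one and the same} iteration simultaneously moves $\alpha$ and fixes $\gamma$: moving $\alpha$ forces iterating past $\alpha$ (up to stage $\alpha+1$), and the sole reason this does not disturb $\gamma$ is that Lemma \ref{lemma:LimitMeasurableFixedPoints} makes $\gamma$ (and $\kappa$) a fixed point of \emph{every} iteration of length below $\kappa$, uniformly for all sufficiently large $\xi$. Once this uniform threshold is extracted, the remaining verifications (closure of $\HH{\kappa_\xi}$ under tuples, the critical-sequence estimate $\kappa^\xi_\beta\geq\beta$, and the absoluteness) are routine.
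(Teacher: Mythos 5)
Your proof is correct and follows essentially the same route as the paper's: extract from Lemma \ref{lemma:LimitMeasurableFixedPoints} a single threshold $\lambda_0<\lambda$ below which every iteration embedding $j^\xi_{0,\beta}$ ($\beta<\kappa$) fixes both $\kappa$ and $\gamma$, then, for a contradiction, iterate $U_\xi$ far enough that the critical sequence passes $\alpha$, and conclude via elementarity plus $\Sigma_1$-upward absoluteness that the purported unique solution $\alpha$ is not unique. The details you make explicit (collecting the $\HH{\kappa_\xi}$-parameters into one $z$, the estimate $\kappa^\xi_\beta\geq\beta$ justifying the choice $\beta=\alpha+1$, and $j^\xi_{0,\beta}\restriction\HH{\kappa_\xi}=\id$) are correct refinements of steps the paper leaves implicit.
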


\begin{proof}
  Lemma \ref{lemma:LimitMeasurableFixedPoints} yields $\lambda_0<\lambda$ with the property that $j^\xi_{0,\alpha}(\kappa)=\kappa$ and $j^\xi_{0,\alpha}(\gamma)=\gamma$ holds for all $\lambda_0\leq\xi<\lambda$ and $\alpha<\kappa$. 
  Assume, towards a contradiction, that there is a $\Sigma_1$-formula $\varphi(v_0,\ldots,v_3)$, and ordinal $\lambda_0\leq\xi<\lambda$,  an element $z$ of $\HH{\kappa_\xi}$ and an ordinal $\alpha$ in the interval $[\kappa_\xi,\kappa)$ such that the set $\{\alpha\}$ is definable by the formula $\varphi(v_0,\ldots,v_3)$ and the parameters $\kappa$, $\gamma$ and $z$. Pick an ordinal $\beta<\kappa$ with $j^\xi_{0,\beta}(\kappa_\xi)>\alpha$. Then $j^\xi_{0,\beta}(\alpha)>\alpha$ and elementarity implies that $\varphi(j^\xi_{0,\beta}(\alpha),\kappa,\gamma,z)$ holds in $M^\xi_\beta$. But, then $\Sigma_1$-upwards absoluteness implies that $\varphi(j^\xi_{0,\beta}(\alpha),\kappa,\gamma,z)$ holds in $V$, contradicting our assumptions. 
\end{proof}

\begin{lemma}\label{lemma:LimitMeasurableStationarySets}
 Let $\kappa$ be a cardinal that is a limit of measurable cardinals. Then every unbounded subset of $\kappa$ that consists of cardinals is $\mathbf{\Sigma}_1(Ord)$-stationary in $\kappa$. 
\end{lemma}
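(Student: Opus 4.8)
The plan is to show that every closed unbounded set $C$ that is $\Sigma_1$-definable with the allowed parameters must contain a member of the given set $S$, by locating a measurable cardinal below $\kappa$ whose image under a suitable linear iteration is simultaneously a member of $C$ and of $S$. Unwinding the definitions, $\mathbf{\Sigma}_1(Ord)$-stationarity is just $\Sigma_1(Ord\cup\HH{\kappa})$-stationarity, so it suffices to fix an unbounded set $S\subseteq\kappa$ consisting of cardinals together with a closed unbounded $C\subseteq\kappa$ such that $\{C\}$ is definable by a $\Sigma_1$-formula with parameters in $Ord\cup\HH{\kappa}\cup\{\kappa\}$, and to produce an element of $C\cap S$. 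First I would absorb the finitely many ordinal parameters into a single ordinal $\gamma\geq\kappa$ via the G\"odel pairing function (from which they are recovered in a $\Delta_0$ fashion, so that the defining formula remains $\Sigma_1$), and collect the $\HH{\kappa}$-parameters into a tuple $\bar p$; since $\kappa$ is a limit cardinal there is some $\nu_0<\kappa$ with $\bar p\in\HH{\nu_0}$. I would then fix a strictly increasing sequence $\vec\kappa=\seq{\kappa_\xi}{\xi<\lambda}$ of measurable cardinals cofinal in $\kappa$, arranged to satisfy the hypothesis of Lemma~\ref{lemma:LimitMeasurableFixedPoints}: one takes $\lambda=\kappa$ when $\kappa$ is regular, and discards an initial segment to guarantee $\lambda=\cof{\kappa}<\kappa_0$ when $\kappa$ is singular. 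Fix also normal ultrafilters $U_\xi$ on the $\kappa_\xi$.

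Next I would apply Lemma~\ref{lemma:LimitMeasurableUndefProp} to the ordinal $\gamma$ in order to obtain a threshold $\lambda_0<\lambda$ such that, for every $\xi$ with $\lambda_0\leq\xi<\lambda$, the cardinal $\kappa$ has the $\Sigma_1(\kappa_\xi,\gamma)$-undefinability property; by Lemma~\ref{lemma:LimitMeasurableFixedPoints} I may simultaneously assume that the linear iteration $\langle\seq{M^\xi_\alpha}{\alpha\in Ord},\seq{\map{j^\xi_{\alpha,\beta}}{M^\xi_\alpha}{M^\xi_\beta}}{\alpha\leq\beta}\rangle$ of $\langle\VV,\in,U_\xi\rangle$ satisfies $j^\xi_{0,\alpha}(\kappa)=\kappa$ and $j^\xi_{0,\alpha}(\gamma)=\gamma$ for all such $\xi$ and all $\alpha<\kappa$. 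I would then single out one index $\xi\geq\lambda_0$ with $\kappa_\xi>\nu_0$, so that $\bar p\in\HH{\kappa_\xi}$ and, since $\kappa_\xi$ is inaccessible, $\bar p$ has rank below the critical point $\kappa_\xi$. Finally, using that $S$ is unbounded, I would choose $\mu\in S$ with $\mu>\kappa_\xi$.

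The heart of the argument then has three steps. Since $\kappa$ has the $\Sigma_1(\kappa_\xi,\gamma)$-undefinability property and $C$ is definable by a $\Sigma_1$-formula with parameters in $\HH{\kappa_\xi}\cup\{\kappa,\gamma\}$, Lemma~\ref{lemma:UnDefStationary} yields $\kappa_\xi\in C$. Writing $j=j^\xi_{0,\mu}$, I would observe that $j$ fixes $\bar p$ (its critical point $\kappa_\xi$ exceeds the rank of $\bar p$) as well as $\kappa$ and $\gamma$; as $C$ is the unique $\Sigma_1$-solution of its defining formula from these parameters, the $\Sigma_1$-elementarity of $j$ together with $\Sigma_1$-upward absoluteness between the transitive class $M^\xi_\mu$ and $\VV$ and the uniqueness of $C$ in $\VV$ forces $j(C)=C$. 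Moreover, the standard analysis of the critical sequence of a linear iteration shows that every cardinal above $\kappa_\xi$ is a closure point of $\alpha\mapsto j^\xi_{0,\alpha}(\kappa_\xi)$, so $j(\kappa_\xi)=\mu$ because $\mu$ is a cardinal strictly above $\kappa_\xi$. Applying $j$ to $\kappa_\xi\in C$ now gives $\mu=j(\kappa_\xi)\in j(C)=C$, and since $\mu\in S$ we conclude $C\cap S\neq\emptyset$, as desired.

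The main obstacle I anticipate is the verification that $j(C)=C$ as sets, rather than merely that $j$ preserves the defining formula: this requires carefully combining the fixing of all three kinds of parameters with the uniqueness of the $\Sigma_1$-definition, and it is precisely here that the arrangement $\bar p\in V_{\kappa_\xi}$ (secured by choosing $\kappa_\xi>\nu_0$) is indispensable. A secondary technical point is the identity $j^\xi_{0,\mu}(\kappa_\xi)=\mu$, which rests on the fact that every cardinal above the critical point is a closure point of the critical sequence of the iteration; this is exactly where the hypothesis that $S$ consists of cardinals is used, and it explains why the conclusion is stated for unbounded sets of cardinals rather than for arbitrary unbounded sets.
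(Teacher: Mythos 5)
Your overall strategy is exactly the paper's: use Lemma~\ref{lemma:LimitMeasurableUndefProp} (through Lemma~\ref{lemma:UnDefStationary}) to place a sufficiently large measurable $\kappa_\xi$ inside the given club $C$, use Lemma~\ref{lemma:LimitMeasurableFixedPoints} to fix $\kappa$ and the ordinal parameter along the iterations, and then push $\kappa_\xi$ up to an element $\mu$ of $S$ via the linear iteration, using $\Sigma_1$-upwards absoluteness plus uniqueness of $C$ to keep $C$ fixed. Your handling of the parameters (G\"odel pairing of the ordinal parameters, absorbing the $\HH{\kappa}$-parameters into $\HH{\kappa_\xi}$) and your derivation of $j(C)=C$ are correct and correspond to the paper's compressed formulation that $\varphi(\mu,\kappa,\gamma,z)$ holds in both $M^\xi_\mu$ and $V$.

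There is, however, one false step. It is not true that every cardinal above $\kappa_\xi$ is a closure point of the critical sequence $\alpha\mapsto j^\xi_{0,\alpha}(\kappa_\xi)$: already the first step of the iteration satisfies $j^\xi_{0,1}(\kappa_\xi)>2^{\kappa_\xi}\geq\kappa_\xi^+$, so every cardinal in the interval $(\kappa_\xi,2^{\kappa_\xi}]$ --- in particular $\kappa_\xi^+$ --- is skipped by the critical sequence, and for such a cardinal $\mu$ one has $j^\xi_{0,\mu}(\kappa_\xi)>\mu$. Since you chose $\mu\in S$ subject only to $\mu>\kappa_\xi$, your claimed identity $j(\kappa_\xi)=\mu$ can fail. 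The repair is immediate, and it is what the paper implicitly does when it says ``we can find a cardinal $\mu\in E$ with $j_{0,\mu}(\kappa_\xi)=\mu$'': choose $\mu\in S$ with $\mu>2^{\kappa_\xi}$, which is possible because $S$ is unbounded in $\kappa$ and $2^{\kappa_\xi}<\kappa$ (there is a measurable, hence inaccessible, cardinal strictly between $\kappa_\xi$ and $\kappa$). For such $\mu$, the standard representation of the ordinals of $M^\xi_\alpha$ below $j^\xi_{0,\alpha}(\kappa_\xi)$ as values $j^\xi_{0,\alpha}(f)(c)$ with $\map{f}{[\kappa_\xi]^n}{\kappa_\xi}$ and $c$ a finite subset of the critical sequence gives $\betrag{j^\xi_{0,\alpha}(\kappa_\xi)}\leq 2^{\kappa_\xi}\cdot\betrag{\alpha}$ for all infinite $\alpha$, and together with the continuity of the critical sequence at limits this yields $j^\xi_{0,\mu}(\kappa_\xi)=\sup_{\alpha<\mu}j^\xi_{0,\alpha}(\kappa_\xi)=\mu$; the rest of your argument then goes through unchanged.
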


\begin{proof}
 Work in the setting of  Lemma \ref{lemma:LimitMeasurableFixedPoints}. Let $E$ be an unbounded subset of $\kappa$ that consists of cardinals and  fix a $\Sigma_1$-formula $\varphi(v_0,v_1,v_2)$, an ordinal $\gamma$ and $z\in\HH{\kappa}$ with the property that there exists a closed unbounded subset $C$ of $\kappa$ that is the unique set $x$ with the property that $\varphi(x,\gamma,z)$ holds. 
 By combining Lemma \ref{lemma:LimitMeasurableFixedPoints} and Lemma \ref{lemma:LimitMeasurableUndefProp} with Corollary \ref{corollary:UnbCardStat}, we can find $\xi<\lambda$ such that $\kappa_\xi$ is an element of $C$, $z\in\HH{\kappa_\xi}$ and for all $\alpha<\kappa$, we have $j^\xi_{0,\alpha}(\kappa)=\kappa$ and $j^\xi_{0,\alpha}(\gamma)=\gamma$. 
 In this situation, we can find a cardinal $\kappa_\xi<\mu\in E$ with $j^\xi_{0,\mu}(\kappa_\xi)=\mu$. Then $\varphi(j^\xi_{0,\mu}(C),\gamma,z)$ holds in $M^\xi_\mu$ and, since $\Sigma_1$-upwards absoluteness implies that this statement also holds in $\VV$, it follows that $j^\xi_{0,\mu}(C)=C$. 
 Moreover, since $\kappa_\xi\in C$, we can now use elementarity to conclude that $\mu=j^\xi_{0,\mu}(\kappa_\xi)\in C\cap E\neq\emptyset$. 
\end{proof}

 The next result generalizes {\cite[Theorem 1.26]{zbMATH07415211}} and Lemma \ref{lemma:DefClubStablyMeas} to (possibly singular) limits of singular cardinals:

\begin{lemma}\label{lemma:Sigma1ClubProperty}
  Let $\kappa$ be a cardinal that is a limit of measurable cardinals and let $E$ be a subset of $\kappa$ with the property that the set $\{E\}$ is definable by a $\Sigma_1$-formula with parameters in $Ord\cup\HH{\kappa}$. Then there exists a closed unbounded subset $C$ of $\kappa$ of order type $\kappa$ with the property that the set $\{C\}$ is definable by a $\Sigma_1$-formula with parameters in $\HH{\kappa}\cup\{\kappa\}$ and either $C\subseteq E$ or $C\cap E=\emptyset$ holds.  
\end{lemma}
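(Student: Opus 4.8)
The plan is to produce the required club as the critical sequence of a single iterated ultrapower, reading off the dichotomy from a fixed-point computation and then verifying definability separately. First I would fix a $\Sigma_1$-formula $\varphi$, an ordinal $\gamma$ (coding the finitely many ordinal parameters into one, which is harmless since each component is recovered $\Delta_0$-uniformly and is fixed whenever $\gamma$ is) and a parameter $z\in\HH{\kappa}$ so that $E$ is the unique $x$ with $\varphi(x,\gamma,z)$. I would then pass to the setting of Lemma~\ref{lemma:LimitMeasurableFixedPoints}, fixing $\lambda$, $\vec{\kappa}=\seq{\kappa_\xi}{\xi<\lambda}$, $\vec{U}=\seq{U_\xi}{\xi<\lambda}$ and the associated linear iterations $\seq{\map{j^\xi_{0,\alpha}}{\VV}{M^\xi_\alpha}}{\alpha\in\On}$. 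Applying Lemma~\ref{lemma:LimitMeasurableFixedPoints} to the ordinals $\gamma$ and $\kappa$, and using $\HH{\kappa}=\bigcup_{\xi<\lambda}\HH{\kappa_\xi}$, I would fix a single index $\xi<\lambda$ large enough that $j^\xi_{0,\alpha}(\gamma)=\gamma$, $j^\xi_{0,\alpha}(\kappa)=\kappa$ and $z\in\HH{\kappa_\xi}$ (hence $j^\xi_{0,\alpha}(z)=z$, since $\crit{j^\xi_{0,\alpha}}=\kappa_\xi$) for all $\alpha<\kappa$.

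The heart of the argument is that each $j^\xi_{0,\alpha}$ fixes $E$. Applying $j^\xi_{0,\alpha}$ to the statement that $E$ is the unique solution of $\varphi(\cdot,\gamma,z)$ shows that $j^\xi_{0,\alpha}(E)$ is the unique solution of $\varphi(\cdot,\gamma,z)$ in $M^\xi_\alpha$; since $\varphi$ is $\Sigma_1$ and $M^\xi_\alpha$ is transitive, $\Sigma_1$-upwards absoluteness yields $\varphi(j^\xi_{0,\alpha}(E),\gamma,z)$ in $\VV$, and uniqueness forces $j^\xi_{0,\alpha}(E)=E$. Writing $\kappa^\xi_\alpha=j^\xi_{0,\alpha}(\kappa_\xi)$, elementarity then gives $\kappa_\xi\in E\Leftrightarrow\kappa^\xi_\alpha\in E$ for every $\alpha<\kappa$. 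Hence the set $C=\Set{\kappa^\xi_\alpha}{\alpha<\kappa}$ satisfies $C\subseteq E$ when $\kappa_\xi\in E$ and $C\cap E=\emptyset$ otherwise; by the standard continuity of critical sequences of linear iterations, $C$ is closed unbounded in $\kappa$ of order type $\kappa$.

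It then remains to check that $\{C\}$ is $\Sigma_1$-definable from parameters in $\HH{\kappa}\cup\{\kappa\}$. The key point here is that $U_\xi\in\HH{\kappa}$: since $\kappa_{\xi+1}$ is measurable it is inaccessible, so $2^{\kappa_\xi}<\kappa_{\xi+1}<\kappa$ and thus $U_\xi\in\HH{\kappa_{\xi+1}}\subseteq\HH{\kappa}$ (the sequence has limit length $\lambda$, so $\xi+1<\lambda$ always holds). Granting this, $C$ is the range below $\kappa$ of the critical-sequence function of the linear iteration of $\langle\VV,\in,U_\xi\rangle$, and I would argue, exactly as in the definability of the core-model iterations treated in Section~\ref{section:UndefProp} and via the $\Sigma_1$-Recursion Theorem, that this function is $\Sigma_1$-definable from the set parameter $U_\xi$ and the ordinal $\kappa$.

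I expect the definability step to be the main obstacle, not the dichotomy (which is immediate from the fixed-point computation). The subtlety is that the object being defined is the critical sequence of a \emph{proper-class} iteration of $\VV$, whereas a $\Sigma_1$-definition must be witnessed by sets. I would resolve this by observing that the action of the iteration on ordinals below $\kappa$ agrees with the critical sequence obtained by the set-sized well-founded iteration of any sufficiently closed transitive model of a large enough fragment of $\ZFC^-$ containing $U_\xi$, so that membership of an ordinal $\nu<\kappa$ in $C$ can be expressed by existentially quantifying over such a set iterate rather than over the class iteration of $\VV$.
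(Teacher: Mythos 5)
Your proposal is correct and follows essentially the same route as the paper's proof: the same appeal to Lemma \ref{lemma:LimitMeasurableFixedPoints} to fix $\gamma$, $\kappa$ and $z$, the same $\Sigma_1$-upwards-absoluteness argument showing $j^\xi_{0,\alpha}(E)=E$, the same dichotomy read off the critical sequence, and the same resolution of the definability issue by passing to linear iterations of set-sized transitive $\ZFC^-$-models containing $U_\xi$ and $\kappa$, whose critical sequences agree with that of the class iteration of $\VV$ because $\POT{\kappa_\xi}$ is contained in any such model. The only cosmetic difference is that the paper phrases the final step as a $\Sigma_1$-definition of the singleton $\{C\}$ (quantifying over a set-sized iterate of length $\kappa$ and its full critical sequence) rather than of membership in $C$, which is what the lemma literally requires; your witnesses deliver this verbatim.
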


\begin{proof}
 We may assume that there is a $\Sigma_1$-formula $\varphi(v_0,v_1,v_2)$, an ordinal $\gamma$ and $z\in\HH{\kappa}$ with the property that $E$ is the unique set $x$ satisfying $\varphi(x,\gamma,z)$. By Lemma \ref{lemma:LimitMeasurableFixedPoints}, there exists a measurable cardinal $\kappa_0<\kappa$ with $z\in\HH{\kappa_0}$ and a normal ultrafilter $U$ on $\kappa_0$ satisfying $j_{0,\alpha}(\kappa)=\kappa$ and $j_{0,\alpha}(\gamma)=\gamma$ for all $\alpha<\kappa$, where $$\langle\seq{M_\alpha}{\alpha\in Ord},\seq{\map{j_{\alpha,\beta}}{M_\alpha}{M_\beta}}{\alpha\leq\beta\in Ord}\rangle$$ is the linear iteration of $\langle V,\in,U\rangle$. 
 Given an ordinal $\alpha<\kappa$, elementarity ensures that $\varphi(j_{0,\alpha}(E),\gamma,z)$ holds in $M_\alpha$. In this situation, $\Sigma_1$-upwards absoluteness implies that $\varphi(j_{0,\alpha}(E),\gamma,z)$ holds in $V$ for all $\alpha<\kappa$, and this allows us to conclude that $j_{0,\alpha}(E)=E$ holds for all $\alpha<\kappa$. 
 If $\kappa_0$ is an element of $E$, then these observations show that $\Set{j_{0,\alpha}(\kappa_0)}{\alpha<\kappa}$ is a closed unbounded subset of $E$ of order type $\kappa$. In the other case, if $\kappa_0\notin E$, then $\Set{j_{0,\alpha}(\kappa_0)}{\alpha<\kappa}$ is a closed unbounded subset of $E$ of order type $\kappa$ that is disjoint from $E$. 

 Now, pick  a transitive model $N$ of $\ZFC^-$ such that $N$ contains $\kappa$ and $U$ and the model $\langle N,\in,U\rangle$ is linearly iterable. Let $$\langle\seq{N_\alpha}{\alpha\in Ord},\seq{\map{i_{\alpha,\beta}}{N_\alpha}{N_\beta}}{\alpha\leq\beta\in Ord}\rangle$$ denote the linear iteration of $\langle N,\in,U\rangle$. In this situation, the fact that $\POT{\kappa_0}\subseteq N$ implies that $j_{0,\alpha}(\kappa_0)=i_{0,\alpha}(\kappa_0)$ holds for all $\alpha<\kappa$. Since the class of all sets $N$ with the above properties is definable by a $\Sigma_1$-formula with parameter $U$, we can conclude that the set $\{\Set{j_{0,\alpha}(\kappa_0)}{\alpha<\kappa}\}$ is definable by a $\Sigma_1$-formula with parameters in $\HH{\kappa}\cup\{\kappa\}$. 
\end{proof}

 The techniques used in the above proofs also allow us to prove the second part of Theorem \ref{theorem:LimitMeasurables}:

\begin{proof}[Proof of Theorem \ref{theorem:LimitMeasurables}]
 Let $\kappa$ be a limit of measurable cardinals. Then Lemma \ref{lemma:LimitMeasurableStationarySets} shows that every unbounded subset of $\kappa$ that consists of cardinals is $\mathbf{\Sigma}_1(Ord)$-stationary. 
 Now, let $S$ be a $\mathbf{\Sigma}_1(Ord)$-stationary subset of $\kappa$ and let $\map{r}{\kappa}{\kappa}$ be a regressive function that is definable by a $\Sigma_1$-formula $\varphi(v_0,\ldots,v_3)$, an ordinal $\gamma$ and an element $z$ of $\HH{\kappa}$. As in the proof of Lemma \ref{lemma:Sigma1ClubProperty}, we can now find a measurable cardinal $\kappa_0<\kappa$ with $z\in \HH{\kappa_0}$ and a normal ultrafilter $U$ on $\kappa_0$ with the property that $j_{0,\alpha}(\gamma)=\gamma$ holds for all $\alpha<\kappa$, where $$\langle\seq{M_\alpha}{\alpha\in Ord},\seq{\map{j_{\alpha,\beta}}{M_\alpha}{M_\beta}}{\alpha\leq\beta\in Ord}\rangle$$ denotes the linear iteration of $\langle V,\in,U\rangle$. Let $r(\kappa_0)=\xi<\kappa_0$. Then $\varphi(\kappa_0,\xi,\gamma,z)$ holds in $V$ and for all $\alpha<\kappa$, elementarity implies that $\varphi(j_{0,\alpha}(\kappa_0),\xi,\gamma,z)$ holds in $M_\alpha$. Given $\alpha<\kappa$,  $\Sigma_1$-upwards absoluteness now implies that $\varphi(j_{0,\alpha}(\kappa_0),\xi,\gamma,z)$ holds in $V$ and hence $r(j_{0,\alpha}(\kappa_0))=\xi$. This shows that the restriction of $r$ to the closed unbounded subset $C=\Set{j_{0,\alpha}(\kappa_0)}{\alpha<\kappa}$ of $\kappa$ is constant with value $\xi$. Moreover, the proof of Lemma \ref{lemma:Sigma1ClubProperty} shows that the set $\{C\}$ is definable by a $\Sigma_1$-formula with parameters $\kappa$ and $U$. 
 Since Corollary \ref{corollary:CriterionClosedIntersections} and Lemma \ref{lemma:LimitMeasurableStationarySets} show that the set $C\cap S$ is $\mathbf{\Sigma}_1(Ord)$-stationary in $\kappa$, these arguments show that $r$ is constant on a $\mathbf{\Sigma}_1(Ord)$-stationary subset of $S$.  
\end{proof}


\subsection{Partition properties}\label{subsection:PartitionProperty}

 Remember that, given uncountable cardinals $\mu<\kappa$, the cardinal $\kappa$ is \emph{$\mu$-Rowbottom} if the square brackets  partition relation $\kappa\longrightarrow[\kappa]^{{<}\omega}_{\lambda,{<}\mu}$ holds true for all $\lambda<\kappa$, i.e., for every $\lambda<\kappa$ and every function $\map{c}{[\kappa]^{{<}\omega}}{\lambda}$, there exists $H\in[\kappa]^\kappa$ with $\betrag{c[[H]^{{<}\omega}]}<\mu$. Moreover, $\omega_1$-Rowbottom cardinals are called \emph{Rowbottom cardinals}.  
 The following lemma connects this partition property to the $\mathbf{\Sigma}_1$-undefinability property:

\begin{lemma}\label{lemma:Rowbottem}
 Let $\kappa$ be a $\mu$-Rowbottom cardinal. 
 If either $\kappa=\omega_\omega$ or $\rho^{{<}\mu}<\kappa$ holds for all $\rho<\kappa$, then $\kappa$ has the $\Sigma_1(\mu)$-undefinability property. 
\end{lemma}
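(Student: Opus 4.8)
The plan is to argue by contradiction, using the $\mu$-Rowbottom partition relation to manufacture a transitive model witnessing a \emph{second} solution of the defining formula, thereby contradicting uniqueness. Unwinding Definition~\ref{definition:UnDefProp} (with $\gamma=\kappa$), it suffices to show that no ordinal $\alpha\in[\mu,\kappa)$ is the unique solution $\xi$ of $\varphi(\xi,\kappa,z)$ for some $\Sigma_1$-formula $\varphi$ and some $z\in\HH{\mu}$. So suppose towards a contradiction that such $\alpha$, $\varphi$ and $z$ exist. First I would fix a regular cardinal $\theta$ large enough that a witness to the $\Sigma_1$-truth of $\varphi(\alpha,\kappa,z)$ lies in $\HH{\theta}$, together with a well-ordering $<^*$ and Skolem functions for the structure $\mathfrak A=\langle\HH{\theta},\in,<^*\rangle$. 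Then $\HH{\theta}\models\varphi(\alpha,\kappa,z)$, and since $\Sigma_1$-formulas are upward absolute from the transitive set $\HH{\theta}$ to $V$, the uniqueness of $\alpha$ in $V$ forces $\alpha$ to be the unique solution inside $\HH{\theta}$ as well; in particular $\alpha$ is definable in $\mathfrak A$ from $\kappa$ and $z$.

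The core of the argument is to produce an elementary substructure $\mathfrak B\prec\mathfrak A$ whose transitive collapse $\pi$ fixes $\kappa$ and $z$ but moves $\alpha$. I would pick an ordinal $\mu_0<\mu$ bounding all ordinals occurring in $\tc{\{z\}}$ (possible since $\betrag{\tc{\{z\}}}<\mu$), expand the language of $\mathfrak A$ by constants naming $\kappa$, $z$, $\mu$, $\alpha$, every element of $\tc{\{z\}}$, and every ordinal below $\mu_0$ — altogether fewer than $\mu$ symbols — and form the Skolem hull $\mathfrak A'$ of $\kappa$ together with these constants. Here the cardinal-arithmetic hypothesis $\rho^{{<}\mu}<\kappa$ (respectively, in the case $\kappa=\omega_\omega$, the canonical cofinal sequence $\seq{\omega_m}{m<\omega}$, which lets one assemble $\mathfrak A'$ as an increasing union along the $\omega_m$'s) guarantees that $\mathfrak A'$ has size exactly $\kappa$, that $\kappa\subseteq\mathfrak A'$, and that $\HH{\mu}$ is coded below $\kappa$. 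I would then apply the relation $\kappa\longrightarrow[\kappa]^{{<}\omega}_{\betrag{\alpha},{<}\mu}$ to the colouring of $[\kappa]^{{<}\omega}$ recording into which of the $\betrag{\alpha}<\kappa$ classes the generated Skolem terms landing in $\alpha$ fall. This yields $H\in[\kappa]^\kappa$ such that the resulting hull $\mathfrak B=\mathrm{Sk}^{\mathfrak A'}(H\cup\{\text{constants}\})$ satisfies $\betrag{B\cap\alpha}<\mu$, while $\betrag{B\cap\kappa}=\kappa$ and $\mu_0\subseteq B$.

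It remains to read the contradiction off from these three features of $B$. Since $B$ contains $\tc{\{z\}}$ and every ordinal below $\mu_0$, the collapse $\pi$ is the identity on $\tc{\{z\}}$, so $\pi(z)=z$; since $B\cap\kappa\subseteq\kappa$ has cardinality $\kappa$ it has order type $\kappa$, so $\pi(\kappa)=\kappa$; and since $\mu_0\subseteq B$ while $\betrag{B\cap[\mu_0,\alpha)}<\mu$, additive indecomposability of the cardinal $\mu$ gives $\pi(\alpha)=\otp{B\cap\alpha}<\mu\le\alpha$, so $\pi$ genuinely moves $\alpha$. Writing $\bar{\mathfrak B}$ for the transitive collapse, elementarity yields $\bar{\mathfrak B}\models\varphi(\pi(\alpha),\kappa,z)$, and upward $\Sigma_1$-absoluteness from the transitive structure $\bar{\mathfrak B}$ gives $V\models\varphi(\pi(\alpha),\kappa,z)$ with $\pi(\alpha)\neq\alpha$, contradicting the uniqueness of $\alpha$.

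I expect the main obstacle to be precisely this simultaneous control of the collapse: arranging, within the constraints of the $\mu$-Rowbottom relation (a language of size $<\mu$ and a colour set of size $<\kappa$), that $\pi$ fixes both $\kappa$ and the parameter $z$ while strictly lowering the order type of $B\cap\alpha$. This is where the hypotheses do their work, and the two cases ($\rho^{{<}\mu}<\kappa$ for all $\rho<\kappa$, versus $\kappa=\omega_\omega$) must be handled slightly differently, since only in the former is $\betrag{\HH{\mu}}<\kappa$ available outright; in the $\omega_\omega$ case one instead exploits the definable scale to keep the auxiliary structure of size $\omega_\omega$.
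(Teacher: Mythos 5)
Your overall strategy is the same as the paper's: reflect the uniqueness of $\alpha$ into a structure, use the Rowbottom partition relation to produce an elementary substructure $B$ with $\betrag{B\cap\kappa}=\kappa$, $\tc{\{z\}}\subseteq B$ and $B\cap\alpha$ of small cardinality, then transitively collapse and contradict uniqueness via $\Sigma_1$-upward absoluteness. Under the hypothesis $\rho^{{<}\mu}<\kappa$ your argument can be made to work. But there is a genuine gap, concentrated exactly at the point you flag as the ``main obstacle'', and it is fatal to your treatment of the case $\kappa=\omega_\omega$.

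The gap is in the application of the partition relation. You propose to apply $\kappa\longrightarrow[\kappa]^{{<}\omega}_{\betrag{\alpha},{<}\mu}$ to ``the colouring recording into which of the $\betrag{\alpha}$ classes the generated Skolem terms landing in $\alpha$ fall''. For a \emph{single} Skolem term this is a colouring with $\betrag{\alpha}+1$ colours, but your language contains a constant for every element of $\tc{\{z\}}$, so it has some size $\nu$ that can be any cardinal below $\mu$ (e.g.\ $\nu=\omega_1$ when $\mu=\omega_2$), and there are $\nu$-many Skolem terms that must be controlled \emph{simultaneously} by one homogeneous set. The standard device of assigning each term its own tuple-length only accommodates countably many terms; the alternative of amalgamating all terms into a single colouring produces a colouring into ${}^{\nu}(\betrag{\alpha}+1)$, i.e.\ with $\betrag{\alpha}^{\nu}$ colours. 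So the colouring you actually need is not a $\betrag{\alpha}$-colouring, and to invoke $\mu$-Rowbottomness (which requires the number of colours to be below $\kappa$) one needs $\betrag{\alpha}^{\nu}<\kappa$ --- this, and not the cardinality of your hull $\mathfrak{A}'$ (which is automatically $\kappa$), is what the hypothesis $\rho^{{<}\mu}<\kappa$ is really for. When $\kappa=\omega_\omega$ this arithmetic can genuinely fail (already $\rho^{\omega}\geq 2^{\aleph_0}$ may exceed $\aleph_\omega$), and your fallback remarks for that case --- assembling $\mathfrak{A}'$ as a union along the $\omega_m$'s, ``exploiting the definable scale'' --- address only the size of the auxiliary structure, not the number of colours, so the claimed conclusion $\betrag{B\cap\alpha}<\mu$ is unjustified there. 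The paper's proof resolves the $\omega_\omega$ case differently: it settles for the weaker bound $\betrag{X\cap\alpha}<\rho=\betrag{\alpha}$, which is available because when $\kappa=\omega_\omega$ the cardinal $\rho$ is some $\omega_n$, hence \emph{regular}, and $\kappa$ is automatically $\rho$-Rowbottom (as $\rho\geq\mu$); one then applies the Rowbottom model-theoretic theorem with $\rho$ in place of $\mu$. The weaker bound still suffices, since $\betrag{X\cap\alpha}<\betrag{\alpha}$ already gives $\pi(\alpha)=\otp{X\cap\alpha}<\alpha$, which is all the collapse argument needs.

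A further, minor, point: your choice of $\mu_0<\mu$ bounding all ordinals of $\tc{\{z\}}$ need not exist when $\mu$ is singular (a set of fewer than $\mu$ ordinals can be cofinal in $\mu$). But the entire $\mu_0$/indecomposability device is unnecessary: $\tc{\{z\}}\subseteq B$ already forces $\pi(z)=z$, and $\betrag{B\cap\alpha}<\betrag{\alpha}$ already forces $\pi(\alpha)<\alpha$.
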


\begin{proof}
  Assume, towards a contradiction, that there is a $\Sigma_1$-formula $\varphi(v_0,v_1,v_2)$, an element $z$ of $\HH{\mu}$ and an ordinal $\alpha$ in $[\mu,\kappa)$ that is the unique set $x$ such that $\varphi(x,\kappa,z)$ holds. 

  \begin{claim*}
   There is an elementary submodel $X$ of $\HH{\kappa^+}$ with $\tc{\{z\}}\cup\{\kappa,\alpha\}\subseteq X$, $\betrag{X\cap\alpha}<\betrag{\alpha}$ and $\betrag{X\cap\kappa}=\kappa$. 
  \end{claim*}

  \begin{proof}[Proof of the Claim]
   Define $\calL$ to be the countable first-order language that extends $\calL_\in$ by a unary predicate symbol $\dot{P}$, a unary function symbol $\dot{s}$, a constant symbol $\dot{\kappa}$, a constant symbol $\dot{\alpha}$ and a constant symbol $\dot{c}_y$ for every $y\in\tc{\{z\}}$. 
  Pick an elementary submodel $M$ of $\HH{\kappa^+}$ of cardinality $\kappa$ with $\tc{\{z\}}\cup(\kappa+1)\subseteq M$ and a surjection $\map{s}{\kappa}{M}$. 
  Now, let $A$ denote an $\calL$-expansion of $\langle M,\in\rangle$ with $\dot{P}^A=\alpha$, $\dot{s}^A\restriction\kappa=s$, $\dot{\kappa}^A=\kappa$, $\dot{\alpha}^A=\alpha$ and $\dot{c}_y^A=y$ for all $y\in\tc{\{z\}}$. In addition, we define  $\rho=\betrag{\alpha}\geq\mu$. 

  First, assume that $\rho^{{<}\mu}<\kappa$ holds. Then {\cite[Theorem 8.5]{MR1994835}} yields an elementary substructure $X$ of $A$ of cardinality $\kappa$ with  $\betrag{X\cap \alpha}<\mu\leq\rho$. This setup then ensures that $\tc{\{z\}}\cup\{\kappa,\alpha\}\subseteq X$ and, since $\ran{s\restriction X}=X$, we also know that $\betrag{X\cap\kappa}=\kappa$. 

  In the other case, assume that $\kappa=\omega_\omega$. Then $\rho$ is regular and $\kappa$ is also $\rho$-Rowbottom. Another application of {\cite[Theorem 8.5]{MR1994835}} now produces an elementary substructure $X$ of $A$ of cardinality $\kappa$ with  $\betrag{X\cap \alpha}<\rho$. As above, we can conclude that $\tc{\{z\}}\cup\{\kappa,\alpha\}\subseteq X$ and $\betrag{X\cap\kappa}=\kappa$.  
  \end{proof}

  Let $\map{\pi}{X}{N}$ denote the corresponding transitive collapse. Then our setup ensures that $\pi(\kappa)=\kappa$, $\pi(z)=z$ and $\pi(\alpha)<\alpha$. Moreover, since $\Sigma_1$-absoluteness causes $\varphi(\alpha,\kappa,z)$ to holds in $\HH{\kappa^+}$ and $X$, we know that $\varphi(\pi(\alpha),\kappa,z)$ holds in $N$. But then $\Sigma_1$-upwards absoluteness implies that this statement holds in $V$, contradicting our assumptions. 
\end{proof}

 Recall that a cardinal $\kappa$ is \emph{J\'{o}nsson} if for every function $\map{f}{[\kappa]^{{<}\omega}}{\kappa}$ there is a proper subset $H$ of $\kappa$ of cardinality $\kappa$ with $f[[H]^{{<}\omega}]\subseteq H$. 
 Motivated by the notoriously open question whether the first limit cardinal $\omega_\omega$ can be J\'{o}nsson, we show that this assumption causes analogs of central results from the previous two sections to hold at $\omega_\omega$.

 \begin{theorem}\label{theorem:JonssonStationary}
  If  $\omega_\omega$ is a J\'{o}nsson cardinal, then the following statements hold: 
  \begin{enumerate}
      \item Every infinite subset of $\Set{\omega_n}{n<\omega}$ is $\mathbf{\Sigma}_1$-stationary in $\omega_\omega$. 

      \item If $\map{r}{\omega_\omega}{\omega_\omega}$ is a regressive function that is definable by a $\Sigma_1$-formula with parameters in $\HH{\aleph_\omega}\cup\{\omega_\omega\}$, then $r$ is constant on an infinite subset of $\Set{\omega_n}{n<\omega}$. 
  \end{enumerate}
 \end{theorem}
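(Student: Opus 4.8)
The plan is to feed both assertions through the undefinability machinery of the previous subsection, using as the single combinatorial input the classical partition-calculus fact that a Jónsson cardinal of cofinality $\omega$ is Rowbottom. First I would record that, since $\omega_\omega$ is Jónsson and $\cof{\omega_\omega}=\omega$, the cardinal $\omega_\omega$ is Rowbottom, i.e.\ $\omega_1$-Rowbottom. Because the relation $\kappa\longrightarrow[\kappa]^{{<}\omega}_{\lambda,{<}\mu}$ only weakens as $\mu$ increases, this makes $\omega_\omega$ an $\omega_n$-Rowbottom cardinal for every $n\geq 1$. Lemma \ref{lemma:Rowbottem}, applied through its $\kappa=\omega_\omega$ clause (so that no cardinal-arithmetic hypothesis is needed), then shows that $\omega_\omega$ has the $\Sigma_1(\omega_n)$-undefinability property for every uncountable cardinal $\omega_n<\omega_\omega$, and Lemma \ref{lemma:UnDefStationary} converts this into the statement that each singleton $\{\omega_n\}$ is $\Sigma_1(\HH{\omega_n}\cup\{\omega_\omega\})$-stationary in $\omega_\omega$.

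To deduce the first assertion from this, let $E$ be an infinite subset of $\Set{\omega_n}{n<\omega}$ and let $C$ be a closed unbounded subset of $\omega_\omega$ whose singleton is defined by a $\Sigma_1$-formula with parameters in $\HH{\omega_\omega}\cup\{\omega_\omega\}$. The parameter lies in $\HH{\omega_m}$ for some $m<\omega$, and since $E$ is infinite I may choose $\omega_n\in E$ with $n>m$. As $\HH{\omega_m}\subseteq\HH{\omega_n}$, the parameters defining $C$ lie in $\HH{\omega_n}\cup\{\omega_\omega\}$, so the $\Sigma_1(\HH{\omega_n}\cup\{\omega_\omega\})$-stationarity of $\{\omega_n\}$ forces $\omega_n\in C$, whence $C\cap E\neq\emptyset$. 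Thus every infinite $E$ is $\mathbf{\Sigma}_1$-stationary in $\omega_\omega$; alternatively one may invoke Corollary \ref{corollary:UnbCardStat} directly for the full set $\Set{\omega_n}{1\leq n<\omega}$.

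For the second assertion, I would exploit $\cof{\omega_\omega}=\omega$ to replace the iteration embeddings used in the regressive-function parts of Theorems \ref{theorem:StablyMeasurable} and \ref{theorem:LimitMeasurables} by a single order-preserving \emph{shift}. Fixing a $\Sigma_1$-definition of $r$ with parameter $z\in\HH{\omega_m}$, I would use the Rowbottom property to produce an infinite increasing sequence $\seq{\omega_{n_i}}{i<\omega}$ of cardinals with $\omega_{n_0}>\omega_m$ together with a map $\sigma$ that is elementary for the relevant $\Sigma_1$-structure (carrying $z$, $\omega_\omega$ and the graph of $r$), satisfies $\sigma(\omega_{n_i})=\omega_{n_{i+1}}$, and restricts to the identity below $\omega_{n_0}$; since the $\omega_{n_i}$ are genuine cardinals with supremum a limit cardinal in $(\omega,\omega_\omega]$, that supremum is $\omega_\omega$ and the sequence is an infinite subset of $\Set{\omega_n}{n<\omega}$. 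Writing $\xi=r(\omega_{n_0})$, regressivity gives $\xi<\omega_{n_0}$, so $\sigma$ fixes $\xi$; elementarity and $\Sigma_1$-upwards absoluteness then yield $r(\omega_{n_{i+1}})=\sigma(r(\omega_{n_i}))=\sigma(\xi)=\xi$ by induction on $i$, so $r$ is constant with value $\xi$ on $\Set{\omega_{n_i}}{i<\omega}$.

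The main obstacle, on which I expect to spend the most effort, is the passage from Jónssonness to these embeddings. For the first assertion this is entirely the cited partition result that a cofinality-$\omega$ Jónsson cardinal is Rowbottom. For the second assertion the delicate construction is the shift $\sigma$: I must arrange that it fixes \emph{every} ordinal below a genuine cardinal $\omega_{n_0}>\omega_m$ (this is what both preserves $z$ and pins the value $\xi=r(\omega_{n_0})$ below the least moved point), while still moving cardinals cofinally in $\omega_\omega$. Here I would lean on the $\omega_{n_0+1}$-Rowbottom property to absorb a base of size $\aleph_{n_0}$ below $\omega_{n_0}$ into the elementary substructure, and on $\cof{\omega_\omega}=\omega$ to guarantee that the resulting length-$\omega$ critical sequence of cardinals is cofinal; the elementary self-map framework must be set up with care, since, unlike in the large-cardinal setting, the models involved correctly see that no $\omega_n$ is inaccessible.
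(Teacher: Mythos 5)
Your treatment of assertion (2) contains the genuine gap: the shift $\sigma$ you specify cannot exist. You require $\sigma$ to be elementary for a structure that recognizes cardinals (you need this to know that the images $\sigma(\omega_{n_i})$ are again genuine cardinals), to fix $\omega_\omega$, to restrict to the identity below the \emph{true} cardinal $\omega_{n_0}$, and to satisfy $\sigma(\omega_{n_0})=\omega_{n_1}>\omega_{n_0}$. These demands are contradictory. Since $\sigma\restriction\omega_{n_0}=\id$ and $\sigma$ is elementary for cardinalhood, an ordinal $\delta<\omega_{n_0}$ is a cardinal in the sense of the (transitive) domain of $\sigma$ if and only if $\sigma(\delta)=\delta$ is a true cardinal; moreover $\omega_{n_0}$ itself is a cardinal of the domain by downward absoluteness. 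Hence the domain believes that $\omega_{n_0}$ is the $n_0$-th infinite cardinal, and elementarity forces $\sigma(\omega_{n_0})$ to be the $n_0$-th infinite cardinal of the target, i.e.\ $\sigma(\omega_{n_0})=\omega_{n_0}$, not $\omega_{n_1}$. In terms of your proposed construction: once you ``absorb'' all of $\omega_{n_0}$ into the Rowbottom submodel $X$, the transitive collapse $\pi$ of $X$ fixes $\omega_{n_0}$ (because $\omega_{n_0}\in X$ by definability and $X\cap\omega_{n_0}=\omega_{n_0}$), so the inverse collapse has no critical point there and your sequence degenerates to a single point. This is exactly where the paper's proof is set up differently: there the critical point of $\pi^{{-}1}$ is $\lambda=\pi(\omega_n)$, an ordinal \emph{below} $\omega_n$ that the collapsed model $N$ wrongly believes to be the $n$-th infinite cardinal; the true cardinals appear only as the iterated preimages $\mu_0=\omega_n=\pi^{{-}1}(\lambda)$, $\mu_{i+1}=\pi^{{-}1}(\mu_i)$, and the anchor value is $r(\lambda)<\lambda$, which is fixed by $\pi$ because the construction arranges $X\cap\omega_n=\lambda$ to be an initial segment (this is the content of the paper's $\pi\restriction\lambda=\id_\lambda$). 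The commutation $\pi(r(\beta))=r(\pi(\beta))$ then gives $r(\mu_i)=r(\lambda)$ for all $i$. So your intended conclusion, and even the constant value $r(\omega_{n_0})=r(\lambda)$, are correct, but the critical point of the shift must be such a fake cardinal; no amount of care in the ``elementary self-map framework'' can place it at a genuine $\omega_{n_0}$, and the obstruction is structural, not the issue about inaccessibility that you flag.

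There is also a smaller problem at the very start, affecting assertion (1): the ``classical partition-calculus fact'' you invoke --- that every J\'{o}nsson cardinal of countable cofinality is Rowbottom, i.e.\ $\omega_1$-Rowbottom --- is not an available theorem. What is available, and what the paper uses ({\cite[Proposition 8.15]{MR1994835}}, due to Kleinberg), is that the \emph{least} J\'{o}nsson cardinal is $\nu$-Rowbottom for \emph{some} $\nu$ below it; since no $\omega_k$ with $0<k<\omega$ is J\'{o}nsson, $\omega_\omega$ is the least J\'{o}nsson cardinal, and one obtains that $\omega_\omega$ is $\omega_n$-Rowbottom for some $n$, hence for all larger $n$ by monotonicity. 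This weaker fact suffices for your argument for assertion (1), since you only ever use $\omega_k$-Rowbottomness for cofinally many $k$ (you choose the member of $E$ above the parameter anyway), so that part is repaired simply by citing the correct result; the rest of your assertion (1) argument coincides with the paper's route through Lemma \ref{lemma:Rowbottem}, Lemma \ref{lemma:UnDefStationary} and Corollary \ref{corollary:UnbCardStat}.
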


 \begin{proof}
    Since $\omega_\omega$ is the least  J\'{o}nsson cardinal, we know that $\omega_\omega$ is $\omega_n$-Rowbottom for some $0<n<\omega$ (see {\cite[Proposition 8.15]{MR1994835}}). Then Lemma \ref{lemma:Rowbottem} implies that $\omega_\omega$ has the $\Sigma_1(\omega_k)$-undefinability property for all $n\leq k<\omega$. An application of Corollary \ref{corollary:UnbCardStat} then shows that every infinite subset of $\Set{\omega_m}{m<\omega}$ is $\mathbf{\Sigma}_1$-stationary in $\omega_\omega$.
    
    Now, assume that $\map{r}{\omega_\omega}{\omega_\omega}$ is a regressive function that is definable by a $\Sigma_1$-formula with parameters in $\HH{\aleph_\omega}\cup\{\omega_\omega\}$. Then we can find $0<n<\omega$ and $z\in\HH{\aleph_n}$ such that $\omega_\omega$ is $\omega_n$-Rowbottom and $r$ is definable by a $\Sigma_1$-formula $\varphi(v_0,\ldots,v_3)$ and the  parameters $\omega_\omega$ and $z$. By repeating the proof of Lemma \ref{lemma:Rowbottem}, we can find an elementary submodel $X$ of $\HH{\aleph_{\omega+1}}$ with $\tc{\{z\}}\cup\{\kappa\}\subseteq X$, $\betrag{X\cap\omega_\omega}=\aleph_\omega$ and $\betrag{X\cap\omega_n}=\aleph_{n-1}$. Let $\map{\pi}{X}{N}$ denote the corresponding transitive collapse. Then $\omega_\omega\in N$, $\pi(\omega_\omega)=\omega_\omega$ and $\omega_n\in\ran{\pi}$. Set $\lambda=\pi^{{-}1}(\omega_n)<\omega_n$. Then $\pi\restriction\lambda=\id_\lambda$ and an an easy induction shows that $\pi(\omega_k)<\omega_k$ holds for all $n\leq k<\omega$. In particular, we know that $\pi^{{-}1}(\alpha)>\alpha$ holds for all $\omega_n\leq\alpha<\omega_\omega$. 
    By elementarity, the formula $\varphi(v_0,\ldots,v_3)$ and the parameters $\omega_\omega$ and $z$ define a regressive function in $N$. In particular, $\Sigma_1$-upwards absoluteness implies that $\varphi(\alpha,r(\alpha),\omega_\omega,z)$ holds in $N$ for all $\alpha<\omega_\omega$. 
    We now define a strictly increasing sequence $\seq{\mu_i}{i<\omega}$ of ordinals in the interval $[\omega_n,\omega_\omega)$  by setting   $\mu_0=\omega_n$ and $\mu_{i+1}=\pi^{{-}1}(\mu_i)$ for all $i<\omega$. We can then inductively show that each $\mu_i$ is a cardinal   with $r(\mu_i)=r(\lambda)$ and hence $r$ is constant on an infinite subset of $\Set{\omega_m}{m<\omega}$. 
 \end{proof}

 We end this section by presenting an example of an application of the concepts isolated in this paper to reduce the class of models of set theory in which $\omega_\omega$ possesses strong partition properties. In particular, we will show that $\omega_\omega$ is not $\omega_2$-Rowbottom in the standard models of strong forcing axioms, where the given axiom was forced over a model of the $\GCH$ by turning some large cardinal into $\omega_2$. This implication will be a direct consequence of the following observation:

 \begin{lemma}
  Assume that there is a natural number $n_*>1$ such that there are no special $\omega_{n_*}$-Aronszajn trees and for all $n_*<n<\omega$,   there are special $\omega_n$-Aronszajn trees.  
  Then the set $\{\omega_{n_*}\}$ is definable by a $\Sigma_1$-formula with parameter $\omega_\omega$ and the cardinal $\omega_\omega$ is not $\omega_{n_*}$-Rowbottom. 
 \end{lemma}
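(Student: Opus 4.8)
The plan is to derive the second assertion directly from the first together with Lemma~\ref{lemma:Rowbottem}. If $\omega_\omega$ were $\omega_{n_*}$-Rowbottom, then, because $\kappa=\omega_\omega$ falls under the first alternative of that lemma, $\omega_\omega$ would have the $\Sigma_1(\omega_{n_*})$-undefinability property; by Definition~\ref{definition:UnDefProp} this says that no ordinal in the interval $[\omega_{n_*},\omega_\omega)$ has its singleton definable by a $\Sigma_1$-formula with parameters in $\HH{\omega_{n_*}}\cup\{\omega_\omega\}$. Since $n_*>1$ makes $\omega_{n_*}$ uncountable, and since the first assertion exhibits $\{\omega_{n_*}\}$ as $\Sigma_1$-definable from the allowed parameter $\omega_\omega$, with $\omega_{n_*}\in[\omega_{n_*},\omega_\omega)$, this is a contradiction. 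Hence everything reduces to proving the $\Sigma_1$-definability of $\{\omega_{n_*}\}$ from $\omega_\omega$.

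For the first assertion, I would start from the observation that $\omega_{n_*}$ is exactly the largest cardinal below $\omega_\omega$ that carries no special Aronszajn tree: by hypothesis every $\omega_n$ with $n>n_*$ carries one whereas $\omega_{n_*}$ does not, and the cardinals below $\omega_\omega$ are precisely the $\omega_n$. The existence of a special $\mu$-Aronszajn tree is a $\Sigma_1$ statement in the parameter $\mu$, witnessed by a pair $(T,f)$ with $f$ a specializing function; the key point is that the defining clauses of such a pair are $\Sigma_0$, and the mere existence of a specializing function provably rules out a cofinal branch, so \anf{being a special Aronszajn tree} is absolute for transitive models and upward absolute to $V$. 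The obstruction to a naive $\Sigma_1$ definition is that \anf{carries no special Aronszajn tree} is $\Pi_1$, and picking out the largest witness inevitably refers to this failure at $\omega_{n_*}$.

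To localize that $\Pi_1$ condition I would pass to a transitive set model. Concretely, I would define $\{\omega_{n_*}\}$ by the formula asserting that there exist a transitive set $M$ and a sequence $\vec t$ such that: $M$ satisfies a sufficiently large finite fragment of $\ZFC^-$; $\omega_\omega\in M$ and $M\models\anf{$\omega_\omega=\aleph_\omega$}$; $\vec t\in M$ is a function assigning to each $n$ with $n_*<n<\omega$ a special $\aleph_n$-Aronszajn tree; and $M$ satisfies \anf{$x$ is the largest cardinal below $\aleph_\omega$ that carries no special Aronszajn tree}. Because $M$ is transitive, the entire matrix is the relativization to $M$ of a single fixed first-order formula, preceded by the two existential quantifiers over $M$ and $\vec t$, so the formula is genuinely $\Sigma_1$ in the parameter $\omega_\omega$. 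The clause $M\models\anf{$\omega_\omega=\aleph_\omega$}$ is what forces correctness of the cardinal computation: since $M\subseteq V$ gives $\Card^V\cap M\subseteq\Card^M$ and hence $\aleph_\omega^M\le\omega_\omega$, equality can hold only when $M$ has no spurious cardinals below $\omega_\omega$, and this single bounded clause therefore forces $\aleph_n^M=\omega_n$ for all $n\le\omega$ without the non-$\Sigma_1$ demand $\HH{\omega_\omega}\subseteq M$.

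With cardinal-correctness in hand, the reflection of special trees goes through: $\vec t$ forces $M\models\anf{$\omega_n$ carries a special Aronszajn tree}$ for every $n>n_*$, while for $n=n_*$ any special $\omega_{n_*}$-Aronszajn tree inside $M$ would, by the absoluteness noted above, be a genuine one in $V$, contradicting the hypothesis; so $M\models\anf{$\omega_{n_*}$ carries none}$. Thus any witnessing $M$ evaluates $x$ as $\omega_{n_*}$, which gives uniqueness, and taking $M=\HH{\theta}$ for a large regular $\theta$ provides a witness, giving existence. The main obstacle is precisely this interplay: turning the inherently $\Pi_1$ statement \anf{$\omega_{n_*}$ has no special Aronszajn tree} into a statement evaluated inside a set model, which succeeds only because specialness is a $\Sigma_0$-witnessed absolute property and because correctness of the cardinals up to $\aleph_\omega$ can be enforced by the single clause $M\models\anf{$\omega_\omega=\aleph_\omega$}$.
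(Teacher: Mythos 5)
Your reduction of the non-Rowbottomness assertion to the definability assertion via Lemma \ref{lemma:Rowbottem} is correct and is exactly what the paper does, and your $\Sigma_1$-formula is essentially the paper's as well: both quantify existentially over a transitive model $M$ of enough $\ZFC^-$ that contains $\omega_\omega$, computes $\aleph_\omega$ correctly, and carries a sequence $\vec t$ of special Aronszajn trees at its own $\aleph_n$ for $n_*<n<\omega$, and then read the answer off inside $M$. The gap is in your justification of uniqueness. You claim that the single clause $M\models\anf{\omega_\omega=\aleph_\omega}$ already ``forces $\aleph_n^M=\omega_n$ for all $n\le\omega$,'' on the grounds that $\Card^V\cap M\subseteq\Card^M$ gives $\aleph_\omega^M\le\omega_\omega$ and ``equality can hold only when $M$ has no spurious cardinals below $\omega_\omega$.'' That implication is false: correctness of the supremum does not imply levelwise correctness. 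If $M$ has, say, exactly one spurious cardinal in each interval $(\omega_k,\omega_{k+1})$, then the increasing enumeration of $\Card^M$ still has supremum $\omega_\omega$, so $\aleph_\omega^M=\omega_\omega$ even though $\aleph_n^M\neq\omega_n$ for all large $n$. Worse, spurious $M$-cardinals \emph{above} $\omega_{n_*}$ cannot be excluded by your clauses at all, since an $M$-tree whose height is not a $V$-cardinal contradicts nothing in $V$; so full cardinal correctness is not merely unproven but unobtainable, and the later steps of your argument, which explicitly invoke it (``with cardinal-correctness in hand\dots''), do not go through as written.

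What saves the proof is that you only need correctness at $n_*$, and this follows from your tree clause together with the hypothesis on $V$ --- precisely the step your write-up treats as automatic. Suppose $\aleph_{n_*}^M<\omega_{n_*}$. Since $\omega_\omega\subseteq M$ and every $V$-cardinal lying in $M$ remains a cardinal in $M$, the ordinal $\omega_{n_*}$ is an $M$-cardinal below $\aleph_\omega^M=\omega_\omega$, hence $\omega_{n_*}=\aleph_m^M$ for some $n_*<m<\omega$. Then $\vec t(m)$ is, in $M$, a special Aronszajn tree of height $\omega_{n_*}$ whose levels have $M$-size at most $\aleph_{m-1}^M<\omega_{n_*}$ and whose specializing function maps into $\aleph_{m-1}^M$; since $\aleph_{m-1}^M$ has $V$-cardinality at most $\omega_{n_*-1}$, the upward absoluteness you recorded turns $\vec t(m)$ into a genuine special $\omega_{n_*}$-Aronszajn tree in $V$, contradicting the hypothesis. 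Hence $\aleph_{n_*}^M=\omega_{n_*}$, and the same absoluteness argument shows that $M$ cannot contain a special $\aleph_{n_*}^M$-Aronszajn tree; combined with your tree clause, this makes your final clause evaluate $x$ as $\omega_{n_*}$ in every witnessing $M$, while $\HH{\aleph_{\omega+1}}$ still provides existence. With this argument substituted for the false cardinal-correctness claim, your proof is complete and coincides with the paper's, whose own ``it is clear that $\omega_{n_*}^M=\omega_{n_*}$'' is in fact hiding exactly this use of the trees.
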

 
\begin{proof}
Consider the collection of all transitive models $M$ of $\ZFC^-$ with the following properties:
\begin{itemize}
    \item $\omega_\omega+1\subseteq M$. 
    
    \item $\omega_\omega=\omega_\omega^M$. 
    
    \item In $M$, for every $n_*<n<\omega$, there is a special  $\omega^M_n$-Aronszajn tree. 
\end{itemize}
 The collection of such models $M$ is not empty as it includes $\HH{\aleph_{\omega+1}}$. 
 It is clear that for each model $M$ in this collection, we have $\omega_{n_*}^M = \omega_{n_*}$.
Hence, we can conclude that the set $\{\omega_{n_*}\}$ is definable by the $\Sigma_1$-formula with parameter $\omega_\omega$ that says there is a model $M$ with the above properties and $x$ is equal to $\omega_{n_*}^M$. 
 In particular, this shows that the cardinal $\omega_\omega$ does not have the $\Sigma_1(\omega_{n_*})$-undefinability property and hence Lemma \ref{lemma:Rowbottem} shows that $\omega_\omega$ is not $\omega_{n_*}$-Rowbottom.  
\end{proof}

 If we start in a model of the $\GCH$ containing a supercompact cardinal and use the canonical forcing to force the validity of some strong forcing axiom, like $\PFA$ or $\MM\MM^{++}$, then a result of Baumgartner shows that the tree property holds at $\omega_2$ and, since the $\GCH$ holds above $\aleph_0$, a result of Specker ensures that there are special $\omega_n$-Aronszajn trees for all $2<n<\omega$. Therefore, the above lemma shows that  $\omega_\omega$ is not $\omega_2$-Rowbottom in these models. 
 This observation should be compared with the consistency  results of K\"onig in \cite{zbMATH05132679}.


\section{Equiconsistency results}\label{Section:ConsistencyResults}

 We  build on the results of the previous  sections to obtain more equiconsistency results that witness various ways by which different types of $\Sigma_1$-definable closed unbounded sets of cardinals $\kappa$  fail to approximate the closed unbounded filter on $\kappa$.


\subsection{Limits of uncountably many measurable cardinals}

 We start by showing that, in the case of singular cardinals of uncountable cofinality, Theorem \ref{theorem:LimitMeasurables} provides the right consistency strength for both of the listed conclusions. In contrast, we will later show that the consistency strength for singular cardinals of countable cofinality is merely a single measurable cardinal.

 \begin{theorem}\label{theorem:UncCofManyMeasurables}
  Let $\kappa$ be a singular cardinal of uncountable cofinality. If there is no inner model with $\cof{\kappa}$-many measurable cardinals, then the following statements hold: 
  \begin{enumerate}
      \item Every subset of $\kappa$ that is $\mathbf{\Sigma}_1$-stationary in $\kappa$ is stationary in $\kappa$. In particular, there is an unbounded subset of $\kappa$ that consists of cardinals and is not $\mathbf{\Sigma}_1$-stationary in $\kappa$. 

      \item There exists a regressive function $\map{r}{\kappa}{\kappa}$ that is definable by a $\Sigma_1$-formula with parameters in $\HH{\kappa}\cup\{\kappa\}$ and is not constant on any unbounded subset of $\kappa$.
  \end{enumerate}
 \end{theorem}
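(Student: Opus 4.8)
The plan is to run the core model argument already used for $\KK^{DJ}$ in Corollaries \ref{corollary:SingularUncCofStationary} and \ref{corollary:ConsequencesSingularNoMeasurable}, but now with the core model $\KK$ appropriate to the weaker anti-large-cardinal hypothesis. Set $\lambda=\cof{\kappa}$. Since there is no inner model with $\lambda$-many measurable cardinals, the generalization of the Dodd--Jensen core model to coherent sequences of measures (below $\lambda$-many measurables) is defined; I would record that this $\KK$ satisfies the $\GCH$, is $\Sigma_1$-definable from any uncountable ordinal via the iterability of its mice (exactly as in the $\KK^{DJ}$ case), and obeys a weak covering lemma. The first real step is to use covering to pin down the cofinality of $\kappa$: because $\KK$ contains fewer than $\lambda$ measurable cardinals and $\lambda$ is regular and uncountable, any cofinal map of length $\lambda$ into $\kappa$ can be thinned so as to avoid the fewer-than-$\lambda$ points at which covering can fail, so $\kappa$ is singular in $\KK$ with $\cof{\kappa}^\KK<\kappa$ (indeed $\cof{\kappa}^\KK=\lambda$). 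This is where the hypothesis is genuinely used, and it is also where the bound ``$\cof{\kappa}$-many measurables'' is sharp: by Theorem \ref{theorem:LimitMeasurables}, a cofinal $\lambda$-sequence of measurables would make every unbounded set of cardinals $\mathbf{\Sigma}_1(Ord)$-stationary, which is incompatible with the existence of any $\Sigma_1$-definable cofinal map into $\kappa$.

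Next I would establish, for this $\KK$, the definability lemmas that parallel Lemmas \ref{lemma:InitialSegmentsKDJ}, \ref{lemma:CofInKDJ} and \ref{lemma:LeastCofinal}: \emph{(i)} when $\kappa$ is not inaccessible in $\KK$, the set $\{\HH{\kappa}^\KK\}$ is definable by a $\Sigma_1$-formula with parameter $\kappa$; \emph{(ii)} $\{\cof{\kappa}^\KK\}$ is $\Sigma_1$-definable from $\kappa$; and \emph{(iii)} the $<_\KK$-least cofinal function $c$ from $\cof{\kappa}^\KK$ to $\kappa$ is $\Sigma_1$-definable from $\kappa$. The arguments are the same comparison proofs as before: one coiterates an arbitrary mouse at an ordinal above $\kappa$ against a mouse obtained by iterating a top measure up to $\kappa$, and uses that the losing side is an initial segment of the winning side. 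The only extra bookkeeping is that the mice now carry coherent sequences of measures rather than a single measure, which lengthens the comparison but does not affect its outcome.

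With \emph{(i)}--\emph{(iii)} in hand, conclusion (1) follows exactly as in Corollary \ref{corollary:SingularUncCofStationary}. Using \emph{(iii)}, fix the club $C_0=\ran{c}$ of order type $\cof{\kappa}^\KK$, whose monotone enumeration $e$ is $\Sigma_1$-definable from $\kappa$. Given an arbitrary club $C\subseteq\kappa$, the set $X=e^{{-}1}[C]$ lies in $\POT{\cof{\kappa}^\KK}\subseteq\HH{\kappa}$, and $C\cap C_0=e[X]$ is then definable by a $\Sigma_1$-formula with parameters in $\HH{\kappa}\cup\{\kappa\}$. Hence every $\mathbf{\Sigma}_1$-stationary subset of $\kappa$ meets $C\cap C_0\subseteq C$ and is therefore stationary. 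For the ``in particular'' clause I would simply note that, since $\kappa$ is a limit cardinal, the set $E$ of successor cardinals below $\kappa$ is unbounded, consists of cardinals, and is non-stationary (being disjoint from the club of limit cardinals), so by the first part it cannot be $\mathbf{\Sigma}_1$-stationary.

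Finally, conclusion (2) follows as in Corollary \ref{corollary:ConsequencesSingularNoMeasurable}(3): using the $\Sigma_1$-definable cofinal function $c$ from \emph{(iii)}, I would define $\map{r}{\kappa}{\cof{\kappa}^\KK}$ by setting $r(\alpha)=0$ for $\alpha\leq\cof{\kappa}^\KK$ and $r(\alpha)=\min\Set{\xi<\cof{\kappa}^\KK}{c(\xi)\geq\alpha}$ otherwise. Then $r$ is regressive, definable by a $\Sigma_1$-formula with parameter $\kappa$, and not constant on any unbounded subset of $\kappa$, because each fibre $r^{{-}1}\{\xi\}$ is bounded by $c(\xi)<\kappa$. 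The main obstacle throughout is the core model input of the first paragraph: identifying the correct core model below $\lambda$-many measurables, verifying that its mice are $\Sigma_1$-definably iterable so that \emph{(i)}--\emph{(iii)} carry over, and extracting the covering statement $\cof{\kappa}^\KK<\kappa$ from the anti-large-cardinal hypothesis. Once these are secured, the combinatorial core of the proof is a direct transcription of the $\KK^{DJ}$ arguments.
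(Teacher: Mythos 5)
Your overall strategy is the same as the paper's: replace $\KK^{DJ}$ by Koepke's short core models for coherent sequences of measures and transcribe Lemmas \ref{lemma:InitialSegmentsKDJ}--\ref{lemma:LeastCofinal} and Corollaries \ref{corollary:SingularUncCofStationary} and \ref{corollary:ConsequencesSingularNoMeasurable}. However, there is a genuine gap in your items \emph{(i)}--\emph{(iii)}: you claim that the relevant objects (in particular the $<_{\KK}$-least cofinal function) are $\Sigma_1$-definable from the parameter $\kappa$ \emph{alone}, asserting that the comparison arguments carry over ``exactly as in the $\KK^{DJ}$ case''. They do not. The $\KK^{DJ}$ arguments rest on the fact that all mice are canonical, pairwise comparable objects, so one can quantify existentially over them without losing the uniqueness of the witness that $\Sigma_1$-definability of a singleton requires. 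Once mice carry coherent sequences of measures, coiteration only makes sense relative to a fixed sequence; an existential quantifier over sequences destroys uniqueness, and the canonical sequence $U_{can}$ is not itself $\Sigma_1$-definable. The paper's proof resolves this by making the sequence a \emph{parameter}: the hypothesis yields (via {\cite[Theorem 2.14]{MR926749}}) that $0^{long}$ does not exist and that $\dom{U_{can}}$ has order-type less than $\cof{\kappa}$, so $\dom{U_{can}\restriction\kappa}$ is bounded in $\kappa$ and $U=U_{can}\restriction\kappa\in\HH{\kappa}$; one then works in $K[U]$, where $\POT{\kappa}^{K[U_{can}]}\subseteq K[U]$, and the least cofinal function is $\Sigma_1$-definable from $\kappa$ \emph{and} $U$. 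This is exactly why conclusion (2) of the theorem only asserts definability with parameters in $\HH{\kappa}\cup\{\kappa\}$, in contrast to Corollary \ref{corollary:ConsequencesSingularNoMeasurable}, where $\kappa$ alone suffices. Your downstream arguments (intersecting with the club $C_0$, coding intersections by subsets of $\cof{\kappa}^{\KK}$, and the fibre-bounded regressive function) only need parameters in $\HH{\kappa}\cup\{\kappa\}$, so the proof survives this correction; but as written, the key definability step is unjustified, and the place where the hypothesis is used to locate the extra parameter inside $\HH{\kappa}$ is missing entirely.

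A secondary problem is your covering step. ``Thinning a cofinal map of length $\lambda$ to avoid the fewer-than-$\lambda$ points at which covering can fail'' does not produce a cofinal map lying in $\KK$, which is what singularity of $\kappa$ in $\KK$ means. The paper's route is different: since $\cof{\kappa}>\omega$, {\cite[Theorem 3.23]{MR926749}} shows that $\kappa$ is not measurable in $K[U_{can}]$ (a $V$-singular cardinal that is measurable in the core model is singularized by a Prikry-type sequence and hence has countable cofinality), and then {\cite[Theorem 3.21]{MR926749}} gives that $\kappa$ is singular in $K[U_{can}]$, whence in $K[U]$. Two minor further points: your claim $\cof{\kappa}^{\KK}=\lambda$ is unneeded and dubious (only $\cof{\kappa}^{\KK}<\kappa$ is required), and $\ran{c}$ need not be closed, so $C_0$ should be taken as the closure of $\ran{c}$, which is still $\Sigma_1$-definable from the same parameters.
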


 \begin{proof}
  By {\cite[Theorem 2.14]{MR926749}}, our assumptions imply that $0^{long}$  (as defined in {\cite[Definition 2.13]{MR926749}}) does not exist. 
  Let $U_{can}$ denote the canonical sequence of measures and let $\KK[U_{can}]$ denote the canonical core model (as defined in {\cite[Definition 3.15]{MR926749}}). Then our assumption ensures that $\dom{U_{can}}$ has order-type less than $\cof{\kappa}$. 
  Moreover, since $\cof{\kappa}$ is uncountable, we can apply    {\cite[Theorem 3.23]{MR926749}} to show that $\kappa$ is not measurable in $\KK[U_{can}]$. 
  But, this allows us to   use {\cite[Theorem 3.20]{MR926749}} to conclude that $\kappa$ is singular in $\KK[U_{can}]$. 
  Set $U=U_{can}\restriction\kappa$ and $\KK=\KK[U]$  (see {\cite[Definition 3.1]{MR926749}}). Then $U\in\HH{\kappa}$ and {\cite[Theorem 3.9]{MR926749}} shows that $\POT{\kappa}^{\KK[U_{can}]}\subseteq\KK$. In particular, we know that $\kappa$ is singular in $\KK$ and we can define $\map{c}{\cof{\kappa}^\KK}{\kappa}$ to be the $<_{\KK[U]}$-least cofinal function in $\KK$ (see {\cite[Theorem 3.4]{MR926749}}). Then {\cite[Lemma 2.3]{MR3845129}} shows that $c$ is definable by a $\Sigma_1$-formula with parameters in $\HH{\kappa}\cup\{\kappa\}$. But this shows that there is a closed unbounded subset $C$ of $\kappa$ of order-type $\cof{\kappa}^\KK$ such that $\min(C)>\cof{\kappa}^\KK$ and the set $\{C\}$ is definable by a $\Sigma_1$-formula with parameters in $\HH{\kappa}\cup\{\kappa\}$. 
  As in the proof of Corollary \ref{corollary:SingularUncCofStationary}, we now know that every subset of $\kappa$ that is $\mathbf{\Sigma}_1$-stationary in $\kappa$ is stationary in $\kappa$.  
  Finally, the function $$\Map{r}{\kappa}{\cof{\kappa}^\KK}{\alpha}{\otp{C\cap\alpha}}$$ is a regressive function that is definable by a $\Sigma_1$-formula with parameters in $\HH{\kappa}\cup\{\kappa\}$ and is not constant on any unbounded subset of $\kappa$. 
 \end{proof}

 By combining this result with Theorem \ref{theorem:LimitMeasurables}, we obtain the following equiconsistency:

 \begin{corollary}\label{corollary:StrengthSingularUncountCof}
     The following statements are equiconsistent over $\ZFC$: 
     \begin{enumerate}
         \item There exist uncountably many measurable cardinals. 

         \item There exists a singular cardinal $\kappa$ of uncountable cofinality with the property that some non-stationary subset of $\kappa$ is $\mathbf{\Sigma}_1$-stationary in $\kappa$. 

         \item There exists a singular cardinal $\kappa$ of uncountable cofinality with the property that some non-stationary subset of $\kappa$ is $\mathbf{\Sigma}_1(Ord)$-stationary in $\kappa$. \qed
     \end{enumerate}
 \end{corollary}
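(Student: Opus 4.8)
The plan is to establish the equiconsistency as a short cycle of implications resting on the two main theorems of this subsection, observing that essentially all of the mathematical content has already been carried by Theorem~\ref{theorem:LimitMeasurables} and Theorem~\ref{theorem:UncCofManyMeasurables}; the corollary itself will amount to bookkeeping together with one monotonicity remark about the parameter classes. Concretely, I would prove the chain $(1)\Rightarrow(3)\Rightarrow(2)\Rightarrow\mathrm{Con}(1)$, which yields $\mathrm{Con}(1)\Leftrightarrow\mathrm{Con}(2)\Leftrightarrow\mathrm{Con}(3)$.

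For $(1)\Rightarrow(3)$ I would argue outright, with no forcing. Assuming there are uncountably many measurable cardinals, I would fix an increasing sequence $\seq{\kappa_\xi}{\xi<\omega_1}$ of measurables and set $\kappa=\sup_{\xi<\omega_1}\kappa_\xi$. Then $\kappa$ is a singular cardinal with $\cof{\kappa}=\omega_1$ that is a limit of measurable cardinals, so Theorem~\ref{theorem:LimitMeasurables} applies: every unbounded subset of $\kappa$ consisting of cardinals is $\mathbf{\Sigma}_1(Ord)$-stationary in $\kappa$. To exhibit a \emph{non-stationary} such set, let $E$ be the set of successor cardinals below $\kappa$. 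Since measurable cardinals are inaccessible, the set $C$ of limit cardinals below $\kappa$ is closed unbounded in $\kappa$, and $E\cap C=\emptyset$; hence $E$ is a non-stationary, unbounded subset of $\kappa$ consisting of cardinals, and by Theorem~\ref{theorem:LimitMeasurables} it is $\mathbf{\Sigma}_1(Ord)$-stationary, witnessing $(3)$.

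The implication $(3)\Rightarrow(2)$ is immediate from monotonicity of stationarity in the parameter class. Unwinding Definition~\ref{Def:Sigma_n-Stationary}, $\mathbf{\Sigma}_1(Ord)$-stationarity is $\Sigma_1(Ord\cup\HH{\kappa})$-stationarity while $\mathbf{\Sigma}_1$-stationarity is $\Sigma_1(\HH{\kappa})$-stationarity, and since $\HH{\kappa}\cup\{\kappa\}\subseteq Ord\cup\HH{\kappa}\cup\{\kappa\}$, the collection of clubs that an $\mathbf{\Sigma}_1$-stationary set must meet is a subcollection of those required for $\mathbf{\Sigma}_1(Ord)$-stationarity; thus any $\mathbf{\Sigma}_1(Ord)$-stationary set is a fortiori $\mathbf{\Sigma}_1$-stationary, and the witness from $(3)$ directly witnesses $(2)$. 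Finally, for $(2)\Rightarrow\mathrm{Con}(1)$ I would invoke the contrapositive of Theorem~\ref{theorem:UncCofManyMeasurables}(1): given a singular $\kappa$ of uncountable cofinality carrying a non-stationary $\mathbf{\Sigma}_1$-stationary subset, there must exist an inner model with $\cof{\kappa}$-many measurable cardinals; as $\cof{\kappa}\geq\omega_1$, this inner model satisfies ``there exist uncountably many measurable cardinals,'' yielding the consistency of $(1)$.

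The main obstacle is essentially nonexistent at the level of the corollary, since the deep work is already packaged in the two cited theorems; the only points requiring care are (a) checking that the inclusion $\HH{\kappa}\cup\{\kappa\}\subseteq Ord\cup\HH{\kappa}\cup\{\kappa\}$ runs in the direction that makes $\mathbf{\Sigma}_1(Ord)$-stationarity the \emph{stronger} notion, and (b) keeping track of the distinction between an outright implication (as in $(1)\Rightarrow(3)$ and $(3)\Rightarrow(2)$, where the witness is constructed directly in the given universe) and a mere consistency implication (as in $(2)\Rightarrow\mathrm{Con}(1)$, where the measurable cardinals are produced only in an inner model).
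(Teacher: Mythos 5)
Your proposal is correct and follows exactly the route the paper intends: the corollary is stated with an implicit proof obtained by combining Theorem~\ref{theorem:LimitMeasurables} (applied to the supremum of $\omega_1$-many measurables, with the non-stationary set of successor cardinals as witness) with Theorem~\ref{theorem:UncCofManyMeasurables} (whose contrapositive yields the inner model), together with the trivial monotonicity step from $\mathbf{\Sigma}_1(Ord)$- to $\mathbf{\Sigma}_1$-stationarity. Your cycle $(1)\Rightarrow(3)\Rightarrow(2)\Rightarrow\mathrm{Con}(1)$ is precisely this argument, and your care about the direction of the parameter-class inclusion and the implication-versus-consistency distinction is exactly what is needed.
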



\subsection{Countable cofinalities}

The aim of this section is to prove the following equiconsistency result:

\begin{theorem}\label{Thm:Measurable}
 The following statements are equiconsistent over $\ZFC$:
 \begin{enumerate}
    \item\label{item:Thm:Measurable1} There is a measurable cardinal. 
    
    \item\label{item:Thm:Measurable2} Every unbounded subset of $\Set{\omega_n}{n<\omega}$ is $\mathbf{\Sigma}_1(Ord)$-stationary in $\omega_\omega$. 
    
    \item\label{item:Thm:Measurable3} There is a singular cardinal $\kappa$ of countable cofinality and  a subset of $\kappa$ that consists of cardinals and is $\mathbf{\Sigma}_1(Ord)$-stationary in $\kappa$. 

    \item\label{item:Thm:Measurable4} There is a singular cardinal $\kappa$ of countable cofinality and  a subset of $\kappa$ that consists of cardinals and is $\mathbf{\Sigma}_1$-stationary in $\kappa$. 
 \end{enumerate}
\end{theorem}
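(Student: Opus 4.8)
The plan is to prove all four equiconsistencies simultaneously by closing the cycle of implications $\eqref{item:Thm:Measurable1}\to\eqref{item:Thm:Measurable2}\to\eqref{item:Thm:Measurable3}\to\eqref{item:Thm:Measurable4}\to\eqref{item:Thm:Measurable1}$ at the level of consistency, after observing that the two middle implications already hold outright in $\ZFC$. Indeed, since $\omega_\omega$ is singular of countable cofinality and $\Set{\omega_n}{n<\omega}$ is an unbounded set of cardinals, statement \eqref{item:Thm:Measurable2} immediately yields \eqref{item:Thm:Measurable3}; and since every closed unbounded set whose singleton is $\Sigma_1$-definable with parameters in $\HH{\kappa}\cup\{\kappa\}$ is in particular definable with parameters in $Ord\cup\HH{\kappa}\cup\{\kappa\}$, a $\mathbf{\Sigma}_1(Ord)$-stationary set, meeting all clubs of the latter kind, in particular meets all clubs of the former kind and is therefore $\mathbf{\Sigma}_1$-stationary, giving $\eqref{item:Thm:Measurable3}\to\eqref{item:Thm:Measurable4}$. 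Thus only the lower bound $\eqref{item:Thm:Measurable4}\to\eqref{item:Thm:Measurable1}$ and the forcing construction $\eqref{item:Thm:Measurable1}\to\eqref{item:Thm:Measurable2}$ require genuine work.

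For the lower bound I would show that \eqref{item:Thm:Measurable4} already implies, in $\ZFC$, the existence of an inner model with a measurable cardinal. So let $\kappa$ be a singular cardinal of countable cofinality admitting a $\mathbf{\Sigma}_1$-stationary subset $S$ consisting of cardinals; by definition $S$ is $\Sigma_1(\HH{\kappa})$-stationary. Applying Corollary \ref{corollary:CriterionClosedIntersections} with $A=\HH{\kappa}$ and $n=1$, the family $\mathcal{F}$ of closed unbounded subsets $C$ of $\kappa$ for which $\{C\}$ is definable by a $\Sigma_1$-formula with parameters in $\HH{\kappa}\cup\{\kappa\}$ is closed under intersections. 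Suppose, towards a contradiction, that there is no inner model with a measurable cardinal. Then the first part of Corollary \ref{corollary:NoMeasurableCorollaries3} provides two disjoint members $C_0,C_1$ of precisely this family $\mathcal{F}$, whence $C_0\cap C_1=\emptyset$ would have to be closed unbounded in $\kappa$, which is absurd. Hence an inner model with a measurable cardinal exists, so that the consistency of \eqref{item:Thm:Measurable4} yields the consistency of a measurable cardinal.

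The substantial direction is $\eqref{item:Thm:Measurable1}\to\eqref{item:Thm:Measurable2}$. I would start from a measurable cardinal $\kappa$ with a normal measure $U$, first forcing $\GCH$ while preserving measurability if necessary, and then force with the standard Prikry-type poset at $\kappa$ with interleaved L\'evy collapses. This adds a cofinal sequence $\seq{\kappa_n}{n<\omega}$ through $\kappa$ and collapses the cardinals in the successive intervals, arranging in the extension $V[G]$ that $\cof{\kappa}=\omega$, that $\kappa=\omega_\omega$, and that $\kappa_n=\omega_{n+1}$, so that $\Set{\omega_n}{n<\omega}$ is, up to a finite shift, the Prikry sequence. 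The key structural feature is that this poset factors, below any fixed Prikry point $\kappa_m$, as a forcing of size $<\kappa$ followed by a Prikry-type tail forcing over the resulting intermediate model; by the L\'evy--Solovay theorem $\kappa$ remains measurable in each such intermediate model, the normal measure survives as an iterable object, and the tail retains the Prikry property over that model.

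To verify \eqref{item:Thm:Measurable2} in $V[G]$, I would reduce it, via Lemma \ref{lemma:UnDefStationary} and Corollary \ref{corollary:UnbCardStat}, to the following undefinability statement: for every ordinal $\gamma\geq\omega_\omega$, the cardinal $\omega_\omega$ has the $\Sigma_1(\omega_n,\gamma)$-undefinability property of Definition \ref{definition:UnDefProp} for all sufficiently large $n<\omega$. Granting this, if $C$ is any closed unbounded subset of $\omega_\omega$ whose singleton is $\Sigma_1$-definable from a parameter $z\in\HH{\omega_\omega}$ together with $\omega_\omega$ and some $\gamma$, then choosing $n$ large enough that $z\in\HH{\omega_n}$ and the undefinability property holds, Lemma \ref{lemma:UnDefStationary} forces $\omega_n\in C$; thus $C$ contains a tail of $\seq{\omega_n}{n<\omega}$ and therefore meets every infinite $S\subseteq\Set{\omega_n}{n<\omega}$, which is exactly $\mathbf{\Sigma}_1(Ord)$-stationarity. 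Proving this undefinability statement is where I expect the main difficulty to lie. In contrast to the genuine limit-of-measurables case of Theorem \ref{theorem:LimitMeasurables}, the points $\omega_n$ carry no measures in $V[G]$, so the ordinal-moving embeddings of Lemma \ref{lemma:LimitMeasurableUndefProp} are unavailable, and iterating the surviving measure on $\kappa$ only moves $\kappa$ and $\gamma$ upward, which cannot contradict the unique definability of an ordinal below $\kappa$. Instead, fixing the parameters and passing to the intermediate model over which they live and $\kappa$ is still measurable, I would use the Prikry property together with the normality of the surviving measure to argue that the $\Sigma_1$-definition of a purported club cannot omit a tail of Prikry points, the genericity of the Prikry sequence forcing any such definable club to absorb cofinitely many of the $\omega_n$. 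Carrying out this homogeneity-and-Prikry-property analysis, in particular controlling how the interleaved collapses interact with the $\Sigma_1$-reflection of the defining formula for arbitrary ordinal parameters $\gamma$, is the crux of the argument.
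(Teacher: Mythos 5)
Your overall architecture coincides with the paper's: the same cycle $\eqref{item:Thm:Measurable1}\Rightarrow\eqref{item:Thm:Measurable2}\Rightarrow\eqref{item:Thm:Measurable3}\Rightarrow\eqref{item:Thm:Measurable4}\Rightarrow\eqref{item:Thm:Measurable1}$, with the two middle implications trivial. Your lower bound $\eqref{item:Thm:Measurable4}\Rightarrow\eqref{item:Thm:Measurable1}$ is correct and complete; it is a mild repackaging of the paper's argument, which uses Lemma \ref{lemma:LeastCofinal} to build a $\Sigma_1(\HH{\kappa}\cup\{\kappa\})$-definable club of successor ordinals that is then disjoint from the given set of cardinals, whereas you combine Corollary \ref{corollary:CriterionClosedIntersections} with Corollary \ref{corollary:NoMeasurableCorollaries3} (in fact the club $C_1$ of Corollary \ref{corollary:NoMeasurableCorollaries3} already consists of successor ordinals, so your intersection step could be bypassed). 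Likewise, your reduction of $\eqref{item:Thm:Measurable1}\Rightarrow\eqref{item:Thm:Measurable2}$ to the statement that, in the extension, $\omega_\omega$ has the $\Sigma_1(\omega_n,\gamma)$-undefinability property for all large $n$ and every $\gamma\geq\omega_\omega$, via Lemma \ref{lemma:UnDefStationary} and Corollary \ref{corollary:UnbCardStat}, is exactly the paper's route: this statement is Lemma \ref{Lemma:Main-Prikry}, and the deduction of $\mathbf{\Sigma}_1(\On)$-stationarity from it is Corollary \ref{corollary:CardStatAtOmegaOmega}.

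The genuine gap is that this undefinability lemma, which you yourself flag as the crux, is never proved; it is the technical heart of the theorem and occupies most of the paper's Section \ref{Section:ConsistencyResults}. What is missing is the mechanism by which a contradiction is actually extracted. In the paper: if some condition forces the failure of the $\Sigma_1(\dot{\kappa}_n,\gamma)$-undefinability property, then weak homogeneity of the collapse product (Remark \ref{Remark:CO-WeaklyHomogeneous}) reduces the name for the defined ordinal to a $\po_U$-name and the name for the parameter $\name{x}\in\HH{\dot{\kappa}_n}$ to a $(\po_U*\name{\co}_n)$-name; the name capturing property of Fact \ref{Fact:PrikryForcing} together with normality of $U$ stabilizes that parameter as a single name $\dot{y}_0\in\HH{\kappa}$ on a measure-one set of one-point extensions; one then fixes a condition $r_0$ deciding the defined ordinal to be some $\alpha<\kappa$, passes to a generic $G_1$ whose $n$-th Prikry point $\rho_1$ exceeds $\alpha$, and uses the Mathias criterion to produce a second generic $G_0\ni r_0$ with $V[G_0]=V[G_1]$ whose collapse product is absorbed into that of $G_1$. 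Upward $\Sigma_1$-absoluteness between $V[G_0,H_0]$ and $V[G_0,H_1]=V[G_1,H_1]$ then makes one formula with the same parameters define both $\alpha$ and an ordinal $\geq\rho_1>\alpha$, contradicting uniqueness. None of this two-generics/absorption device appears in your sketch, and your proposed substitute is itself unproven: the claim that your interleaved-collapse poset factors below $\kappa_m$ into a small forcing followed by a Prikry-type tail over the intermediate model, with the measure surviving by L\'{e}vy--Solovay, is a nontrivial assertion about a poset the paper does not even use (the paper works with the two-step iteration $\po_U*\dot{\co}$ precisely to avoid such factoring). As it stands, the forcing direction is a plan rather than a proof.
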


\begin{remark}
    The restriction to $\Sigma_1(Ord)$-definability in \eqref{item:Thm:Measurable2} of the theorem is optimal since the sequence $\seq{\omega_n}{n < \omega}$ is $\Sigma_2(Ord)$-definable. 
\end{remark}

In the following arguments we will make use of the Easton-support collapse version from \cite{EastonCollapse} of a universal collapse forcing (see \cite{MR2768692} for a comprehensive background).

\begin{definition}
Suppose that $\kappa$ is a Mahlo cardinal and let $$I_\kappa ~ = ~  \Set{ \gamma < \kappa}{\textit{$\gamma$ is inaccessible}}.$$ 
For each $\gamma\in I_\kappa$, we define $\qo(\gamma,\kappa)$ to be the product $$ \qo(\gamma,\kappa) = \prod_{\delta \in I_\kappa\setminus\gamma} \col(\delta,{<}\kappa)$$ with the Easton-support.
\end{definition}
%
\begin{remark}\label{Rmk:EastonCollapseBasics}
The partial order $\qo(\gamma,\kappa)$ is clearly definable from the ordinals $\gamma$ and $\kappa$, and forcing with $\qo(\gamma,\kappa)$ collapses all cardinals in the interval $(\gamma,\kappa)$. Since this partial order is ${<}\gamma$-closed and satisfies the $\kappa$-chain condition, the cardinal $\kappa$ becomes $\gamma^+$ in $\qo(\gamma,\kappa)$-generic extensions. 
In the following, we will rely on the following two useful features of this partial order: 
\begin{enumerate}
    \item  If $\gamma < \delta$ are elements of $I_\kappa$, then $\qo(\gamma,\delta) \times \qo(\delta,\kappa)$ is a regular sub-forcing of $\qo(\gamma,\kappa)$. 
    
    \item $\qo(\gamma,\kappa)$ is weakly homogeneous.
\end{enumerate} 
The proofs are left to the reader (see also \cite{EastonCollapse}). 
\end{remark}

Before proving  Theorem \ref{Thm:Measurable}, we review some basic facts about Prikry forcing and products of collapse forcings.  
Suppose that $\kappa$ is a measurable cardinal  and $U$ is a normal measure on $\kappa$. 
We let $\po_U$ denote the Prikry forcing given by $U$. 
Conditions $\po_U$ are then of the form $p = \langle s_p, A_p\rangle$,  where $s_p \in [\kappa]^{{<}\omega}$ is a finite sequence of inaccessible cardinals and $A_p \in U$ consists of inaccessible cardinals and satisfies  $\min(A_p) > \max(s_p)$.  
We then have $p\leq_{\po_U}q$   if $s_p$ end-extends $s_q$, $A_p \subseteq A_q$, and $s_p\setminus s_q \subseteq A_q$. 
If moreover, $s_p = s_q$ holds, then we say $p$ is a direct extension of $q$, denoted by $p \leq_{\po_U}^* q$.  
 Given a condition $p$ in $\po_U$ and $t\in[A_p]^{{<}\omega}$, we let $p\fr t$ denote the condition $\langle s_p\cup t,A_p\setminus(\max(t)+1)\rangle$. 
 Finally, if $G$ is $\po_U$-generic filter over $V$, then its associated Prikry sequence $\vec{\kappa}^G = \seq{\kappa^G_n}{n<\omega}$ is defined by $\vec{\kappa}^G = \bigcup \Set{s\in [\kappa]^{<\omega}}{\exists A \in U ~ \langle s,A\rangle \in G}$. 
Below, we record some of the basic properties of $\po_U$ that will be used in the following arguments (see \cite{MR2768695} for details).

\begin{fact}\label{Fact:PrikryForcing} ${}$
 \begin{enumerate}
  \item The direct extension order $\leq^*_{\po_U}$ is ${<}\kappa$-closed. 
  
  \item Forcing with $\po_U$ does not introduce new bounded subsets of $\kappa$. 
  
  \item The partial order $\po_U$ satisfies the $\kappa^+$-chain condition. 
  
 \item (\emph{Prikry Property}) For every condition $q$ in $\po_U$ and every statement $\sigma$ in the forcing language of $\po_U$, there is $p \leq^*_{\po_U} q$ which decides $\sigma$.  
 
 \item (\emph{Strong Prikry Property}) For every condition $q$ in $\po_U$ and every dense open set $D$ of $\po_U$, there is $p \leq_{\po_U}^* q$ and $n < \omega$ such that $D$ contains all conditions of the form $p\fr t$ with $t\in[A_p]^n$.  
 
 \item (\emph{Name Capturing Property}) For every condition $q$  in $\po_U$ and every $\po_U$-name $\name{f}$ for a function with domain $\omega$ with the property that $\name{f}^G(n)\in\HH{\kappa_n^G}$ holds whenever $n<\omega$ and $G$
 is $\po_U$-generic over $V$ with $q\in G$, 
 there is a condition  $p \leq_{\po_U}^* q$  and a function $\map{F}{[A_p]^{{<}\omega}}{\HH{\kappa}}$ so that 
$$ p\fr t \Vdash_{\po_U}\anf{\name{f}(\vert \check{s}_p \cup \check{t}\vert) = \can{F}(\check{t})}$$ holds for every $t \in [A^*]^{{<}\omega}$. 

\item A $\po_U$-generic filter $G$ is generated by its induced Prikry sequence  $\vec{\kappa}^G$ in the sense that $G = \Set{ \langle\vec{\kappa}^G\uhr n, A\rangle}{n<\omega, ~ \vec{\kappa}^G\setminus n \subseteq A\in U}$.

\item (\emph{Mathias Criterion}) A sequence $\vec{\kappa} = \seq{\kappa_n}{n<\omega}$ generates a $\po_U$-generic filter if and only if $\vec{\kappa}\setminus A$ is finite for every $A \in U$. 
\end{enumerate}
\end{fact}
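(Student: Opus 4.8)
The plan is to establish the eight items of Fact~\ref{Fact:PrikryForcing} in the standard order, deriving the ``Prikry-style'' analytic properties from a single combinatorial homogenization and then reading off the genericity characterizations. Throughout, the only features of $U$ that enter are that it is a $\kappa$-complete normal ultrafilter on $\kappa$, so that $\kappa$ enjoys the normal partition property (for each $m<\omega$, every colouring of $[\kappa]^m$ into fewer than $\kappa$ colours admits a homogeneous set in $U$). I would begin with the two purely combinatorial items. For item~(1), a $\leq^*_{\po_U}$-decreasing sequence $\seq{p_\xi}{\xi<\gamma}$ with $\gamma<\kappa$ shares a common stem $s$, so by $\kappa$-completeness $A=\bigcap_{\xi<\gamma}A_{p_\xi}\in U$; since $A$ still consists of inaccessibles above $\max(s)$, the condition $\langle s,A\rangle$ is a $\leq^*_{\po_U}$-lower bound. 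For item~(3), any two conditions with the same stem $s$ are compatible via $\langle s,A_p\cap A_q\rangle$, so an antichain contains at most one condition per stem, and $\betrag{[\kappa]^{{<}\omega}}=\kappa$ bounds its size by $\kappa$.

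The heart of the matter is item~(4), which I expect to be the main obstacle. Given $q=\langle s,A\rangle$ and a statement $\sigma$, I would colour each $t\in[A]^{m}$, for every $m<\omega$ separately, according to whether some direct extension of $\langle s\cup t,A\setminus(\max t+1)\rangle$ forces $\sigma$, forces $\neg\sigma$, or neither; the normal partition property together with $\kappa$-completeness (to intersect the homogeneous sets across all $m$) yields $A^*\in U$ homogeneous at every level. The crux is to show that $\langle s,A^*\rangle$ already decides $\sigma$: otherwise one extends to a condition deciding $\sigma$, say positively, reads off its stem-length $m$, and then uses level-$m$ homogeneity together with the $<\kappa$-closure from item~(1) to amalgamate the decisions of all $\langle s\cup t,\cdot\rangle$ into a single direct extension forcing $\neg\sigma$, a contradiction. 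From item~(4) I would obtain item~(2) at once: given a name for a subset of some $\lambda<\kappa$, iterate~(4) along a $\leq^*_{\po_U}$-decreasing sequence of length $\lambda$ deciding each membership question and take a lower bound by~(1), so the resulting direct extension computes the set in $V$. Item~(5) follows from the same homogenization applied to the density statement for $D$: by openness of $D$ and density one extracts $A^*\in U$ and a uniform level $n$ at which every $\langle s\cup t,A^*\setminus(\max t+1)\rangle$ with $t\in[A^*]^n$ lies in $D$.

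Item~(6) is the second technical point. For each $t\in[A_q]^{{<}\omega}$ I would use item~(4) to fix a direct extension deciding $\dot f(\betrag{s_q\cup t})$ to a value $F(t)$, which lies in $\HH{\kappa}$ because it is forced into $\HH{\kappa^G_{\betrag{s_q\cup t}}}$ and the relevant Prikry point is bounded below $\kappa$. A single $A^*\in U$ simultaneously preserving all of these decisions is then produced by a diagonal intersection of the associated measure-one tails, which is legitimate precisely by the normality of $U$; this is the delicate step, and I would take care that the diagonalization is arranged so that for $t\in[A^*]^{{<}\omega}$ the condition $q\fr t$ with tail $A^*$ still forces $\dot f(\betrag{s_q\cup t})=\check F(\check t)$.

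Finally I would treat the genericity characterizations. For item~(7), since $G$ is a filter its stems are pairwise end-extension-comparable, hence form a chain whose union is $\vec\kappa^G$, and for $p\in G$ every future generic point must be drawn from $A_p$, giving $\vec\kappa^G\setminus\betrag{s_p}\subseteq A_p$; conversely, for $A\in U$ the set of conditions with tail contained in $A$ is dense, so directedness of $G$ yields $\langle\vec\kappa^G\uhr n,A\rangle\in G$ whenever $\vec\kappa^G\setminus n\subseteq A$, establishing the displayed equality. Item~(8) then records the two directions: if $\vec\kappa$ is generic, meeting those same dense sets forces $\vec\kappa\setminus A$ to be finite for every $A\in U$; for the substantive converse, assuming $\vec\kappa\setminus A$ is finite for all $A\in U$, I would show that the filter generated by $\vec\kappa$ meets every dense open $D\in V$ by applying item~(5) to obtain a pair $(A^*,n)$, using finiteness of $\vec\kappa\setminus A^*$ to push the stem past all points of $\vec\kappa$ lying outside $A^*$, and invoking the openness of $D$ to land an initial segment of $\vec\kappa$ inside $D$.
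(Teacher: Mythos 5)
The paper itself gives no proof of this Fact: it is quoted as standard, with a pointer to the literature (\cite{MR2768695}), so your reconstruction can only be judged on its own terms. Its architecture is the standard one, and items (1), (2), (3), (5) and (7) are essentially right as sketched; but the three steps you yourself identify as the crux contain genuine gaps. In item (4), the amalgamation of the witnesses $B_t$ (for the $t\in[A^*]^m$ of the homogeneous colour) is an amalgamation of $\kappa$-many measure-one sets indexed by finite sets, not a ${<}\kappa$-chain, so the ${<}\kappa$-closure of $\leq^*_{\po_U}$ from item (1) is the wrong tool: what is needed is the diagonal intersection $\Set{\alpha\in A^*}{\alpha\in\bigcap\Set{B_t}{t\in[A^*\cap\alpha]^m}}$, i.e.\ normality. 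Moreover the reductio is mis-stated: homogeneity in the colour \anf{some direct extension of $\langle s\cup t,\cdot\rangle$ forces $\sigma$} amalgamates to a direct extension forcing $\sigma$, not $\neg\sigma$, and a direct extension of $\langle s,A^*\rangle$ deciding $\sigma$ does not contradict \anf{$\langle s,A^*\rangle$ does not decide $\sigma$} --- it is exactly the conclusion item (4) asks for. The argument should either be run as a construction (homogenize, take any extension deciding $\sigma$, amalgamate its level into a deciding direct extension), or the witnesses should be diagonalized into $A^*$ \emph{before} the reductio, after which $\langle s,A^{**}\rangle$ itself decides.

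In item (6), item (4) cannot by itself \anf{fix a direct extension of $q\fr t$ deciding $\name{f}(\betrag{s_q\cup t})$}: this is a decision among $\kappa$-many candidates, and the value is only constrained to lie in $\HH{\kappa^G_n}$ for $n=\betrag{s_q\cup t}$, where $\kappa^G_n$ is the \emph{next} Prikry point, which $q\fr t$ does not determine. The missing step is to decide the value below $q\fr(t\cup\{\rho\})$ for each $\rho\in A_q\setminus(\max(t)+1)$ (there item (4), ${<}\kappa$-closure and the bound $\betrag{\HH{\rho}}<\kappa$ do suffice), obtaining $x_{t,\rho}\in\HH{\rho}$, and then to use normality a second time --- constancy on a set in $U$ of functions $\rho\mapsto x_{t,\rho}\in\HH{\rho}$ --- to make the value independent of $\rho$; only then is $F(t)$ well defined, and only then can the tails be diagonalized as you describe. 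Finally, the converse direction of item (8) is circular as written: item (5) is applied to a condition with a particular stem, and the set $A^*$ it returns may omit points of $\vec{\kappa}$ lying beyond that stem; \anf{pushing the stem past} those points changes the condition, and the pair $(A^*,n)$ you hold says nothing about conditions with the longer stem (the member of $D$ it produces has a gap in its stem, hence does not belong to the filter generated by $\vec{\kappa}$). The standard repair is to apply item (5) once for each of the countably many initial segments $\vec{\kappa}\uhr m$, intersect the resulting measure-one sets into a single $B\in U$ using $\kappa$-completeness, and only then invoke the Mathias hypothesis to choose $m^*$ with $\vec{\kappa}\setminus m^*\subseteq B$; openness of $D$ then finishes the argument.
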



To push the construction down to $\omega_\omega$, we force with a product of collapse posets after adding a Prikry forcing.  Let $\vec{\rho} = \seq{\rho_n}{n< \ell_\rho}$ be a strictly increasing sequence of inaccessible cardinals of length $0<\ell_\rho \leq \omega$ and define $\co_{\vec{\rho}}$ to be the  product $$\qo(\omega,{<}\rho_0) ~ \times ~ \prod_{1 \leq n < \ell_{\vec{\rho}}}\qo(\rho_{n-1},{<}\rho_{n})$$ with full support. Therefore, conditions  in $\co_{\vec{\rho}}$ are sequences
$q = \seq{q_n}{n < \ell_{\vec{\rho}}}$ with $q_0 \in \qo(\omega,{<}\rho_0)$ and $q_n \in \qo(\rho_{n-1},{<}\rho_{n})$ for all $0<n<\ell_{\vec{\rho}}$.

\begin{remark}\label{Remark:CO-WeaklyHomogeneous} 
${}$
\begin{enumerate}
 \item Standard arguments about product forcings show that, if  $G$ is ${\co_{\vec{\rho}}}$-generic over $V$, then $\rho_n=\omega_{n+1}^{V[G]}$ holds for every $n<\ell_{\vec{\rho}}$. 
 
 \item By the absorption argument for the Easton-support collapse product (Remark \ref{Rmk:EastonCollapseBasics}), 
 if  $s$ is a finite strictly increasing sequences of inaccessible cardinals and $t$ is a subsequence of $s$ with $\max(s) = \max(t)$, then $\co_s$ is a regular subforcing of $\co_t$.  This directly implies that, if $\vec{\rho}$ is a strictly increasing sequence of inaccessible cardinals of length $\omega$ and $s$ is a finite subset of $\vec{\rho}$, then $\co_{\vec{\rho}}$ is a regular subforcing of $\co_{\vec{\rho}\setminus s}$. Moreover, the associated forcing projection from  $\co_{\vec{\rho}\setminus s}$ to $\co_{\vec{\rho}}$ is the identity on the components below $\min(s)$ and above $\min\left(\vec{\rho} \setminus (\max(s)+1)\right)$.
 
 \item The forcing $\co_{\vec{\rho}}$ is weakly homogeneous. Therefore, if some condition in $\co_{\vec{\rho}}$ forces a statement with ground model parameters to hold, then every condition forces this statement to hold.  
Similarly, for every sequence $\vec{\rho}$ and an initial segment $s$, the quotient forcing $\co_{\vec{\rho}}/\co_{s}$ is also weakly homogeneous. 
\end{enumerate}
\end{remark}

In the following, let $\dot{\co}$ denote the canonical $\po_U$-name for a partial order with the property that $\dot{\co}^G=\co_{\vec{\kappa}^G}$ holds whenever $G$ is $\po_U$-generic over $V$. 
The argument of the following Lemma will give the forcing direction of Theorem \ref{Thm:Measurable}.

\begin{lemma}\label{Lemma:Main-Prikry}
  If $G * H$  is $(\po_U*\dot{\co})$-generic  over $V$  and $\gamma\geq\kappa$ is an ordinal, then, in $V[G,H]$, the cardinal $\omega_\omega$ has the $\Sigma_1(\omega_n,\gamma)$-undefinability property for all sufficiently large natural numbers $n$. 
\end{lemma}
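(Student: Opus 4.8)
The plan is to argue by contradiction. Fix $\gamma\geq\kappa$ and suppose that for arbitrarily large $n$ there are an ordinal $\alpha\in[\omega_n,\omega_\omega)$, an element $z\in\HH{\omega_n}^{V[G,H]}$ and a $\Sigma_1$-formula $\varphi$ such that, in $V[G,H]$, the ordinal $\alpha$ is the unique solution of $\varphi(x,\omega_\omega,\gamma,z)$. Writing $\omega_m=\kappa^G_{m-1}$ for the collapsed Prikry points (Remark \ref{Remark:CO-WeaklyHomogeneous}(1)), fix $\ell\geq n$ with $\kappa^G_{\ell-1}\leq\alpha<\kappa^G_\ell$. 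First I would localise the parameter: since each tail collapse $\col(\kappa^G_{m-1},{<}\kappa^G_m)$ with $m\geq n$ is ${<}\kappa^G_{n-1}$-closed and, by Fact \ref{Fact:PrikryForcing}(2), the Prikry part adds no bounded subsets of $\kappa$, the set $\HH{\omega_n}^{V[G,H]}$ is already computed by the initial part of the forcing. In particular $z$ lies in the small model $W$ obtained from $V$ by collapsing below $\kappa^G_{n-1}$, in which (small forcing preserving measurability) $\kappa$ is still measurable and $\omega_n=\kappa^G_{n-1}$.

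Second, I would convert the $\Sigma_1$-definability of $\alpha$ into a generic-independent statement. Writing the forcing above $W$ as $\po_{U}*\dot\co_{\mathrm{tail}}$, the strategy is to combine the Prikry and Strong Prikry Properties (Fact \ref{Fact:PrikryForcing}(4),(5)) with the Name Capturing Property (Fact \ref{Fact:PrikryForcing}(6)) to find a direct extension $(s,A)$ of a condition in $G$ and a function $F$ on $[A]^{{<}\omega}$ capturing the $\Sigma_1$-witness for $\varphi(\alpha,\ldots)$. Since the direct extension order is ${<}\kappa$-closed (Fact \ref{Fact:PrikryForcing}(1)) and the Levy collapses are weakly homogeneous with weakly homogeneous quotients (Remark \ref{Remark:CO-WeaklyHomogeneous}(3)), the homogeneity of the collapse absorbs the choice of collapse-generic, while the indiscernibility of the Prikry points (Mathias Criterion, Fact \ref{Fact:PrikryForcing}(8); absorption, Remark \ref{Remark:CO-WeaklyHomogeneous}(2)) shows that the solution set of $\varphi(\cdot,\kappa,\gamma,z)$ does not depend on which admissible point sits in the block $(\kappa^G_{\ell-1},\kappa^G_\ell)$. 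This is the step that makes $\omega_\omega$ behave, for the purposes of $\Sigma_1$-definability, like a limit of measurable cardinals rather than like the single measurable $\kappa$.

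Third, with definability pushed down to $W$, I would relocate $\alpha$ by the reflection-and-iteration method of Lemma \ref{proposition:StablyMeasurableUndefinable}: reflect $\{z,\kappa,\gamma,\alpha\}$ into an elementary submodel of cardinality less than $\omega_n$, transitively collapse it to an iterable structure $N_0$ fixing $z$ and sending $\kappa$ below $\alpha$, and iterate the top measure of $N_0$ to an embedding $j$ with $j(\pi(\kappa))=\kappa$ and $j\restriction\HH{\pi(\kappa)}^{N_0}=\id$. Elementarity together with $\Sigma_1$-upward absoluteness would then produce a solution $\pi(\alpha)\neq\alpha$ of $\varphi(\cdot,\kappa,\gamma,z)$ in $V[G,H]$, contradicting uniqueness, exactly as a nontrivial iterate relocates the witness in Lemma \ref{lemma:LimitMeasurableUndefProp}.

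The main obstacle is the ordinal parameter $\gamma$, and it is precisely what forces the restriction to \emph{sufficiently large} $n$. For a single measurable the analogous undefinability statement is simply false for a badly chosen $\gamma$: if $\gamma=\kappa+\beta$ with $\beta<\kappa$, then $\{\beta\}$ is $\Sigma_0$-definable from $\{\kappa,\gamma\}$, so no argument fixing $n$ uniformly can succeed. Instead I would show that, for a \emph{fixed} $\gamma$, the set of ordinals below $\omega_\omega$ that are $\Sigma_1$-definable from $\{\omega_\omega,\gamma\}$ together with bounded generic data is bounded below $\omega_\omega$ by some $\delta_\gamma$, so that choosing $n$ with $\omega_n>\delta_\gamma$ removes every such ordinal from the interval $[\omega_n,\omega_\omega)$. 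Concretely, this amounts to finding, for the given $\gamma$, a stage beyond which the embeddings used in the previous step fix $\gamma$, which is the analogue, via the Prikry indiscernibles, of Lemma \ref{lemma:LimitMeasurableFixedPoints} and of the choice of $\lambda_0$ in Lemma \ref{lemma:LimitMeasurableUndefProp}. Making this ``$\gamma$ is eventually fixed'' statement precise in the forcing extension, where the relevant embeddings arise from iterating $U$ through the collapse rather than from a genuine increasing sequence of measures, is the delicate part of the proof.
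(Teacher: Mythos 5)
Your plan takes a genuinely different route from the paper's --- reflection into a small hull followed by iteration of its measure, as in Lemmas \ref{proposition:StablyMeasurableUndefinable} and \ref{lemma:LimitMeasurableUndefProp}, after localizing the parameter to the small model $W$ --- and while your first step (pulling $z$ into $W=V[H_{{<}n}]$ via closure of the tail collapse, the fact that $\po_U$ adds no bounded subsets of $\kappa$, and L\'evy--Solovay) is sound, the core of the argument has a genuine gap, namely exactly the point you yourself flag at the end as ``the delicate part'': the ordinal parameter $\gamma$. In your third step you need an embedding $j$ on the transitive collapse $N_0$ with $j(\pi(\kappa))=\kappa$ \emph{and} $j(\pi(\gamma))=\gamma$; only then does elementarity plus $\Sigma_1$-upward absoluteness contradict uniqueness. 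With a single measure on $\kappa$ there is no analogue of Lemma \ref{lemma:LimitMeasurableFixedPoints}: that lemma needs an increasing sequence of measurables so that, for each fixed $\gamma$, the measures with sufficiently large critical point have all their iterates fixing $\gamma$, and the Prikry points are merely inaccessible in $V$, so they carry no measures to iterate. Worse, your concrete plan provably fails already for $\gamma=\kappa^{+}$: the iterate $N_1$ of the collapsed hull $N_0$ (of size less than $\omega_n$) that sends $\pi(\kappa)$ to $\kappa$ has cardinality at most $\kappa$, so $j(\pi(\kappa^{+}))=(\kappa^{+})^{N_1}<\kappa^{+}$, i.e.\ $j(\pi(\gamma))\neq\gamma$. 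There is also a model mismatch that your second step is supposed to repair but for which you only name the tools: any hull with a ``top measure'' must be taken inside $W$ (or $V$), where $\kappa$ is measurable, whereas $\varphi(\alpha,\kappa,\gamma,z)$ and its uniqueness hold only in $V[G,H]$, where $\kappa=\aleph_\omega$. Since $\Sigma_1$ statements are upward but not downward absolute, the hypothesis cannot be pulled back to $W$, so the reflection argument has no model to run in until the name analysis is actually carried out --- and that analysis is the real content of the proof, not a preliminary.

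The paper's proof avoids embeddings altogether and thereby keeps $\gamma$ inert. Working below a condition that forces failure of undefinability at level $n$, it reduces the $\HH{\dot{\kappa}_n}$-parameter to a $\po_U$-name $\dot{y}$ for a bounded collapse name (using that $\po_U$ adds no bounded subsets of $\kappa$ and the homogeneity of the collapse quotients, Remark \ref{Remark:CO-WeaklyHomogeneous}), captures $\dot{y}$ by a function $Y$ on $A_p$ via the name capturing property of Fact \ref{Fact:PrikryForcing}, and stabilizes $Y$ to a single name $\dot{y}_0$ on a set $A\in U$ by normality. It then builds two Prikry generics $G_0,G_1$ with $V[G_0]=V[G_1]$ whose sequences differ in only finitely many places --- position $n$ carries $\rho_0=\min(A)\leq\alpha$ in one and some $\rho_1>\alpha$ in the other --- so that collapse absorption yields $V[G_0,H_0]\subseteq V[G_1,H_1]$ with $\dot{y}_0^{H_0}=\dot{y}_0^{H_1}$. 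Upward absoluteness of $\varphi(\alpha,\kappa,\gamma,\dot{y}_0^{H_1})$ between these two extensions of the \emph{same} ground model, in which $\gamma$ is a fixed ordinal of $V$ that is never moved, produces two distinct solutions, a contradiction. Note finally that the restriction to sufficiently large $n$ emerges there from a density argument over stem lengths, not from any ``eventual fixing of $\gamma$'' by embeddings; your heuristic example $\gamma=\kappa+\beta$ correctly shows the restriction is necessary, but the mechanism absorbing it is the stem of a condition deciding the relevant statements. Any repair of your approach that handles arbitrary $\gamma\geq\kappa$ will be forced to apply its embeddings to names over the ground model and to compare two generic extensions --- at which point you will have reconstructed the paper's argument.
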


\begin{proof}
  Fix a condition $p_*$ in $\po_U$.   For each natural number $n\geq\vert s_{p_*}\vert$, let $\sigma_n$ be the statement in the forcing language of $\po_U$ that says that there is a condition in $\dot{\co}$ which forces that $\kappa$ does not have the $\Sigma_1(\dot{\kappa}_n,\gamma)$-undefinability property, where $\dot{\kappa}_n$ denotes the canonical $\po_U$-name for $\kappa_n^G$. 
  By the Prikry property, for each $\vert s_{p_*}\vert\leq n<\omega$,  there is a condition $p_n\leq^*_{\po_U}p_*$ that  decides  $\sigma_n$. We complete the proof by showing that, for all $\vert s_{p_*}\vert\leq n<\omega$, the condition $p_n$ forces $\neg \sigma_n$ to hold. 
 Suppose otherwise that $p_n\Vdash_{\po_U}\sigma_n$ for some $\vert s_{p_*}\vert\leq n<\omega$. Pick a condition $p\leq_{\po_U}p_n$ with $\betrag{s_p}=n$. 
 Since $p \Vdash_{\po_U} \sigma_n$, there is a $\po_U$-name $\name{q}$ for a condition in $\dot{\co}$ so that $$\langle p,\name{q}\rangle \Vdash_{\po_U * \dot{\co}} \anf{\textit{$\kappa$ does not  have the $\Sigma_1(\name{\kappa}_n,\check{\gamma})$-undefinability property}}.$$ Since $\name{\kappa}_n$ is a $\po_U$-name and $\dot{\co}$ is forced to be weakly homogeneous (see Remark \ref{Remark:CO-WeaklyHomogeneous}),  we can assume that $\name{q}$ is the name for the trivial condition $\dot{\mathbbm{1}}_{\dot{\co}}$ in  $\dot{\co}$. 
This allows us to find  $(\po_U * \dot{\co})$-names $\name{x}$, $\name{\alpha}$ and $\name{\tau}$ so that $\langle p,\dot{\mathbbm{1}}_{\dot{\co}}\rangle$ forces the following statements to hold: 
\begin{itemize}
 \item $\name{x}$ is an element of $\HH{\dot{\kappa}_n}$.  

 \item $\name{\alpha}$ is an ordinal in the interval $[\dot{\kappa}_n,\check{\kappa})$. 
 
 \item $\dot{\tau}$ is the G\"odel number of a $\Sigma_1$-formula $\varphi(v_0,\ldots,v_3)$ that defines the set $\{\name{\alpha}\}$ using the parameters $\check{\kappa}$, $\check{\gamma}$ and $\name{x}$. 
\end{itemize}
 
 Making another use of the fact that $\name{\co}$ is forced to be a weakly homogeneous partial order, we may assume $\name{\alpha}$ and $\name{\tau}$ are $\po_U$-names. 
 Moreover, using the Prikry property of $\po_U$, we may also assume that $p$ decides that $\name{\tau}$ codes a given $\Sigma_1$-formula $\varphi(v_0,\ldots,v_3)$.  
 Since the quotient forcing $\co_{\vec{\kappa}^G}/\co_{\vec{\kappa}^G\restriction(n+1)}$ is weakly homogeneous whenever $G$ is $\po_U$-generic over $V$, we may assume that $\name{x}$ is a $(\po_U*\name{\co}_n)$-name, where $\name{\co}_n$ is the canonical $\po_U$-name for $\co_{\vec{\kappa}^G\restriction(n+1)}$. 
 In addition, since forcing with $\po_U$ does not add new bounded sets to $\kappa$ and $\name{\co}_n$ is forced to satisfy the $\name{\kappa}_n$-chain condition, there is a $\po_U$-name $\dot{y}$ with the property that whenever $G$ is $\po_U$-generic over $V$ with $p\in G$, then $\name{y}^G$ is a $\co_{\vec{\kappa}^G\restriction(n+1)}$-name in $\HH{\kappa_n^G}^V$ that $\co_{\vec{\kappa}^G\restriction(n+1)}$ forces to be equal to $\dot{x}$.
 The fact that   $p\fr\langle\rho\rangle\Vdash_{\po_U}\anf{\name{\kappa}_n=\check{\rho}}$ holds  for every $\rho \in A_p$ allows us to use the  name capturing property of $\po_U$ (see Fact \ref{Fact:PrikryForcing}) to find a function $\map{Y}{A_p}{\HH{\kappa}}$ with the property that for every $\rho \in A_p$, the set $Y(\rho)$ is a $\co_{s_p\cup\{\rho\}}$-name in $\HH{\rho}$ with $p\fr\langle\rho\rangle\Vdash_{\po_U}\anf{\name{y}=\check{Y}(\check{\rho})}$.  
 Using the normality of $U$, we can find $A\subseteq A_p$ in $U$ and $\dot{y}_0$ in $\HH{\kappa}$ with the property that $Y(\rho)=\dot{y}_0$ holds for all $\rho\in A$. 
 Let $\rho_0 = \min(A)$. Then we can find an ordinal $\alpha$ in the interval $[\rho_0,\kappa)$ and a condition $r_0\leq_{\po_U}p\fr\langle\rho_0\rangle$ with $\alpha<\max(s_{r_0})$ and $r_0\Vdash_{\po_U}\anf{\dot{\alpha}=\check{\alpha}}$. 
 Set $\rho_1=\min(A\cap A_{r_0})$ and  
$r_1=\langle s_p\cup\{\rho_1\},A_{r_0}\setminus(\rho_1+1)\rangle$. Then $r_1$ is a condition in $\po_U$ that strengthens   $p\fr\langle\rho_1\rangle$. Let $G_1*H_1$ be $(\po_U*\dot{\co})$-generic over $V$ with $r_1\in G_1$. We then have $\kappa_n^{G_1}=\rho_1>\max(s_{r_0})>\alpha$. 
By the Mathias criterion for $\po_U$ (see Fact \ref{Fact:PrikryForcing}), the sequence $s_{r_0}\fr(\vec{\kappa}^{G_1}\restriction[n,\omega))$ generates a $\po_U$-generic filter $G_0$ over $V$ that is an element of $V[G_1]$. 
It is then clear that $r_0$ is an element of $G_0$ and $V[G_0]=V[G_1]$. Moreover, since $\vec{\kappa}^{G_1}\restriction n=s_p=s_{r_0}\restriction n$, it follows that $\vec{\kappa}^{G_1}$ is a subsequence of $\vec{\kappa}^{G_0}$ with finite difference and hence the absorption property of Easton collapse posets   (see Remark \ref{Rmk:EastonCollapseBasics}) ensures that  the partial order  $\co_{\vec{\kappa}^{G_0}}$ is a regular subforcing of the partial order  $\co_{\vec{\kappa}^{G_1}}$. Let $H_0\in V[G_0,H_1]$ denote the filter on $\co_{\vec{\kappa}^{G_0}}$ induced by $H_1$. 
 Since $H_0$ and $H_1$ induce the same filter on $\co_{s_p\cup\{\rho_0\}}$, we know that  $\dot{y}_0^{H_0}=\dot{y}_0^{H_1}$. 
 This implies that $\varphi(\alpha,\kappa,\gamma,\dot{y}_0^{H_1})$ holds in $V[G_0,H_0]$ and, by $\Sigma_1$-upwards absoluteness, this statement also holds in $V[G_0,H_1]$. 
 But this yields a contradiction, because the fact that $r_1$ is an element of $G_1$ implies that $\dot{\alpha}^{G_1}\geq\kappa_n^{G_1}=\rho_1>\alpha$ and $\dot{\alpha}^{G_1}$ is the unique element $a$ of $V[G_0,H_1]$ with the property that $\varphi(a,\kappa,\gamma,\dot{y}_0^{H_1})$ holds. 
\end{proof}

 A combination of the above lemma with Corollary \ref{corollary:UnbCardStat} now directly yields the following result:

\begin{corollary}\label{corollary:CardStatAtOmegaOmega}
 If $G * H$  is $(\po_U*\dot{\co})$-generic  over $V$, then, in $V[G,H]$, every unbounded subset of $\Set{\omega_n}{n<\omega}$ is $\mathbf{\Sigma}_1(\On)$-stationary in $\omega_\omega$. \qed 
\end{corollary}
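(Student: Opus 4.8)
The plan is to reduce the assertion, for an arbitrary unbounded $E_0\subseteq\Set{\omega_n}{n<\omega}$, to a single application of Lemma \ref{Lemma:Main-Prikry} together with the singleton stationarity supplied by Lemma \ref{lemma:UnDefStationary} (which is precisely the content of Corollary \ref{corollary:UnbCardStat}). I would work in $V[G,H]$, where $\kappa=\omega_\omega$ and, by Remark \ref{Remark:CO-WeaklyHomogeneous}, the Prikry points are the uncountable cardinals $\omega_{n+1}$. Fix an unbounded set $E_0\subseteq\Set{\omega_n}{n<\omega}$ and a closed unbounded subset $C$ of $\omega_\omega$ such that $\{C\}$ is definable by a $\Sigma_1$-formula with parameters in $\On\cup\HH{\omega_\omega}$. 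It suffices to exhibit a single point of $C\cap E_0$.

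First I would normalise the parameters. Using the G\"odel pairing function, whose relevant relations are absolute, I would bundle all the ordinal parameters of the defining formula that are $\geq\kappa$ into a single ordinal $\gamma\geq\kappa$ (taking $\gamma=\kappa$ if there are none), and bundle the remaining parameters, all of which lie in $\HH{\omega_\omega}$, into a single set $z\in\HH{\omega_\omega}$; rewriting the formula so that it decodes $\gamma$ and $z$ keeps it $\Sigma_1$. Since $\HH{\omega_\omega}=\bigcup_{m<\omega}\HH{\omega_m}$, there is $m<\omega$ with $z\in\HH{\omega_m}$, and we may thus assume that $\{C\}$ is defined by a $\Sigma_1$-formula with parameters $\kappa$, $\gamma$ and $z$, where $\gamma\geq\kappa$ and $z\in\HH{\omega_m}$.

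Next I would invoke Lemma \ref{Lemma:Main-Prikry} for this $\gamma$ to obtain a natural number $N\geq m$ such that $\omega_\omega$ has the $\Sigma_1(\omega_n,\gamma)$-undefinability property for all $n\geq N$. Since $E_0$ is unbounded in $\omega_\omega$, I can pick some $\omega_n\in E_0$ with $n\geq N$. By Lemma \ref{lemma:UnDefStationary}, the undefinability property for $\omega_n$ yields that the singleton $\{\omega_n\}$ is $\Sigma_1(\HH{\omega_n}\cup\{\gamma\})$-stationary in $\omega_\omega$. As $z\in\HH{\omega_m}\subseteq\HH{\omega_n}$, the set $\{C\}$ is definable by a $\Sigma_1$-formula with parameters in $\HH{\omega_n}\cup\{\kappa,\gamma\}$, so $\omega_n\in C$, and hence $\omega_n\in C\cap E_0$, as required.

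The argument is short because Lemma \ref{Lemma:Main-Prikry} carries the entire weight; the only genuine care lies in the parameter bookkeeping of the first step. The main point to verify is that the threshold $N$ produced by Lemma \ref{Lemma:Main-Prikry} (which depends on $\gamma$) and the index $m$ with $z\in\HH{\omega_m}$ can be simultaneously exceeded by some $\omega_n\in E_0$ — this is exactly where the unboundedness of $E_0$ is used, and it is what upgrades the per-club fact ``$C$ contains a tail of $\Set{\omega_n}{n<\omega}$'' into $\mathbf{\Sigma}_1(\On)$-stationarity against all ordinal parameters.
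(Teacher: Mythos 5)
Your proof is correct and follows essentially the same route as the paper, which obtains the corollary by combining Lemma \ref{Lemma:Main-Prikry} with Corollary \ref{corollary:UnbCardStat} (itself just Lemma \ref{lemma:UnDefStationary} applied pointwise). Your only addition is to make explicit the parameter bookkeeping --- absorbing ordinal parameters below $\kappa$ into $\HH{\kappa}$ and G\"odel-pairing those $\geq\kappa$ into a single $\gamma$ --- which the paper leaves implicit in its ``directly yields''.
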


 We are now ready to prove the main result of this section:

\begin{proof}[Proof of Theorem \ref{Thm:Measurable}]
 First, let $\kappa$ be a singular cardinal of countable cofinality and let $E$ be a subset of $\kappa$ that consists of cardinals and is $\mathbf{\Sigma}_1(\On)$-stationary in $\kappa$. Assume, towards contradiction, that there is no inner model with a measurable cardinal. Then $\kappa$ is singular in the  Dodd-Jensen core model $\KK^{DJ}$. 
 By Lemma \ref{lemma:LeastCofinal}, there is a 
cofinal function $\map{c}{\cof{\kappa}^{\KK^{DJ}}}{\kappa}$ which is definable by a $\Sigma_1$-formula with parameter $\kappa$. In addition, pick a cofinal function $\map{g}{\omega}{\cof{\kappa}^{\KK^{DJ}}}$. 
Then the composition $\map{f = c \circ g}{\omega}{\kappa}$ is cofinal in $\kappa$ and $\Sigma_1$-definable from the parameters $\kappa$ and $g \in \HH{\kappa}$. 
Then the $C = \Set{ f(n)+1}{n < \omega}$ is closed unbounded in $\kappa$ and it is definable by a $\Sigma_1$-formula with parameters in $\HH{\kappa}\cup\{\kappa\}$. Since it consists of successor ordinals, it is disjoint from $E$, contradicting our assumptions. 

 The above computations show that \eqref{item:Thm:Measurable4} implies \eqref{item:Thm:Measurable1} in the statement of the theorem. This completes the proof of the theorem, because  the implications from \eqref{item:Thm:Measurable2} to \eqref{item:Thm:Measurable3} and from \eqref{item:Thm:Measurable3} to \eqref{item:Thm:Measurable4} are trivial and the implication from \eqref{item:Thm:Measurable1} to \eqref{item:Thm:Measurable2} is given by Corollary \ref{corollary:CardStatAtOmegaOmega}. 
 %
\end{proof}

By combining Theorem \ref{Thm:Measurable} with the second part of Corollary \ref{corollary:NoMeasurableCorollaries3}, we derive the following equiconsistency result that shows that the statement of Proposition \ref{proposition:CountCofBistat} is optimal in $\ZFC$.

\begin{corollary}\label{Cor:EquiconsistentReaping}
 The following statements are equiconsistent over $\ZFC$:     
 \begin{enumerate}
     \item There is a measurable cardinal. 

     \item There is a singular cardinal $\kappa$ of countable cofinality with the property that for every subset $A$ of $\HH{\kappa}$ of cardinality $\mathfrak{r}$, there exists a subset $E$ of $\kappa$ such that both $E$ and $\kappa\setminus E$ are $\Sigma_1(A)$-stationary. 

     \item There is a singular cardinal $\kappa$ of countable cofinality with the property that there exists a subset $E$ of $\kappa$ such that both $E$ and $\kappa\setminus E$ are $\mathbf{\Sigma}_1(Ord)$-stationary. \qed 
 \end{enumerate}
\end{corollary}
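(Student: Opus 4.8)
The plan is to prove the three statements equiconsistent by establishing the cyclic chain of relative consistency implications $(1)\Rightarrow(3)\Rightarrow(2)\Rightarrow(1)$, where the first arrow is a forcing construction built on Theorem \ref{Thm:Measurable}, the second is an outright implication coming from the containment of parameter classes, and the third extracts an inner model with a measurable cardinal from Corollary \ref{corollary:NoMeasurableCorollaries3}. Since $(3)\Rightarrow(2)$ holds as a theorem of $\ZFC$, it in particular yields $\mathrm{Con}(3)\Rightarrow\mathrm{Con}(2)$, so the chain closes up to equiconsistency.

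For the direction from $(1)$ to $(3)$, I would start with a measurable cardinal $\kappa$ carrying a normal measure $U$ and force with $\po_U*\dot{\co}$, exactly as in the proof of Theorem \ref{Thm:Measurable}. If $G*H$ is $(\po_U*\dot{\co})$-generic over $V$, then $\omega_\omega^{V[G,H]}=\kappa$ is singular of countable cofinality, and Corollary \ref{corollary:CardStatAtOmegaOmega} shows that, in $V[G,H]$, every unbounded subset of $\Set{\omega_n}{n<\omega}$ is $\mathbf{\Sigma}_1(\On)$-stationary in $\omega_\omega$. Splitting the indices into even and odd, both $\Set{\omega_{2n}}{n<\omega}$ and $\Set{\omega_{2n+1}}{n<\omega}$ are such unbounded subsets. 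Setting $E=\bigcup_{n<\omega}[\omega_{2n},\omega_{2n+1})$, the set $E$ contains the first of these and $\omega_\omega\setminus E$ contains the second, so both $E$ and $\omega_\omega\setminus E$ are $\mathbf{\Sigma}_1(\On)$-stationary, being supersets of $\mathbf{\Sigma}_1(\On)$-stationary sets. Hence $(3)$ holds in $V[G,H]$ with $\kappa=\omega_\omega$.

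The implication $(3)\Rightarrow(2)$ is immediate from the definitions: if $\kappa$ is singular of countable cofinality and $E$ witnesses $(3)$, then for every $A\subseteq\HH{\kappa}$ we have $A\cup\{\kappa\}\subseteq\On\cup\HH{\kappa}$, so every closed unbounded set that is $\Sigma_1$-definable with parameters in $A\cup\{\kappa\}$ is \emph{a fortiori} $\Sigma_1$-definable with parameters in $\On\cup\HH{\kappa}$. Thus the $\mathbf{\Sigma}_1(\On)$-stationarity of $E$ and of $\kappa\setminus E$ yields their $\Sigma_1(A)$-stationarity for every such $A$, in particular for every $A$ of cardinality $\mathfrak{r}$, and the same $E$ witnesses $(2)$.

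The crux of the argument is the lower bound $(2)\Rightarrow(1)$, which I would prove in contrapositive form: assuming there is no inner model with a measurable cardinal, I claim that $(2)$ fails. Fix any singular cardinal $\kappa$ of countable cofinality. By the second part of Corollary \ref{corollary:NoMeasurableCorollaries3}, there is a \emph{single} subset $A$ of $\HH{\kappa}$ of cardinality $\mathfrak{r}$ with the property that every $\Sigma_1(A)$-stationary subset of $\kappa$ contains a closed unbounded set $C$ for which $\{C\}$ is $\Sigma_1$-definable with parameters in $A\cup\{\kappa\}$. Now suppose towards a contradiction that some $E$ splits $\kappa$ for this $A$, that is, both $E$ and $\kappa\setminus E$ are $\Sigma_1(A)$-stationary. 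Applying the property to $E$ produces a $\Sigma_1(A)$-definable closed unbounded $C\subseteq E$; but then $\kappa\setminus E$, being $\Sigma_1(A)$-stationary, must meet $C$, contradicting $C\cap(\kappa\setminus E)=\emptyset$. Hence this $A$ admits no splitting and $(2)$ fails, so $(2)$ implies the existence of an inner model with a measurable cardinal and thereby the consistency of $\ZFC$ together with a measurable cardinal. The main obstacle is isolating, under the anti-large-cardinal hypothesis, a parameter set of size exactly $\mathfrak{r}$ for which $\Sigma_1(A)$-stationarity already forces the containment of a $\Sigma_1(A)$-definable closed unbounded set; but this is precisely the content of Corollary \ref{corollary:NoMeasurableCorollaries3}, so the remaining work reduces to the short disjoint-club contradiction above, and the whole scheme exhibits $(2)$ as the optimal sharpening of Proposition \ref{proposition:CountCofBistat}.
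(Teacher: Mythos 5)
Your proposal is correct and matches the paper's intended argument: the paper proves this corollary precisely by combining Theorem \ref{Thm:Measurable} (via the Prikry-plus-collapse model of Corollary \ref{corollary:CardStatAtOmegaOmega}) for the forcing direction with the second part of Corollary \ref{corollary:NoMeasurableCorollaries3} for the lower bound, which is exactly your chain $(1)\Rightarrow(3)\Rightarrow(2)\Rightarrow(1)$. Your filled-in details (splitting $\Set{\omega_n}{n<\omega}$ into evens and odds, the parameter-class containment for $(3)\Rightarrow(2)$, and the disjoint-club contradiction against the reaping-size parameter set $A$) are all sound.
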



\subsection{Successors of regular  cardinals}

We continue by studying $\Sigma_1(A)$-stationary subsets of successor cardinals. In this section, we prove the following equiconsistency result for successors of regular cardinals:

\begin{theorem}\label{Thm:Mahlo}
The following statements are equiconsistent over $\ZFC$:
\begin{enumerate}
    \item\label{item:Thm:Mahlo1} There is a Mahlo cardinal. 
    
    \item\label{item:Thm:Mahlo2} There is a regular cardinal $\mu$ with the property that the set $\{\mu\}$ is $\Sigma_1(\HH{\mu})$-stationary in $\mu^+$. 

    \item\label{item:Thm:Mahlo3} There is a regular cardinal $\mu$ with the property that the set $\{\mu\}$ is $\Sigma_1$-stationary in $\mu^+$. 
\end{enumerate}
\end{theorem}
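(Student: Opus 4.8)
The plan is to prove the three statements equiconsistent by establishing the trivial implication $\eqref{item:Thm:Mahlo2}\Rightarrow\eqref{item:Thm:Mahlo3}$ together with the two consistency directions $\eqref{item:Thm:Mahlo3}\Rightarrow\mathrm{Con}(\text{Mahlo})$ and $\mathrm{Con}(\text{Mahlo})\Rightarrow\mathrm{Con}(\eqref{item:Thm:Mahlo2})$. The implication from \eqref{item:Thm:Mahlo2} to \eqref{item:Thm:Mahlo3} is immediate, since every $\Sigma_1$-formula with parameter in $\{\mu^+\}$ is in particular a $\Sigma_1$-formula with parameters in $\HH{\mu}\cup\{\mu^+\}$, so that $\Sigma_1(\HH{\mu})$-stationarity implies $\Sigma_1$-stationarity. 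For the equiconsistency it then suffices to force \eqref{item:Thm:Mahlo2} from a Mahlo cardinal and to extract a Mahlo cardinal in an inner model from \eqref{item:Thm:Mahlo3}.

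For the forcing direction, I would start with a Mahlo cardinal $\lambda$, fix a regular cardinal $\mu<\lambda$, and force with the Levy collapse $\col(\mu,{<}\lambda)$, so that $\mu^+=\lambda$ in the extension $V[G]$. By Lemma \ref{lemma:UnDefStationary} (applied with $\kappa=\gamma=\mu^+$ and $n=1$), it suffices to show that, in $V[G]$, the cardinal $\mu^+$ has the $\Sigma_1(\mu,\mu^+)$-undefinability property, i.e.\ that no ordinal in $[\mu,\mu^+)$ has its singleton defined by a $\Sigma_1$-formula with parameters in $\HH{\mu}\cup\{\mu^+\}$; this yields that $\{\mu\}$ is $\Sigma_1(\HH{\mu})$-stationary in $\mu^+$, which is \eqref{item:Thm:Mahlo2}. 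To verify the undefinability property I would exploit two features of the collapse: it has the $\mu^+$-chain condition, so the stationary set $S$ of $V$-inaccessibles below $\lambda$ remains stationary in $V[G]$; and it is weakly homogeneous and factors as $\col(\mu,{<}\beta)\times\col(\mu,[\beta,\lambda))$ at every inaccessible $\beta<\lambda$, with $\beta=(\mu^+)^{V[G\restriction\beta]}$. Given a putative $\Sigma_1$-definition of an ordinal $\alpha\in[\mu,\mu^+)$ from a parameter $z\in\HH{\mu}^{V[G]}$, one first absorbs $z$ into some $V[G\restriction\delta]$ with $\delta<\lambda$, and then uses the weak homogeneity of the tail above an inaccessible $\beta\in S$ with $\delta,\alpha<\beta$ to reflect the definition and derive a contradiction from the stationarity of $S$.

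For the lower bound, I would assume \eqref{item:Thm:Mahlo3} and suppose toward a contradiction that there is no inner model with a Mahlo cardinal. Since the Silver indiscernibles would be Mahlo in $\LL$, this implies that $0^\sharp$ does not exist, so $\KK^{DJ}=\LL$, Jensen's Covering Lemma holds, and $(\mu^+)^{\LL}=\mu^+$. I would then produce a $\Sigma_1$-formula with the single parameter $\mu^+$ that defines a closed unbounded subset of $\mu^+$ avoiding $\mu$, contradicting the $\Sigma_1$-stationarity of $\{\mu\}$. If $\mu^+$ is not inaccessible in $\LL$, then $\{\HH{\mu^+}^{\LL}\}$ is $\Sigma_1$-definable from $\mu^+$ by Lemma \ref{lemma:InitialSegmentsKDJ}, and $\mu$, being the largest cardinal of $\HH{\mu^+}^{\LL}$, is $\Sigma_1$-definable from $\mu^+$; the interval $(\mu,\mu^+)$ is then the desired club. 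If $\mu^+$ is inaccessible in $\LL$ but $\mu$ is a successor cardinal in $\LL$, then the club of $\LL$-limit cardinals below $\mu^+$ avoids $\mu$ and is $\Sigma_1$-definable from $\mu^+$ through a witnessing level $\LL_\gamma$, in the spirit of Lemmas \ref{lemma:InitialSegmentsKDJ} and \ref{lemma:CofInKDJ}. Finally, if $\mu^+$ is inaccessible but not Mahlo in $\LL$ and $\mu$ is a limit cardinal in $\LL$, then the $\LL$-inaccessibles below $\mu^+$ are nonstationary in $\LL$, so the $<_{\LL}$-least $\LL$-club avoiding them is $\Sigma_1$-definable from $\mu^+$ and avoids $\mu$ (which is $\LL$-inaccessible). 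In every case we reach a contradiction, so $\mu^+$ must be Mahlo in $\LL$, giving an inner model with a Mahlo cardinal.

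The main obstacle is the verification of the undefinability property in $V[G]$. Since ordinals are fixed by the automorphisms of the collapse, weak homogeneity alone does not displace a putatively definable $\alpha$, and the parameter $\mu^+=\lambda$ sits at the very top of the forcing and is not lowered by a naive reflection of the defining formula. The crux is therefore to combine the weak homogeneity and the factorization of $\col(\mu,{<}\lambda)$ at the stationarily many ground-model inaccessibles in such a way that a unique $\Sigma_1$-solution below $\mu^+$ is forced to be reflected below one of these inaccessibles, contradicting either its uniqueness or its membership in $[\mu,\mu^+)$.
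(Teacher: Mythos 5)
Your outline of the equiconsistency (trivial implication plus two consistency directions) matches the paper, and your lower-bound argument via $\LL$ is workable, but the forcing direction contains a genuine gap --- one you yourself flag in your closing paragraph --- and it sits exactly where the paper's proof does its real work. You fix a single regular cardinal $\mu<\lambda$ and force with the ordinary Levy collapse $\col(\mu,{<}\lambda)$, hoping to verify the $\Sigma_1(\mu,\mu^+)$-undefinability property from weak homogeneity, the chain condition, and the factorizations $\col(\mu,{<}\lambda)\cong\col(\mu,{<}\beta)\times\col(\mu,[\beta,\lambda))$ at inaccessibles $\beta$. This machinery cannot produce a contradiction. By weak homogeneity, a putative $\Sigma_1$-definition of an ordinal $\alpha\in[\mu,\lambda)$ from $\lambda$ and $z\in\HH{\mu}^{V[G]}=\HH{\mu}^{V}$ is decided by the trivial condition, so it determines a single ground-model ordinal $\alpha_z$ independent of the generic. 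Moreover, for inaccessible $\beta$ the tail $\col(\mu,[\beta,\lambda))$ is forcing equivalent to $\col(\mu,{<}\lambda)$ itself (by the standard absorption lemma), so the nested extensions $V[G_{\mathrm{tail}}]\subseteq V[G]$ all compute the \emph{same} solution $\alpha_z$; $\Sigma_1$-upward absoluteness then merely confirms this coherence rather than refuting anything, and the stationarity of the set of ground-model inaccessibles never gets any purchase on the definition. There is simply no mechanism that displaces the defined ordinal, and it is not even clear that the statement you are trying to prove (undefinability after $\col(\mu,{<}\lambda)$ for an \emph{arbitrary} fixed regular $\mu$) is true.

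The missing idea, which is how the paper proceeds, is to vary the bottom cardinal of the collapse rather than fix it, and to use Mahloness as a pigeonhole principle across these different collapses. The paper forces with the Kunen--Laver universal collapse $\qo(\gamma,\kappa)$ (the Easton-support product of $\col(\delta,{<}\kappa)$ over inaccessibles $\delta\in[\gamma,\kappa)$) and proves in Lemma \ref{lemma:MahloSilverCollapse} only that \emph{some} inaccessible $\delta<\kappa$ works. Two features are essential and absent from your setup: first, for inaccessibles $\gamma_0<\gamma_1$, the poset $\qo(\gamma_1,\kappa)$ is a regular subforcing of $\qo(\gamma_0,\kappa)$, so a $\qo(\gamma_0,\kappa)$-extension contains a $\qo(\gamma_1,\kappa)$-extension; second, the ordinal defined over $\qo(\gamma,\kappa)$ is constrained to lie in $[\gamma,\kappa)$, so its value is pushed upward as $\gamma$ grows. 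Assuming every $\gamma$ fails, one stabilizes the defining formula and the parameter $x_\gamma\in\HH{\gamma}$ on a stationary set $S$ of inaccessibles (this is where Mahloness enters, Fodor-style), then takes $\gamma_0=\min(S)$ and $\gamma_1\in S$ above $\alpha_{\gamma_0}$: the same formula and parameters define $\alpha_{\gamma_1}>\alpha_{\gamma_0}$ over the smaller forcing, and $\Sigma_1$-upward absoluteness between the nested extensions contradicts uniqueness in the $\qo(\gamma_0,\kappa)$-extension. This varying lower bound is precisely the displacement your single-poset, fixed-$\mu$ approach lacks. Two smaller points on your lower bound: the covering lemma does \emph{not} give $(\mu^+)^{\LL}=\mu^+$ for regular $\mu$, and consequently $\mu$ need not be the largest cardinal of $\HH{\mu^+}^{\LL}$ (the largest $\LL$-cardinal $\nu$ below $\mu^+$ satisfies only $\nu\geq\mu$); this is repairable since $(\nu,\mu^+)$ still avoids $\mu$, and in fact the paper's single uniform argument --- the $<_{\LL}$-least constructible club of ordinals singular in $\LL$ would have to contain the regular cardinal $\mu$ --- covers all three of your cases at once.
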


\begin{lemma}\label{lemma:MahloSilverCollapse}
  If $\kappa$ is a Mahlo cardinal, then there exists an inaccessible cardinal $\delta<\kappa$ with the property that   $\qo(\delta,\kappa)$ forces $\kappa$ to have the $\Sigma_1(\delta)$-undefinability property. 
\end{lemma}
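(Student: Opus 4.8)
The plan is to translate the statement into a ground-model assertion about the forcing relation and then to locate a suitable $\delta$ by a pressing-down argument; the Mahlo-ness of $\kappa$ will enter only through the stationarity of the set $I_\kappa$ of inaccessibles below $\kappa$. Concretely, I would fix an arbitrary inaccessible $\delta<\kappa$ and a $\qo(\delta,\kappa)$-generic $G$, and reduce the failure of the undefinability property to a property of $\delta$ in $V$. Since $\qo(\delta,\kappa)$ is $<\delta$-closed we have $\HH{\delta}^{V[G]}=\HH{\delta}^{V}$, so every admissible parameter $z$ already lies in $V$. Moreover $\qo(\delta,\kappa)$ is an Easton-support product of Levy collapses and hence weakly homogeneous, so for every ground-model ordinal $\beta$ the trivial condition decides $\varphi(\check\beta,\check\kappa,\check z)$ for each $\Sigma_1$-formula $\varphi$; as every ordinal of $V[G]$ lies in $V$, the set of ordinal solutions of $\varphi(\cdot,\kappa,z)$ computed in $V[G]$ is exactly $\{\beta\in\On:\mathbbm{1}\Vdash_{\qo(\delta,\kappa)}\varphi(\check\beta,\check\kappa,\check z)\}$, an element of $V$. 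Consequently $\qo(\delta,\kappa)$ fails to force the $\Sigma_1(\delta)$-undefinability property exactly when $\delta$ is \emph{bad}, meaning that for some $\Sigma_1$-formula $\varphi$ and some $z\in\HH{\delta}$ the set $\{\beta\in\On:\mathbbm{1}\Vdash_{\qo(\delta,\kappa)}\varphi(\check\beta,\check\kappa,\check z)\}$ is a singleton $\{\alpha\}$ with $\alpha\in[\delta,\kappa)$. It thus suffices to produce an inaccessible $\delta<\kappa$ that is not bad.

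The key tool will be a monotonicity of $\Sigma_1$-forceability in the collapse parameter: if $\delta_0<\delta_1$ are inaccessible and $z\in\HH{\delta_0}$, then $\mathbbm{1}_{\qo(\delta_1,\kappa)}\Vdash\varphi(\check\beta,\check\kappa,\check z)$ implies $\mathbbm{1}_{\qo(\delta_0,\kappa)}\Vdash\varphi(\check\beta,\check\kappa,\check z)$. This follows because $\qo(\delta_1,\kappa)$ is a regular subforcing of $\qo(\delta_0,\kappa)$, so a $\qo(\delta_0,\kappa)$-generic $G_0$ reduces to a $\qo(\delta_1,\kappa)$-generic $G_1$ with $V[G_1]\subseteq V[G_0]$, whence $\Sigma_1$-upward absoluteness transports the witness. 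In particular, for a fixed $\varphi$ and $z\in\HH{\delta_0}$, the ordinal solution set attached to $\delta_1$ is contained in the one attached to $\delta_0$.

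Then I would argue by contradiction that not every inaccessible is bad. Assuming all of them are bad, the set $I_\kappa$ is stationary by Mahlo-ness and each bad inaccessible $\delta$ carries a witness $(\varphi_\delta,z_\delta,\alpha_\delta)$. Writing $z_\delta\in V_{\eta_\delta}$ with $\eta_\delta<\delta$ gives a regressive function, so Fodor's Lemma yields a stationary set on which $\eta_\delta$ is constantly $\eta^{*}$; since $|V_{\eta^{*}}|<\kappa$ and there are only countably many $\Sigma_1$-formulas, a further pigeonhole (using the regularity of $\kappa$) produces a stationary $S'$ on which the pair $(\varphi_\delta,z_\delta)$ equals a fixed $(\varphi^{*},z^{*})$. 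Choosing $\delta_0<\delta_1$ in $S'$ with $\delta_1>\alpha_{\delta_0}$, the monotonicity forces the singleton $\{\alpha_{\delta_1}\}$ into $\{\alpha_{\delta_0}\}$, so $\alpha_{\delta_1}=\alpha_{\delta_0}<\delta_1\leq\alpha_{\delta_1}$, a contradiction. Hence the bad inaccessibles are non-stationary, and since $I_\kappa$ is stationary there is an inaccessible $\delta<\kappa$ that is not bad; by the reduction this $\delta$ is as required.

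The hard part, in my view, is recognizing that the obvious strategies fail and that this monotonicity is the right substitute. Reflecting the $\Sigma_1$-definition through an elementary submodel is useless here because $\kappa$ occurs as a parameter and reflection necessarily sends it to a smaller ordinal, producing a statement about a different parameter rather than a contradiction; and the homogeneity of the collapse cannot be used to ``move'' $\alpha$, since forcing automorphisms fix all check-names while the reduction shows that $\alpha$ is already a ground-model ordinal. The two steps needing genuine care are therefore the ground-model reformulation via weak homogeneity and the monotonicity along the regular-subforcing tower; once these are in place, the stationary pressing-down argument is routine.
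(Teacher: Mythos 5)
Your proof is correct and follows essentially the same route as the paper's: weak homogeneity plus ${<}\delta$-closure to reduce everything to trivial-condition-forced statements with ground-model parameters, a Fodor/pigeonhole argument on the stationary set of inaccessibles to stabilize the formula and the parameter, and then the regular-subforcing relation between $\qo(\delta_1,\kappa)$ and $\qo(\delta_0,\kappa)$ together with $\Sigma_1$-upward absoluteness to conclude $\alpha_{\delta_1}=\alpha_{\delta_0}$ while $\alpha_{\delta_1}\geq\delta_1>\alpha_{\delta_0}$. The only differences are cosmetic: the paper runs the absoluteness step directly with two generics $G_1\in V[G_0]$ rather than through your ``monotonicity of solution sets'' lemma, and your claimed ``exactly when'' in the reduction is only needed (and only actually established) in the direction ``fails to force $\Rightarrow$ bad,'' since a singleton \emph{ordinal} solution set could in principle coexist with non-ordinal solutions of $\varphi$ --- but that is the only direction your argument uses, so nothing is affected.
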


\begin{proof}
    Assume, towards a contradiction, that no such $\delta$ exists. Then for every $\gamma\in I_\kappa$, the fact that $\qo(\gamma,\kappa)$ is weakly homogeneous (see Remarks \ref{Rmk:EastonCollapseBasics} and \ref{Remark:CO-WeaklyHomogeneous}) allows us to find $x_\gamma$, $\alpha_\gamma$ and $\tau_\gamma$ such that the following statements hold: 
    \begin{itemize}
        \item $x_\gamma$ is an element of $\HH{\gamma}$. 

        \item $\alpha_\gamma$ is an ordinal in the interval $[\gamma,\kappa)$. 

        \item $\tau_\gamma$ is a G\"odel number of a $\Sigma_1$-formula $\varphi(v_0,v_1,v_2)$ with the property that the trivial condition of $\qo(\gamma,\kappa)$ forces that $\alpha_\gamma$  is the unique set $a$ such that $\varphi(a,\kappa,x_\gamma)$ holds. 
    \end{itemize}
    
 Since $\kappa$ is a Mahlo cardinal, we can find a stationary subset $S$ of $I_\kappa$ with the property that there is an element $x$ of $\HH{\kappa}$ and a $\Sigma_1$-formula $\varphi(v_0,v_1,v_2)$ such that for all $\gamma\in S$, we have $x_\gamma=x$ and $\tau_\gamma$ is a G\"odel number of $\varphi(v_0,v_1,v_2)$.  
 Let $\gamma_0 = \min(S)$ and $\gamma_1 = \min(S \setminus (\alpha_{\gamma_0}+1))$. Clearly, we then have $\gamma_0<\gamma_1$ and $\alpha_{\gamma_1} > \alpha_{\gamma_0}$. 
 Let $G_0$ be $\qo(\gamma_0,\kappa)$-generic over $V$. 
 As $\qo(\gamma_1,\kappa)$ is a regular subforcing of $\qo(\gamma_0,\kappa)$, we can now find $G_1\in V[G_0]$ that is $\qo(\gamma_1,\kappa)$-generic over $V$. In this situation, we know that $\varphi(\alpha_{\gamma_1},\kappa,x)$ holds in $V[G_1]$ and, by $\Sigma_1$-upwards absoluteness, this statement also holds in $V[G_0]$. But this yields a contradiction, because $\alpha_{\gamma_1}>\alpha_{\gamma_0}$ and $\alpha_{\gamma_0}$ is the unique element $a$ of $V[G_0]$ with the property that $\varphi(a,\kappa,x)$ holds in $V[G_0]$.  
\end{proof}

\begin{proof}[Proof of Theorem \ref{Thm:Mahlo}]
  Suppose that $\mu$ is a regular cardinal so that the set $\{\mu\}$ is $\Sigma_1$ stationary in $\mu^+$. 
 Assume, towards a contradiction, that $\kappa=\mu^+$ is not a Mahlo cardinal in $\LL$. Then there is a constructible closed unbounded subset of $\kappa$ whose elements are singular in $\LL$. Let $C$ denote the $<_\LL$-least such subset of $\kappa$. Then the set $\{C\}$ is definable by a $\Sigma_1$-formula with parameter $\kappa$ and hence we know that $\mu$ is an element of $C$. But this is a contradiction, because all elements of $C$ are singular. 
 
 The above computations yield the implication from \eqref{item:Thm:Mahlo3} to \eqref{item:Thm:Mahlo1} in the statement of the theorem. The implication from \eqref{item:Thm:Mahlo1} to \eqref{item:Thm:Mahlo2} is given by a combination of Lemma  \ref{lemma:UnDefStationary} and Lemma \ref{lemma:MahloSilverCollapse}. Finally, the implication from \eqref{item:Thm:Mahlo2} to \eqref{item:Thm:Mahlo3} is trivial.  
\end{proof}


\subsection{Successors of singular cardinals}

The last arguments used in the above proof of Theorem \ref{Thm:Mahlo} do not apply if we consider successors of singular cardinals. Indeed, it turns out that the corresponding assumptions has much higher consistency strength. In one direction, we show that an analogous statement holds for successors of limits of measurable cardinals:

\begin{theorem}\label{Thm:LimitOfMeasurables}
If $\kappa$ is a limit of measurable cardinals, then the cardinal $\kappa^+$ has the $\Sigma_1(\kappa)$-undefinability property.  
\end{theorem}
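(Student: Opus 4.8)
My plan is to argue by contradiction and to move a putative definable ordinal by a linear iteration of a measurable cardinal below $\kappa$ that nevertheless fixes $\kappa^+$. So I would assume that some $\alpha\in[\kappa,\kappa^+)$ is the unique ordinal satisfying $\varphi(\alpha,\kappa^+,z)$ for a $\Sigma_1$-formula $\varphi$ and some $z\in\HH{\kappa}$, fix a measurable cardinal $\kappa_0<\kappa$ with $z\in\HH{\kappa_0}$ and a normal ultrafilter $U$ on $\kappa_0$, and consider the induced linear iteration $\langle M_\gamma,j_{\gamma\delta}\rangle$. The entire argument would rest on one observation: whenever $\gamma$ is an ordinal with $j_{0,\gamma}(\kappa^+)=\kappa^+$ — note that $j_{0,\gamma}(z)=z$ automatically, since $z\in\HH{\kappa_0}=V_{\kappa_0}$ and $\crit{j_{0,\gamma}}=\kappa_0$ — applying $j_{0,\gamma}$ to the statement that $\alpha$ is the unique $x$ with $\varphi(x,\kappa^+,z)$ and then using $\Sigma_1$-upward absoluteness from the transitive class $M_\gamma$ into $V$ shows that $\varphi(j_{0,\gamma}(\alpha),\kappa^+,z)$ holds in $V$, whence $j_{0,\gamma}(\alpha)=\alpha$ by the uniqueness of $\alpha$. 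It therefore suffices to exhibit one such $\gamma$ with $j_{0,\gamma}(\alpha)\neq\alpha$.

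The key computation I would carry out is that the length-$\kappa$ iterate $j_{0,\kappa}\colon V\to M_\kappa$ fixes $\kappa^+$ while moving $\kappa$ itself. Since $\kappa$ is a limit of measurable (hence inaccessible) cardinals, it is a strong limit, so the critical sequence stays below $\kappa$ and $j_{0,\kappa}(\kappa_0)=\kappa_\kappa=\kappa$. Counting the ordinals of $M_\kappa$ below $j_{0,\kappa}(\kappa)$ — each of the form $j_{0,\kappa}(f)(a)$ with $f\colon[\kappa_0]^{<\omega}\to\kappa$ and $a\in[\kappa]^{<\omega}$, of which there are only $\kappa^{\kappa_0}\cdot\kappa=\kappa$ — gives $\kappa<j_{0,\kappa}(\kappa)<\kappa^+$, so that $|j_{0,\kappa}(\kappa)|=\kappa$ and hence
\[
 j_{0,\kappa}(\kappa^+) ~ = ~ \bigl(j_{0,\kappa}(\kappa)\bigr)^{+M_\kappa} ~ \leq ~ \bigl(j_{0,\kappa}(\kappa)\bigr)^{+V} ~ = ~ \kappa^+ .
\]
As $j_{0,\kappa}(\kappa^+)\geq\kappa^+$ always holds, this yields $j_{0,\kappa}(\kappa^+)=\kappa^+$. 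In particular, for $\alpha=\kappa$ I would be done immediately, since then $j_{0,\kappa}(\alpha)=j_{0,\kappa}(\kappa)\in(\kappa,\kappa^+)$ is different from $\kappa$.

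For $\alpha\in(\kappa,\kappa^+)$ the same embedding $j_{0,\kappa}$ fixes $\kappa^+$ and moves every ordinal of $(\kappa,\kappa^+)$ lying below its least closure point; more generally, I would force a discontinuity of the iteration at $\alpha$ by matching the cofinality of the iteration length to $\cof{\alpha}$. When $\cof{\alpha}<\kappa$ I would take a length $\gamma<\kappa$ with $\cof{\gamma}=\cof{\alpha}$ and invoke Lemma \ref{lemma:LimitMeasurableFixedPoints}, applied to the fixed ordinal $\kappa^+$, to keep $j_{0,\gamma}(\kappa^+)=\kappa^+$; when $\cof{\alpha}=\kappa$ (which can occur only if $\kappa$ is regular) I would use $\gamma=\kappa$ and the computation above. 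It is worth noting that when $\kappa$ is singular — the case motivating the theorem, as $\kappa^+$ is then the successor of a singular cardinal — every $\alpha\in(\kappa,\kappa^+)$ has $\cof{\alpha}<\kappa$, so only short iterations are ever needed. In each case, $\Sigma_1$-upward absoluteness together with the uniqueness of $\alpha$ would close the argument exactly as in the first paragraph.

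The step I expect to be the main obstacle is that a given $\alpha$ may happen to be a closure point of the chosen iteration, so that $j_{0,\gamma}(\alpha)=\alpha$ even when the cofinalities match, and then the iteration fails to witness undefinability. To move such an $\alpha$ I would either vary the underlying measurable $\kappa_0<\kappa$ — the closure points of $j^{\kappa_0}_{0,\kappa}$ depend on $\kappa_0$ — or, more robustly, exploit the normal $M_\kappa$-ultrafilter $W=j_{0,\kappa}(U)$ on $\kappa$, which is iterable since $\langle M_\kappa,\in,W\rangle$ is a tail of the original iteration of $V$. Using $W$ one can attempt a reflect-and-reiterate argument in the spirit of the proof of Lemma \ref{proposition:StablyMeasurableUndefinable}: reflect the $\Sigma_1$-definition into a small elementary submodel, collapse $\alpha$ to a proper initial value, and then iterate the collapsed measure so as to carry the collapsed copy of $\kappa^+$ back to $\kappa^+$, producing a second solution of $\varphi(\cdot,\kappa^+,z)$. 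Guaranteeing that this second solution really differs from $\alpha$, uniformly across all possible cofinalities, is the delicate heart of the proof.
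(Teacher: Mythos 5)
Your overall frame (contradiction, linear iteration of a normal measure on some measurable $\kappa_0<\kappa$, $\Sigma_1$-upward absoluteness plus uniqueness of $\alpha$) is exactly the paper's, and your computation for the single case $\alpha=\kappa$ is essentially the right one --- except that the equality $\kappa^{\kappa_0}=\kappa$ is unjustified: if $\kappa$ is singular with $\cof{\kappa}\leq\kappa_0$, then $\kappa^{\kappa_0}\geq\kappa^{\cof{\kappa}}>\kappa$. This is why the paper requires the chosen measurable $\delta$ to satisfy $\cof{\kappa}\in\delta\cup\{\kappa\}$; since $\kappa$ is a limit of measurables, such a choice is always available, but it has to be made. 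The genuine gap, however, is the one you flag yourself: your treatment of $\alpha\in(\kappa,\kappa^+)$ does not work, and it cannot be repaired within the constraint you have silently imposed, namely that the iteration has length at most $\kappa$. Matching the cofinality of the iteration length to $\cof{\alpha}$ does not ``force a discontinuity at $\alpha$'': an induction on the length shows that $j_{0,\gamma}$ is discontinuous at an ordinal $\beta$ exactly when $\cof{\beta}=\kappa_0$ (limit stages of the iteration create no new discontinuities), so the length's cofinality is irrelevant. Worse, for a fixed length $\gamma\leq\kappa$ the fixed points of $j_{0,\gamma}$ are stationary in $\kappa^+$ (all closure points of cofinality $\neq\kappa_0$), and varying $\kappa_0$ does not help either: setting $\alpha_0=\kappa$ and $\alpha_{n+1}=\sup\Set{j(\alpha_n)}{\text{$j$ an iteration of length $\leq\kappa$ of a normal measure on a measurable below $\kappa$}}<\kappa^+$, the ordinal $\alpha=\sup_{n<\omega}\alpha_n$ has cofinality $\omega$ and is fixed by \emph{every} such embedding. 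So no combination of your cofinality-matching and measure-varying moves can handle this $\alpha$, and your reflect-and-reiterate fallback is left incomplete precisely at the decisive step.

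The missing idea is simply to iterate \emph{past} $\alpha$: nothing restricts the length to $\kappa$. For every $\gamma<\kappa^+$, the same counting argument you ran at length $\kappa$ still applies --- every ordinal below $j_{0,\gamma}(\beta)$ with $\beta<\kappa^+$ has the form $j_{0,\gamma}(f)(c)$ with $\map{f}{[\kappa_0]^{n}}{\beta}$ and $c$ a finite subset of the critical sequence, so $\betrag{j_{0,\gamma}(\beta)}\leq\kappa^{\kappa_0}\cdot\betrag{\gamma}=\kappa$, and continuity at $\kappa^+$ (which holds because $\cof{\kappa^+}=\kappa^+>\kappa_0$) then gives $j_{0,\gamma}(\kappa^+)=\kappa^+$ for \emph{all} $\gamma<\kappa^+$, not just $\gamma\leq\kappa$. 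Now take $\gamma=\alpha+1<\kappa^+$. Since the critical sequence is strictly increasing, $j_{0,\gamma}(\kappa_0)=\kappa_\gamma\geq\gamma>\alpha$, and since $\alpha\geq\kappa>\kappa_0$, elementarity yields $j_{0,\gamma}(\alpha)\geq j_{0,\gamma}(\kappa_0)>\alpha$. This moves every $\alpha\in[\kappa,\kappa^+)$ uniformly, with no case split on $\cof{\alpha}$ and no closure-point issues, and the contradiction with uniqueness follows exactly as in your first paragraph. This is precisely the paper's proof: it cites the fact that $j_{0,\gamma}(\kappa^+)=\kappa^+$ and $j_{0,\gamma}(z)=z$ for all $\gamma<\kappa^+$ as a standard lemma and then picks $\gamma$ with $j_{0,\gamma}(\delta)>\alpha$.
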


\begin{proof}
  Assume, towards a contradiction, that there is a $\Sigma_1$-formula $\varphi(v_0,v_1,v_2)$, an ordinal $\alpha$ in the interval $[\kappa,\kappa^+)$ and an element $z$ of $\HH{\kappa}$ such that $\alpha$ is the unique set $a$ with the property that $\varphi(a,\kappa^+,z)$ holds. Pick a measurable cardinal $\delta<\kappa$ such that $z\in\HH{\delta}$ and $\cof{\kappa}\in\delta\cup\{\kappa\}$. Pick a normal ultrafilter $U$ on $\delta$ and let $$\langle\seq{M_\alpha}{\alpha\in\On},\seq{\map{j_{\alpha,\beta}}{M_\alpha}{M_\beta}}{\alpha\leq\beta\in\On}\rangle$$ denote the linear iteration of $\langle V,\in,U\rangle$. Standard arguments now allow us to conclude that  $j_{0,\gamma}(\kappa^+)=\kappa^+$ and $j_{0,\gamma}(z)=z$ holds for all $\gamma<\kappa^+$ (see, for example, {\cite[Lemma 7.3]{Sigma_1_higher}}). We can now pick $\gamma<\kappa^+$ with $j_{0,\gamma}(\delta)>\alpha$. Elementarity then implies that $\varphi(j_{0,\gamma}(\alpha),\kappa^+,z)$ holds in $M_\gamma$ and, by $\Sigma_1$-upwards absoluteness, this shows that $\varphi(j_{0,\gamma}(\alpha),\kappa^+,z)$ also holds in $V$. But, this yields a contradiction, because $j_{0,\gamma}(\alpha)>\alpha$.  
 \end{proof}

 In combination with Lemma \ref{lemma:UnDefStationary}, the above result shows that if $\kappa$ is a limit of measurable cardinals, then the set $\{\kappa\}$ is $\Sigma_1(\HH{\kappa})$-stationary in $\kappa^+$. We end this section by showing that, in the case of successors of singular cardinals, the used large cardinal assumption is optimal. In the proof of this result, we again rely on the results of \cite{MR926749}.

 \begin{theorem}
     Let $\kappa$ be a singular cardinal with the property that the set $\{\kappa\}$ is $\Sigma_1(\HH{\kappa})$-stationary in $\kappa^+$. Then there is an inner model with $\cof{\kappa}$-many measurable cardinals. 
 \end{theorem}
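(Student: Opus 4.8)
The plan is to prove the contrapositive: assuming that there is \emph{no} inner model with $\cof{\kappa}$-many measurable cardinals, I will produce a closed unbounded subset $C$ of $\kappa^+$ with $\kappa\notin C$ such that $\{C\}$ is definable by a $\Sigma_1$-formula with parameters in $\HH{\kappa}\cup\{\kappa^+\}$, contradicting the assumed $\Sigma_1(\HH{\kappa})$-stationarity of $\{\kappa\}$ in $\kappa^+$. Since the half-open interval $C=(\kappa,\kappa^+)$ is a closed unbounded subset of $\kappa^+$ that avoids $\kappa$, it suffices to show that the singleton $\{\kappa\}$ is itself definable by a $\Sigma_1$-formula with parameters in $\HH{\kappa}\cup\{\kappa^+\}$; the interval $C$ is then $\Sigma_1$-definable from the same parameters. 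In other words, the whole argument reduces to \emph{defining the singular cardinal $\kappa$ from its successor} in the core model, which is the natural dual to the undefinability established in Theorem \ref{Thm:LimitOfMeasurables}.

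To carry this out, I would first reproduce the core-model setup from the proof of Theorem \ref{theorem:UncCofManyMeasurables}. By {\cite[Theorem 2.14]{MR926749}}, the hypothesis rules out the existence of $0^{long}$, so the canonical core model $K[U_{can}]$ is available, and the absence of $\cof{\kappa}$-many measurables forces $\dom{U_{can}}$ to have order-type less than $\cof{\kappa}$. Since $\cof{\kappa}<\kappa$, the measurable cardinals of $K[U_{can}]$ below $\kappa$ are bounded in $\kappa$, so $U:=U_{can}\restriction\kappa$ is an element of $\HH{\kappa}$, and $K=K[U]$ satisfies $\POT{\kappa}^{K[U_{can}]}\subseteq K$ by {\cite[Theorem 3.9]{MR926749}}. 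As $\kappa$ is singular in $V$, it is not measurable in $K$, and the covering results of \cite{MR926749} (weak covering at a singular cardinal, far below the large-cardinal threshold ruled out here) yield $(\kappa^+)^{K}=\kappa^+$. This last equality is what ties the parameter $\kappa^+$ to the internal structure of $K$: it guarantees that $\kappa$ is the \emph{largest} cardinal of $\HH{\kappa^+}^{K}$.

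The second ingredient is the definability of the relevant initial segment of $K$. Exactly as in Lemma \ref{lemma:InitialSegmentsKDJ} and the argument of {\cite[Lemma 2.3]{MR3845129}}, the class of mice over $U$ is $\Sigma_1$-definable from $U$ together with any uncountable ordinal, and a coiteration argument shows that, since $\kappa^+=(\kappa^+)^{K}$ is a successor cardinal of $K$ (hence not inaccessible there), the set $\{\HH{\kappa^+}^{K}\}$ is definable by a $\Sigma_1$-formula with parameters $\kappa^+$ and $U$. Writing $N=\HH{\kappa^+}^{K}$, the structure $N$ satisfies $\ZFC^-$ and has $\kappa$ as its largest cardinal, so $\kappa$ is first-order definable over $N$. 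Folding the $\Sigma_1$-definition of $N$ into a single existential quantifier then shows that $\{\kappa\}$ is definable by a $\Sigma_1$-formula with parameters in $\HH{\kappa}\cup\{\kappa^+\}$, namely \anf{there is a transitive set $N$ witnessing the definition of $\HH{\kappa^+}^{K}$ whose largest cardinal is $x$}. This completes the reduction described above and yields the desired contradiction.

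\textbf{Main obstacle.} The combinatorial skeleton of the argument is routine; the delicate points are the two core-model inputs. First, I must make sure that weak covering genuinely delivers $(\kappa^+)^{K}=\kappa^+$ under the sole hypothesis that there is no inner model with $\cof{\kappa}$-many measurables --- this is where the singularity of $\kappa$ and the bound $\otp{\dom{U_{can}}}<\cof{\kappa}$ are essential, placing the situation well below the threshold at which weak covering can fail. Second, I need the $\Sigma_1$-definability of $\HH{\kappa^+}^{K[U]}$ in Mitchell's core model with parameters $\kappa^+$ and $U\in\HH{\kappa}$, which requires checking that the iterability and comparison machinery for mice over a measure sequence can be verified inside sufficiently tall transitive $\ZFC^-$-models, precisely as in the Dodd--Jensen case treated in Section \ref{section:UndefProp}. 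Once these two facts are in place, the extraction of $\kappa$ as the largest cardinal of a $\Sigma_1$-definable initial segment is immediate.
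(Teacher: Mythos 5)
Your overall strategy coincides with the paper's: argue contrapositively in Mitchell's core model, set $U=U_{can}\restriction\kappa\in\HH{\kappa}$, establish $(\kappa^+)^{K[U]}=\kappa^+$, show that $\{\kappa\}$ is definable by a $\Sigma_1$-formula with parameters $\kappa^+$ and $U$ via $U$-mice, and contradict the assumed stationarity with the club $(\kappa,\kappa^+)$. Your definability step (first defining $\{\HH{\kappa^+}^{K[U]}\}$ by adapting Lemma \ref{lemma:InitialSegmentsKDJ} and then extracting $\kappa$ as the largest cardinal of that structure) is a harmless repackaging of the paper's claim that $\kappa$ is the unique $\zeta$ for which some $U$-mouse $M$ with $\kappa^+\in lp(M)$ satisfies $(\zeta^+)^M=\kappa^+$; that half of the argument is fine.

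The genuine gap is in your justification of $(\kappa^+)^{K}=\kappa^+$. You assert \anf{as $\kappa$ is singular in $V$, it is not measurable in $K$} and then invoke weak covering. That implication is false, and it fails in exactly the scenario your hypotheses permit: start with a model containing a single measurable cardinal $\kappa$ and force with Prikry forcing. In the extension, $\kappa$ is singular of cofinality $\omega$, there is still no inner model with $\cof{\kappa}$-many (i.e., infinitely many) measurable cardinals, and yet $\kappa\in\dom{U_{can}}$, i.e., $\kappa$ is measurable in the core model. So the case \anf{$\kappa$ measurable in $K[U_{can}]$} cannot be excluded, and Mitchell's weak covering theorem ({\cite[Theorem 3.20]{MR926749}}), which is what you want to apply, presupposes precisely that this case does not occur. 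The conclusion $(\kappa^+)^{K}=\kappa^+$ is still true in this case, but it needs a different argument: the paper closes it with {\cite[Theorem 3.23]{MR926749}}, which shows that if $\kappa\in\dom{U_{can}}$ then $\cof{\kappa}=\omega$ and $\kappa^+$ is computed correctly by a generic extension of $K[U_{can}]$ by finitely many Prikry forcings, whence $K[U_{can}]$ itself computes $\kappa^+$ correctly. Without this case distinction, the covering step --- which you yourself single out as the main obstacle --- remains unjustified, since your proposed reason for being \anf{below the threshold at which weak covering can fail} rests on the false claim that singularity in $V$ rules out measurability in the core model.
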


 \begin{proof}
     Assume, towards a contradiction, that the above conclusion fails. An application of {\cite[Theorem 2.14]{MR926749}} then shows that $0^{long}$ does not exist. 
  Let $U_{can}$ denote the canonical sequence of measures and let $\KK[U_{can}]$ denote the canonical core model. Our assumption then implies that $\dom{U_{can}\restriction\kappa}$ is a bounded subset of $\kappa$. 

  \begin{claim*}
      $\kappa^+=(\kappa^+)^{\KK[U_{can}]}$.
  \end{claim*}

  \begin{proof}[Proof of the Claim]
   First, if $\kappa\notin\dom{U_{can}}$, then the fact that $\dom{U_{can}\restriction\kappa}$ is  bounded in $\kappa$ allows us to use {\cite[Theorem 3.20]{MR926749}} to derive the desired conclusion. Hence, we may assume that $\kappa$ is an element of $\dom{U_{can}}$. An application of {\cite[Theorem 3.23]{MR926749}} then shows that $\cof{\kappa}=\omega$ and there is a generic extension of $\KK[U_{can}]$ by finitely-many Prikry forcings that computes $\kappa^+$ correctly. But this also means that  $\KK[U_{can}]$ computes $\kappa^+$ correctly. 
  \end{proof}

  Set $U=U_{can}\restriction\kappa\in\HH{\kappa}$. By {\cite[Theorem 3.9]{MR926749}}, we then have $\POT{\kappa}^{\KK[U_{can}]}\subseteq\KK[U]$ and hence the above claim shows that $\kappa^+=(\kappa^+)^{\KK[U]}$. 

  \begin{claim*}
     The set $\{\kappa\}$ is definable by a $\Sigma_1$-formula with parameters $\kappa^+$ and $U$.
  \end{claim*}

  \begin{proof}[Proof of the Claim]
    If $\zeta<\kappa^+$ is an ordinal and  $M$ is a $U$-mouse (i.e., an iterable premouse over $U$, see {\cite[Definition 2.5]{MR926749}}) with $\kappa^+\in lp(M)$ and $\kappa^+=(\zeta^+)^M$, then a direct adaptation of the proof of Lemma \ref{lemma:InitialSegmentsKDJ}  shows that $\POT{\kappa}\subseteq M$ and hence $\kappa=\zeta$. Since there exists a $U$-mouse that satisfies the listed properties with respect to $\kappa$, this observation yields the desired definition of the set $\{\kappa\}$. 
  \end{proof}

  The above claim directly yields a contradiction, because it implies that the set $\{(\kappa,\kappa^+)\}$ is definable by a $\Sigma_1$-formula with parameters in $\HH{\kappa}\cup\{\kappa^+\}$. 
 \end{proof}


\section{Open Problems}\label{Section:Concluding Remarks and Open Problems}

There are many natural ways to vary Definition \ref{definition:UnDefProp}. For example, given a cardinal $\kappa\geq\omega_2$, we may ask whether for some uncountable ordinal $\alpha<\kappa$, the set $\{\alpha\}$ is definable by a $\Sigma_1$-formula with parameter $\kappa$. Since our arguments to derive consistency strength from the undefinability property (as in the proof of Theorem \ref{Thm:Measurable}) cannot be directly adjusted to this variation, we arrive at the following question:

\begin{question}
  Assume that for every uncountable ordinal $\alpha<\omega_\omega$, the set $\{\alpha\}$ is not definable by a $\Sigma_1$-formula with parameter $\omega_\omega$. Does $0^\#$ exist?
\end{question}

In Section \ref{subsection:PartitionProperty}, we show that many of the implications of large cardinals on $\Sigma_1(A)$-stationary sets can also be derived for smaller cardinals in the case where these cardinals possess strong partition properties.  
 For some of these implications, it is natural to ask whether they can be strengthened. First, since Lemma \ref{lemma:Rowbottem} relies on additional assumptions on the given Rowbottom cardinal, we ask whether these assumptions can be omitted:

 \begin{question}
   Does every $\mu$-Rowbottom cardinal have the $\Sigma_1(\mu)$-undefinability property?   
 \end{question}

 Second, when we consider $\Sigma_1$-definable regressive functions and compare the third part of Theorem \ref{theorem:StablyMeasurable} and the second part of Theorem \ref{theorem:LimitMeasurables} with the second part of Theorem \ref{theorem:JonssonStationary}, then we notice that our result for the case where $\omega_\omega$ is J\'{o}nsson is restricted to $\mathbf{\Sigma}_1$-stationary sets consisting of cardinals. We therefore ask if the given conclusion can also be extended to arbitrary   $\mathbf{\Sigma}_1$-stationary subsets in this setting.

 \begin{question}
   Assume that $\omega_\omega$  is a  J\'{o}nsson cardinal, $S$ is a $\mathbf{\Sigma}_1$-stationary subset of $\omega_\omega$ and $\map{r}{\omega_\omega}{\omega_\omega}$ is a regressive function that is definable by a $\Sigma_1$-formula with parameters in $\HH{\aleph_\omega}\cup\{\omega_\omega\}$. Is $r$ constant on a $\mathbf{\Sigma}_1$-stationary subset of $S$? 
 \end{question}

Finally, let us say that a cardinal $\kappa$ is \emph{strongly measurable with respect to $\Sigma_1(Ord)$-clubs} if there is an inner model $W$ in which $\kappa$ is measurable  such that the collection of $\Sigma_1(Ord)$-closed unbounded subsets of $\kappa$ (in $V$) is generated by the intersection filter $F \in W$ of $\eta < \kappa$ many $\kappa$-complete ultrafilters on $\kappa$ in $W$. The proof of Theorem \ref{Thm:Measurable} shows that $\omega_\omega$ can be strongly measurable with respect to $\Sigma_1$-definable clubs. Can the same hold for other singular cardinals such as $\omega_{\omega_1}$?

 \begin{question}
   Can $\omega_{\omega_1}$ be strongly measurable with respect to $\Sigma_1(Ord)$-clubs. 
 \end{question}

See \cite{BNU} for an analog of Prikry forcing, that changes the cofinality of a cardinal $\kappa$ to $\omega_1$ by a  homogeneous poset.

${}$\\ \textbf{Acknowledgments:}
The authors would like to thank Matt Foreman, Yair Hayut and Menachem Magidor for their comments, and to an anonymous referee for valuable suggestions and comments.





 \bibliographystyle{plain}
 \bibliography{references}

\end{document}